\documentclass{amsart}

\usepackage{amssymb}

\usepackage{amsmath, amssymb,epic,graphicx,mathrsfs,enumerate}
\usepackage[all]{xy}

\setlength{\parskip}{2mm}

\usepackage{amsthm}
\usepackage{amssymb}
\usepackage{latexsym}
\usepackage{longtable}
\usepackage{epsfig}
\usepackage{amsmath}
\usepackage{hhline}

\usepackage{latexsym}

\newtheorem{proposition}{Proposition}[section]

\newtheorem{lemma}[proposition]{Lemma}
\newtheorem{corollary}[proposition]{Corollary}
\newtheorem{theorem}[proposition]{Theorem}
\newtheorem*{theorem1}{Theorem 1.8}
\newtheorem{example}[proposition]{Example}

\def\spl#1#2{\mathord{\mathrm{Sp}_{#1}(#2)}}
\def\psp#1#2{\mathord{\mathrm{PSp}_{#1}(#2)}}
\def\ort#1#2#3{\mathord{\mathrm{O}_{#1}^{#2}(#3)}}

\def\sort#1#2#3{\mathord{\mathrm{SO}_{#1}^{#2}(#3)}}

\newcommand{\Aut}{{\mathrm {Aut}}}

\newcommand{\Out}{{\mathrm {Out}}}

\newcommand{\F}{{\mathbb F}}

\def\Aut{\rm Aut}

\marginparsep-0.5cm

\footnotesep6.5pt

\begin{document}
\title[Normalizers of Primitive Groups]{Normalizers
of Primitive Permutation Groups}
\author[R. M. Guralnick]{Robert M. Guralnick}
\address{Department of Mathematics, University of Southern California,
Los Angeles,
CA 90089-1113, USA}
\email{guralnic@math.usc.edu}

\author[A. Mar\'oti]{Attila Mar\'oti}
\address{Alfr\'ed R\'enyi Institute of Mathematics, Hungarian Academy of Sciences, Re\'altanoda utca 13-15, H-1053, Budapest, Hungary}
\email{maroti.attila@renyi.mta.hu}

\author[L. Pyber]{L\'aszl\'o Pyber}
\address{Alfr\'ed R\'enyi Institute of Mathematics, Hungarian Academy of Sciences, Re\'altanoda utca 13-15, H-1053, Budapest, Hungary}
\email{pyber.laszlo@renyi.mta.hu}

\date{January 28, 2017}
\keywords{primitive permutation group, linear group, Galois group}
\subjclass[2000]{20B15, 20C99, 12F10, 20D06, 20B35, (20C20).}
\thanks{The first author was supported by NSF grants DMS-1302886 and DMS-1600056. The second author was supported by the MTA R\'enyi ``Lend\"ulet" Groups and Graphs Research Group and by a Humboldt Return Fellowship. The second and third authors were supported by the National Research, Development and Innovation Office (NKFIH) Grant No.~K84233 and No.~K115799. 
The third author was funded by the National Research, Development and Innovation Office (NKFIH) Grant No.~ERC$\_$HU$\_$15 118286. The first and third authors would like to thank the London Mathematical Society for its support during the Durham Conference on Groups, Geometry and Combinatorics -- 2001 where some of this work was done. The authors thank Mikl\'os Ab\'ert for various eye-opening remarks.}

\begin{abstract}
Let $G$ be a transitive normal subgroup of a permutation group $A$ of finite degree $n$. The factor group $A/G$ can be considered as a certain Galois group and one would like to bound its size. One of the results of the paper is that $|A/G| < n$ if $G$ is primitive unless $n = 3^{4}$, $5^4$, $3^8$, $5^8$, or $3^{16}$. This bound is sharp when $n$ is prime. In fact, when $G$ is primitive, $|\Out(G)| < n$ unless $G$ is a member of a given infinite sequence of primitive groups and $n$ is different from the previously listed integers. Many other results of this flavor are established not only for permutation groups but also for linear groups and Galois groups.     
\end{abstract}
\maketitle

\section{Introduction}

\subsection{Permutation groups}

Aschbacher and the first author showed \cite{AG2} that if $A$ is a finite permutation group of degree $n$ and $A'$ is its commutator subgroup, then $|A:A'| \leq 3^{n/3}$, furthermore if $A$ is primitive, then $|A:A'| \leq n$. These results were motivated by a problem in Galois theory. For another motivation for the present paper we need a definition. Let $\mathcal{N}$ be a normal series for a finite group $X$ such that every quotient in $\mathcal{N}$ either involves only noncentral chief factors or is an elementary abelian group with at least one central chief factor. Define $\mu(\mathcal{N})$ to be the product of the exponents of the quotients which involve central chief factors. Let $\mu(X)$ be the minimum of the $\mu(\mathcal{N})$ for all possible choices of $\mathcal{N}$. This invariant is an upper bound for the exponent of $X/X'$. In \cite{G} it was shown that if $A$ is a permutation group of degree $n$, then $\mu(A) \leq 3^{n/3}$, furthermore if $A$ is transitive, then $\mu(A) \leq n$, and if $A$ is primitive with $A'' \not= 1$, then the exponent of $A/A'$ is at most $2 \cdot n^{1/2}$. These results were also motivated by Galois theory. In this paper we prove similar statements and obtain corresponding results in Galois theory.    

Let $G$ be a normal subgroup of a permutation group $A$ of finite degree $n$. In this paper the factor group $A/G$ is studied. It is often assumed that $G$ is transitive (this is very natural from the point of view of Galois groups and the results are much weaker without this assumption). Throughout the paper the base of the logarithms is $2$ unless otherwise stated. Our first result is the following.
 
\begin{theorem}
\label{main:0}
Let $G$ and $A$ be permutation groups of finite degree $n$ with $G \vartriangleleft A$. Suppose that $G$ is primitive. Then $|A/G| < n$ unless
$G$ is an affine primitive permutation group and the pair $(n,A/G)$ is $(3^{4},\ort{4}{-}{2}$, $(5^{4},\spl{4}{2})$, $(3^{8},\ort{6}{-}{2})$, $(3^{8},\sort{6}{-}{2})$, $(3^{8},\ort{6}{+}{2})$, $(3^{8},\sort{6}{+}{2})$, $(5^{8},\spl{6}{2})$, $(3^{16},\ort{8}{-}{2})$, $(3^{16},\sort{8}{-}{2})$, $(3^{16},\ort{8}{+}{2})$, or $(3^{16},\sort{8}{+}{2})$. Moreover if $A/G$ is not a section of $\mathrm{\Gamma L}_{1}(q)$ when $n = q$ is a prime power, then $|A/G| < n^{1/2} \log n$ for $n \geq 2^{14000}$.
\end{theorem}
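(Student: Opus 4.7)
The plan is to start from the containment $A/G \hookrightarrow N_{S_{n}}(G)/G$ and apply the O'Nan--Scott theorem to $G$, handling each type separately. For non-affine primitive $G$ (almost simple, simple diagonal, compound diagonal, product action, twisted wreath), $N_{S_{n}}(G)/G$ is controlled by $|\Out(T)|$ for a non-abelian simple composition factor $T$ of the socle, together with a permutation of its simple direct factors. Classification-based estimates comparing $|\Out(T)|$ against the minimal faithful permutation degree of $T$ (in the spirit of \cite{AG2}) yield $|A/G|<n$ with substantial slack in each of these types, so all exceptions must arise when $G$ is affine.

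For the affine case, write $G=V\rtimes G_{0}$ with $V=\FF_{p}^{d}$, $n=p^{d}$, and $G_{0}\le \mathrm{GL}(V)$ irreducible. Then
\[
A/G\cong A_{0}/G_{0}\le N_{\mathrm{GL}(V)}(G_{0})/G_{0},
\]
so the task reduces to bounding this normalizer quotient. I would apply Aschbacher's theorem to a minimal overgroup of $G_{0}$ in $\mathrm{GL}(V)$. In the geometric classes $\mathcal{C}_{2}$, $\mathcal{C}_{3}$, $\mathcal{C}_{4}$, $\mathcal{C}_{5}$, $\mathcal{C}_{7}$ (imprimitive, semilinear, tensor product, subfield, tensor induced), the diagonal/field/graph contribution to $N_{\mathrm{GL}(V)}(G_{0})/G_{0}$ is easily bounded against $n$, and in class $\mathcal{S}$ one uses CFSG bounds on $|\Out(S)|$ against the minimal dimension of a faithful projective $\FF_{p}$-representation of the simple section $S$; together these give $|N_{\mathrm{GL}(V)}(G_{0})/G_{0}|<n$ with no exceptional cases.

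The heart of the argument, and the source of every listed exception, is Aschbacher class $\mathcal{C}_{6}$: here $G_{0}$ normalises an extraspecial $r$-group $E$ of order $r^{1+2m}$ acting absolutely irreducibly on $V$ of dimension $d=r^{m}$, and $N_{\mathrm{GL}(V)}(G_{0})/G_{0}$ can contain most of $\spl{2m}{r}$ with, when $r=2$, an additional orthogonal refinement $\ort{2m}{\pm}{2}$ reflecting a quadratic form on $E/Z(E)$. Comparing $|\spl{2m}{r}|\sim r^{2m^{2}+m}$ to $n=p^{r^{m}}$ forces $r=2$ and $p\in\{3,5\}$ for the ratio to exceed $1$; an explicit check for $m\in\{2,3,4\}$ (i.e.\ $d\in\{4,8,16\}$) produces exactly the twelve pairs in the theorem, while a direct numerical estimate absorbs $m\ge 5$.

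For the refined $n^{1/2}\log n$ bound, the assumption that $A/G$ is not a section of $\mathrm{\Gamma L}_{1}(q)$ excludes precisely the situation where a full Galois group of $\FF_{q}/\FF_{p}$ contributes a factor of order $\log q=\log n$ that dominates the lower-order terms. Outside this case, each Aschbacher branch contributes only a polylogarithmic outer factor on top of an index that is $n^{o(1)}$, and the threshold $n\ge 2^{14000}$ absorbs the finitely many small-dimensional verifications. The main obstacle is the $\mathcal{C}_{6}$ analysis: one must correctly identify the extraspecial normalizer in each of the twelve small cases and verify that no additional exceptions arise, while establishing the clean asymptotic $|\spl{2m}{2}|\cdot 2 < p^{2^{m}}$ for all $m\ge 5$ and all admissible $p$.
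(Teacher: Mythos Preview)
Your high-level strategy matches the paper's: reduce to the affine case via O'Nan--Scott, then locate the exceptions inside the extraspecial (symplectic-type) normalizer configuration. However, two of your reductions are asserted rather than proved, and each hides a genuine difficulty that the paper spends several sections on.

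First, the imprimitive case (your class $\mathcal{C}_{2}$) is not ``easily bounded against $n$''. If $A_{0}$ preserves $V=V_{1}\oplus\cdots\oplus V_{t}$ and $K$ is the kernel of the action on blocks, then $|A_{0}/G_{0}|$ factors as $|A_{0}/G_{0}K|\cdot|K/(G_{0}\cap K)|$. The second factor is \emph{not} controlled by an Out-type estimate: one needs the module-theoretic bound $t_{G}(V)\le\frac{1}{2}\dim V$ (and its refinements for $t\neq 2,4$ and $t\ge 17$) to show that $a(K/(G_{0}\cap K))\le a(A_{1})^{t/2}$, together with a separate inequality $a(A_{1})<m^{2}/\sqrt{6}$ for the primitive block action (which itself fails at $m=9$ and requires special treatment). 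Without this machinery the imprimitive case does not close; the paper devotes Lemmas~\ref{module}, \ref{nyuszi}, \ref{imprimitiveexample} and Theorems~\ref{linear2}, \ref{166} to exactly this.

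Second, in your $\mathcal{C}_{6}$ analysis you compare $|\mathrm{Sp}_{2m}(r)|$ to $n=p^{r^{m}}$ as though $|A_{0}/G_{0}|\le|N_{\mathrm{GL}(V)}(E)/EZ|$. But nothing in your setup forces $E\le G_{0}$: the irreducible group $G_{0}$ might meet the extraspecial group $E$ only in scalars, and then $A_{0}/G_{0}$ is not bounded by $\mathrm{Sp}_{2m}(r)$ at all. The paper handles this with a separate argument (Lemma~\ref{seged}) showing that \emph{if} $|A/G|\ge n$ and the minimal non-central normal subgroup $J$ of $A$ is unique, \emph{then} $J\le G$; one must also treat the case where $J$ is a product of several such subgroups (solvable and quasisimple mixed), which is where most of Section~\ref{Section 11} goes. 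Finally, there are eleven exceptional pairs, not twelve, and your asymptotic paragraph is too optimistic: the $n^{1/2}\log n$ bound in Section~\ref{Section 14} again requires the module-generation estimates and a separate treatment of $t=2,4$ in the imprimitive case.
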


The $n-1$ bound in Theorem \ref{main:0} is sharp when $n$ is prime and $G$ is a cyclic group of order $n$. For more information about the eleven exceptions in Theorem \ref{main:0} and for a few other examples see Section \ref{Section 3}. Note that for every prime $p$ there are infinitely many primes $r$ such that the primitive permutation group $G \leq \mathrm{A \Gamma L}_{1}(q)$ of order $np = qp = r^{p-1}p$ satisfies $|N_{\mathrm{S}_n}(G)/G| = (n-1)(p-1)/p$. It will also be clear from our proofs that the bound $n^{1/2} \log n$ in Theorem \ref{main:0} is asymptotically sharp apart from a constant factor at least $\log_{9}8$ and at most $1$. 

Our second result concerns the size of the outer automorphism group $\Out(G)$ of a primitive subgroup $G$ of the finite symmetric group $\mathrm{S}_n$.   
 
\begin{theorem}
\label{main:1}
Let $G \leq \mathrm{S}_n$ be a primitive permutation group. Then $|\Out (G)| < n$ unless $G$ is an affine primitive permutation group and one of the following holds. 
\begin{enumerate}   
\item $n = 3^{4}$, $G = {(C_{3})}^{4} : (D_{8} \circ Q_{8})$ and $\Out (G) \cong \ort{4}{-}{2}$.

\item $n = 5^{4}$, $G = {(C_{5})}^{4} : (C_{4} \circ D_{8} \circ D_{8})$ and $\Out (G) \cong \spl{4}{2}$.

\item $n = 3^{8}$, $G = {(C_{3})}^{8} : (D_{8} \circ D_{8} \circ Q_{8})$ and $\Out (G) \cong \ort{6}{-}{2}$.

\item $n = 3^{8}$, $G = {(C_{3})}^{8} : (D_{8} \circ D_{8} \circ D_{8})$ and $\Out (G) \cong \ort{6}{+}{2}$.

\item $n = 5^{8}$, $G = {(C_{5})}^{8} : (C_{4} \circ D_{8} \circ D_{8} \circ D_{8})$ and $\Out (G) \cong \spl{6}{2}$.

\item $n = 3^{16}$, $G = {(C_{3})}^{16} : (D_{8} \circ D_{8} \circ D_{8} \circ Q_{8})$ and $\Out (G) \cong \ort{8}{-}{2}$.

\item $n = 3^{16}$, $G = {(C_{3})}^{16} : (D_{8} \circ D_{8} \circ D_{8} \circ D_{8})$ and $\Out (G) \cong \ort{8}{+}{2}$.

\item $n=q^2$ with $q=2^e$, $e > 1$, $G = {(C_{2})}^{2e} : \mathrm{L}_{2}(q)$ and $|\Out (G)| = q(q-1)e$. 

\end{enumerate} 
\end{theorem}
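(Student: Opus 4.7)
The theorem sharpens Theorem~\ref{main:0} by replacing $|A/G|$ with $|\Out(G)|$, and the bridge between the two invariants is the following. If $G$ is primitive of degree $n$ and not regular (the regular case is $G = C_p$ with $|\Out(G)| = p-1 < n$), then $C_{\mathrm{S}_n}(G) = 1$, so conjugation yields an embedding
\[
\pi\colon\ N_{\mathrm{S}_n}(G)/G \hookrightarrow \Out(G)
\]
whose image is the stabiliser in $\Out(G)$ of the $G$-conjugacy class $[G_\alpha]$ of a point stabiliser. Denoting by $c$ the length of the $\Out(G)$-orbit of $[G_\alpha]$, one has $|\Out(G)| = c \cdot |N_{\mathrm{S}_n}(G)/G|$. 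The plan is to apply Theorem~\ref{main:0} with $A = N_{\mathrm{S}_n}(G)$ to control the second factor and bound $c$ using the O'Nan--Scott type of $G$.

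\emph{Non-affine case.} When $\soc(G) = T^k$ is non-abelian, $\Out(G)$ is a subquotient of $\Out(T) \wr \mathrm{S}_k$ extended by $G/\soc(G)$. Liebeck's polynomial bound on primitive degrees together with CFSG give $|\Out(T)| = O(\log n)$ and $k!$ is negligible compared to $n$, so $|\Out(G)|$ is already much smaller than $n$. Moreover $c$ is at most the number of $\Aut(G)$-equivalent $G$-classes of maximal subgroups of $G$ of index $n$, which the O'Nan--Scott / Aschbacher theory shows is small in each family (typically at most $2$). Since Theorem~\ref{main:0} has no non-affine exception, these bounds combine to give $|\Out(G)| < n$, with only a handful of small degrees to verify directly.

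\emph{Affine case.} Write $G = V \rtimes H$ with $V \cong \FF_p^d$, $n = p^d$, and $H \le \mathrm{GL}(V)$ irreducible and faithful without non-zero fixed vectors. Because $V = F(G)$ is characteristic and $Z(G) = 1$, the standard analysis of automorphisms of a split extension with self-centralising abelian base gives
\[
\Out(G) \;\cong\; H^1(H,V) \rtimes \bigl(N_{\mathrm{GL}(V)}(H)/H\bigr),
\]
and the right-hand factor is exactly $N_{\mathrm{S}_n}(G)/G$. By Theorem~\ref{main:0} it is less than $n$ outside the seven exceptional triples listed there; in each of those $H$ is a classical group over $\FF_2$ acting on a natural or spin module in coprime characteristic $p \in \{3,5\}$, and the Cline--Parshall--Scott cross-characteristic vanishing forces $H^1(H,V) = 0$. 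This produces exceptions (1)--(7); the $\mathrm{O}^\varepsilon$ and $\mathrm{SO}^\varepsilon$ variants of Theorem~\ref{main:0} collapse to the same $G$ here, since the determinant is invisible to $\Out(G)$. When $|N_{\mathrm{GL}(V)}(H)/H| < n$, the only remaining way to reach $|\Out(G)| \ge n$ is a pair $(H, V)$ with $|H^1(H,V)|$ large enough to compensate. A cohomology audit---using the general dimension bound $\dim H^1(H,V) \le \tfrac{1}{2}\dim V$ of Guralnick--Hoffman together with the catalogue of non-vanishing first cohomology of irreducible modules for quasi-simple groups---pins down the unique surviving family: $H = L_2(q)$ with $q = 2^e$, $e \ge 2$, acting on its natural module $V = \FF_q^2 \cong \FF_2^{2e}$, where $H^1(H, V) \cong \FF_q$. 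Then $N_{\mathrm{GL}(V)}(H)/H = \mathrm{\Gamma L}_2(q)/L_2(q)$ has order $(q-1)e$, so $|\Out(G)| = q(q-1)e$, and $(q-1)e \ge q$ exactly when $e \ge 2$: this is exception (8).

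\emph{Main obstacle.} The crux is the cohomology audit in the affine case: one must enumerate every faithful irreducible pair $(H, V)$ with $H^1(H, V) \ne 0$ and verify that $|H^1(H, V)| \cdot |N_{\mathrm{GL}(V)}(H)/H|$ reaches $n$ only for the natural module of $L_2(2^e)$. This draws on general dimension bounds for $H^1$ in natural and cross characteristic, the Cline--Parshall--Scott computations for classical groups, and explicit work for small-rank and sporadic simple groups. Once the $L_2(2^e)$ family is isolated, the remaining contributions are shown to be too small to spawn a new exception.
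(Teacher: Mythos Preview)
Your overall strategy matches the paper's: use $|\Out(G)| = |H^1(H,V)| \cdot |N_{\mathrm{S}_n}(G)/G|$ in the affine case (this is Lemma~\ref{cohomology}), split on whether $H^1$ vanishes, apply Theorem~\ref{main:0} when it does, and run a cohomology audit when it does not (this is Theorem~\ref{out}). The non-affine case is likewise handled by embedding $\Aut(G)$ in $\Aut(F^*(G))$ and reusing the estimates behind Theorem~\ref{main:0}.

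However, you have misidentified $H$ in exceptions (1)--(7). You write that ``$H$ is a classical group over $\FF_2$ acting on a natural or spin module'' and invoke Cline--Parshall--Scott for $H^1(H,V)=0$. This is backwards: the classical groups $\ort{2a}{\pm}{2}$ and $\spl{2a}{2}$ appearing in these exceptions are $\Out(G) \cong N_{\mathrm{GL}(V)}(H)/H$, not $H$ itself. The point stabiliser $H$ is the extraspecial-type $2$-group (for instance $D_8 \circ Q_8$ in case (1), or $C_4 \circ D_8 \circ D_8$ in case (2)). The correct reason $H^1(H,V)=0$ is simply coprime action: $H$ is a $2$-group while $\mathrm{char}(V) \in \{3,5\}$. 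This also clarifies the reduction from eleven exceptional pairs in Theorem~\ref{main:0} to seven exceptional groups here: Theorem~\ref{main:0} lists pairs $(n,A/G)$ for varying $A$, and in four of them $A$ has index $2$ in $N_{\mathrm{S}_n}(G)$, so the same $G$ is counted twice --- it is not that ``the determinant is invisible to $\Out(G)$''.

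For the $H^1 \ne 0$ branch, note that the paper's audit in Theorem~\ref{out} rests on Lemma~\ref{AGlemma} (from \cite{AG1}): non-vanishing of $H^1(H,V)$ with $H$ faithful and irreducible forces $H$ to have a unique minimal normal subgroup $N = L_1 \times \cdots \times L_t$ with $L_i \cong L$ nonabelian simple, and moreover $|H^1(H,V)| \le |H^1(L,W)|$ for an irreducible $L$-constituent $W$ of $V$. You allude to a quasi-simple catalogue but do not name this reduction; without it the audit would have to cover arbitrary irreducible $H$, not just those with $F^*(H)$ a product of copies of a single nonabelian simple group, and the Guralnick--Hoffman bound alone would not suffice to isolate the $\mathrm{L}_2(2^e)$ family.
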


If $G$ is any of the groups in (1)-(7) of Theorem \ref{main:1}, then $\Out (G) \cong N_{\mathrm{S}_{n}}(G)/G$. This indicates why there are only seven exceptional groups in the statement of Theorem \ref{main:1} and not eleven as in the statement of Theorem \ref{main:0}. (For in four cases in Theorem \ref{main:0} the group $A$ has index $2$ in $N_{\mathrm{S}_n}(G)$.)

Next we state an asymptotic version of Theorem \ref{main:1}. For this we need a definition. Let $\mathcal{C}$ be the class of all affine primitive permutation groups $G$ with an almost simple point-stabilizer $H$ with the property that the socle $\mathrm{Soc}(H)$ of $H$ acts irreducibly on the socle of $G$ and $\mathrm{Soc}(H)$ is isomorphic to a finite simple classical group such that its natural module has dimension at most $6$.

\begin{theorem}
\label{main:1.5}
Let $G \leq \mathrm{S}_{n}$ be a primitive permutation group. Suppose that if $n = q$ is a prime power then $G$ is not a subgroup of $\mathrm{A \Gamma L}_{1}(q)$. If $G$ is not a member of the infinite sequence of examples in Theorem \ref{main:1}, then $|\Out(G)| < 2 \cdot n^{3/4}$ for $n \geq 2^{14000}$. Moreover if $G$ is not a member of $\mathcal{C}$, then $|\Out(G)| < n^{1/2} \log n$ for $n \geq 2^{14000}$. 
\end{theorem}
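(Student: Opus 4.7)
The plan is to run an O'Nan--Scott case analysis, bounding $|\Out(G)|$ separately in each type and concentrating the real work in the affine case. For the non-affine types (almost simple, diagonal, product, twisted wreath) the bound is comparatively easy: writing $T$ for a simple component of $\mathrm{Soc}(G)$ and $k$ for the number of components, one combines the CFSG-based bound $|\Out(T)| \leq c\log|T|$ with the degree lower bound $n \geq m^{k}$, where $m$ is the degree of the underlying component action of an almost simple group with socle $T$. This will yield $|\Out(G)| \ll n^{1/2}/\log n$ for $n$ large, with substantial room to spare, so these types never approach the exceptional bound.

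For the affine case we write $G = V \rtimes H$ with $V = \mathbb{F}_{p}^{d}$ an irreducible $H$-module, $n = p^{d}$, and $H \not\leq \mathrm{\Gamma L}_{1}(p^{d})$ by hypothesis. Up to a factor at most $d\log p$ coming from field automorphisms of $V$, $|\Out(G)|$ is bounded by $|N_{\mathrm{GL}(V)}(H)/H|$, so it suffices to bound the latter. I would apply Aschbacher's theorem: a maximal overgroup of $H$ in $\mathrm{GL}(V)$ lies in one of the geometric classes $\mathcal{C}_{1},\ldots,\mathcal{C}_{8}$ or in the almost-simple class $\mathcal{S}$. In each geometric class the normalizer quotient has an explicit structure (wreath of general linear groups, extension-field stabilizer, tensor-decomposition stabilizer, classical-form stabilizer), and a direct estimate---done by induction on $d$ when needed---will give bounds considerably stronger than $n^{1/2}\log n$. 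The extreme contributors within the geometric classes are iterated wreath products built from two-dimensional blocks, and these will be the ones dictating the shape of the constants.

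The heart of the proof is class $\mathcal{S}$: $H$ contains a unique quasisimple component $L$ with $L/Z(L) = S$ absolutely irreducible on $V$. Then $N_{\mathrm{GL}(V)}(H)/H$ embeds, up to the scalars $Z(\mathrm{GL}(V))$, into $\Out(L)$, so $|\Out(G)|$ is controlled by roughly $|\Out(S)| \cdot (p-1) \cdot d\log p$. If $S$ is alternating, sporadic, or exceptional Lie type then $|\Out(S)|$ is tiny and the minimal dimension of a faithful projective representation of $S$ grows fast enough that the $n^{1/2}\log n$ bound follows easily once $n \geq 2^{14000}$. If $S$ is classical of natural dimension $m$ over $\mathbb{F}_{p^{f}}$, then $|\Out(S)|$ grows like $mf$ while Landazuri--Seitz style lower bounds force $d$ to be large in terms of $m$; the arithmetic will show that $m \geq 7$ always yields $|\Out(G)| < n^{1/2}\log n$, whereas $m \leq 6$---precisely the class $\mathcal{C}$---can fail this and only admits the weaker $2n^{3/4}$ estimate.

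The main obstacle will be this small-dimensional classical analysis: constants have to be tracked carefully across the ranges of $p$, $f$, and $d$, and the infinite family of Theorem~\ref{main:1}(8), which is the genuine obstruction to the $2n^{3/4}$ bound, must be explicitly excluded. Verifying the $2n^{3/4}$ bound for the surviving cases with $m \in \{2,3,4,5,6\}$ will require a case-by-case argument that also calls on the detailed knowledge of maximal subgroups of the classical groups in low dimensions. The threshold $n \geq 2^{14000}$ is chosen large enough to dominate the finite list of small-$n$ exceptions that pop up throughout these computations, in all cases of the analysis.
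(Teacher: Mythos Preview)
Your overall structure (O'Nan--Scott reduction, then concentrate on the affine case) matches the paper's, but there is a genuine gap in the affine analysis. You write that, up to field automorphisms, $|\Out(G)|$ is bounded by $|N_{\mathrm{GL}(V)}(H)/H|$. This is false in general: for $G = V \rtimes H$ with $H$ irreducible on $V$, the paper's Lemma~\ref{cohomology} gives $|\Aut(G):N_{\mathrm{S}_n}(G)| = |H^{1}(H,V)|$, so that
\[
|\Out(G)| \;=\; |N_{\mathrm{S}_n}(G)/G|\cdot |H^{1}(H,V)|.
\]
The first factor is indeed controlled by $N_{\mathrm{GL}(V)}(H)/H$, and your Aschbacher-style bound on that part would be fine (the paper instead uses the Fitting-type structure theory of primitive linear groups developed in Sections~\ref{Section 9}--\ref{Section 10}). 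But the cohomological factor is not a ``field automorphism'' term and cannot be absorbed into $d\log p$; it can be as large as $|V|^{1/2}$.

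This missing factor is precisely what drives both exceptional phenomena in the statement. The infinite family of Theorem~\ref{main:1}(8) has $H=\mathrm{L}_{2}(q)$ acting on its natural $2$-dimensional module with $|H^{1}(H,V)|=q$, which is what pushes $|\Out(G)|$ above $n$; you will not detect this obstruction from $N_{\mathrm{GL}(V)}(H)/H$ alone. Likewise, the reason the class $\mathcal{C}$ only admits $2n^{3/4}$ rather than $n^{1/2}\log n$ is that for small-rank classical $L$ one has $\dim H^{1}(L,W)$ close to $\tfrac{1}{2}\dim W$ (the Guralnick--Hoffman bound in \cite{GH} is sharp there), whereas for alternating, sporadic, exceptional, and large-rank classical $L$ the results of \cite{GH}, \cite{GK}, and \cite{H} give $\dim H^{1}(L,W)\le \tfrac{1}{3}\dim W$, which recovers $n^{1/2}\log n$. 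Without invoking these cohomology bounds your argument cannot distinguish $\mathcal{C}$ from the rest, nor explain where the exponent $3/4$ comes from.
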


As mentioned earlier, the bound $n^{1/2} \log n$ in Theorem \ref{main:1.5} is asymptotically sharp apart from a constant factor close to $1$.  

The proof of Theorem \ref{main:0} requires a careful analysis of the abelian and the nonabelian composition factors of $A/G$ where $A$ and $G$ are finite groups. For this purpose for a finite group $X$ we denote the product of the orders of the abelian and the nonabelian composition factors of a composition series for $X$ by $a(X)$ and $b(X)$ respectively. Clearly $|X| = a(X) b(X)$. 

The next result deals with $b(A/G)$ in the general case when $G$ is transitive and in the more special situation when $G$ is primitive.  

\begin{theorem}
\label{main:2}
Let $A$ and $G$ be permutation groups with $G \vartriangleleft A \le \mathrm{S}_n$. If $G$ is transitive,
then $b(A/G) \leq n^{\log n}$. If $G$ is primitive, then $b(A/G) \leq {(\log n)}^{2 \log \log n}$.
\end{theorem}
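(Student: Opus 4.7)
The plan is to prove the primitive statement first and deduce the transitive statement from it by induction on the degree $n$. The multiplicativity of $b$ in short exact sequences, $b(X) = b(N)\,b(X/N)$ for $N \vartriangleleft X$, is used throughout.

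\emph{Primitive statement: $b(A/G) \leq (\log n)^{2\log\log n}$.} I would split via the O'Nan--Scott theorem into the affine, almost simple, diagonal, product action, and twisted wreath types, repeatedly invoking Schreier's conjecture (a consequence of CFSG) that $\Out(T)$ is solvable for every finite nonabelian simple $T$, so $b(\Out(T))=1$. In the almost simple case with socle $T$, the inclusion $A \leq N_{\mathrm{S}_n}(T)$ yields $A/T \hookrightarrow \Out(T)$, making $A/G$ a section of $\Out(T)$ and forcing $b(A/G)=1$. In the diagonal, product action, and twisted wreath cases the socle is $T^k$ and $A/T^k \leq \Aut(T)^k \rtimes \mathrm{S}_k$; after killing the solvable $\Out(T)^k$ part, what remains is essentially $N_{\mathrm{S}_k}(P)/P$ for some transitive (primitive in the diagonal case) $P \leq \mathrm{S}_k$. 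Since $k \leq \log_{60} n$, so $\log k \leq \log\log n$, the primitive bound applied inductively to this degree-$k$ subproblem, together with induction on $n$ for any inner primitive component of a wreath structure, handles these cases comfortably. In the affine case, $G = V \rtimes H$ with $V = \mathbb{F}_p^d$ and $n = p^d$, so $A/G \hookrightarrow N_{\mathrm{GL}(V)}(H)/H$, and I would walk through the Aschbacher classes of $H$: each yields either an almost simple section (killed by Schreier) or a tensor/wreath permutation factor on $O(\log\log n)$ components, bounded inductively.

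\emph{Transitive statement: $b(A/G) \leq n^{\log n}$.} Induct on $n$. If $G$ is primitive, the primitive bound suffices with enormous room. Otherwise fix a nontrivial block system $\mathcal{B}$ of $G$; since $A$ permutes the $G$-block systems by conjugation, passing to the $A$-stabilizer of $\mathcal{B}$ introduces a factor at most the number of such block systems, which is absorbed into the final bound, and I may assume $A \leq \mathrm{Sym}(k) \wr \mathrm{Sym}(m)$ with $m = |\mathcal{B}|$ and $k = n/m$. Setting $L = \ker(A \to \mathrm{Sym}(\mathcal{B}))$ and $K = L \cap G$,
\[
b(A/G) = b(L/K) \cdot b\bigl((A/L)/(G/K)\bigr).
\]
The top factor is a normalizer quotient of the transitive group $G/K$ on $\mathcal{B}$ of degree $m < n$, bounded inductively by $m^{\log m}$. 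For the base, all projections of $L$ into $\mathrm{Sym}(B)$ are conjugate in $A$ to a single $L_0 \leq \mathrm{Sym}(k)$ with $K_0 \vartriangleleft L_0$ transitive, so $b(L_0/K_0) \leq k^{\log k}$ by induction; a subdirect-product estimate combined with $mk = n$ then completes the bound.

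\emph{Main obstacle.} The technical heart lies in the affine subcase of the primitive statement, where $N_{\mathrm{GL}(V)}(H)/H$ can accumulate nonabelian composition factors from tensor product ($\mathcal{C}_4$) and tensor-induced ($\mathcal{C}_7$) Aschbacher decompositions in which several classical factors are permuted by a symmetric group. Verifying that, after Schreier and the inductive reduction of these permutations on tensor slots, only $O((\log\log n)^2)$ nonabelian composition-factor mass survives is exactly what gives the bound its delicate $(\log n)^{2\log\log n}$ shape; this bookkeeping, and the parallel analysis in the inner wreath factor of the product action case, are where the real work of the proof lies.
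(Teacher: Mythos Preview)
Your outline has two genuine gaps.

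First, in the affine case you misidentify the source of the large nonabelian composition factors in $N_{\mathrm{GL}(V)}(H)/H$. You say each Aschbacher class yields ``either an almost simple section (killed by Schreier) or a tensor/wreath permutation factor on $O(\log\log n)$ components''; this omits the $\mathcal{C}_6$ case, where $H$ normalizes a symplectic-type $r$-group $R$ with $|R/Z(R)|=r^{2a}$. Here $N_{\mathrm{GL}(V)}(R)/RZ \cong \spl{2a}{r}$ or $\ort{2a}{\pm}{2}$, contributing a genuine nonabelian simple factor $\psp{2a}{r}$ or $\sort{2a}{\pm}{2}$ to $b(A/G)$ that is neither killed by Schreier nor a permutation-on-components term. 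Example~\ref{example} shows these give $b(A/G)$ of order roughly $(\log n)^{2\log\log n}$, so they are the extremal case; the bound in Theorem~\ref{primitive-linear} comes precisely from estimating $|\spl{2a}{r}|$ against $r^a \mid d$. Your closing paragraph locating the ``technical heart'' in $\mathcal{C}_4/\mathcal{C}_7$ is therefore pointed at the wrong class.

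Second, your ``subdirect-product estimate'' in the transitive reduction is doing all the work and is not a triviality. A naive bound gives $b(L/K)\le \prod_i b(L_i/K_i)$, i.e.\ $k^{m\log k}$, far too large. What is actually needed is $b(L/K)\le b(L_1/K_1)$, a single factor, and this is the content of the paper's Theorem~\ref{diag1}, which rests on Lemma~\ref{abelian-diagonal} (the normalizer of a full-projection subdirect subgroup is solvable modulo it). There is also a circularity in your order of proof: in the product-action primitive case the image $P\le \mathrm{S}_k$ is transitive, not primitive, so you need the transitive bound at degree $k$ to continue; proving primitive before transitive therefore does not work unless the two inductions are run simultaneously. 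The paper resolves this by proving the transitive bound first (Theorem~\ref{transitive}), self-contained via Theorem~\ref{diag1}, and then feeding it into both the imprimitive step of the linear argument (Theorem~\ref{linear}) and the nonaffine primitive cases (Theorem~\ref{primitive}). Finally, your concern about $A$ permuting $G$-block systems is unnecessary: one simply cases on whether $A$ is primitive, and if not takes an $A$-invariant partition, on which $G$ is automatically transitive.
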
   

In order to give a sharp bound for $a(A/G)$ when $G$ is a primitive permutation group, interestingly, it is first necessary to bound $a(A)$ (for $A$ primitive). In 1982 P\'alfy \cite{palfy} and Wolf \cite{wolf} independently showed that $|A| \leq 24^{-1/3} n^{1 + c_{1}}$ for a solvable primitive permutation group $A$ of degree $n$ where $c_{1}$ is the constant $\log_{9}(48 \cdot 24^{1/3})$ which is close to $2.24399$. Equality occurs infinitely often. In fact $a(A) \leq 24^{-1/3} n^{1 + c_{1}}$ holds \cite{pyber} for any primitive permutation group $A$ of degree $n$. Using the classification theorem of finite simple groups we extend these results to the following, where for a finite group $X$ and a prime $p$ we denote the product of the orders of the $p$-solvable composition factors of $X$ by $a_{p}(X)$. 

\begin{theorem}
\label{main:3}
Let $G \leq \mathrm{S}_{n}$ be primitive, let $p$ be a prime divisor of $n$ and let $c_{1}$ be as before. Then $a_{p}(G) |\Out(G)| \leq 24^{-1/3} n^{1 + c_{1}}$.
\end{theorem}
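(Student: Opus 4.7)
The plan is to adapt Pyber's proof \cite{pyber} that $a(G) \leq 24^{-1/3}\,n^{1+c_1}$ for every primitive permutation group $G$ of degree $n$. Two strengthenings are needed: to replace $a(G)$ by the larger $a_p(G)$ (which also counts the nonabelian composition factors of $p'$-order) and to insert the factor $|\Out(G)|$. I argue by O'Nan--Scott type, with the affine case being decisive. For almost simple $G$ with socle $S$, the direct estimate $a_p(G)\cdot|\Out(G)| \leq a_p(S)\cdot|\Out(S)|^{2}$, combined with the classification bounds on $|\Out(S)|$ and the Landazuri--Seitz-type lower bounds on the minimal faithful permutation degree, comfortably gives $a_p(G)\cdot|\Out(G)|\ll n^{1+c_1}$; the subcase $p\nmid |S|$ (where $a_p(S)=|S|$) is restricted to very specific primes and handled separately. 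The diagonal, product action and twisted wreath types reduce to the affine and almost simple cases via standard O'Nan--Scott arguments and an induction on degree.

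For the affine case, write $G = V \rtimes H$ with $V = \FF_p^d$, $n = p^d$, and $H \leq \mathrm{GL}_d(p)$ irreducible. The composition factors of $V$ are all copies of $C_p$, hence $p$-solvable, so $a_p(G) = n\cdot a_p(H)$ and it suffices to prove
\[
a_p(H)\cdot|\Out(G)| \leq 24^{-1/3}\,n^{c_1}.
\]
Set $\tilde H := N_{\mathrm{GL}_d(p)}(H)$ and $\tilde G := V \rtimes \tilde H$. Since $\tilde H \supseteq H$ still acts irreducibly on $V$, the group $\tilde G$ is a primitive subgroup of $\mathrm{AGL}_d(p)$ of degree $n$, and Pyber's theorem applied to $\tilde G$ yields $a(\tilde G) = n\cdot a(\tilde H) \leq 24^{-1/3}\,n^{1+c_1}$, hence $a(\tilde H) \leq 24^{-1/3}\,n^{c_1}$. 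The standard description of $\Aut$ of a split extension with elementary abelian kernel gives $|\Out(G)| = |H^{1}(H,V)|\cdot|\tilde H/H|$ (modulo corrections when $V$ has several classes of complements in $G$, which are absorbed by a slight enlargement of $\tilde G$), with $|H^{1}(H,V)|$ controlled by the standard cohomology bounds for irreducible modules of finite groups. Hence the task reduces to comparing $a_p(H)\cdot|\tilde H/H|\cdot|H^{1}(H,V)|$ with $a(\tilde H)=a(H)\cdot a(\tilde H/H)$.

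The principal obstacle is that $a_p(H)$ may strictly exceed $a(H)$ (via nonabelian $p'$-composition factors of $H$), and $|\tilde H/H|$ may strictly exceed $a(\tilde H/H)$ (via nonabelian composition factors of the outer part), so that a direct comparison with $a(\tilde H)$ is insufficient. My plan is to retrace the tensor and imprimitivity induction at the heart of the P\'alfy--Wolf--Pyber argument, applied to $\tilde H$ acting on $V$. In this induction each layer of the tensor or imprimitive decomposition of $V$ contributes a factor subject to its own P\'alfy--Wolf-style inequality; the slack in these inequalities is used to absorb both the nonabelian $p'$-composition factors of $H$ (each of which, by the cross-characteristic minimal degree bounds for nonabelian simple groups, is forced to consume a large block of $\dim V$) and the nonabelian composition factors of $\tilde H/H$ coming from permutations of equivalent tensor factors (bounded by symmetric groups of small rank). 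The hardest subcase is expected to be when $H$ is a near-central product of quasisimple groups in a single irreducible tensor representation, because there $a_p(H)$ and $|\Out(G)|$ may simultaneously approach their extremal values and the slack in the underlying Pálfy--Wolf bound is essentially tight.
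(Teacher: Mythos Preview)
Your overall architecture matches the paper's: reduce via O'Nan--Scott, with the affine case decisive, and in the affine case use the identity $|\Out(G)|=|H^1(H,V)|\cdot|N_{\mathrm{GL}(V)}(H):H|$ together with a linear-group inequality. Where your proposal falls short is precisely at the step you flag yourself. You observe that Pyber's bound $a(\tilde H)\le 24^{-1/3}n^{c_1}$ does not directly control $a_p(H)\cdot|\tilde H/H|\cdot|H^1(H,V)|$, because the two ``excess'' factors---the nonabelian $p'$-composition factors of $H$ and $b(\tilde H/H)$---are not counted by $a(\tilde H)$. Your proposed fix, ``retrace the P\'alfy--Wolf--Pyber induction and absorb the excess in the slack,'' is not a proof: there is no uniform slack in those inequalities (they are sharp infinitely often), so you would have to argue case by case that whenever the extra factors are large the P\'alfy--Wolf bound happens to be far from tight. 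That is exactly the hard work, and you have not indicated a mechanism.

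The paper does not attempt to push everything through $a(\tilde H)$. Instead it proves a \emph{mixed} inequality directly: for $G\vartriangleleft A\le\mathrm{GL}(V)$ with $G$ irreducible and $|V|=n$, one has $a_p(G)\,|A/G|\le 24^{-1/3}n^{c_1}$ (Theorem~\ref{mixedirreducible}). The induction for this runs over imprimitivity decompositions as you suggest, but the primitive base case is handled by two ingredients you do not name. First, the paper has already established (Sections~\ref{Section 9}--\ref{Section 11}, essentially the proof of Theorem~\ref{main:0}) that $|A/(G\cap Z(A)J)|<n$ apart from the listed exceptions, where $J$ is the product of the minimal noncentral normal subgroups of $A$; this replaces your appeal to ``slack'' by an explicit, hard-won bound on the outer part. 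Second, the nonabelian $p'$-composition factors of $J$ are controlled by Lemma~\ref{kgv}, which uses the existence of regular orbits under coprime action (the $k(GV)$ machinery) to get $b(J_0)<n$ with explicit small exceptions; your ``cross-characteristic minimal degree bounds'' are in the right direction but are not strong enough here---one really needs regular orbits. With these two inputs the primitive case of the mixed inequality goes through, and the imprimitive induction then uses Theorem~\ref{mixedtransitive} and Lemma~\ref{lll} (the latter resting on the base-size result of \cite{HM}). The affine case of Theorem~\ref{main:3} then follows via Lemma~\ref{cohomology}, with the subcase $H^1(H,V)\ne0$ requiring the structural Lemma~\ref{AGlemma} and the cohomology bounds of \cite{GH} and \cite{H} rather than a generic estimate. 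For the non-affine O'Nan--Scott types the paper simply invokes Theorem~\ref{main:1} ($|\Out(G)|<n$) and the proof of Corollary~\ref{c1}; your ``direct estimate'' for almost simple groups would work but is not how the paper organises it.
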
 

Wolf \cite{wolf} also showed that if $G$ is a finite nilpotent group acting faithfully and completely reducibly on a finite vector space $V$, then $|G| \leq {|V|}^{c_{2}}/2$ where $c_{2}$ is the constant $\log_{9} 32$ close to $1.57732$. In order to generalize this result we set $c(X)$ to be the product of the orders of the central chief factors in a chief series of a finite group $X$. In particular we have $c(X) = |X|$ for a nilpotent group $X$. The following theorem extends Wolf's result. 

\begin{theorem}
\label{main:3.5}
Let $G \leq \mathrm{S}_{n}$ be a primitive permutation group. Then $c(G) \leq n^{c_{2}}/2$ where $c_{2}$ is as above. 
\end{theorem}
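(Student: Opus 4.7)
The plan is to proceed via the O'Nan--Scott classification, reducing to the affine case and there to a Wolf-type inequality.

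For non-affine primitive $G \le \mathrm{S}_{n}$ (almost simple, diagonal, product, or twisted wreath type), the socle $\mathrm{Soc}(G)$ is a direct product of non-abelian simple groups and thus contributes only non-central chief factors. Hence $c(G) \le |G/\mathrm{Soc}(G)|$, which lies far below $n^{c_{2}}/2$ by the standard bounds on outer automorphism groups of simple groups together with the combinatorial structure of product, diagonal, and twisted wreath constructions. These cases are handled with ample slack.

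For the affine case, write $G = V \rtimes H$ with $V = \mathbb{F}_{p}^{d}$ the socle, $H = G_{0} \le \mathrm{GL}(V)$ faithful and irreducible, and $n = p^{d}$. The chief factor $V$ is central in $G$ iff $H = 1$; in that case irreducibility forces $n = p$ and $G = C_{p}$, so $c(G) = n$, and the inequality $n \le n^{c_{2}}/2$ is a direct verification for the primes where it holds, with isolated small primes handled by inspection. If $H \ne 1$, then $V$ is non-central, every chief factor of $G$ strictly above $V$ is a chief factor of $H \cong G/V$, and centrality in $G$ for such a factor coincides with centrality in $H$, because $V$ sits below the factor and so acts trivially on it. Consequently $c(G) = c(H)$, and the theorem reduces to the following Wolf-type inequality: \emph{if $H$ acts faithfully and completely reducibly on $V$, then $c(H) \le |V|^{c_{2}}/2$.}

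To establish the key inequality, the plan is to exhibit a nilpotent subgroup $N \le H$ with $|N| \ge c(H)$ that still acts faithfully on $V$; Wolf's theorem applied to $N$ then gives $c(H) \le |N| \le |V|^{c_{2}}/2$. A natural first candidate is the Fitting subgroup $F(H)$, which is normal in $H$ (so $V|_{F(H)}$ is completely reducible by Clifford) and whose order absorbs many of the central chief factors. However, $F(H)$ alone can be too small: for $H = S_{5}$ one has $c(H) = 2$ while $F(H) = 1$, the extra factor arising from the abelian central chief factor $C_{2} = S_{5}/A_{5}$ above the solvable radical. The remedy is to augment $F(H)$ with a nilpotent lift of the abelian central chief factors of $H/R(H)$, extracted from the outer-automorphism part of $\mathrm{Soc}(H/R(H))$ via the Schreier conjecture, and to verify that the resulting nilpotent subgroup $N$ is still faithful on $V$. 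Carrying out this construction is the main obstacle of the proof: central chief factors of $H$ can be distributed both inside $R(H)$ (as hypercentral contributions) and above it (as abelian quotient factors), and pooling both kinds into a single nilpotent subgroup that is still faithful on $V$ is the technical crux.
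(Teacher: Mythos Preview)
Your reduction is correct and matches the paper: the non-affine case is disposed of by noting that $\mathrm{Soc}(G)$ contributes no central chief factors and bounding the quotient, and in the affine case with $H\neq 1$ one has $c(G)=c(H)$, so the heart of the matter is the inequality $c(H)\le |V|^{c_2}/2$ for $H$ acting faithfully and irreducibly on $V$. That inequality is exactly Theorem~\ref{ttt} of the paper.

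Where your proposal diverges from the paper, and where it has a genuine gap, is the proof of that linear inequality. You propose to manufacture a nilpotent subgroup $N\le H$ with $|N|\ge c(H)$ and then invoke Wolf. Two obstacles are real, not merely technical:
\begin{itemize}
\item Wolf's bound $|N|\le |V|^{c_2}/2$ requires $N$ to act \emph{completely reducibly} on $V$. For a nilpotent $N$ this forces $O_p(N)=1$ and hence (since $N$ is the direct product of its Sylow subgroups) that $N$ is a $p'$-group. But $c(H)$ may well have a nontrivial $p$-part coming from central abelian chief factors above the solvable radical; your own example $H=S_5$ already shows that such contributions live outside $F(H)$, and for suitable $H$ they can be $p$-groups (take $H=L\wr C_p$ with $L$ a nonabelian simple $p'$-group, which has $O_p(H)=1$ yet a central chief factor $C_p$). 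So the very primes you need $N$ to avoid are among those you must account for in $c(H)$. You have given no argument that a $p'$-nilpotent subgroup of order at least $c(H)$ exists in general.
\item Even setting aside complete reducibility, the ``augment $F(H)$ by nilpotent lifts of the central factors of $H/R(H)$'' step is not a construction: the product of $F(H)$ with an arbitrary nilpotent transversal piece need not be nilpotent, and you have not shown how to choose lifts that commute with $F(H)$, nor that the pooled subgroup has order $\ge c(H)$. You yourself flag this as ``the technical crux'', and indeed it is the entire content of the theorem in the affine case.
\end{itemize}

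The paper proceeds completely differently and avoids the need for any such $N$. It proves $c(H)\le |V|^{c_2}/2$ by induction on $|V|$: when $H$ is reducible or imprimitive one factors $c(H)\le c(H/K)\,c(K)$ and uses the permutation bound $c(X)\le 2^{t-1}$ (Theorem~\ref{c.permutation}) together with Lemma~\ref{folosleg} to control the kernel, and when $H$ is primitive one uses the structural description of primitive linear groups (the layer $J$ of Theorem~\ref{primitive-linear2}) to bound $c(H)$ directly via $c(H)\le f(p^f-1)\cdot c(H_0/Z(H_0))$ and an inductive estimate on $c(H_0/Z(H_0))$. No single nilpotent witness is needed; the invariant $c(\cdot)$ is bounded piecewise along a suitable normal series.
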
    

Some technical, module theoretic results enable us to show that if $G \vartriangleleft A \leq \mathrm{S}_n$ are transitive permutation groups, then $a(A/G) \leq 6^{n/4}$ (see Theorem \ref{abelian-transitive}). In fact, we show that $a(A/G) \leq {4}^{n/\sqrt{\log n}}$ whenever $n \geq 2$ (see Theorem \ref{t1}). This together with Theorem \ref{main:2} give the following.

\begin{theorem}
\label{main:4}
We have $|A:G| \leq {4}^{n/\sqrt{\log n}} \cdot n^{\log n}$ whenever $G$ and $A$ are transitive permutation groups with $G \vartriangleleft A \leq \mathrm{S}_n$ and $n \geq 2$.
\end{theorem}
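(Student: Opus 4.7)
My plan is to combine Theorem \ref{main:2} with Theorem \ref{t1} (mentioned in the paragraph immediately preceding the statement) via the factorization $|A:G| = a(A/G) \cdot b(A/G)$. This identity is immediate from the definitions together with the Jordan--H\"older theorem: any composition series of $A/G$ partitions into abelian composition factors (contributing to $a$) and nonabelian composition factors (contributing to $b$), and the orders multiply to $|A:G|$. Once both factors are controlled, the bound in the theorem is just their product.

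For the nonabelian part, Theorem \ref{main:2} gives directly $b(A/G) \leq n^{\log n}$ under the transitivity hypothesis on $G$; this plugs in verbatim. For the abelian part, Theorem \ref{t1} provides $a(A/G) \leq 4^{n/\sqrt{\log n}}$ for $n \geq 2$. Multiplying,
\[
|A:G| \;=\; a(A/G)\cdot b(A/G) \;\leq\; 4^{n/\sqrt{\log n}} \cdot n^{\log n},
\]
as required.

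The main obstacle is not Theorem \ref{main:4} itself but the underlying Theorem \ref{t1} that supplies the $a(A/G)$ bound. I expect the proof of that theorem to proceed by O'Nan--Scott style reduction: pass to a minimal normal subgroup $M$ of $G$, and separately treat the affine case (where $A/G$ embeds into a quotient of $\mathrm{GL}(M)$ and the P\'alfy--Wolf--Pyber estimate in the strengthened form of Theorem \ref{main:3} controls the solvable composition factors) and the non-affine case (where $M$ is a direct product of isomorphic nonabelian simple groups and the abelian contribution to $\Out(M)$ is small). The unusual exponent $n/\sqrt{\log n}$, rather than a clean linear one, suggests the final estimate comes from optimizing a tradeoff: a system of blocks of size $b$ gives roughly $n/b$ primitive components, each contributing an abelian factor bounded polynomially in $b$, and balancing $(n/b)\log b$ against $\log(4)\cdot n/\sqrt{\log n}$ is what drives the exponent. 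The rougher bound $a(A/G) \leq 6^{n/4}$ of Theorem \ref{abelian-transitive} is presumably an intermediate reduction step that is then sharpened.

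Granting Theorems \ref{main:2} and \ref{t1}, the deduction of Theorem \ref{main:4} is the two-line multiplication above; no further case analysis or induction is required.
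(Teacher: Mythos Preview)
Your proof is correct and matches the paper's own argument exactly: the paper explicitly states that Theorems \ref{t1} and \ref{main:2} together imply Theorem \ref{main:4} via the factorization $|A:G| = a(A/G)\,b(A/G)$. (Your speculative sketch of how Theorem \ref{t1} might be proved is only roughly accurate---the paper uses block systems together with Lemma \ref{LMM} and Lemma \ref{nyuszi}, bootstrapping from the explicit bound of Theorem \ref{abelian-transitive}, rather than an O'Nan--Scott analysis of a minimal normal subgroup of $G$---but that is tangential to the deduction of Theorem \ref{main:4} itself.)
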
     

For an exponential bound in Theorem \ref{main:4} we can have $168^{(n-1)/7}$ (see Theorem \ref{168}). See \cite[Proposition 4.3]{pyber} for examples of transitive $p$-groups ($p$ a prime) showing that Theorem \ref{main:4} is essentially the best one could hope for apart from the constant $4$. It is also worth mentioning that a $c^{n/\sqrt{\log n}}$ type bound fails in case we relax the condition $G \vartriangleleft A$ to $G \vartriangleleft \vartriangleleft A$. Indeed, if $A$ is a Sylow $2$-subgroup of $\mathrm{S}_n$ for $n$ a power of $2$ and $G$ is a regular elementary abelian subgroup inside $A$, then $|A:G| = 2^{n}/2n$. The next result shows that an exponential bound in $n$ holds in general for the index of a transitive subnormal subgroup of a permutation group of degree $n$.     

\begin{theorem}
\label{main:5}
Let $G\vartriangleleft \vartriangleleft A \leq \mathrm{S}_n$. If $G$ is
transitive, then $|A:G| \leq 5^{n-1}$.
\end{theorem}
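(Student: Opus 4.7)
My plan is to argue by induction on $n$, driven by an imprimitive block decomposition of $G$'s action. Let $G \vartriangleleft \vartriangleleft A \leq \mathrm{S}_n$ with $G$ transitive; every term of a subnormal chain from $G$ to $A$ contains $G$ and so is itself transitive of degree $n$. I would handle the primitive case first: if $G$ is primitive, then any two $A$-conjugates of $G$ would be normal subgroups of a common envelope in the subnormal chain, both primitive of degree $n$; their intersection must be either trivial (ruled out by a centralizer/commuting-product analysis incompatible with a primitive action on $n$ points) or all of $G$. Hence $G \vartriangleleft A$, and Theorem \ref{main:0} gives $|A/G| < n \leq 5^{n-1}$ outside a finite list of explicit exceptions, each of which satisfies the bound by inspection.

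For imprimitive $G$, pick an $A$-invariant system $\mathcal{B}$ of blocks for $G$ of size $s$, with $r = n/s$ blocks (starting from a minimal $G$-block system and refining so that it is also $A$-invariant, using that $A$ normalizes $G$). Writing $K_G$ and $K_A$ for the respective kernels of the action on $\mathcal{B}$, the quotients $G/K_G \vartriangleleft \vartriangleleft A/K_A$ are transitive on $\mathcal{B}$ of degree $r < n$, so by induction $|A/K_A : G/K_G| \leq 5^{r-1}$. For the kernel contribution, restrict to one block $B \in \mathcal{B}$: the projection $K_G|_B$ is transitive on $B$ (since $G$ is transitive and $\mathcal{B}$ is a $G$-block system) and sits subnormally inside $K_A|_B \leq \mathrm{Sym}(B)$; by induction on $s < n$ one has $|K_A|_B : K_G|_B| \leq 5^{s-1}$. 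Because $A$ is transitive on $\mathcal{B}$, all block projections are $A$-conjugate, and the subdirect embedding $K_A/K_G \hookrightarrow \prod_B(K_A|_B / K_G|_B)$ is controlled by the single-block bound raised to the number of blocks, yielding $|K_A:K_G| \leq 5^{r(s-1)}$. Multiplication gives $|A:G| \leq 5^{r-1} \cdot 5^{r(s-1)} = 5^{rs-1} = 5^{n-1}$.

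The main obstacle will be the subdirect-product step: a priori, $K_A/K_G$ inside $\prod_B(K_A|_B / K_G|_B)$ could carry twisted diagonal contributions that inflate the index beyond the naive per-block product. The rescue should come from the wreath product embedding $A \leq \mathrm{Sym}(B) \wr (A/K_A)$ coupled with a Goursat-type decomposition: the $A$-conjugacy of the block projections forces the quotient $K_A/K_G$ to be a section of the full base group, from which the bound $5^{r(s-1)}$ follows. Base cases for small $n$ (say $n \leq 8$) would be verified directly against the database of transitive groups to seat the induction. The constant $5$ is tight enough to accommodate $|\mathrm{S}_4 : V_4| = 6 \leq 5^3 = 125$ but not so generous that the induction collapses in higher imprimitive towers; confirming this calibration via the small-case enumeration is an additional piece of bookkeeping required to close the argument.
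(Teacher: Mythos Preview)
Your induction scheme has several genuine gaps that cannot be repaired without a different idea.

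First, the parenthetical ``using that $A$ normalizes $G$'' is simply false: you only have $G \vartriangleleft\vartriangleleft A$, so you cannot refine a $G$-block system into an $A$-invariant one by conjugation. You should instead start from an $A$-invariant partition (any $A$-block system), which is then automatically $G$-invariant since $G \leq A$; this is what the paper does.

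More seriously, your inductive step on a single block fails. You assert that $K_G|_B$ is transitive on $B$, but this is false in general: if $G$ acts faithfully on the set of blocks (e.g.\ $G = S_3$ acting regularly on $6$ points with blocks the cosets of a non-normal subgroup of order $2$), then $K_G = 1$, which is certainly not transitive on a block of size $>1$. Without transitivity of $K_G|_B$, you cannot apply the inductive hypothesis to the pair $K_G|_B \vartriangleleft\vartriangleleft K_A|_B$. Relatedly, the claimed embedding $K_A/K_G \hookrightarrow \prod_B (K_A|_B / K_G|_B)$ is not valid: an element of $K_A$ whose restriction to each block agrees with \emph{some} element of $K_G$ need not itself lie in $K_G$, so the natural map has a kernel that can be strictly larger than $K_G$.

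The paper's proof avoids all of this by a completely different mechanism. It takes an $A$-invariant partition with blocks of minimal size $k$, so that the block action $H_1$ is \emph{primitive}. It then bounds $|K : G\cap K|$ not by induction on the block but crudely by $|K| \leq |H_1|^t$, invoking the Praeger--Saxl bound $|H_1| \leq 4^k \leq 5^{k-1}$ whenever $H_1$ does not contain $\mathrm{A}_k$. The delicate case is when $\mathrm{A}_k \leq H_1$: here one shows that $K'$ is a direct product of diagonal copies of $\mathrm{A}_k$ and is a minimal normal subgroup of every transitive group in the subnormal chain containing it; walking down the chain, either $K' \leq G$ (and then $|K:G\cap K| \leq |K:K'| \leq 2^t$) or some term of the chain centralizes $K'$, which is impossible since $K'$ contains a full $\mathrm{A}_k$ on one block and the centralizer of a transitive group is semiregular. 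This is the real use of subnormality in the argument, and it is entirely absent from your sketch. Your primitive-case reduction to Theorem~\ref{main:0} is also circular in spirit (the paper emphasizes that this theorem is proved without the classification), and the claim that primitivity forces $G \vartriangleleft A$ is not established by your two-line sketch.
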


The proof of Theorem \ref{main:5} avoids the use of the classification theorem for finite simple groups. Using the classification it is possible to replace the $5^{n-1}$ bound with $3^{n-1}$. It would be interesting to know whether $|A:G| \leq 2^{n}$ holds for transitive permutation groups $G$ and $A$ with $G\vartriangleleft \vartriangleleft A \leq \mathrm{S}_n$. 

We note that the paper contains sharp bounds for $|A:G|$, $b(A/G)$ and $a(A/G)$ in case $A$ is a primitive permutation group of degree $n$ and $G$ is a transitive normal subgroup of $A$. These are $n^{\log n}$ in the first two cases (see the proofs of Theorems \ref{main:8} and \ref{transitive}), and it is $24^{-1/3}n^{c_{1}}$ in the third case (see Corollary \ref{c1}). 

\subsection{Galois groups}
\label{Section 1.2}

As briefly mentioned earlier the above results are also motivated by questions in Galois theory. We need to introduce some definitions. 

Let $k \leq F \leq E$ be three fields such that $k$ is algebraically closed in $E$ and the field extension $E \mid F$ is finite and separable. Let the degree of the extension $E \mid F$ be $n$. Let $L$ be the Galois closure of $E \mid F$. Let $k'$ denote the algebraic closure of $k$ in $L$. Since $L \mid F$ is a Galois extension, so are $L \mid k'F$ and $L \mid E$ (where $k'F$ denotes the compositum over $k$ of the subfields $k'$ and $F$ of $L$). We set $A = \mathrm{Gal}(L \mid F)$, $G = \mathrm{Gal}(L \mid k'F)$, and $H = \mathrm{Gal}(L \mid E)$.

By the definition of $k'$, the Galois group $A$ leaves $k'$ invariant. Since $G$ is the kernel of this action, $G$ is normal in $A$ and $A/G \cong \mathrm{Gal}(k'F \mid F)$. Furthermore $A/G \cong \mathrm{Gal}(k' \mid k)$ (since $A/G$ embeds in $\mathrm{Gal}(k' \mid k)$ which embeds in $\mathrm{Gal}(k'F \mid F)$). 

Since $k'$ is linearly disjoint from both $E$ and $F$, viewed as extensions of $k$, we have that $n = |E : F| = |Ek' : Fk'|$. Since $G \cap H \cong \mathrm{Gal}(L \mid k'E)$, the index of $G \cap H$ in $G$ is $n$. Thus $G$ is transitive on the $A$-set $A/H$.   

We can take $E$ and $F$ to be function fields of $k$-varieties and the extension $E \mid F$ arising from a separable map between varieties. One of the original motivations for studying problems of this sort was the particular case when $E$ and $F$ are function fields of curves over $k$. 

We say that $E \mid F$ is indecomposable if it is a minimal field extension. This is equivalent to saying that $A$ acts primitively on $A/H$. We say that $E \mid F$ is geometrically indecomposable if $Ek' \mid Fk'$ is a minimal field extension, or equivalently $G$ is primitive on $A/H$ (this terminology comes from the case of considering covers of curves). 

We recall a theorem \cite{G} of the first author in this setting. If $E \mid F$ is indecomposable and $k$ is procyclic (its absolute Galois group is procyclic; eg., if $k$ is finite), then $|k':k| < n$. Another theorem of a similar nature had been obtained earlier by Aschbacher and the first author in \cite{AG2}. If $E \mid F$ is indecomposable and $K \mid F$ is an abelian extension (a Galois extension whose Galois group is abelian) with $K \subseteq L$, then $|K : F| \leq n$. 

We now recast two of our results using the notation and assumptions made above. Theorem \ref{main:4} implies the following.

\begin{theorem}
\label{main:6}
$|k':k| \leq {4}^{n/\sqrt{\log n}} \cdot n^{\log n}$ for $n \geq 2$. 
\end{theorem}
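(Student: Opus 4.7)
The plan is to deduce Theorem \ref{main:6} directly from Theorem \ref{main:4} by using the dictionary between fields and permutation groups set up in Section \ref{Section 1.2}. First I recall that, in the notation there, one has $A/G \cong \mathrm{Gal}(k' \mid k)$, so the quantity $|k':k|$ that must be bounded is precisely $|A:G|$. To invoke Theorem \ref{main:4} I need to realize $A$ as a transitive subgroup of $\mathrm{S}_n$ containing $G$ as a transitive normal subgroup.

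Next I would use the coset action of $A$ on $A/H$. This set has cardinality $n$, since $[A:H] = [L:F]/[L:E] = [E:F] = n$. The action is faithful: its kernel is $\bigcap_{a\in A} aHa^{-1}$, whose fixed field in $L$ is the compositum of the conjugate fields $a(E)$, and this compositum equals $L$ by the defining property of the Galois closure. Thus $A$ embeds in $\mathrm{S}_n$. The subgroup $G$ is normal in $A$ by construction, and is transitive on $A/H$ because $G\cap H = \mathrm{Gal}(L\mid k'E)$ has index $n$ in $G$; this is precisely the linear disjointness argument recorded in Section \ref{Section 1.2}.

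Finally, applying Theorem \ref{main:4} to the pair $G\vartriangleleft A\leq \mathrm{S}_n$ yields
\[
|k':k| \;=\; |A:G| \;\leq\; 4^{n/\sqrt{\log n}}\cdot n^{\log n}
\]
for every $n\geq 2$. There is no substantive obstacle: the deep content is contained in the permutation-group statement Theorem \ref{main:4}, and the only point needing a brief explicit justification is the faithfulness of the coset action of $A$ on $A/H$, which is the standard characterization of the Galois closure of $E\mid F$.
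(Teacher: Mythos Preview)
Your proposal is correct and follows exactly the route the paper takes: the paper simply states that Theorem~\ref{main:4} implies Theorem~\ref{main:6}, relying on the dictionary of Section~\ref{Section 1.2} where $|k':k|=|A:G|$ and $G\vartriangleleft A$ act transitively on $A/H$. Your only addition is making the faithfulness of the coset action explicit, which the paper leaves implicit; this is a welcome clarification but not a new idea.
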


Theorem \ref{main:0} can be stated in the following form. 

\begin{theorem}
\label{main:7}
Let $E \mid F$ be geometrically indecomposable. Then $|k':k| < n$ unless the pair $(n,\mathrm{Gal}(k' \mid k))$ is among the eleven exceptions in Theorem \ref{main:0}. Moreover if $\mathrm{Gal}(k' \mid k)$ is not a section of $\mathrm{\Gamma L}_{1}(q)$ when $q = n$ is a prime power, then $|k':k| < n^{1/2} \log n$ for $n \geq 2^{14000}$. 
\end{theorem}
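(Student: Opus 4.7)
The plan is to recognise Theorem~\ref{main:7} as essentially a translation of Theorem~\ref{main:0} through the Galois-permutation dictionary already laid out in Section~\ref{Section 1.2}. Nothing new needs to be proved about primitive permutation groups; all of the combinatorial content is absorbed into Theorem~\ref{main:0}. So the task is to check, carefully and in order, that every hypothesis of Theorem~\ref{main:0} is matched by a hypothesis of Theorem~\ref{main:7}, and that the conclusion transfers back cleanly.

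First I would set up the dictionary exactly as in Section~\ref{Section 1.2}: put $A=\mathrm{Gal}(L\mid F)$, $G=\mathrm{Gal}(L\mid k'F)$, $H=\mathrm{Gal}(L\mid E)$. The discussion there records that $G\vartriangleleft A$, that $A/G\cong\mathrm{Gal}(k'\mid k)$ (so in particular $|A/G|=|k':k|$), and that $G$ is transitive on the coset space $A/H$, which has size $n=|E:F|$. I would then verify that the action of $A$ on $A/H$ is \emph{faithful}: the kernel of this action is the intersection of the conjugates of $H$, which is $\mathrm{Gal}(L\mid \widetilde E)$ where $\widetilde E$ is the Galois closure of $E$ over $F$ inside $L$; by construction $\widetilde E=L$, so the kernel is trivial. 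Hence $A$ (and therefore $G$) can be identified with a subgroup of $\mathrm{S}_n$ in the sense required by Theorem~\ref{main:0}.

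Next I would match the hypothesis. By definition, $E\mid F$ is geometrically indecomposable precisely when $Ek'\mid Fk'$ is a minimal extension, i.e., when $G=\mathrm{Gal}(L\mid k'F)$ acts primitively on $A/H$. So assuming $E\mid F$ geometrically indecomposable, the pair $(G,A)\leq\mathrm{S}_n$ satisfies the hypotheses of Theorem~\ref{main:0}. Applying that theorem gives $|A/G|<n$ except when $(n,A/G)$ is one of the eleven listed pairs; translating back via $A/G\cong\mathrm{Gal}(k'\mid k)$ and $|A/G|=|k':k|$ yields the first assertion of Theorem~\ref{main:7}. For the moreover statement, the condition that $\mathrm{Gal}(k'\mid k)$ is not a section of $\mathrm{\Gamma L}_1(q)$ (when $n=q$ is a prime power) is exactly the condition that $A/G$ is not such a section, so the second half of Theorem~\ref{main:0} gives $|A/G|<n^{1/2}\log n$ for $n\ge 2^{14000}$, and hence $|k':k|<n^{1/2}\log n$ in the same range.

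There is no genuine obstacle here; the one thing that deserves explicit attention is the faithfulness check above, since Theorem~\ref{main:0} is phrased for subgroups of $\mathrm{S}_n$ rather than for abstract groups with a transitive action, and one might otherwise worry about a kernel between $A$ and its image on $A/H$. Once that is in place, the proof is a one-line appeal to Theorem~\ref{main:0} through the identifications recorded in Section~\ref{Section 1.2}.
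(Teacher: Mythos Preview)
Your proposal is correct and matches the paper's approach: the paper does not give a separate proof of Theorem~\ref{main:7} at all, simply introducing it with ``Theorem~\ref{main:0} can be stated in the following form'' after having set up the Galois--permutation dictionary in Section~\ref{Section 1.2}. Your explicit faithfulness check (that the kernel of the $A$-action on $A/H$ is trivial because $L$ is the Galois closure of $E\mid F$) is a welcome detail that the paper leaves implicit.
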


Finally we discuss the situation when the extension $E \mid F$ is indecomposable.

\begin{theorem}
\label{main:8}
If $E \mid F$ is indecomposable, then $|k':k| < n^{\log n}$.  
\end{theorem}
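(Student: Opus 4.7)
The plan is to translate via the dictionary of Section \ref{Section 1.2} and then bound $|A:G|$ by the order of a point stabilizer. Under the correspondence $A=\mathrm{Gal}(L\mid F)$, $G=\mathrm{Gal}(L\mid k'F)$, $H=\mathrm{Gal}(L\mid E)$, indecomposability of $E\mid F$ says exactly that $A\le\mathrm{S}_n$ is primitive on $A/H$, the subgroup $G$ is a transitive normal subgroup of $A$, and $|k':k|=|A:G|$. So the theorem becomes: \emph{if $A\le\mathrm{S}_n$ is primitive and $G\vartriangleleft A$ is transitive, then $|A:G|<n^{\log n}$}.

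Transitivity of $G$ implies $n\mid|G|$, hence
\[
|A:G|\ \le\ |A|/n\ =\ |A_\alpha|,
\]
the order of a point stabilizer. So it suffices to show $|A|<n^{1+\log n}$ for every primitive $A\le\mathrm{S}_n$. The case $\mathrm{A}_n\le A$ is immediate, since $G$ must then contain $\mathrm{A}_n$ and $|A:G|\le 2$ for $n\ge 5$ (with $n\le 4$ checked directly). Otherwise, Mar\'oti's theorem on orders of primitive permutation groups gives
\[
|A|\ \le\ n\prod_{i=0}^{\lfloor\log n\rfloor-1}(n-2^i)\ <\ n^{1+\log n},
\]
apart from a short explicit list of Mathieu and product-action examples which are verified directly; strictness comes from each factor in the product being strictly less than $n$. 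Dividing by $n$ yields $|A:G|<n^{\log n}$.

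The main obstacle is verifying strictness in the extremal case $n=2^d$, $A=\mathrm{AGL}_d(2)$ with $G$ the translation socle, where $|A:G|=|\mathrm{GL}_d(2)|=\prod_{i=0}^{d-1}(2^d-2^i)<2^{d^2}=n^{\log n}$ and the bound is asymptotically tight. The same result can alternatively be assembled from within the paper by writing $|A:G|=a(A/G)\,b(A/G)$ and combining the $G$-transitive case of Theorem \ref{main:2} (giving $b(A/G)\le n^{\log n}$) with Corollary \ref{c1} (giving $a(A/G)\le 24^{-1/3}n^{c_1}$) after an O'Nan--Scott case split, since a direct multiplication is too weak by a factor of $n^{c_1}$.
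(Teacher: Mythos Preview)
Your approach is essentially the paper's own proof: both translate to the group-theoretic statement, use $|A:G|\le |A|/n$ from transitivity of $G$, and invoke Mar\'oti's theorem for the generic bound $|A|<n^{1+\log n}$. The paper's proof reads almost verbatim: ``If $|A|<n^{1+\log n}$ or if $A$ is simple, the result follows. Otherwise, by \cite[Theorem 1.1]{maroti}\ldots''

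There is one genuine gap. You describe the Mathieu and product-action exceptions as ``a short explicit list\ldots verified directly,'' but the product-action family is \emph{infinite}: one group for each choice of $m\ge 5$, $1\le k\le m/2$, $r\ge 1$ with $n=\binom{m}{k}^r$. You handle only the subcase $r=k=1$ (namely $\mathrm{A}_n\le A$). For the rest, the order bound $|A|<n^{1+\log n}$ genuinely fails --- for instance $\mathrm{A}_m$ on $2$-subsets has $\log|A|\sim m\log m$ while $(1+\log n)\log n\sim 4(\log m)^2$. The Mathieu groups also need care: $|M_{12}|=95040>12^{1+\log 12}$, so they are not absorbed by the order bound either (the paper covers them by the clause ``or if $A$ is simple'').

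The fix is exactly what the paper does, and it is just your $\mathrm{A}_n\le A$ argument carried out uniformly: in the product-action case the socle $N=(\mathrm{A}_m)^r$ is the unique minimal normal subgroup of $A$, so the transitive normal subgroup $G$ contains $N$, and then
\[
|A:G|\le |A:N|\le 2^r\,r! < n^{\log n}
\]
since $n=\binom{m}{k}^r\ge 5^r$. Your final paragraph's alternative via $a(A/G)\cdot b(A/G)$ is, as you note, not sharp enough without further case analysis, so it is not a genuine second route.
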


\begin{proof}
If $|A| < n^{1 + \log n}$ or if $A$ is simple, the result follows. Otherwise, by \cite[Theorem 1.1]{maroti}, we have that $A$ is a subgroup of $\mathrm{S}_{m} \wr \mathrm{S}_r$ containing $N = {(\mathrm{A}_{m})}^{r}$, where the action of $\mathrm{S}_m$ is on $k$-element subsets of $\{ 1, \ldots , m \}$ and the wreath product has the product action of degree $n = {\binom{m}{k}}^{r}$. It can be shown that $N$ is the unique minimal normal subgroup of $A$ and so $|A/G| \leq |A/N| < n^{\log n}$. 
\end{proof}

It follows by \cite{GS} that Theorems \ref{main:6}, \ref{main:7}, \ref{main:8} are in fact equivalent to the corresponding group theoretic theorems. 




\subsection{Structure of the paper}   

In this subsection we provide an overview of the present paper. The paper is organized as follows. 

\begin{enumerate}

\item Let $G \vartriangleleft A$ be certain finite groups. In Section \ref{Section 2} a technical result (see Theorem \ref{diag1}) is proved to bound the product $b(A/G)$ of the sizes of the nonabelian composition factors of a composition series of the factor group $A/G$. In specific situations this is an important tool in dealing with nonabelian composition factors. 

\item Section \ref{Section 3} provides some examples of groups $G \vartriangleleft A \leq \mathrm{S}_{n}$ for which $b(A/G)$ is large. These examples demonstrate the sharpness of some of our main results. In particular, Example \ref{example} sheds more light on the eleven exceptions in Theorem \ref{main:0} and also on the seven exceptions in Theorem \ref{main:1}.  
  
\item Let $G \vartriangleleft A \leq \mathrm{GL}(V)$ be finite groups where $V$ is a finite vector space. Suppose that $V$ is an irreducible $G$-module. The purpose of Section \ref{Section 4} is to bound $b(A/G)$ in terms of $n = |V|$. Theorem \ref{linear} states that if $n \geq 3$, then $b(A/G) < (\log n)^{2\log\log n}$. 

\item Section \ref{Section 5} finishes the sole treatment of nonabelian composition factors and this is where the proof of Theorem \ref{main:2} is completed. This is done by proving Theorem \ref{transitive} and Theorem \ref{primitive}. Note that Theorem \ref{transitive} is also used in Section \ref{Section 4}. 

\item For a finite group $G$ and a prime $p$ let $a_{p}(G)$ be the product of the orders of the $p$-solvable composition factors in a composition series for $G$. The key observation (see Theorem \ref{subgroup}) in Section \ref{Section 6} is that for any finite group $G$ and any prime $p$ the invariant $a_{p}(G)$ is bounded from above by the size of a $p$-solvable subgroup of $G$. Section \ref{Section 6} initiates the study of abelian composition factors and establishes a part of Theorem \ref{main:3}. 

\item Section \ref{Section 7} introduces the basic tools for working with abelian composition factors. Lemma \ref{module} collects a few results on the closely related invariant $t_{G}(V)$ which, for a finite group $G$, is defined to be the smallest number $r$ such that every submodule of a $G$-module $V$ can be generated by $r$ elements. This result is used to show Theorem \ref{main:4}. Lemmas \ref{l2} and \ref{l3} are basic tools concerning the solvable group $\Out(S)$ where $S$ is a nonabelian finite simple group. 

\item We consider groups $G$ and $A$ with $G \vartriangleleft \vartriangleleft A \leq \mathrm{S}_{n}$. In Section \ref{Section 8} we prove Theorem \ref{main:5} stating that $|A:G| \leq 5^{n-1}$ provided that $G$ is transitive. However later a stronger bound with a stronger hypothesis is needed (see Theorem \ref{168}). 

\item In Section \ref{Section 9} an important structure theorem is given for primitive linear groups and this is applied for groups acting on a vector space of size larger than $3^{16}$. In particular Theorem \ref{t2} is proved stating that $a(A/G) < n$ where $G \vartriangleleft A$ are primitive permutation groups of degree $n \geq 3^{16}$. Here $a(A/G)$ is the product of the sizes of the abelian composition factors in a composition series for $A/G$.  

\item While Section \ref{Section 9} deals only with abelian composition factors ($a(A/G)$), Section \ref{Section 10} considers both abelian and nonabelian composition factors of finite groups ($a(A/G)$ and $b(A/G)$), still assuming that the number of elements in the vector space (or the permutation domain of an affine permutation group) is larger than $3^{16}$. 

\item In Section \ref{Section 11} we continue the previous investigations. However here the vector space (or the permutation domain) has size at most $3^{16}$. This is the section where the first half of the proof of Theorem \ref{main:0} is completed. Special care is required here because all the eleven exceptions in Theorem \ref{main:0} satisfy $n \leq 3^{16}$.  

\item In Section \ref{Section 12} we bound the size of the outer automorphism group of a primitive permutation group of degree $n$. It turns out that a bound of $n$ is a natural estimate (with few exceptions). Use of deep properties of the first cohomology group are required. In this section Theorem \ref{main:1} is proved which may be considered as an extension of the first half of Theorem \ref{main:0}. 

\item In Section \ref{Section 13} we continue the study of the invariant $a_{p}(G)$ for a finite group $G$ and a prime $p$. We use the methods and results we developed in earlier sections and we complete the proof of Theorem \ref{main:3}. We work with the so-called P\'alfy-Wolf constant. 

\item For a finite group $G$ let $c(G)$ denote the product of the orders of the central chief factors in a chief series for $G$. In Section \ref{Section 13.5} this invariant is considered when $G$ is a primitive permutation group. It again turns out that affine primitive groups $G$ are of central importance. The aim of the section is to prove Theorem \ref{main:3.5}. 

\item Finally, in Section \ref{Section 14} we use slightly different methods to prove asymptotic statements holding for very large values of $n$. This is where the missing parts of the statements of the Introduction are established. These are the second half of Theorem \ref{main:0} and Theorem \ref{main:1.5}. 

\end{enumerate}

\section{Basic results on nonabelian composition factors}
\label{Section 2}

If $G$ is a finite group, define $b(G)$ to be the product of the orders of
all the nonabelian simple composition factors of $G$ in a composition series
for $G$. Two trivial observations that we shall use without comment are:

1. if $G$ is normal in $A$, then $b(A) = b(A/G)b(G)$; and

2. if $A \le B$, then $b(A) \le b(B)$ (choose a normal series for $B$ and
intersect $A$ with this series -- abelian quotients stay abelian and the
nonabelian quotients can only get smaller).

The first lemma of the paper is not used in later parts of the work, nevertheless it is worth mentioning. 

\begin{lemma}
Let $X_{1}$ and $X_{2}$ be two finite groups, $A \leq X_{1} \times
X_{2}$, and $G \lhd A$. For $i = 1, 2$ let $\pi_{i}$ denote the
projection into $X_{i}$. (We consider $\pi_{i}(A)$ and
$\pi_{i}(G)$ as subgroups of $X_{i}$.) Then $$b(A/G) \leq
b(\pi_{1}(A)/\pi_{1}(G)) b(\pi_{2}(A)/\pi_{2}(G)).$$
\end{lemma}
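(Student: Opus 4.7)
The plan is to sandwich an intermediate normal subgroup between $G$ and $A$ and bound $b(A/G) = b(A/H) \cdot b(H/G)$ piece by piece. Take $H = (\pi_{1}(G) \times \pi_{2}(G)) \cap A$. Since $\pi_{i}(G) \lhd \pi_{i}(A)$, the product $\pi_{1}(G) \times \pi_{2}(G)$ is normal in $\pi_{1}(A) \times \pi_{2}(A) \supseteq A$, so $H \lhd A$, and evidently $G \leq H$.

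The factor $b(A/H)$ is handled by the obvious homomorphism: composing the inclusion $A \hookrightarrow \pi_{1}(A) \times \pi_{2}(A)$ with the quotient map onto $\pi_{1}(A)/\pi_{1}(G) \times \pi_{2}(A)/\pi_{2}(G)$ yields a map with kernel exactly $H$, so $A/H$ embeds into this product. Hence $b(A/H) \leq b(\pi_{1}(A)/\pi_{1}(G)) \cdot b(\pi_{2}(A)/\pi_{2}(G))$.

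The heart of the argument is to show $b(H/G) = 1$, which I will do by proving $H/G$ is abelian. Set $K_{2} = \{y \in X_{2} : (1, y) \in A\}$ and $E_{2} = \{y \in X_{2} : (1, y) \in G\}$. Any $(x_{1}, x_{2}) \in H$ has $x_{1} \in \pi_{1}(G)$, hence can be written as $(x_{1}, y)(1, y^{-1} x_{2})$ with $(x_{1}, y) \in G$; the second factor lies in $A$, forcing $y^{-1} x_{2} \in K_{2}$, and it is automatically in $\pi_{2}(G)$. So every coset of $G$ in $H$ has a representative $(1, z)$ with $z \in K_{2} \cap \pi_{2}(G)$, and a short check gives $H/G \cong (K_{2} \cap \pi_{2}(G))/E_{2}$. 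The normality $G \lhd A$ now enters via the commutator identity $[(1, k), (a, b)] = (1, [k, b]) \in G$ for $k \in K_{2}$ and $(a, b) \in G$, which forces $[K_{2}, \pi_{2}(G)] \leq E_{2}$. Thus $K_{2}$ centralizes $\pi_{2}(G)/E_{2}$, placing $(K_{2} \cap \pi_{2}(G))/E_{2}$ inside $Z(\pi_{2}(G)/E_{2})$, and $H/G$ is abelian. The only delicate point is this centralizer observation; without $G \lhd A$ the quotient $H/G$ can carry arbitrary composition factors, so the normality hypothesis is truly essential.
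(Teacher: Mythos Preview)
Your proof is correct and rests on the same key observation as the paper's: for $k\in K_2=\pi_2(\ker\pi_1|_A)$ and $(a,b)\in G$ one has $[(1,k),(a,b)]=(1,[k,b])\in G$, forcing $[K_2,\pi_2(G)]\le E_2$. The paper organizes the argument with the intermediate subgroup $GK$ (where $K=\ker\pi_1|_A$), obtaining $b(A/GK)=b(\pi_1(A)/\pi_1(G))$ exactly and $b(GK/G)\le b(\pi_2(A)/\pi_2(G))$; your choice $H=(\pi_1(G)\times\pi_2(G))\cap A$ sits inside $GK$ and pushes the abelian piece entirely into $H/G$, but the substance is the same.
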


\begin{proof}
Let $K$ denote the kernel of $\pi_{1}$ on $A$. Notice that if $x
\in \pi_{2}(K)$ and $y \in \pi_{2}(G)$ then $[x,y] \in \pi_{2}(G
\cap K)$. Hence $b((\pi_{2}(K) \cap \pi_{2}(G))/\pi_{2}(K \cap G))
= 1$. From this we get
$$b(K/(K \cap G)) = b(\pi_{2}(K)/(\pi_{2}(K \cap G))) = b(\pi_{2}(K)/(\pi_{2}(K) \cap \pi_{2}(G)))
=$$ $$= b(\pi_{2}(K)\pi_{2}(G)/\pi_{2}(G)) \leq
b(\pi_{2}(A)/\pi_{2}(G)).$$ Since $b(A/G) = b(A/GK) b(GK/G) =
b(\pi_{1}(A)/\pi_{1}(G)) b(K/(K \cap G))$, the result follows.
\end{proof}




The next lemma is needed for a technical result (see Theorem \ref{diag1}) for dealing with nonabelian composition factors. 

\begin{lemma}
\label{abelian-diagonal} Let $J \le Y:=X_1\times \cdots \times X_t$ and
assume that $\pi_i(J)=X_i$ for all $i$ (where $\pi_i$ is the projection onto the $i$th
factor). Then $N_Y(J)/J$ is solvable.
\end{lemma}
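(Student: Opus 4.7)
The plan is to proceed by induction on $t$. The base case $t=1$ is trivial since then $J = X_1 = Y$ and $N_Y(J)/J$ is the trivial group. The heart of the argument is the case $t=2$, where I would show the stronger conclusion that $N_Y(J)/J$ is in fact abelian.

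For $t=2$: I would invoke Goursat's lemma to describe $J$ as the graph of an isomorphism, namely there exist $N_i \lhd X_i$ and an isomorphism $\phi\colon X_1/N_1 \to X_2/N_2$ with $J = \{(x_1,x_2) : \phi(x_1 N_1) = x_2 N_2\}$. Conjugating the elements $(n_1, 1) \in J$ with $n_1 \in N_1$ shows immediately that any $(y_1, y_2) \in N_Y(J)$ must normalize $N_i$, and a direct computation then shows that the full normalizing condition amounts to the inner automorphisms induced by $y_1 N_1$ and $y_2 N_2$ on $X_i/N_i$ corresponding under $\phi$. Writing $Q_i = X_i/N_i$, this is equivalent to $(y_2 N_2)\phi(y_1 N_1)^{-1} \in Z(Q_2)$. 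I would then show that the map $\rho\colon N_Y(J) \to Z(Q_2)$ sending $(y_1, y_2) \mapsto (y_2 N_2)\phi(y_1 N_1)^{-1}$ is a surjective homomorphism with kernel exactly $J$, giving $N_Y(J)/J \cong Z(Q_2)$.

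For the inductive step from $t-1$ to $t$: Let $Y_2 = X_2 \times \cdots \times X_t$ and let $J_2$ be the image of $J$ under the projection $Y \to Y_2$. Then $J_2$ is subdirect in $Y_2$, so the inductive hypothesis gives that $N_{Y_2}(J_2)/J_2$ is solvable; and $J$ is subdirect in $X_1 \times J_2$, so the $t=2$ case gives that $N_{X_1 \times J_2}(J)/J$ is abelian. Since any element of $N_Y(J)$ must project into $N_{Y_2}(J_2)$, we have $N_Y(J) \le X_1 \times N_{Y_2}(J_2)$. Setting $M = N_Y(J) \cap (X_1 \times J_2) = N_{X_1 \times J_2}(J)$, the quotient $N_Y(J)/M$ embeds in $(X_1 \times N_{Y_2}(J_2))/(X_1 \times J_2) \cong N_{Y_2}(J_2)/J_2$, which is solvable by induction, while $M/J$ is abelian by the $t=2$ case. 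Hence $N_Y(J)/J$ is solvable.

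The main obstacle is the $t=2$ step, particularly verifying that $\rho$ is a homomorphism. This hinges on a subtle point: when expanding $\rho$ of a product, one must commute $(y_2 N_2)\phi(y_1 N_1)^{-1}$ past $\phi(y_1' N_1)^{-1}$, which is permissible precisely because the former lies in $Z(Q_2)$. Once $t=2$ is secured, the induction is clean bookkeeping with subdirect decompositions.
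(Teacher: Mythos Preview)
Your proof is correct. Both you and the paper induct on $t$ with the $t=2$ case producing an abelian quotient, but the execution differs. The paper first quotients out $R=\prod_i (J\cap X_i)$ to reduce to the situation where $J$ meets each factor trivially; then for $t=2$ the group $J$ is literally a diagonal and one reads off $N_Y(J)=J(Z\times Z)$ with $Z=Z(J)$. For $t>2$ the paper takes $M$ to be the last term of the derived series of $N=N_Y(J)$ and, projecting onto the product of all factors but the $i$th, uses induction to get $\pi_i'(M)\le \pi_i'(J)$, hence $M\le J(N\cap X_i)$; since $R=1$ one has $[N\cap X_i,J]\le X_i\cap J=1$, whence $M=[M,M]\le J$. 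Your approach instead keeps the full Goursat description at $t=2$, identifying $N_Y(J)/J$ with the centre of the common quotient $X_2/N_2$, and your inductive step (peel off one factor, apply the $t=2$ case to $X_1\times J_2$, and embed the remaining quotient in $N_{Y_2}(J_2)/J_2$) is a clean extension argument rather than a commutator computation. Both arguments yield the same derived-length bound $t-1$ for $N_Y(J)/J$; yours avoids the preliminary reduction to $R=1$, while the paper's avoids explicitly invoking Goursat.
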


\begin{proof}
Set $N=N_Y(J)$. Let $M$ be the final term in the derived series of $N$. Let $%
B_i=\ker \pi_i^{\prime}\cap J$ where $\pi_i^{\prime}$ is the projection of $%
Y $ onto the direct product of all but the $i$th term (so $B_i=J \cap X_i$).

Set $R=B_1 \times \cdots \times B_t$. Note that $R
\lhd J$ and that $R$ is also normal in $Y$ since $\pi_{i}(J) =
X_{i}$ for all $i$, whence we may pass to $Y/R$. If we prove the
result in this case, then $MR/R \leq J/R$, and so $M \leq J$.
Hence we may assume from now on that $R=1$. If we prove the
result in this case, then $MR/R \le J/R$, whence $M \le J$.

We induct on $t$. If $t=1$, the result is clear.

Suppose that $t=2$. Since $R=1$, we may identify $J$ as a diagonal subgroup
of $X_{1} \times X_{2}$ and the normalizer $N$ is $J(Z \times Z)$ where $Z=Z(J)$, whence
the result.

So now assume that $t > 2$. By induction, we have $\pi_i^{\prime}(M) \le
\pi_i^{\prime}(J)$, whence $M \le J(N \cap X_i)$. Note that $[N \cap
X_i,X_i]=[N \cap X_i,J] \le X_i \cap J=1$. Thus, $M=[M,M] \le [J(N \cap
X_i),J(N \cap X_i)] \le J$ as claimed.
\end{proof}

Note that the proof shows that the derived length of $N_{Y}(J)/J$ is at
most $t-1$.

We now come to one of our major tools in studying $b(A/G)$.

\begin{theorem}
\label{diag1} Assume that $G \vartriangleleft A \le B = X \wr \mathrm{S}_t = (X_1 \times
\cdots \times X_t).\mathrm{S}_t=Y.\mathrm{S}_t$ and that $G$ acts transitively on $\{X_1, \ldots,
X_t\}$ by conjugation.
Assume that the projection of $N_A(X_i)$ into $X_i$ is $X_i$
(note that $N_A(X_i) \le X_i \times X_i^{\prime}$ for an obvious
choice of $X_i^{\prime}$).
Let $N_i=N_G(X_i)$ and set $M_i$ to be the projection of $N_i$ into $X_i$. Let $K$ be the subgroup of $A$ normalizing each $X_i$. Then
\begin{equation*}
b(A/G) \le b(A/GK)b(X_1/M_1).
\end{equation*}
\end{theorem}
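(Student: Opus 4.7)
My plan is to reduce the theorem to showing $b(K/Z) \le b(X_1/M_1)$, where $Z = K \cap G$; this follows from $b(A/G) = b(A/GK)\cdot b(GK/G)$ and $GK/G \cong K/Z$. Two preliminary facts will be essential: (i) since $K, G \vartriangleleft A$ with $K \cap G = Z$, the commutator $[K,G]$ lies in $Z$, so $G$ acts trivially by conjugation on $K/Z$; (ii) the hypothesis $\pi_i(N_A(X_i)) = X_i$ combined with $N_i \vartriangleleft N_A(X_i)$ implies, after projecting the relation $a N_i a^{-1} = N_i$, that $M_i \vartriangleleft X_i$. Hence $M := \prod_i M_i$ is a normal subgroup of $Y$ that is $A$-invariant.

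The heart of the argument is a diagonal-fixed-point analysis in the quotient $Y/M \cong \prod_i X_i/M_i$. The image $\tilde K := KM/M$ is pointwise fixed by the conjugation action of $G$, because $[K,G] \le Z \le M$. As $G$ permutes the factors $X_i/M_i$ transitively and each $M_i \vartriangleleft X_i$, any $G$-fixed tuple $(x_1 M_1, \ldots, x_t M_t)$ with $x_1 \in M_1$ satisfies $x_i \in M_i$ for every $i$: choosing $g = y\sigma \in G$ with $\sigma(1) = i$, the $i$-th coordinate of $g x g^{-1}$ equals $y_i x_1 y_i^{-1}$, and the fixed-point relation modulo $M_i$ (together with normality of $M_i$ in $X_i$) forces $x_i \in M_i$. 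Thus the first projection $\pi_1$ injects the $G$-fixed subgroup into $X_1/M_1$, and consequently $K/(K \cap M) \hookrightarrow X_1/M_1$, giving $b(K/(K \cap M)) \le b(X_1/M_1)$.

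To finish I will show $(K \cap M)/Z$ is solvable, yielding $b(K/Z) = b(K/(K \cap M))\cdot b((K \cap M)/Z) \le b(X_1/M_1)$. The tool is Lemma \ref{abelian-diagonal}: the subgroup $Z$ sits as a full diagonal in $Y_Z := \prod_i \pi_i(Z) \le M$, so $N_{Y_Z}(Z)/Z$ is solvable; since $K \cap M$ normalizes $Z$, the subgroup $(K \cap Y_Z)/Z$ is solvable. The quotient $(K \cap M)/(K \cap Y_Z)$ embeds into $\prod_i M_i/\pi_i(Z)$, and a second diagonal-fixed-point argument — using that $\pi_i(Z) \vartriangleleft X_i$ (another consequence of the hypothesis, seen by conjugating $Z$ by elements of $N_A(X_i)$ and projecting) — confines this image to the diagonal factor $\cong M_1/\pi_1(Z)$. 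The main obstacle I anticipate is the solvability of this residual diagonal piece; handling it requires exploiting the hypothesis $\pi_i(N_A(X_i)) = X_i$ more finely to control the gap between $\pi_1(Z)$ and $M_1$ and to show no new nonabelian composition factors arise beyond those already captured in $b(X_1/M_1)$.
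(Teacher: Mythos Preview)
Your overall architecture matches the paper's proof: reduce to bounding $b(K/Z)$ with $Z=K\cap G$, embed $KM/M$ diagonally into $X_1/M_1$ via the transitive $G$-action (using $[K,G]\le Z\le M$), and then argue that $(K\cap M)/Z$ is solvable so it contributes nothing to $b$. Your verification that $M_i\vartriangleleft X_i$ and the first diagonal embedding are both fine and essentially identical to what the paper does.

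The gap is exactly where you flag it. Your second diagonal embedding of $(K\cap M)/(K\cap Y_Z)$ into $M_1/\pi_1(Z)$ is correct, but it does not finish: $M_1/\pi_1(Z)$ can certainly be nonsolvable (take $X_1$ simple, $M_1=X_1$, $\pi_1(Z)=1$), so you cannot conclude solvability from the embedding alone. The idea you are missing is a short commutator calculation that bypasses this entirely. Writing $I=\prod_i \pi_i(Z)$ (your $Y_Z$), one has
\[
[K,\,K\cap M]\le I.
\]
Indeed, for $k\in K$ and $m\in K\cap M$, the $i$-th coordinate $\pi_i(m)$ lies in $M_i=\pi_i(N_i)$, so there is $n\in N_i\le G$ with $\pi_i(n)=\pi_i(m)$; then $\pi_i([k,m])=[\pi_i(k),\pi_i(n)]=\pi_i([k,n])\in\pi_i([K,G])\le\pi_i(Z)$. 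Hence $(K\cap M)/(K\cap I)$ is \emph{abelian}, not merely embedded in some possibly nonsolvable quotient. Combined with your application of Lemma~\ref{abelian-diagonal} to get $(K\cap I)/Z$ solvable, this gives $b((K\cap M)/Z)=1$ and the proof closes. The hypothesis $\pi_i(N_A(X_i))=X_i$ is not needed at this step; what you need is only that coordinates of $K\cap M$ lift to elements of $G$.
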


\begin{proof}
We have that $b(A/G)=b(A/GK)b(GK/G)$. So we only need to show that $b(GK/G) \le
b(X_1/M_1)$. Let $M=M_1 \times \cdots \times M_t$. Let $I=J_1 \times \cdots
\times J_t$ where $J_i$ is the projection of $J=K \cap G$ into $X_i$.

Note first that $[K,K \cap M] \le I$ (since $[K,N_i] \le [K,G] \le J$). In
particular, $(K \cap M)/(K \cap I)$ is abelian. By Lemma \ref
{abelian-diagonal}, $(K \cap I)/J$ is solvable. Thus, $(K \cap M)/J$ is
solvable and in particular, $b((K \cap M)/J)=1$. Thus,
\begin{equation*}
b(GK/G)=b(K/J)=b(K/(K\cap M))= b(KM/M).
\end{equation*}

Put $H = \{ y \in Y : [g,y] \in M
\mathrm{\, for \, all \,} g \in G \}$. Notice that $KM \leq H$. From this  
$KM/M \leq H/M \leq Y/M$ follows. Since $G$ permutes the $t$ direct
factors transitively, we see that $H/M$ (and so $KM/M$) is
contained in a full diagonal subgroup of $Y/M = \prod
(X_{i}/M_{i})$. Thus, $b(KM/M) \leq b(X_{1}/M_{1})$ as claimed.
\end{proof}

We will use the following lemma. 

\begin{lemma}
\label{loglog} Suppose that $n=m^t$. Then
\begin{equation*}
t^{\log t} {(\log m)}^{\log\log m} \le (\log n)^{\log\log n}.
\end{equation*}
\end{lemma}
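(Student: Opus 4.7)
The plan is a direct algebraic verification using the identities $\log n = t \log m$ and consequently $\log \log n = \log t + \log \log m$. I would substitute these into the right-hand side and expand.

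First I would write
\[
(\log n)^{\log \log n} \;=\; (t \log m)^{\log t + \log \log m} \;=\; t^{\log t}\cdot t^{\log \log m}\cdot (\log m)^{\log t}\cdot (\log m)^{\log \log m}.
\]
Dividing both sides of the desired inequality by $t^{\log t} (\log m)^{\log \log m}$, the claim reduces to
\[
1 \;\le\; t^{\log \log m}\cdot (\log m)^{\log t}.
\]

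The second step is to verify this reduced inequality. Taking logarithms (base $2$) of the right-hand side yields $\log t\cdot \log \log m + \log \log m \cdot \log t = 2 \log t \log \log m$, and this quantity is nonnegative as soon as $t \ge 1$ and $m \ge 2$, since both $\log t$ and $\log \log m$ are then nonnegative. (In the degenerate cases $t=1$ or $m=2$ one simply has equality in the reduced inequality.) Thus the inequality holds under the implicit hypothesis that $m, t \ge 2$ (the cases relevant for the lemma's applications).

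The only real obstacle is cosmetic: keeping track of the exponents carefully and confirming that the boundary cases (small $m$ or $t$) do not cause trouble. There is no combinatorial or structural content beyond the exponent arithmetic.
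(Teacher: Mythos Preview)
Your proof is correct and is essentially the same as the paper's. The paper simply takes logarithms at the outset, reducing the claim to $(\log t)^2 + (\log\log m)^2 \le (\log t + \log\log m)^2$, which is immediate from the nonnegativity of the cross term $2(\log t)(\log\log m)$; your multiplicative expansion followed by a logarithm arrives at the identical cross-term inequality.
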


\begin{proof}
It suffices to prove that $(\log t)^2 +(\log\log m)^2 \le (\log\log n)^2$.
The right-hand side is equal to $(\log t + \log\log m)^2$, whence the result.
\end{proof}

\section{Some examples}
\label{Section 3}

In this section, we consider several examples with $G\vartriangleleft A\leq
\mathrm{S}_{n}$ always with $A$ and $G$ transitive. The first example shows that $%
|A/G|=n-1$ may hold even with $G$ primitive and that $b(A/G)$ can be on the
order of $n^{\log n}$ if we only assume that $G$ is transitive (note that
when $n=2^{a}$ for an integer $a$, then it is easy to see that $n^{\frac{1}{2}\log
n}<|\mathrm{L}_{a}(2)|<n^{\log n}$). The third example shows that $b(A/G)$ can be close to
$(\log n)^{2\log \log n}$ even when $G$ is primitive.

\begin{example}
Let $p$ be a prime. Let $V$ be a vector space over ${\F}_p$ of dimension
$a$. Let $A = \mathrm{AGL}_{a}(p)= \mathrm{AGL}(V)$ be the full group of affine transformations
of $V$. Let $G$ be the normal subgroup of translations.

\begin{enumerate}
\item  $G \vartriangleleft A$, $G$ is transitive on $V$ and $A$ is primitive on
$V$;

\item  If $a=1$, then $G$ is primitive on $V$ and $|A/G|=p-1$;

\item  If $a > 1$ and $(p,a) \ne (2,2), (2,3)$, then $b(A/G)=|\mathrm{L}_{a}(p)|$.
\end{enumerate}
\end{example}

\begin{example}
Let $C \vartriangleleft D$ be transitive groups of degree $t$. Let $A=\mathrm{A}_{5}\wr D$
and $G=\mathrm{A}_{5}\wr C$. Then $G \vartriangleleft A$ and they both act primitively on
a set of cardinality $5^{t}$. Then $b(A/G)=b(D/C)$. In particular,
considering the previous example yields examples of primitive $A$ and $G$
with $b(A/G)>t^{\frac{1}{2}\log t}>(\frac{1}{3}\log n)^{(\frac{1}{2}\log
\log n)-2}$.
\end{example}

For the third example and for later use the full symplectic group of dimension $2a$ ($a$ and integer) over the prime field of order $p$ is denoted by $\spl{2a}{p}$. Its order is $p^{a^2} \prod_{i=1}^{a}(p^{2i}-1)$. The group $\spl{2a}{p}$ has a center of size $(2,p-1)$, the greatest common divisor of $2$ and $p-1$, and the corresponding factor group is denoted by $\psp{2a}{p}$. We will also need the orthogonal groups in this paper however only for the field of size $2$ (apart from Lemma \ref{l2}). We denote the full orthogonal groups of dimension $2a$ ($a$ and integer) over the field of order $2$ by $\ort{2a}{\epsilon}{2}$ where $\epsilon = \pm 1$. Their order is $2 \cdot 2^{a(a-1)} (2^{a}-\epsilon) \prod_{i=1}^{a-1}(2^{2i}-1)$. The groups $\ort{2a}{\epsilon}{2}$ have a subgroup of index $2$ which we denote by $\sort{2a}{\epsilon}{2}$.

\begin{example}
\label{example}
Let $p$ be a prime. Let $R$ be a $p$-group of symplectic type -- i.e. $Z(R)$
is cyclic of order $p$ or $4$, $R/Z(R)$ is elementary abelian of order $%
p^{2a}$ for an integer $a$ and $R$ has exponent $p$ for $p$ odd and exponent $4$ for $p=2$. Let
$q$ be a prime power such that $q-1$ is a multiple of $p$ and a multiple of $4$ if $|Z(R)| = 4$. 
Then $R$ embeds in the group $\mathrm{GL}_{p^{a}}(q) = \mathrm{GL}(V)$. Let
$N$ be the normalizer of $R$ in $\mathrm{GL}(V)$ and $Z$ the group of scalars. Then $N/R Z \cong
\spl{2a}{p}$ or $N/R Z \cong \ort{2a}{\epsilon}{2}$ with the latter
possibility occurring when $p=2$ and $Z(R)$ is cyclic of order $2$.
In particular (except for some very small cases),
$b(N/R)=|\psp{2a}{p}|$ or $|\sort{2a}{\epsilon}{2}|$. Let $G=VR$ and $A=VN$. Then
$A$ and $G$ are primitive permutation groups on $V$ and $b(A/G)=|\psp{2a}{p}|$ or
$|\sort{2a}{\epsilon}{2}|$. In particular, when $q=3$, $p=2$ we obtain examples
where $b(A/G)$ has size approximately $(\log n)^{2\log\log n}$.
\end{example}

For the proof of Theorem \ref{main:1} we will need more information about $2$-groups of symplectic type. By \cite[(23.14)]{A} there are, for each positive integer $a$, two extraspecial groups of order $2^{2a+1}$. These are the central product of $a$ copies of $D_{8}$ and the central product of $a-1$ copies of $D_8$ with one copy of $Q_8$. The first can be thought of as an orthogonal space of $+$ type and the other an orthogonal space of $-$ type. The central product of $a$ copies of $D_8$ with one copy of $C_4$ can be thought of as a symplectic space.  

\section{Normalizers of irreducible linear groups -- Nonabelian composition
factors}
\label{Section 4}

In this section, we consider $G \vartriangleleft A \le \mathrm{GL}(V) = \mathrm{GL}_{d}(q)$ where $V$ is a vector space of dimension $d$ over the finite field of order $q$ and want
to bound $b(A/G)$ when $G$ is irreducible on $V$. Of course, this is a
special case of the problem for general pairs of primitive groups -- this is
equivalent to the setup for the case of groups acting primitively on an affine
space.

Recall that a subgroup $A$ of $\mathrm{GL}(V)$ is called primitive if it preserves no
additive decomposition of $V$ (i.e. there is no $A$-invariant collection of
subspaces $V_1,\ldots,V_t$ with $V = \oplus_i V_i$). In particular, this
implies that $A$ is irreducible and every normal subgroup of $A$ acts
homogeneously on $V$ (i.e. any two simple submodules are isomorphic). Recall also the definition of a $p$-group of symplectic type (see Example \ref{example}).


\begin{theorem}
\label{primitive-linear} Let $G \vartriangleleft A\leq \mathrm{GL}(V) = \mathrm{GL}_{d}(q)$. Set $%
n=q^{d} \geq 3$. Assume that $G$ acts irreducibly on $V$ and that $A$ acts
primitively on $V$. Assume that every irreducible $J$-submodule of $V$
is absolutely irreducible for any normal subgroup $J$ of $G$. Then $%
b(A/G) < (\log n)^{2\log \log n}$.
\end{theorem}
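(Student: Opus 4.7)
The plan is to use the structure theory of primitive linear groups to reduce $V$ to an $A$-invariant tensor decomposition, then analyze the indecomposable tensor factors in two subcases: a symplectic-type $p$-group, and a quasisimple component.

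First I would replace $A$ and $G$ by $AZ$ and $GZ$, where $Z$ denotes the group of scalars in $\mathrm{GL}(V)$; this preserves $A/G$ and all the hypotheses. Under the absolute-irreducibility assumption on normal-subgroup submodules of $V$, Clifford theory together with the primitivity of $A$ yields an $A$-invariant tensor decomposition $V = W_1 \otimes \cdots \otimes W_t$ such that the generalized Fitting subgroup $F^*(G)$, modulo scalars, is a central product $R_1 \ast \cdots \ast R_t$, where each $R_i$ acts irreducibly on $W_i$ and as scalars on the remaining factors, and each $R_i$ is either a $p_i$-group of symplectic type or a quasisimple group. The absolute-irreducibility hypothesis is precisely what lets one realize this factorization over $\mathbb{F}_q$ itself rather than only after extending scalars.

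Next, $A$ permutes the $R_i$ by conjugation, and $G$ does so with some orbit structure on $\{R_1, \ldots, R_t\}$, refined by $A$. Working $G$-orbit-by-$G$-orbit (an $A$-invariant partition), each orbit gives a sub-tensor-product and matches the hypotheses of Theorem \ref{diag1}. Writing $K$ for the kernel of $A \to \mathrm{S}_t$, this yields
\[
b(A/G) \;\leq\; b(A/GK)\cdot \prod_i b(X_i/M_i),
\]
where the product runs over one representative per $G$-orbit. The factor $b(A/GK)$ is bounded by $b$ of a subgroup of $\mathrm{S}_t$ and hence by $t^{\log t}$. For each indecomposable per-factor bound $b(X_i/M_i)$, two subcases arise. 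If $R_i$ is a symplectic-type $p$-group with $\dim W_i = p^a$ over $\mathbb{F}_q$, then the induced outer action of $A$ on $R_i/Z(R_i)$ lies in $\spl{2a}{p}$ or in an orthogonal subgroup thereof, possibly extended by field automorphisms of $\mathbb{F}_q$, so $b(X_i/M_i) \leq |\psp{2a}{p}|$. Using $\log|W_i|\geq p^a$ and $\log\log|W_i|\geq a$, one checks $|\psp{2a}{p}| < (\log |W_i|)^{2\log\log |W_i|}$ directly; this inequality is sharp up to constants for the family of Example \ref{example} with $q=3$, $p=2$. If instead $R_i$ is quasisimple with nonabelian simple quotient $S$, then the outer $A/G$-action on $R_i$ factors through $\Out(S)$, which is solvable by Schreier's conjecture, so $b(X_i/M_i) = 1$.

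Finally, combining the wreath factor $t^{\log t}$ with the per-factor bounds via Lemma \ref{loglog} (and its iterate for unequal factor sizes, using $n = \prod|W_i|$) yields $b(A/G) < (\log n)^{2\log\log n}$. The main obstacle is establishing the $A$-invariant tensor factorization cleanly from the absolute-irreducibility hypothesis and grouping the tensor factors so that Theorem \ref{diag1} applies verbatim (in particular, handling the case when several nonisomorphic $R_i$-types coexist and must be processed simultaneously). A secondary technical point is the uniform verification of the symplectic-type inequality for small admissible values of $p$, $a$, and $q$, where the asymptotic estimate is tight and the margin in the inequality $|\psp{2a}{p}| < (\log|W|)^{2\log\log|W|}$ is smallest.
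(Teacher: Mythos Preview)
Your approach diverges substantially from the paper's, and it has a real gap.

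The paper does not use Theorem~\ref{diag1} here at all. Its key move is to look at normal subgroups of $A$ (not of $G$) that are minimal with respect to being non-central, and to prove via the Three Subgroup Lemma that each such $R$ is contained in $G$: if $R$ is perfect and $R\not\le G$, then $R\cap G\le Z(A)$, whence $[R,G]\le Z(A)$ and so $R=[R,R]\le C_A(G)=Z(A)$, a contradiction. Letting $J=J_1\cdots J_k$ be the product of these minimal non-central $A$-normal subgroups, one has $J\le G$ and $C_A(J)=Z(A)$, so $A/Z(A)J$ embeds in $\prod_i\Out(J_i)$. The bound $b(A/G)\le\prod_i b(\Out(J_i))$ then follows directly, and the tensor factorization $W\cong U_1\otimes\cdots\otimes U_k$ is used only to relate $\dim U_i$ to the parameters of $J_i$ for the final numerical estimate.

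Your route through $F^*(G)$ and Theorem~\ref{diag1} runs into trouble at the orbit step. First, the phrase ``refined by $A$'' is backwards: since $G\vartriangleleft A$, the $A$-orbits on $\{R_1,\ldots,R_t\}$ are unions of $G$-orbits, not refinements. Second, and more seriously, a single $G$-orbit $O$ is not in general $A$-stable, so you cannot restrict $A$ to the tensor factor $\bigotimes_{i\in O}W_i$ and invoke Theorem~\ref{diag1} there; that theorem requires $G$ transitive on the full set of factors that $A$ permutes. You flag this as ``the main obstacle'' but do not resolve it. Third, you implicitly assume that $C_A(F^*(G))$ contributes nothing to $b(A/G)$; this needs an argument, since a priori $C_A(F^*(G))\cap G$ is only known to be abelian, not that $C_A(F^*(G))$ itself is central in $A$. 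The paper sidesteps all of this by working with $A$-normal subgroups from the outset, so that the embedding into $\prod\Out(J_i)$ is immediate and no transitivity or wreath-product bookkeeping is required.
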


\begin{proof}
As we have already noted, every normal subgroup of $A$ acts homogeneously on
$V$. In particular, any abelian normal subgroup acts homogeneously and so is
cyclic by Schur's Lemma. By hypothesis, it must be central. There is no harm
in assuming that $G$ contains all solvable normal subgroups of $A$ (since
that does not affect $b(A/G)$).

We claim that any normal subgroup $R$ of $A$ which is minimal with respect to being non-central is contained in $G$. 

By the first paragraph we may assume that $R$ is not solvable. 

So $Z(R)\leq Z(A)$ consists of scalars and $R/Z(R)$ is characteristically
simple. So either $R$ is a central product of say $t$ quasisimple groups $%
Q_{i}$ (with $Q_{i}/Z(Q_{i})$ all isomorphic) or $R/Z(R)$ is an elementary
abelian $r$-group for some prime $r$. In the second case it follows easily that $R$ is of
symplectic type with $|R/Z(R)|=r^{2a}$ for some $a$, however we may exclude this case in the proof of the claim since $R$ must be non-solvable. 

So $R$ is perfect. Since $G$ acts irreducibly, $C_A(G)=Z(A)$. In particular, $R$
cannot centralize $G$. Suppose that $R$ is not contained in $G$. Then $G
\cap R \le Z(A)$. It follows by the Three Subgroup Lemma \cite[Page 26]{A} that $R=[R,R]$
centralizes $G$, a contradiction.

This proves our claim that every normal subgroup of $A$ which is minimal with respect to being non-central is contained in $G$. 

Let $J_1, \ldots, J_k$ denote the distinct normal subgroups of $A$ that are
minimal with respect to being noncentral in $A$. Let $J=J_1\cdots J_k$ be
the central product of these subgroups. We have shown that $J \le G$. Then $%
C_A(J) = Z(A)$ (for otherwise
the normal subgroup $C_{A}(J)$ of $A$ would contain a normal
subgroup, say $J_{1}$ of $A$ which is minimal with respect to
being non-central, then $J_{1} \leq Z(J)$ which implies that $J_{1}$ is
abelian).

Thus, $A/Z(A)J$ embeds into the direct product of the outer
automorphism groups of the normal subgroups of $A$ which are
minimal subject to being non-central. If $J_{i}$ is such a normal
subgroup and is perfect with $t$ components, then either $t < 5$ and this outer automorphism
group is solvable or $t \ge 5$ and modulo its solvable radical is $\mathrm{S}_t$.

If $J_{i}$ is of symplectic type with $|J_{i}/Z(J_{i})|=r^{2a}$, then this outer
automorphism group has at most one non-solvable composition factor -- $%
\psp{2a}{r}$ or $\sort{2a}{\epsilon}{2}$.

This gives us our upper bound on $b(A/J)$ and so also on $b(A/G)$. Let $W$
be an irreducible constitutent for $J$. Since $A$ is primitive on $V$, it
follows that $J$ acts homogeneously on $V$. It follows by \cite[Lemma 5.5.5, page 205 and Lemma 2.10.1, pages 47-48]{KL} that $W\cong
U_{1}\otimes \cdots \otimes U_{k}$ where $U_{i}$ is an irreducible $J_{i}$%
-module. In particular, if $J_{i}$ is the central product of $t$ copies of a
nonabelian simple group, then $\dim U_{i}\geq 2^{t}$ and if $J_{i}$ is of
symplectic type with $J_{i}/Z(J_{i})$ of order $r^{2a}$, then $\dim U_{i}=r^{a}$.
Moreover, since $U_{i}$ is absolutely irreducible, $r|(q-1)$.

A straightforward computation shows that $\prod_i b(\mathrm{Out}(J_i)) <
(\log n)^{2\log\log n}$ and this finishes the proof. 
\end{proof}

\begin{theorem}
\label{linear} Let $G \vartriangleleft A\leq \mathrm{GL}(V) = \mathrm{GL}_{d}(q)$. Set $n=q^{d} \geq 3$.
Assume that $G$ acts irreducibly on $V$. Then $b(A/G)<
(\log n)^{2\log\log n}$.
\end{theorem}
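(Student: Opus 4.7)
The plan is to prove Theorem \ref{linear} by induction on $d = \dim_F V$, reducing to Theorem \ref{primitive-linear}. The base case $d = 1$ is immediate since $\mathrm{GL}_1(q)$ is abelian. For $d \ge 2$ we distinguish the imprimitive and primitive cases.

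Suppose first that $A$ acts imprimitively on $V$, with a minimal $A$-invariant decomposition $V = V_1 \oplus \cdots \oplus V_t$, $t \ge 2$. Since $G \vartriangleleft A$ is irreducible, $G$ permutes the blocks transitively (a proper $G$-orbit of blocks would span a proper $G$-invariant subspace). Setting $X = \mathrm{Stab}_A(V_1)|_{V_1}$, $M = \mathrm{Stab}_G(V_1)|_{V_1}$, and letting $K$ be the kernel of $A \to \mathrm{S}_t$, Theorem \ref{diag1} yields $b(A/G) \le b(A/GK) \cdot b(X/M)$. Now $A/K$ is a transitive permutation group of degree $t$ with transitive normal subgroup $GK/K$, so Theorem \ref{transitive} gives $b(A/GK) \le t^{\log t}$. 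Clifford theory forces $M$ to act irreducibly on $V_1$, and induction on $d$ applied to $M \vartriangleleft X \le \mathrm{GL}(V_1)$ gives $b(X/M) < (\log m)^{2\log\log m}$ where $m = |V_1| = n^{1/t}$. Combining these with Lemma \ref{loglog} and the monotone estimate $(\log m)^{\log\log m} \le (\log n)^{\log\log n}$ produces $b(A/G) < (\log n)^{2\log\log n}$.

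Suppose instead that $A$ acts primitively on $V$. If every irreducible $J$-submodule of $V$ is absolutely irreducible for each $J \vartriangleleft G$, then Theorem \ref{primitive-linear} applies directly. Otherwise, we extend scalars. By primitivity every normal subgroup of $A$ acts homogeneously on $V$, so any abelian normal subgroup of $A$ is cyclic by Schur. One chooses $E \subseteq \End_F V$ to be a finite field extension of $F$ with $[E:F] > 1$---for instance $E = F[Z]$ for a suitable abelian normal subgroup $Z$ of $A$, or $E$ the center of $\End_{FJ}(V)$ for an $A$-invariant normal subgroup $J$ of $G$ witnessing the failure of the hypothesis---such that $A$ acts on $E$ via a cyclic quotient of $\Gal(E/F)$. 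Let $A_0$ be the kernel of $A \to \Aut(E)$ and $G_0 = G \cap A_0$; then $b(A/A_0) = b(G/G_0) = 1$, giving
\begin{equation*}
b(A/G) \le b(A/A_0 G) \cdot b(A_0 G/G) \le b(A_0/G_0).
\end{equation*}
The pair $G_0 \vartriangleleft A_0 \le \mathrm{GL}(V_E)$ has $\dim_E V_E = d/[E:F] < d$, and Clifford theory, combined with the primitivity of $A$, ensures $G_0$ remains irreducible on $V_E$. The inductive hypothesis then gives $b(A_0/G_0) < (\log n)^{2\log\log n}$ since $|V_E| = n$ is unchanged.

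The main obstacle is the scalar-extension step: one must produce an $A$-invariant proper field extension $E/F$ inside $\End_F V$ whenever the hypothesis of Theorem \ref{primitive-linear} genuinely fails, and then verify via Clifford theory that $G_0$ still acts irreducibly on $V_E$ so that the inductive hypothesis legitimately applies. The imprimitive case, while technical, is essentially routine once Theorem \ref{diag1}, Theorem \ref{transitive}, and Lemma \ref{loglog} are in hand.
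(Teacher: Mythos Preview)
Your overall strategy matches the paper's: both argue by induction on the dimension, handle the imprimitive case via Theorem~\ref{diag1}, Theorem~\ref{transitive}, and Lemma~\ref{loglog} exactly as you describe, and reduce the primitive case to Theorem~\ref{primitive-linear} after a field-extension step.

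The gap you flag in the scalar-extension step is genuine, and neither of your proposed constructions of $E$ is guaranteed to work. The hypothesis of Theorem~\ref{primitive-linear} is stated for arbitrary $J \vartriangleleft G$; a witnessing $J$ need not be $A$-invariant, so the centre of $\End_{FJ}(V)$ need not be normalised by $A$, and there need not exist a noncentral abelian $Z \vartriangleleft A$ producing a proper extension $F[Z]$. The paper sidesteps this by performing the field-extension reduction in two separate passes, \emph{before} the primitive/imprimitive split. First, if $G$ is not absolutely irreducible, take $E = \End_G(V)$, which is $A$-invariant because $G \vartriangleleft A$; here $G$ is automatically irreducible over $E$. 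Second, once $G$ is absolutely irreducible, if $A$ preserves any field-extension structure over $\F_{q^e}$ with $e>1$, pass to $A_0 = A \cap \mathrm{GL}_{d/e}(q^e)$ and $G_0 = G \cap A_0$; irreducibility of $G_0$ on $U = V_E$ is checked via the decomposition $V \otimes_{\F_q} \F_{q^e} \cong \bigoplus_\sigma U^\sigma$ together with absolute irreducibility of $G$ on $V$. After both reductions, $A$ is primitive and preserves no field extension, so every $J \vartriangleleft A$ is homogeneous with absolutely irreducible constituents (else the centre of $\End_J(V)$ would be a forbidden extension normalised by $A$), and Theorem~\ref{primitive-linear} applies. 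The key point that makes this route succeed is that the \emph{proof} of Theorem~\ref{primitive-linear} only ever uses the absolute-irreducibility hypothesis on the subgroups $J_i$ that are normal in $A$; verifying it for $J \vartriangleleft A$ rather than for all $J \vartriangleleft G$ is therefore enough.
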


\begin{proof}
Consider a counterexample with $d$ minimal. We claim that $G$ acts
absolutely irreducibly on $V$. If not, let $E=\mathrm{End}_G(V)$ and let $C$
be the group of units in the field $E$. So $|E|=q^e > q$.

There is no harm in replacing $G$ by $GC$ and $A$ by $AC$ and so
assume $C \le G$.
Let $A_0=C_A(C)$. Then $A/A_0$ is abelian (since it embeds in the
automorphism group of $C$) and so $b(A/G)=b(A_0/G)$. Also, viewing $V$ as a
vector space over $E$, $G$ (and so $A_0$) certainly act irreducibly. Since $%
\dim_E (V) < d$ we obtain a contradiction.

So we assume that $G$ (and so $A$) acts absolutely irreducibly on $V$.

Suppose that $A$ preserves a field extension structure on
$V$ over $\F_{q^e}$ with $e >1$. Let $A_0 = A \cap \mathrm{GL}_{d/e}(q^e)$ and $G_0=G \cap A_0$.
Let $U$ denote $V$ considered as a vector space over $\F_{q^e}$ (and as
an $\F_{q^e}[A_0]$-module). Then $A$ embeds in $\mathrm{GL}_{d/e}(q^e).e$.  Let $W=V \otimes_{\F_q} \F_{q^e}$.
Now $$W \cong \oplus_{\sigma \in {\rm Gal}(\F_{q^e}|\F_q)} U^{\sigma}$$
as an $A_0$-module.  Then $A$ permutes the $U^{\sigma}$.  Moreover,
  $G_0$ acts irreducibly on $U$ (or $G$ acts reducibly on $W$,
  a contradiction to the fact that $V$ is absolutely irreducible
  as a $G$-module).
  Also, $A_0$ acts faithfully on $U$ ($x$ trivial on $U$ implies
  that $x$ is trivial on $U^{\sigma}$ for all $\sigma$, whence
  $x$ is trivial on $W$).
  Then  $b(A/G)=b(A_0/G_0)
  < (\log n)^{2\log\log n}$ contradicts the minimality of $d$ (noting that $n=|U|$).

Suppose that $A$ acts imprimitively on $V$ -- so $V=V_1 \oplus \cdots \oplus
V_t$ with $t > 1$ and $A$ permutes the $V_i$. Note that $G$ must permute the
$V_i$ transitively as well since $G$ is irreducible. Let $K$ be the subgroup
of $A$ fixing each $V_i$. Let $A_i$ be the action of $N_A(V_i)$ on $V_i$ and
define $G_i$ in an analogous way. By Theorem \ref{diag1}, we have $b(A/G) \le b(A/GK)b(A_1/G_1)$. By Theorem \ref{transitive} (see the next section), $b(A/GK) < t^{\log t}$. Now $G_1$ must act
irreducibly on $V_1$ (otherwise $K \cap G$ and so $G$ would act reducibly on $V$) and so by minimality, 
$b(A_1/G_1) < (\log m)^{2\log\log m}$, where $m=|V_1|$. Note that $n=m^t$.
So $$b(A/G) < t^{\log t}(\log m)^{2\log\log m}.$$ The desired conclusion follows from 
Lemma \ref{loglog}.

The remaining case is that $G$ is absolutely irreducibly on $V$, $A$ is
primitive on $V$ and preserves no field extension structure on $V$. Let $J$
be a normal subgroup of $A$. Then $J$ must act homogeneously on $V$ (by the
primitivity hypothesis) and moreover, the irreducible constituents for $J$
must be absolutely irreducible (otherwise the center of $\mathrm{End}_J(V)$
is ${\F}_{q^e}$ for some $e > 1$ and would be normalized by $A$, whence $A$
preserves a field extension structure on $V$). Now the result follows by
Theorem \ref{primitive-linear}.
\end{proof}

\section{Normalizers of transitive and primitive groups -- Nonabelian
composition factors}
\label{Section 5}

We consider the situation $G \vartriangleleft A\leq \mathrm{S}_{n}$. We wish to bound $%
b(A/G)$ when $G$ is transitive and when $G$ is primitive. It is easy to see
that even if one is only interested in the primitive case, one needs an
answer in the transitive case as well.

We first consider the case when $G$ is merely transitive. We have already
used this result in the previous section.

\begin{theorem}
\label{transitive} Let $G$ and $A$ be nontrivial transitive groups with $G \vartriangleleft A \le \mathrm{S}_n$. Then $b(A/G) < n^{\log n}$.
\end{theorem}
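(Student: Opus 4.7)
The plan is to induct on $n$, with base case $n=2$ being immediate as $b(A/G)=1<2=n^{\log n}$. For $n\geq 3$ I would split into the cases of $A$ primitive and $A$ imprimitive.

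Suppose first that $A$ is imprimitive. Choose a maximal $A$-invariant block system $\Sigma=\{B_1,\dots,B_t\}$ with block size $m$, so $1<m<n$. I would realise $A$ as a subgroup of $X\wr \mathrm{S}_t$, where $X\leq \mathrm{Sym}(B_1)$ is the image of the block stabilizer $A_{B_1}$. Then $A$ and $G$ are transitive on $\{X_1,\dots,X_t\}$ by conjugation (as $G$ acts transitively on $\Sigma$), and $N_A(X_i)=A_{B_i}$ projects onto $X_i$ by construction, so the hypotheses of Theorem \ref{diag1} hold. Since $G$ is transitive on $\Omega$ and preserves $\Sigma$, for any $x,y\in B_1$ some $g\in G$ sends $x$ to $y$ and hence must fix $B_1$ setwise; thus $G\cap A_{B_1}$ is already transitive on $B_1$ and its image $M_1$ is a nontrivial transitive normal subgroup of $X$. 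Theorem \ref{diag1} then gives
\begin{equation*}
b(A/G)\leq b(A/GK)\cdot b(X/M_1),
\end{equation*}
where $K$ is the kernel of the action of $A$ on $\Sigma$. Applying the inductive hypothesis on $\Sigma$ (to $GK/K\vartriangleleft A/K$, both nontrivial transitive on $t<n$ points) and on $B_1$ (to $M_1\vartriangleleft X$, nontrivial transitive on $m<n$ points) yields $b(A/GK)<t^{\log t}$ and $b(X/M_1)<m^{\log m}$. The elementary inequality $t^{\log t}\cdot m^{\log m}\leq (mt)^{\log(mt)}=n^{\log n}$ then completes this case.

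Suppose next that $A$ is primitive. I would appeal to Mar\'oti's theorem \cite[Theorem 1.1]{maroti}, which places $A$ in one of three families. If $|A|<n^{1+\log n}$ then $b(A/G)\leq |A|/|G|<n^{\log n}$, using $|G|\geq n$ by transitivity. If $A$ is one of the Mathieu groups $\mathrm{M}_{11},\mathrm{M}_{12},\mathrm{M}_{23},\mathrm{M}_{24}$ in its natural $4$-transitive action, then $A$ is simple so $G=A$ and $b(A/G)=1$. In the remaining case $A\leq \mathrm{S}_m\wr \mathrm{S}_r$ contains $(\mathrm{A}_m)^r$ in the product action of degree $n=\binom{m}{k}^r$: for $m\geq 5$ I would show that $G\supseteq(\mathrm{A}_m)^r$, for otherwise $G\cap(\mathrm{A}_m)^r=1$ (using transitivity of $A$ on the $r$ factors together with simplicity of $\mathrm{A}_m$), and then $G$ would centralize the socle, contradicting $C_{\mathrm{S}_m\wr \mathrm{S}_r}((\mathrm{A}_m)^r)=1$ and $G\neq 1$; when $m\leq 4$ the subgroup $(\mathrm{A}_m)^r$ is solvable, so contributes nothing to $b$. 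Either way, $A/G$ is a section of $C_2\wr \mathrm{S}_r$, giving $b(A/G)\leq b(\mathrm{S}_r)\leq r!/2$, and this is comfortably bounded by $n^{\log n}\geq 2^{r^2}$ since $n\geq 2^r$.

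The main technical obstacle I anticipate is the primitive case, particularly the wreath-product family in Mar\'oti's classification: one needs the centralizer of $(\mathrm{A}_m)^r$ in $\mathrm{S}_m\wr\mathrm{S}_r$ to be trivial in order to conclude $G\supseteq(\mathrm{A}_m)^r$, and then the bookkeeping to absorb $r!/2$ into $n^{\log n}$. The imprimitive reduction via Theorem \ref{diag1} is essentially routine once one verifies transitivity of $M_1$, which is what lets the induction run on the block of size $m$.
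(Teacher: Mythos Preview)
Your proof is correct and the imprimitive reduction is exactly the paper's: both invoke Theorem~\ref{diag1}, apply induction on the block action and on a block, and finish with $t^{\log t}m^{\log m}<n^{\log n}$.

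In the primitive case you take a genuinely different route. The paper argues directly from the Aschbacher--O'Nan--Scott theorem: it sets $E=F^*(A)$, shows $G\supseteq E$ (or at least one minimal normal subgroup in the two-minimal-normal case), and then bounds $b(A/E)$ case by case --- via $|\mathrm{L}_a(p)|<n^{\log n}$ in the affine case, via $(t!)/2$ together with $n\ge 5^t$ in the nonabelian-socle cases, and via $n(t!)/2$ when there are two minimal normal subgroups. You instead invoke Mar\'oti's trichotomy \cite[Theorem~1.1]{maroti}: the order bound $|A|<n^{1+\log n}$ handles almost everything at once using only $|G|\ge n$; the Mathieu groups are simple; and in the product-action family you show $G\supseteq(\mathrm{A}_m)^r$ by a centralizer argument and bound $b(A/G)$ by $r!/2$. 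Your approach packages the affine, diagonal and twisted-wreath cases into a single stroke via the order bound, at the cost of importing Mar\'oti's theorem (itself a consequence of O'Nan--Scott plus classification-dependent estimates). The paper's approach is more self-contained structurally but requires separate treatment of each O'Nan--Scott type. Both are at the same logical depth and of comparable length; yours is arguably tidier for this particular statement, and indeed the paper uses exactly your Mar\'oti-based shortcut elsewhere (proof of Theorem~\ref{main:8}).
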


\begin{proof}
Suppose the theorem is false and consider a counterexample with $n$ minimal.
First suppose that $A$ is primitive.

Let $E:=F^*(A)$ be the generalized Fitting subgroup of $A$. By the
Aschbacher-O'Nan-Scott theorem, either $E$ is a minimal normal subgroup or $
E = E_1 \times E_2$ with $E_1 \cong E_2$ a direct product of $t$ copies of a
simple nonabelian subgroup $L$ of order $m$.

In the latter case, $n=m^t$, $G$ must contain one of the $E_i$ and by the
structure theorem together with Schreier's conjecture, we see that $b(A/G) \le n(t!)/2 < n^{\log n}$.

In the other cases, $G$ contains $E$ and so we may assume that $G=E$. If $E$
is abelian, then $A/G$ embeds in $\mathrm{GL}_{a}(p)$ with $n=p^a$ for an integer $a$ and so $b(A/G) \le
|\mathrm{L}_{a}(p)| < n^{\log n}$. If $E$ is nonabelian and is the product of $t$
copies of a nonabelian simple group $L$, then either $t \le 4$ and $A/G$ is
solvable or $n \ge 5^t$ and $b(A/G) \le (t!)/2 < n^{\log n}$.

Suppose that $A$ is not primitive. Let $\{ B_{1}, \ldots, B_{t} \}$ be an $A$%
-invariant partition of the underlying set on which $A$ acts. Let $A_i$ denote the action of the stabilizer of $B_i$
in $A$ on $B_i$. Then $A$ embeds in $A_i \wr \mathrm{S}_t$ and $G$ permutes
transitively the subgroups $A_i$. Let $G_i$ denote the action of the
stabilizer of $B_i$ in $G$ on $B_i$. We apply Theorem \ref{diag1} and
induction to conclude that $b(A/G) \le t^{\log t} b(A_1/G_1) \le t^{\log
t}s^{\log s}$ where $n=st$. Thus, the result holds.
\end{proof}

The previous and the next theorem imply Theorem \ref{main:2}.

\begin{theorem}
\label{primitive} Let $G$ and $A$ be primitive groups with $G \vartriangleleft A\leq \mathrm{S}_{n}$ and $n \geq 3$. Then
$b(A/G)<(\log n)^{2\log \log n}$.
\end{theorem}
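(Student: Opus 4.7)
The plan is to use the Aschbacher--O'Nan--Scott theorem to split on the type of $G$. If $G$ is of affine type with socle $V \cong \F_p^d$, then $V = F(G)$ is characteristic in $G$, hence normal in $A$; primitivity of $A$ then forces $A = V \rtimes A_0$ and $G = V \rtimes G_0$ with $G_0 \vartriangleleft A_0 \leq \mathrm{GL}(V)$ both irreducible, and Theorem \ref{linear} yields $b(A/G) = b(A_0/G_0) < (\log n)^{2 \log \log n}$. If $G$ is almost simple with socle $T$, then $T$ is characteristic in $G$, so normal in $A$; the centralizer $C_A(T)$ must be trivial (otherwise $C_A(T)$ would be a regular normal subgroup of $A$ of order $n$, incompatible with $T$ nonabelian simple and transitive), so $A \hookrightarrow \Aut(T)$. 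By Schreier's conjecture, $\Out(T)$ is solvable, whence $b(A/G) = 1$.

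The remaining case is $\mathrm{Soc}(G) = T^{\ell}$ with $T$ nonabelian simple and $\ell \geq 2$. Let $\sigma : A \to \mathrm{S}_{\ell}$ denote the action of $A$ on the $\ell$ simple factors. The normal subgroup $\mathrm{Soc}(G) \vartriangleleft A$ is characteristically simple, and since $\mathrm{Soc}(G)$ is a minimal normal subgroup of $G$, the group $G$ permutes the $\ell$ factors transitively; thus $\sigma(G)$ and $\sigma(A)$ are transitive subgroups of $\mathrm{S}_{\ell}$ with $\sigma(G) \vartriangleleft \sigma(A)$. The kernel $K = A \cap \ker \sigma$ satisfies
$$K \big/ \bigl( \mathrm{Soc}(G) \cdot C_A(\mathrm{Soc}(G)) \bigr) \hookrightarrow \Out(T)^{\ell},$$
which is solvable. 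In the non-regular subtypes (SD, CD, PA) we have $C_A(\mathrm{Soc}(G)) = 1$; in the regular-socle subtypes (HS, HC, TW), primitivity of $G$ (and not merely of $A$) forces any second minimal normal subgroup of $A$ to lie inside $G$, since otherwise $G$ would centralize it and hence be forced to act regularly, which is incompatible with $G$ being primitive of degree $|T|^{\ell}$ with $\ell \geq 2$. In either situation $b(KG/G) = 1$, so by Theorem \ref{transitive},
$$b(A/G) \;=\; b(\sigma(A)/\sigma(G)) \;<\; \ell^{\log \ell}.$$

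To finish, I need $\ell^{\log \ell} < (\log n)^{2 \log \log n}$. In every O'Nan-Scott subtype of the third case there is an absolute constant $c > 1$ with $n \geq c^{\ell}$ (one may take $c = 5$ in product action and $c \geq 60$ in the diagonal, twisted wreath and compound subtypes), so $\ell \leq (\log n)/\log c$. Then $(\log \ell)^{2} \leq (\log \log n - \log \log c)^{2} < 2(\log \log n)^{2}$ once $n$ exceeds an explicit small threshold, and the remaining finitely many degrees are handled by direct inspection. The main obstacle is the bookkeeping in the regular-socle subtypes, where one must rule out the scenario in which $A$ carries a ``diagonal centralizer'' piece disjoint from $G$; this is precisely where primitivity of $G$ rather than merely of $A$ enters the argument.
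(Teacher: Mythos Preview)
Your overall architecture matches the paper's: split via Aschbacher--O'Nan--Scott, handle the affine case by Theorem \ref{linear}, and in the nonabelian-socle case use Schreier's conjecture to kill the kernel of the component action and then invoke Theorem \ref{transitive} on that action. The final numerical estimate $\ell^{\log\ell}<(\log n)^{2\log\log n}$ is also the same as the paper's (in fact the paper only needs the exponent $\log\log n$, not $2\log\log n$, since $n\ge 5^\ell$ already gives $\log\ell<\log\log n$).

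There is, however, a genuine gap. You assert that ``$\mathrm{Soc}(G)$ is a minimal normal subgroup of $G$'' and use this to conclude that $\sigma(G)$ is transitive on the $\ell$ simple factors. This is false precisely in the HS and HC subtypes: there $\mathrm{Soc}(G)=N_1\times N_2$ with each $N_i\cong T^{\ell/2}$ a minimal normal subgroup of $G$, so $G$ normalises each $N_i$ and $\sigma(G)$ has two orbits of size $\ell/2$. Consequently Theorem \ref{transitive} does not apply to $\sigma(A)/\sigma(G)$ on $\ell$ points. (In HS this is harmless since $\ell=2$ and $b(\mathrm{S}_2)=1$, but in HC with $\ell=2s\ge 4$ you genuinely need an argument.) The paper sidesteps this by working with $E:=F^*(A)$ and, in the two-minimal-normal case $E=E_1\times E_2$, considering the action on the $s$ components of \emph{one} minimal normal subgroup $E_1$; on those $s$ components $GK/K$ \emph{is} transitive, Theorem \ref{transitive} gives $b(A/GK)\le s^{\log s}$, and since $n=|L|^s\ge 60^s$ this is below $(\log n)^{\log\log n}$. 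You could also patch your version by passing to the index-$\le 2$ subgroup $A_0\le A$ that fixes $\{N_1,N_2\}$ and applying the product bound of Lemma 2.1 to $A_0/K\le \mathrm{S}_s\times\mathrm{S}_s$, but either way the HC case needs explicit treatment.

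Two smaller points: your ``degree $|T|^\ell$'' in the regular-socle discussion is off (in HS/HC the degree is $|T|^{\ell/2}$), and the claim $C_A(\mathrm{Soc}(G))=1$ in SD/CD/PA, while true, deserves a one-line justification (a nontrivial centraliser would force $\mathrm{Soc}(G)$ to be regular).
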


\begin{proof}
We consider the various cases in the Aschbacher-O'Nan-Scott theorem.

In all cases, $G$ contains $E:=F^*(A)$. The result follows by Theorem \ref
{linear} if $E$ is abelian. So we may assume that $E$ is a direct product of
$t$ copies of a nonabelian simple group $L$ of order $m$. Let $K$ denote the
subgroup of $A$ stabilizing all the components of a minimal normal subgroup of $A$. Clearly $K$ contains $E$. 

Suppose first that $E=E_{1}\times E_{2}$ with $E_{1}\cong E_{2}$ the two
minimal normal subgroups of $A$. In this case $t=2s$ and $n=|E_{1}|=m^{s}$.
Then $GK/K \vartriangleleft A/K$ are transitive subgroups of $\mathrm{S}_{s}$ and so by
Theorem \ref{transitive}, $b(A/GK) \leq s^{\log s}$. On the other hand, $K/E$
is solvable (because it is contained in the direct product of copies of the
outer automorphism group of $L$). Thus, $b(A/G)=b(A/GK) \leq s^{\log s}<(\log
n)^{\log \log n}$.

In the remaining cases, $E$ is the unique minimal normal subgroup of $A$, the groups $
GK/K \vartriangleleft A/K$ are transitive subgroups of $\mathrm{S}_t$ and so, as in the
previous case, we see that $b(A/G) \leq t^{\log t}$. It follows by
the Aschbacher-O'Nan-Scott theorem that $n \ge 5^t$ and so $t^{\log t} < (\log
n)^{\log\log n}$.
\end{proof}

\section{$p$-solvable composition factors of primitive groups}
\label{Section 6}

If $G$ is a finite group, define $a(G)$ to be the product of
the orders of all the abelian (i.e. cyclic) simple composition factors of $G$
in a composition series for $G$. The bounds we will give in this section for $a(G)$ extend naturally to results about $a_{p}(G)$ defined to be the product of the orders of all composition factors of $G$ which are either $p$-groups or $p'$-groups for a given prime $p$. Clearly $a(G) \leq a_{p}(G)$ for any prime $p$ and, by the Odd Order Theorem, $a_{2}(G) = a(G)$.  

It is easy to see that $a_{p}(G)$ (and $a(G)$) is
bounded by the order of some $p$-solvable (solvable) subgroup $S \leq G$ (e.g., this
follows from Theorem \ref{subgroup} below). By a theorem of Dixon \cite{dixon} this
implies (see also Dixon-Mortimer \cite{dixmort}) that $a(G)$ is at most $24^{(n-1)/3}$ for any subgroup $G$ of $\mathrm{S}_{n}$. We state the following more general result.

\begin{proposition}
\label{di} Let $G \leq \mathrm{S}_n$. The product of the orders of all composition factors of $G$ which are not isomorphic to alternating groups of degrees larger than $d \geq 4$ is at most ${d!}^{(n-1)/(d-1)}$. In particular, $a_{2}(G)$ and $a_{3}(G)$ are at most $24^{(n-1)/3}$ and $a_{p}(G) \leq {(p-1)!}^{(n-1)/(p-2)}$ for $p \geq 5$. 
\end{proposition}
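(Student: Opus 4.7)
The plan is to set $f(G)$ equal to the product of orders of composition factors of $G$ that are not isomorphic to $A_m$ for any $m > d$, and prove $f(G) \leq d!^{(n-1)/(d-1)}$ for every $G \leq \mathrm{S}_n$ by induction on $n$. The two properties driving the argument are $f(G) = f(N)f(G/N)$ for any $N \vartriangleleft G$ and $f(H) \leq |H|$. For the base case $n \leq d$, every composition factor of $G$ is cyclic or $A_m$ with $m \leq n \leq d$, so $f(G) = |G| \leq n!$; the inequality $n! \leq d!^{(n-1)/(d-1)}$ then follows from the monotonicity of $k \mapsto \log(k!)/(k-1)$, which is the running average of $\log 2, \log 3, \ldots, \log k$.

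For the inductive step with $n > d$, I would split on permutation structure. If $G$ is intransitive with an orbit of size $n_1$, where $0 < n_1 < n$, let $K$ be the pointwise stabilizer of that orbit; applying induction to $G/K$ acting faithfully on the orbit and to $K$ acting faithfully on the complement (of size $n_2 = n - n_1$) yields $f(G) = f(K)f(G/K) \leq d!^{(n-2)/(d-1)}$. If $G$ is transitive but imprimitive with a nontrivial block system of $t$ blocks of size $b$ (so $bt = n$ with $b,t > 1$), let $K$ be the kernel of the action on blocks. Then $K$ is intransitive on $\{1,\ldots,n\}$ with at least $t$ orbits, so iterating the intransitive bound gives $f(K) \leq d!^{(n-t)/(d-1)}$; induction on the transitive quotient $G/K \leq \mathrm{S}_t$ gives $f(G/K) \leq d!^{(t-1)/(d-1)}$; multiplying yields the target bound.

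The primitive case is the crux. If $G \supseteq \mathrm{A}_n$ then every composition factor is $\mathrm{A}_n$ (excluded since $n > d$) or $C_2$, so $f(G) \leq 2$. Otherwise $G \not\supseteq \mathrm{A}_n$, and since $f(G) \leq |G|$ it suffices to show $|G| \leq d!^{(n-1)/(d-1)}$. I would invoke the Aschbacher-O'Nan-Scott theorem and handle each of the five structural types (affine, almost simple, simple diagonal, product action, twisted wreath) in turn. The decisive quantitative input would be a bound of the form $|G| < n^{1+\log n}$ (Mar\'oti's theorem on primitive permutation groups), which is comfortably smaller than $d!^{(n-1)/(d-1)} \geq 24^{(n-1)/3}$ once $n$ is moderately large; for the finitely many remaining small values of $n$, one verifies the inequality by consulting the list of primitive groups directly. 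This is the main obstacle because it is exactly here that the classification of finite simple groups enters (through Mar\'oti or equivalent tools).

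For the particular statements, the key observation is that $p \mid |A_m|$ for all $m \geq p$, so composition factors that are $p$-solvable (that is, cyclic or nonabelian simple with $p \nmid |S|$) are automatically not isomorphic to any $A_m$ with $m \geq p$. Hence take $d = p-1$ for $p \geq 5$, giving $a_p(G) \leq f(G) \leq (p-1)!^{(n-1)/(p-2)}$. For $p = 2$, take $d = 4$: by the Feit-Thompson theorem every nonabelian simple group has even order, so $a_2(G) = a(G)$ involves only cyclic composition factors, which are included in $f(G)$. For $p = 3$, take $d = 4$ again: since $3 \mid |A_m|$ for every $m \geq 3$, no $3$-solvable composition factor is an $A_m$ with $m \geq 5$, giving $a_3(G) \leq f(G) \leq 24^{(n-1)/3}$.
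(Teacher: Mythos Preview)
Your proof is correct and follows essentially the same route as the paper. The paper's proof is terse: it cites \cite[Corollary 1.5]{maroti} for the primitive case and refers to the inductive structure written out later in the proofs of Theorem~\ref{c.permutation} and Lemma~\ref{folosleg}, which is precisely your intransitive\,/\,transitive imprimitive\,/\,primitive split. The only cosmetic difference is that the paper appeals to a corollary in Mar\'oti's paper that already gives the primitive bound in the exact form $|G|\le d!^{(n-1)/(d-1)}$, whereas you reach it via the coarser $|G|<n^{1+\log n}$ plus a finite check; both are legitimate and rest on the same classification-dependent input.
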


\begin{proof}
The first statement follows from \cite[Corollary 1.5]{maroti} by using the argument implicit in the proofs of Theorem \ref{c.permutation} and Lemma \ref{folosleg}.
\end{proof}

Recall that a subgroup $I$ is \emph{intravariant} in a group $G$ if for all
automorphisms $\alpha $ of $G$, the subgroup $I^{\alpha }$ is $G$-conjugate
to $I$.

\begin{lemma}
\label{csu}Let $G$ be a finite group and let $G=G_{0}\vartriangleright
G_{1}\vartriangleright \cdots \vartriangleright G_{r}=1$ be a normal series
with each $G_{i}\vartriangleleft G$. Let $\overline{G_{i}}=G_{i-1}/G_{i}$.
Let $p$ be a fixed prime. Suppose for each $i$ that $I_{i}$ is an
intravariant $p^{\prime }$-subgroup of $\overline{G_{i}}$. Then $G$
has a $p^{\prime }$-subgroup $H$ such that $\left| H\right| \geq
\prod_{i=1}^{r}\left| I_{i}\right|$.
\end{lemma}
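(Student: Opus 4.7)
The plan is to proceed by induction on $r$. The base case $r\le 1$ is immediate, taking $H=I_1$. For the inductive step I would first apply the induction hypothesis to $G/G_{r-1}$, equipped with its induced normal series of length $r-1$ and the intravariant $p'$-subgroups $I_1,\dots,I_{r-1}$, to obtain a $p'$-subgroup $\bar{K}\le G/G_{r-1}$ of order at least $\prod_{i<r}|I_i|$. Pulling back to $G$ gives $K\le G$ with $G_{r-1}\vartriangleleft K$ and $K/G_{r-1}=\bar{K}$, a $p'$-group.

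The next move exploits the intravariance of $I_r$ in $\overline{G_r}=G_{r-1}$ via a Frattini argument: intravariance says every $\Aut(G_{r-1})$-conjugate of $I_r$ is $G_{r-1}$-conjugate to $I_r$, so in particular every $G$-conjugate of $I_r$ (acting through $\Aut(G_{r-1})$) is $G_{r-1}$-conjugate to $I_r$, whence $G=G_{r-1}N_G(I_r)$. Applying Dedekind's modular law along $G_{r-1}\le K\le G$ yields $K=G_{r-1}N_K(I_r)$. Writing $\tilde{K}:=N_K(I_r)$, one then has $I_r\vartriangleleft \tilde{K}$, $N_{G_{r-1}}(I_r)\vartriangleleft \tilde{K}$, and $\tilde{K}/N_{G_{r-1}}(I_r)\cong \bar{K}$.

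The final step, which I expect to be the main obstacle, is to extract a $p'$-subgroup $H$ of $\tilde{K}$ of order at least $|I_r|\cdot|\bar{K}|$. The difficulty is that $|N_{G_{r-1}}(I_r)|$ and $|\bar{K}|$ need not be coprime, so Schur--Zassenhaus cannot split the extension $N_{G_{r-1}}(I_r)\vartriangleleft \tilde{K}$ directly. I would instead choose a Sylow $p$-subgroup $P$ of $\tilde{K}$ (necessarily inside $N_{G_{r-1}}(I_r)$ since $\bar{K}$ is $p'$), apply Frattini once more to obtain $\tilde{K}=N_{G_{r-1}}(I_r)\cdot N_{\tilde{K}}(P)$, and then use Schur--Zassenhaus inside $N_{\tilde{K}}(P)$, where coprimality is automatic because $P$ is Sylow in $\tilde{K}$. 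This produces a $p'$-complement $Q$ of $P$ in $N_{\tilde{K}}(P)$, and by the conjugacy part of Schur--Zassenhaus I would choose $Q$ to contain the $p'$-subgroup $I_r\cap N_{\tilde{K}}(P)$. Setting $H:=I_r Q$ then gives a $p'$-subgroup of $\tilde{K}\le G$; using that $I_r\cap Q=I_r\cap N_{\tilde{K}}(P)$ is a $p'$-subgroup of $N_{N_{G_{r-1}}(I_r)}(P)$ whose order divides the $p'$-part of $|N_{N_{G_{r-1}}(I_r)}(P)|$, a brief index computation shows $|H|=|I_r||Q|/|I_r\cap Q|\ge |I_r|\cdot|\bar{K}|\ge \prod_i|I_i|$, completing the induction.
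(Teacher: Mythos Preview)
Your argument is correct. The induction, the two Frattini arguments, the Schur--Zassenhaus step inside $N_{\tilde K}(P)$, and the final index computation all go through as you describe; in particular the choice of $Q$ containing $I_r\cap N_{\tilde K}(P)$ is justified by the conjugacy part of Schur--Zassenhaus applied inside $SP$ (with $S=I_r\cap N_{\tilde K}(P)$), since $S$ and $Q_0\cap SP$ are both Hall $p'$-subgroups there and $P$ is solvable.

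The paper, however, does not prove this lemma at all: it simply records that the statement is a special case of \cite[Theorem 5.3.17]{suzuki}. What you have written is essentially an explicit proof of that theorem in the case needed here, using exactly the standard machinery (intravariance $\Rightarrow$ Frattini, then Schur--Zassenhaus to peel off the $p$-part). So your route is not so much a different idea as a self-contained unpacking of the cited result; the advantage of your version is that it avoids an external reference, while the paper's citation keeps the exposition short.
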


\begin{proof}
This is a special case of \cite[Theorem 5.3.17]{suzuki}.
\end{proof}

We also need a consequence \cite[Lemma 2.9]{lipy} of the classification theorem of finite simple
groups.

\begin{lemma}
\label{li}Let $G$ be a nonabelian finite simple group and $p$ a prime. Then $G$ has a solvable
intravariant $p^{\prime }$-subgroup $I$ such that $2|\Out(G)|_{p} \leq |I|$.
\end{lemma}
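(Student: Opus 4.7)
The plan is to proceed case-by-case over the classification of finite simple groups, exhibiting in each case a canonical solvable subgroup $I \leq G$ of order at least $2|\Out(G)|_p$ whose $G$-conjugacy class is preserved by every automorphism of $G$. The natural source of intravariant subgroups is Sylow normalizers: for any prime $\ell$, the normalizer $N_G(P_\ell)$ of a Sylow $\ell$-subgroup is intravariant because Sylow $\ell$-subgroups form a single $G$-conjugacy class, which is automatically $\Aut(G)$-stable. A second family of candidates, for groups of Lie type, is the set of normalizers of maximal tori of fixed type under the Frobenius endomorphism.

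For $G = A_n$ with $n \geq 5$ we have $|\Out(G)| \leq 4$, so it suffices to produce a solvable $p'$-subgroup of order at least $8$. Choosing a prime $\ell$ with $n/2 < \ell \leq n$ and $\ell \neq p$ (which exists by Bertrand's postulate once $n$ is not too small), the normalizer $N_G(P_\ell)$ is a Frobenius group of order $\ell(\ell-1)/\gcd(2,\ell-1)$, hence solvable and large enough. The finitely many remaining small cases and the sporadic groups are handled by direct inspection using the ATLAS, noting that $|\Out(G)|_p$ is at most $2$ for all sporadic groups.

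The main work is the case where $G$ is a simple group of Lie type in characteristic $r$ over $\FF_q$ with $q = r^f$. Here $\Out(G)$ has the familiar factorization into diagonal, field, and graph parts, and so $|\Out(G)|_p$ is controlled by $d_p \cdot f_p \cdot g_p$ where $d = |\mathrm{Outdiag}(G)|$ and $g \in \{1,2,3\}$. I would take $I$ to be the normalizer $N_G(T)$ of a maximal torus of Coxeter type: $T$ is cyclic of order close to $\Phi_h(q)/d$ where $h$ is the Coxeter number, $|N_G(T)/T| = h$, and the Weyl group element is a Coxeter element, so $N_G(T)$ is metacyclic and solvable. If $p \neq r$, Zsigmondy's theorem provides a prime $\ell \neq p$ dividing $\Phi_h(q)$, and after passing to the Hall $p'$-subgroup of $N_G(T)$ (which remains solvable and intravariant as a characteristic subgroup of an intravariant subgroup) one gets $|I|$ at least on the order of $q$, comfortably exceeding $2|\Out(G)|_p$. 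If $p = r$, one instead picks a torus of order coprime to $r$ (such tori always exist for nonsolvable Lie type groups) and uses the same argument.

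The main obstacle is uniformity for small rank classical groups when $q$ is small and $f$ is highly divisible by $p$, which can inflate $|\Out(G)|_p$ relative to the ambient $|G|$. In these cases one must check the inequality by hand, using explicit order formulas and the structure of $\Out(G)$ documented in the ATLAS or in Gorenstein--Lyons--Solomon. Once each family is dispatched, Lemma \ref{li} follows. This is exactly the content of \cite[Lemma 2.9]{lipy}, where the bookkeeping is carried out explicitly.
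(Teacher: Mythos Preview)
The paper does not prove this lemma at all: it simply states it as ``a consequence \cite[Lemma 2.9]{lipy} of the classification theorem of finite simple groups'' and moves on. Your proposal is a reasonable sketch of how such a classification-based argument would go, and you correctly identify the same reference at the end; so your approach is entirely consistent with what the paper does, just more expansive.
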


Now we prove a useful reduction result.

\begin{theorem}
\label{subgroup}
Let $G$ be a finite group and $p$ a prime. Then $a_{p}(G) \leq |S|$ for some $p$-solvable subgroup $S$ of $G$. 
\end{theorem}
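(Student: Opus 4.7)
I would prove this by induction on $|G|$, combining Lemmas \ref{csu} and \ref{li}. Let $N$ be a minimal normal subgroup of $G$.

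Suppose first that $N$ is itself $p$-solvable; equivalently, either $N$ is elementary abelian, or $N = T^k$ for $T$ a nonabelian simple group with $p \nmid |T|$. By induction on $G/N$ there is a $p$-solvable subgroup $\bar{S} \leq G/N$ with $|\bar{S}| \geq a_p(G/N)$, and the full preimage $S$ of $\bar{S}$ in $G$ is an extension of two $p$-solvable groups, hence itself $p$-solvable. In both subcases $a_p(G) = |N| \cdot a_p(G/N)$, so $|S| = |N| \cdot |\bar{S}| \geq a_p(G)$.

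The essential case is $N = T^k$ with $T$ nonabelian simple and $p \mid |T|$, where $a_p(N) = 1$ and thus $a_p(G) = a_p(G/N)$. By induction, pick a $p$-solvable $\bar{S} \leq G/N$ with $|\bar{S}| \geq a_p(G/N)$, and let $H$ be its preimage in $G$. Apply Lemma \ref{li} to obtain a solvable intravariant $p'$-subgroup $J_0$ of $T$ with $|J_0| \geq 2|\mathrm{Out}(T)|_p$, and set $J = J_0^k \leq N$. Since $\mathrm{Aut}(T^k) \leq \mathrm{Aut}(T) \wr \mathrm{S}_k$, the $\mathrm{S}_k$-factor stabilizes $J_0^k$ as a set and each $\mathrm{Aut}(T)$-factor preserves the $T$-conjugacy class of $J_0$, so $J$ is intravariant in $N$. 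Intravariance combined with the normality $N \trianglelefteq H$ yields $H = N_H(J) \cdot N$, and therefore $|N_H(J)| = |\bar{S}| \cdot |N_N(J)|$, where $N_N(J) = N_T(J_0)^k$.

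The remaining task, which is the main obstacle, is to extract from $N_H(J)$ a $p$-solvable subgroup of order at least $|\bar{S}|$. Since $J \trianglelefteq N_H(J)$ is a $p'$-group, it suffices to locate, inside the extension
\[
1 \to N_N(J)/J \to N_H(J)/J \to \bar{S} \to 1,
\]
a subgroup mapping onto $\bar{S}$ whose kernel in $N_N(J)/J \cong (N_T(J_0)/J_0)^k$ is $p$-solvable. I would carry this out by applying Lemma \ref{csu} to a chief series of $N_H(J)/J$ refining this extension, using the intravariance of suitable subgroups at each step. The factor of $2$ in the bound $|J_0| \geq 2|\mathrm{Out}(T)|_p$ from Lemma \ref{li} is exactly the slack needed to absorb the $p$-part of the action of $\bar{S}$ on $N$ (which factors through $\mathrm{Out}(T) \wr \mathrm{S}_k$), and hence to guarantee that the lifted subgroup, together with $J$, forms a $p$-solvable subgroup of the required order. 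This reduction of the lifting problem to a controlled application of Lemma \ref{csu} is the technical heart of the proof, and verifying that the $p$-part of the obstruction is indeed dominated by the estimate supplied by Lemma \ref{li} is the step requiring the most care.
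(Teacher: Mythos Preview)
Your inductive framework is sound through the Frattini step, but the final paragraph contains a real gap. Lemma~\ref{csu} produces a $p'$-subgroup, not a $p$-solvable one: applied to a chief series of $N_H(J)/J$, the intravariant $p'$-subgroups of the elementary abelian $p$-chief factors of $\bar S$ are trivial, so the output has order at least $|\bar S|_{p'}$ times whatever you can extract from the chief factors inside $N_N(J)/J$. You assert that the factor of $2$ in Lemma~\ref{li} supplies the missing $|\bar S|_p$, but you have not identified intravariant $p'$-subgroups of the chief factors of $N_N(J)/J = (N_T(J_0)/J_0)^k$ whose product dominates $|\bar S|_p$, and there is no obvious reason such subgroups exist. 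The bound $|J_0|\ge 2|\Out(T)|_p$ concerns $J_0$ itself, which you have already quotiented out; it says nothing about the chief factors of $N_T(J_0)/J_0$.

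There is, however, a clean repair: simply re-apply the induction hypothesis to $N_H(J)/J$. Since $J_0$ is a proper nontrivial subgroup of the simple group $T$, we have $N_N(J)<N$, whence $|N_H(J)|=|\bar S|\,|N_N(J)|<|\bar S|\,|N|=|H|\le |G|$, so induction applies and yields a $p$-solvable subgroup $S''\le N_H(J)/J$ with $|S''|\ge a_p(N_H(J)/J)\ge a_p(\bar S)=|\bar S|$ (the middle inequality because $\bar S$ is a quotient of $N_H(J)/J$). The preimage of $S''$ in $N_H(J)$ is an extension of the $p'$-group $J$ by $S''$, hence $p$-solvable of order at least $|\bar S|=a_p(G)$. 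Note this fix uses only that $J_0$ is a nontrivial intravariant $p'$-subgroup of $T$; the quantitative bound in Lemma~\ref{li} is not needed, and indeed a Sylow subgroup of $T$ would serve for $J_0$.

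This corrected argument is genuinely different from the paper's. The paper does not induct: it passes in one step to the case where the $p$-solvable radical is trivial, then applies Lemma~\ref{csu} once to a chief series of $G$ through the entire socle $L=L_1\times\cdots\times L_t$, taking $I=I_1\times\cdots\times I_t$ as the intravariant $p'$-subgroup of $L$. There the estimate $\prod|I_i|\ge 2^t|K/L|_p\ge |G/L|_p$ from Lemma~\ref{li} is essential, because it is precisely what compensates for the $p$-chief factors of $G/L$ in a single application of Lemma~\ref{csu}. The paper thus obtains the stronger conclusion that $S$ may be taken to be a $p'$-subgroup (with $O_p(S)=1$), which is used in the proof of Theorem~\ref{szar}; your inductive approach yields only a $p$-solvable $S$, which is all the statement asks for but loses this refinement.
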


\begin{proof}
We may assume that the largest normal $p$-solvable subgroup of $G$ is trivial.  

The socle $L$ of $G$ is a direct product 
$L=L_{1}\times \cdots \times L_{t}$ of nonabelian simple groups $L_{i}$ and $G$ is embedded in $\Aut(L)$.
Denote the kernel of the action of $G$ on the set of subgroups $L_{i}$ by $K$. Then $G/K$
is a permutation group of degree $t$ and $\left| G/K \right|
_{p}\leq 2^{t}$.

Let $I_{1}, \ldots , I_{t}$ be solvable intravariant $p^{\prime }$-subgroups of
maximal orders in the groups $L_{1},\ldots ,L_{t}$. Since $K/L$ is isomorphic to a subgroup of $%
\prod_{i=1}^{t}\Out(L_{i})$, using Lemma \ref{li} we see that
$\left| I_{1}\right| \left| I_{2}\right| \cdots \left| I_{t}\right| \geq
2^{t}\left| K/L\right| _{p}\geq \left| G/L\right| _{p}$.

It is easy to see that the subgroup $I=I_{1}I_{2}\cdots I_{t}$ is
intravariant in $L$.

Consider now a normal series $G =G_{0}\vartriangleright
G_{1}\vartriangleright \cdots \vartriangleright G_{r-1}=L$ such that the groups $G_{i}/L$ form a chief series of $G/L$. Every
$p^{\prime }$-factor $G_{j}/G_{j+1}$ can be considered as an
intravariant $p^{\prime }$-subgroup of itself. Applying Lemma \ref{csu} we
see that $G$ has a $p^{\prime }$-subgroup $S$ whose order is
greater or equal to the product of the orders of these $p^{\prime }$-factors
and $\left| G/L\right| _{p}$. Therefore we have $\left| S \right| \geq
a_{p}(G)$ as required.
\end{proof}








Combining Theorem \ref{subgroup} with well-known results of P\'{a}lfy \cite
{palfy} and Wolf \cite{wolf} one obtains sharp bounds for $a(X)$ for
irreducible linear groups and primitive permutation
groups $X$. Using \cite{HM} we extend these results even further. In the next three results $c_{1} = \log_{9}(48 \cdot 24^{1/3})$ which is close to $2.24399$.

\begin{theorem}
\label{szar}
If $A$ is a finite group acting faithfully and completely reducibly on a finite vector space of size $n$ in characteristic $p$, then 
$a(A) \leq a_{p}(A)\leq 24^{-1/3}{n}^{c_{1}}$.
\end{theorem}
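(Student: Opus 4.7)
The plan is to combine the reduction from Theorem \ref{subgroup} with a P\'alfy--Wolf style bound for $p$-solvable linear groups. First, the inequality $a(A) \le a_p(A)$ is immediate, since every abelian composition factor of $A$ is cyclic of prime order $r$, and so is counted in $a_p(A)$ whether $r=p$ or $r \ne p$. Hence it suffices to bound $a_p(A)$.

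To bound $a_p(A)$, I would apply Theorem \ref{subgroup} to produce a $p$-solvable subgroup $S \le A$ with
\[
a_p(A) \;\le\; |S|.
\]
Since $A$ acts faithfully on $V$, so does $S$. The problem is thereby reduced to showing that any $p$-solvable subgroup $S$ of a finite group $A$ which acts faithfully and completely reducibly on a vector space $V$ of size $n$ in characteristic $p$ satisfies $|S|\le 24^{-1/3}n^{c_1}$. This is exactly the content of the extension of the P\'alfy--Wolf theorem to the $p$-solvable setting proved in \cite{HM}, which I would quote directly to conclude.

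If the statement in \cite{HM} requires that $S$ itself act completely reducibly on $V$, the second step needs a short additional argument, and this is where I expect the only subtle point to appear. In characteristic $p$ a subgroup of a completely reducible group need not be completely reducible. The standard way around this is to take a composition series $0 = V_0 < V_1 < \cdots < V_r = V$ of $S$-submodules of $V$ and replace $V$ by $W := \bigoplus_i V_i/V_{i-1}$; then $S$ acts completely reducibly on $W$ with the same underlying dimension, and its kernel on $W$ is a normal $p$-subgroup $N$ of $S$. Since $N$ is a $p$-subgroup acting trivially on the composition factors of $V$, the classical fact that a $p$-group in characteristic $p$ acts trivially on simple modules, combined with the complete reducibility of $V$ as an $A$-module (and Clifford's theorem applied to the normal closure of $N$ inside appropriately chosen normal subgroups of $A$), forces $N = 1$. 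Hence $S$ acts faithfully and completely reducibly on $W$, and the P\'alfy--Wolf type bound from \cite{HM} applies.

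The main obstacle is therefore not the reduction itself but pinning down the precise form of the $p$-solvable P\'alfy--Wolf bound and verifying that a $p$-solvable subgroup of a completely reducible linear group inherits an effective faithful completely reducible action of the same dimension. Once this is in place the chain
\[
a(A) \;\le\; a_p(A) \;\le\; |S| \;\le\; 24^{-1/3} n^{c_1}
\]
yields the theorem.
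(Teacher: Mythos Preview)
Your overall strategy coincides with the paper's: reduce via Theorem~\ref{subgroup} to a $p$-solvable subgroup $S\le A$ with $a_p(A)\le |S|$, then invoke \cite[Theorem 1.2]{HM}. You also correctly isolate the one subtle point, namely that the Halasi--Mar\'oti bound needs $S$ to act faithfully and completely reducibly.

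However, your proposed resolution of that point has a genuine gap. Passing to the semisimplification $W$ is fine, and the kernel $N$ of $S$ on $W$ is indeed a normal $p$-subgroup of $S$; but your argument that $N=1$ does not go through. The group $S$ is merely a subgroup of $A$, not a normal one, so the normal closure of $N$ in $A$ need not be a $p$-group, and Clifford's theorem gives you nothing here. Concretely, take $A=\mathrm{GL}_2(p)$ acting naturally on $V=\mathbb F_p^{\,2}$ (irreducible, so completely reducible), and let $S$ be the Borel subgroup of upper-triangular matrices. Then $S$ is $p$-solvable with $|S|\ge a_p(A)$, yet the kernel on the semisimplification is the unipotent radical, of order $p$. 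So for an \emph{arbitrary} $p$-solvable $S$ witnessing Theorem~\ref{subgroup} one cannot expect $N=1$, and after factoring out $N$ you no longer know that $a_p(A)\le |S/N|$.

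The paper closes this gap differently: it observes that the \emph{specific} $S$ produced in the proof of Theorem~\ref{subgroup} may be taken with $O_p(S)=1$ once $O_p(A)=1$. Indeed, that construction first sets $R$ equal to the $p$-solvable radical of $A$ and then lifts a $p'$-subgroup of $A/R$, so $S\ge R$; then $O_p(S)\le R$ (as $S/R$ is a $p'$-group), hence $O_p(S)\le O_p(R)\le O_p(A)=1$, the last inclusion because $A$ is completely reducible in characteristic $p$. With $O_p(S)=1$, the kernel on the semisimplification vanishes and \cite{HM} applies to $S$ on a space of size $n$. Replacing your Clifford argument by this observation makes the proof complete and identical in substance to the paper's.
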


\begin{proof}
By Theorem \ref{subgroup} we know that there is a $p$-solvable subgroup $S$ of $A$ such that $a_{p}(A) \leq |S|$. Moreover, by the construction implicit in the proof of Theorem \ref{subgroup}, we may assume that $O_{p}(S) = 1$ (since $O_{p}(A) = 1$). Thus $S$ can be viewed as a finite group acting faithfully and completely reducibly on a vector space of size $n$. By \cite[Theorem 1.2]{HM} we have $|S| \leq 24^{-1/3}{n}^{c_{1}}$.
\end{proof}

\begin{corollary}
\label{c1}
Let $1\neq G\vartriangleleft A \leq \mathrm{S}_n$ with $A$ primitive. Let $p$ be a prime dividing $n$. Then $
a(A/G) \leq a_{p}(A/G) \leq 24^{-1/3}n^{c_{1}}$.
\end{corollary}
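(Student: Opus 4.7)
The plan is to split into the affine and non-affine cases of the Aschbacher--O'Nan--Scott classification, reducing the former directly to Theorem \ref{szar} and handling the latter by elementary bounds on $|A/\mathrm{Soc}(A)|$.

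First I would handle the affine case. Here $n = p^d$ and the socle $V = F^{*}(A)$ is elementary abelian of order $n$ and is the unique minimal normal subgroup of $A$. The quotient $A/V$ embeds into $\mathrm{GL}(V)$ and acts faithfully and irreducibly on $V$, hence completely reducibly, in characteristic $p$. Since $G$ is a nontrivial normal subgroup of $A$, it must contain the unique minimal normal subgroup $V$. Therefore $A/G$ is a quotient of $A/V$, and the composition factors of $A/G$ (in particular the $p$-solvable ones) form a sub-multiset of those of $A/V$, giving $a_p(A/G) \leq a_p(A/V)$. Applying Theorem \ref{szar} to $A/V$ acting on $V$ then yields $a_p(A/V) \leq 24^{-1/3} n^{c_1}$, which is what we want.

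Next I would handle the non-affine case. Now the socle $L$ of $A$ is a direct product of $t \geq 1$ copies of a nonabelian simple group $T$, and by the structure of non-affine primitive groups we have $|A/L| \leq |\mathrm{Out}(T)|^t \cdot t!$ with $L$ either the unique minimal normal subgroup of $A$ or a product $L_1 \times L_2$ of two isomorphic minimal normal subgroups (the holomorph/diagonal case). In the unique-minimal case $G \supseteq L$ immediately. In the two-minimal case $G$ contains at least one of $L_1, L_2$, and although the remaining $T$-factor may show up as an extra composition factor of $A/G$ beyond those of $A/L$, this is a nonabelian simple factor of order $|T|$ divisible by $p$ (since $p \mid n = |L_1| = |T|$ in this case), so it does not contribute to $a_p(A/G)$. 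Either way, $a_p(A/G) \leq a_p(A/L) \leq |A/L|$, the last inequality using that $A/L$ has only solvable (via Schreier's conjecture) and permutation composition factors. Routine estimates, using that $|\mathrm{Out}(T)|$ is at most polylogarithmic in $|T|$ and that $n \geq 5^t$ in the product/diagonal types while $n$ exceeds the minimum faithful transitive degree of $T$ in the almost simple type, now give $|A/L| \leq 24^{-1/3} n^{c_1}$ with substantial slack.

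The main obstacle is the bookkeeping in the non-affine case, particularly in the holomorph-of-simple-group configuration where one must verify that the "missing" composition factors of $A/G$ are exactly those that fall outside of $a_p$. Once this is checked, the rest is a routine numerical estimation, facilitated by the generous exponent $c_1 > 2$, which easily absorbs both the polylogarithmic contribution of $|\mathrm{Out}(T)|^t$ and the factorial contribution of $t!$ against the at-least-exponential growth of $n$ in $t$.
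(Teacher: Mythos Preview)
Your affine case is fine and matches the paper. The non-affine case, however, contains a genuine gap: the step ``routine estimates \ldots\ give $|A/L| \leq 24^{-1/3} n^{c_1}$'' is false in general. Take $L = T^t$ with $T = \mathrm{A}_5$ in product action on $n = 5^t$ points. Then $|A/L|$ can be as large as $|\mathrm{Out}(\mathrm{A}_5)|^t \cdot t! = 2^t \cdot t!$, and for $t$ large (already around $t \approx 100$) this exceeds $24^{-1/3} \cdot 5^{c_1 t}$, since $t!$ grows super-exponentially in $t$ while $n^{c_1}$ is only exponential. So passing through $|A/L|$ throws away exactly the information you need.

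The point you are missing is that the large composition factor $\mathrm{A}_t$ (coming from the top group in $\mathrm{S}_t$) is \emph{not} $p$-solvable once $t \geq p$, so it does not contribute to $a_p(A/L)$ at all. The paper exploits this by bounding $a_p$ of the image of $A$ in $\mathrm{S}_t$ via Proposition~\ref{di}, which gives $a_p(\text{top group}) \leq n$ rather than the hopeless $t!$; combined with $|\mathrm{Out}(T)| \leq (2/3)m$ this yields $a_p(A) < (2/3)n^2 < 24^{-1/3} n^{c_1}$ for $n \geq 16$, with small $n$ handled separately. To repair your argument you must replace the inequality $a_p(A/L) \leq |A/L|$ by a genuine estimate of $a_p$ of the permutation quotient.
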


\begin{proof}
We use the Aschbacher-O'Nan-Scott Theorem. The affine case follows from Theorem \ref{szar} by noting that $|G|$ may be taken to be $n$. So assume that $F^{*}(A)$ is nonabelian and it is the direct product of $t$ copies of a nonabelian simple group $L$. By our choice of $p$ the group $L$ (and thus $F^{*}(A)$) is not $p$-solvable. 

If $F^{*}(A)$ is the unique minimal normal subgroup of $A$, then $n \geq {m}^t$ for some divisor $m$ of $|L|$ which is at least the minimal degree of a permutation representation for $L$. In this case $a_{p}(A) \leq {|\Out(L)|}^{t} a_{p}(T)$ where $T$ is a transitive permutation group on $t$ letters. By \cite[Lemma 2.7 (i)]{AG2} we have $|\Out(L)| \leq (2/3) m$, and by Proposition \ref{di} we have $a_{p}(T) \leq n$ (since $m$ can be chosen such that $m \geq p$). These give $a_{p}(A) < (2/3) n^{2}$. This is less than $24^{-1/3}n^{c_{1}}$ unless $n \leq 15$ (and thus $t=1$). If $n \leq 15$, then $a_{p}(A)$ is at most $|\Out(L)| < m = n < 24^{-1/3}n^{c_{1}}$.  

Finally assume that $F^{*}(A)$ is the direct product of two minimal normal subgroups of $A$. In this case $n = l^{t/2}$ where $l = |L| \geq 60$. Again by \cite[Lemma 2.7 (i)]{AG2} and Proposition \ref{di} we find that $a_{p}(A) \leq (2/3) n^{2} < 24^{-1/3}n^{c_{1}}$ for $n \geq 60$ (since $l^{1/2} \geq p$). 
\end{proof}

This immediately implies the following sharp result (which extends the main
result of \cite{palfy} and \cite{wolf}).

\begin{corollary}
\label{cc22}
If $A$ is a primitive permutation group of degree $n$ and $p$ is a prime divisor of $n$, then $a(A) \leq a_{p}(A) \leq 24^{-1/3}n^{1+c_{1}}$.
\end{corollary}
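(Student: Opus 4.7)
The plan is to obtain Corollary \ref{cc22} as an essentially immediate packaging of Corollary \ref{c1}. I would take $G$ to be any minimal normal subgroup of the primitive group $A$, so that $1 \neq G \vartriangleleft A$, and exploit the factorization $a_{p}(A) = a_{p}(G) \cdot a_{p}(A/G)$ together with the bound $a_{p}(A/G) \leq 24^{-1/3} n^{c_{1}}$ supplied by Corollary \ref{c1}. The companion inequality $a(A) \leq a_{p}(A)$ is immediate from the definitions, because every simple abelian group is cyclic of prime order and is therefore either a $p$-group or a $p^{\prime}$-group, hence $p$-solvable.

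All that remains is to verify $a_{p}(G) \leq n$. I would split into two cases. If $G$ is abelian, then primitivity of $A$ forces $G$ to be elementary abelian of order $n$, and $a_{p}(G) = n$. If $G$ is nonabelian, then $G \cong L^{t}$ for some nonabelian simple group $L$. Since $G$, being a minimal normal subgroup of a primitive group, is transitive on the $n$ points, we have $n \mid |G| = |L|^{t}$; combined with $p \mid n$ this yields $p \mid |L|$, so $L$ fails to be $p$-solvable (a nonabelian simple group is $p$-solvable only when it is a $p^{\prime}$-group). Consequently every composition factor of $G$ is a copy of $L$, none of which is $p$-solvable, and $a_{p}(G) = 1$.

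Combining the two cases then gives $a_{p}(A) \leq n \cdot 24^{-1/3} n^{c_{1}} = 24^{-1/3} n^{1 + c_{1}}$. Since the argument rests only on Corollary \ref{c1} and an elementary divisibility observation, I do not anticipate any serious obstacle; the only step requiring a sentence of care is the divisibility $n \mid |G|$ in the nonabelian case, which follows from the standard fact that a minimal normal subgroup of a primitive permutation group acts transitively on the underlying set.
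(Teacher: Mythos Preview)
Your argument is correct and is exactly the derivation the paper has in mind: the paper simply records that Corollary \ref{cc22} ``immediately'' follows from Corollary \ref{c1}, which amounts to choosing $G$ to be a minimal normal subgroup of $A$, noting $a_{p}(G) \le n$ (with equality in the affine case and $a_{p}(G)=1$ otherwise since $p \mid |L|$), and multiplying by the bound $a_{p}(A/G) \le 24^{-1/3} n^{c_{1}}$.
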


This proves a part of Theorem \ref{main:3}. 

\section{Basic results on abelian composition factors}
\label{Section 7}

Our earlier results on nonabelian composition factors in wreath products do
not help in considering abelian composition factors. We use different
methods for studying abelian composition factors.

The following lemma and its consequences will be crucial in proving Theorem \ref{main:0} on the indices of primitive groups in their normalizers.

If $V$ is a $G$-module over a field, let $t_{G}(V)$ denote the smallest
number $r$ such that every submodule of $V$ can be generated by $r$ elements.

\begin{lemma}
\label{module}Let $H<G$ with $|G:H|=t>1$. Let $W$ be an $H$-module over an
arbitrary field and let $V=W_{H}^{G}$ be the induced module. Then we have the following.
\begin{itemize}

\item[1)] $t_{G}(V)\leq \frac{1}{2}\dim V$. 

\item[2)] If $t\neq 2^{n}$ for any integer $n$, then $t_{G}(V)\leq \frac{1}{3}\dim V$.

\item[3)] If $H \vartriangleleft G$ and $G/H\cong C_{2}^{n}$, then $t_{G}(V)\leq
c_{n}\dim V$ where $c_{n}= \frac{1}{2^{n}} \binom{n}{\left\lfloor n/2%
\right\rfloor }$.

\item[4)] If $t=2^{n}$ for an integer $n$, then $t_{G}(V)\leq \frac{3}{8}\dim V$, unless $H$ is normal
in $G$ and $G/H\cong C_{2}$ or $C_2^2$. Moreover $t_{G}(V)\leq \frac{5}{16}
\dim V$ for $t\geq 32$.
\end{itemize}
\end{lemma}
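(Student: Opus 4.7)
This lemma bounds the generator count $t_G(V)$ of submodules of the induced module $V = W_H^G$. My approach would combine Frobenius reciprocity with combinatorial arguments based on the coset-sheet decomposition of $V$. Recall that for a finitely generated $kG$-module $M$ (with $G$ finite), the minimum number of $G$-generators of $M$ equals the maximum multiplicity of any simple $kG$-module $S$ in the head $M/J(kG)M$; equivalently, since $kG$ is a Frobenius algebra, the maximum socle multiplicity in $M^{*}$. Since $V^{*} \cong (W^{*})_H^G$ is again an induced module of the same index, one can freely pass between analysing heads of submodules and socles of quotients of $V$.

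For part (1), fix a simple $S$ and apply Frobenius reciprocity: $\dim \Hom_G(S, V) = \dim \Hom_H(S|_H, W)$, which controls the $S$-multiplicity in $\soc V$. Writing $V = \bigoplus_{g \in T} gW$ for a left transversal $T$ of size $t$, a nonzero $G$-submodule concentrated in a single sheet has $k$-dimension at most $\dim W = \dim V / t \leq \tfrac{1}{2}\dim V$, and the extremal case for the $\tfrac{1}{2}$ bound is $t = 2$ with a diagonal submodule isomorphic to $W$ as an $H$-module. Tracking how $S$-multiplicities in the heads of submodules distribute across the $t$ sheets, together with the permutation action of $G/H$ on these sheets, is the main ingredient for the bound $t_G(V) \leq \tfrac{1}{2}\dim V$.

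For part (3), the shape of $c_n = \binom{n}{\lfloor n/2 \rfloor}/2^n$ strongly suggests Sperner's theorem. Assuming $H \vartriangleleft G$ and $G/H \cong C_{2}^{n}$, consider $k[G/H]$; the only nontrivial case is characteristic $2$, where $k[G/H] \cong k[x_1,\ldots,x_n]/(x_i^2)$ with its natural grading by subsets of $[n]$. Submodules of $V = W \otimes_k k[G/H]$ (with $H$ acting through $W$) inherit a compatible filtration, and minimal generators of a submodule land in an antichain of the Boolean lattice $2^{[n]}$. Sperner's theorem then bounds such antichains by $\binom{n}{\lfloor n/2 \rfloor}$, yielding $t_G(V) \leq c_n \dim V$. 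The non-modular case is easier since $k[G/H]$ is then semisimple.

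Parts (2) and (4) will follow by combining (1) and (3) with a case analysis on the structure of $G/H$. For (2), if $t$ has an odd prime divisor $p$ then $G/H$ contains an element of order $p$, whose orbits on cosets rule out the diagonal configurations saturating (1); a refined count yields the $\tfrac{1}{3}$ bound. For (4), when $t = 2^n \geq 8$ and $G/H \not\cong C_{2}, C_{2}^{2}$, the group $G/H$ admits an elementary abelian quotient of rank at least $3$, and applying (3) through this quotient gives $\tfrac{3}{8}$; the improved bound $\tfrac{5}{16}$ for $t \geq 32$ uses the case $n \geq 5$ of (3). The main obstacle will be handling the modular-characteristic case uniformly, where $V$ is non-semisimple and its Loewy structure is intricate; a careful induction on $|G:H|$ together with the self-duality $V \leftrightarrow V^{*}$ under induction should resolve this.
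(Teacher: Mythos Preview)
Your approach diverges substantially from the paper's, and there is a genuine gap in parts (2) and (4). You repeatedly treat $G/H$ as a group: you speak of ``$G/H$ contains an element of order $p$'' and ``$G/H$ admits an elementary abelian quotient of rank at least $3$''. But $H$ is not assumed normal in $G$ except in part (3). In part (4) the hypothesis is merely $t=2^n$; the clause ``unless $H \vartriangleleft G$ and $G/H \cong C_2$ or $C_2^2$'' is part of the conclusion, not the hypothesis. Even when $H$ \emph{is} normal, a $2$-group $G/H$ of order $\geq 8$ need not have an elementary abelian quotient of rank $3$ --- take $G/H \cong C_8$ --- so your reduction of (4) to (3) fails outright. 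Your Frobenius-reciprocity approach to (1) bounds the $S$-multiplicity in $\mathrm{soc}\,V$, but $t_G(V)$ is the supremum of the number of generators over \emph{all} submodules of $V$, and you have not explained how the sheet picture controls those.

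The paper's method is entirely different and worth knowing. The key inequality is $t_G(V) \leq t_S(V|_S)$ for any subgroup $S \leq G$, together with the Mackey decomposition of $V|_S$ as a direct sum of modules induced from $S \cap H^g$. For (1) and (2) one restricts to a Sylow $p$-subgroup for the largest prime $p \mid t$, then further to the cyclic group generated by an element lying in no conjugate of $H$; the induced module for a cyclic $p$-group from a $1$-dimensional module is a single Jordan block, hence monogenic, giving $t_G(V) \leq (1/p)\dim V$. For (3) and (4) one restricts to a Sylow $2$-subgroup $S$ (transitive on cosets since $t=2^n$), reduces to $W$ the trivial $1$-dimensional module via a composition-series argument, and then: for (3) one invokes the Kov\'acs--Newman bound on generators of submodules of the regular $C_2^n$-module (which is indeed the Sperner bound you identified); for (4) one shows structurally that if the image of $S$ in $\mathrm{S}_t$ is not regular elementary abelian then it contains an element of order $4$ all of whose cycles are $4$-cycles, and restriction to that cyclic subgroup gives $t_G(V) \leq \frac{1}{4}\dim V$. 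Your Sperner idea for (3) is on the right track, but the reduction to $W$ trivial is needed and you have not supplied it.
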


\begin{proof}
First we prove 1) and 2). By extension of scalars, we may assume that the ground field $k$ is
algebraically closed. Let $p$ be the largest prime dividing $t$ and let $S$
be a Sylow $p$-subgroup of $G$.

Consider the restricted module $V_{S}$. By the Mackey decomposition, this is a direct sum of induced modules of the form ${(W^{g})}_{H^{g} \cap S}^S$ with $g \in G$. Now $p$ divides $t$, so $H^{g} \cap S$ is a proper subgroup of $S$ for all $g \in G$. If we manage to show the proposed bounds for $t_{S}(V_{S})$ then we are finished since $t_{G}(V) \leq t_{S}(V_{S})$. Since $t_{S}$ is subadditive with respective to a direct sum decomposition of $V_{S}$, it is sufficient to bound $t_{S}({(W^{g})}_{H^{g} \cap S}^S)$ for a given $g \in G$. But this means that we may assume that $S=G$ and $H^{g} = H$.

Now let $c\in G$ be an element which does not lie in any conjugate of $H$
and let $C=\left\langle c\right\rangle $. Then, as above, $C \cap H^g$ is
proper in $C$ for all $g\in G$, so by restricting $V$ to $C$ we may
assume that $G$ is a nontrivial cyclic $p$-group.

We can also assume that $W$ is irreducible. Since $H$ is cyclic, $W$ is
$1$-dimensional and the induced module $V$ consists of a single Jordan block,
thus it can be generated by one element. That is, $t_{G}(V) = 1 \leq \frac{1}{p}%
\dim V\leq \frac{1}{2}\dim V$ as required.

Now if $t\neq 2^{n}$, then $p>2$ and 2) follows.

Now we turn to the proof of 3) and 4). If $t=2^{n}$, then let $P$ be the permutation representation of $G$ on the
set of left cosets of $H$ and let $T$ be a Sylow $2$-subgroup of $G$, the
image of $S$ in the permutation representation $P$. Then $T$ is transitive
and so the restricted module $V_{S}$ is simply the induced module $W_{S\cap
H}^{S}$, by the Mackey decomposition.

Instead of $V$ and $W$, we will consider the restricted
modules $V_{S}$ and $W_{S\cap H}$. If $\mathrm{char}(k)\neq 2$, then $V_{S}$ is
semisimple and $t_{G}(V)\leq t_{S}(V)\leq \dim W$ holds. So we can assume
that $\mathrm{char}(k)=2$. Then we can assume that $W_{S\cap H}$ is irreducible, so
the action of $S\cap H$ on $W_{S\cap H}$ is trivial, i.e., $W_{S\cap H}$ is
the trivial $1$-dimensional module. (For this notice that a composition series of $W_{S \cap H}$ corresponds naturally to a series of $\dim W$ submodules of $V$. For any submodule $A$ of $V$ we can view the intersection of $A$ with the members of the previous series. We obtain the claim after summing dimensions corresponding to factor modules of $A$ and by noticing that $c_{n}$ can be viewed as a constant.) Then $V_{S}$ is isomorphic to the
regular representation module of $C_{2}^{n}$. Now using \cite[3.2]{kovnew}
we see that $t_{G}(V)\leq t_{S}(V)\leq c_{n}\dim V$, as required.

In proving 4), we need the following.

\textbf{Claim.} Let $D$ be the permutational wreath product of a regular
elementary abelian $2$-group $R$ and $C_{2}$. If $g$ is an element of order $%
4$ in $D$, then the cycle decomposition of $g$ consists of $4$-cycles.

To see this, write $g$ in the form $g=(a,b)\tau $ where $(a,b)$ is an
element of the base group $R_{1}\times R_{2}$ (here $R_{1}$ and $R_{2}$ are
naturally identified with $R$) and $\tau $ is the involution in the top
group. Then $g^{2}=(a,b)\tau (a,b)\tau =(ab,ba)$. Since $g^{2}\neq 1$, we
see that $ab$ and $ba=(ab)^{-1}$ are both different from the identity, hence
they are fixed point free involutions and so is $g^{2}$ which implies the claim.

Now we will prove 4).

If $T$ itself is not isomorphic to the regular action of $C_{2}^{n}$, then
we prove that $t_{S}(V)\leq \frac{1}{4}\dim V$ from which 4) follows. We
argue by induction. Let $B_{1},B_{2}$ be a $T$-invariant partition. Let $K$
be the stabiliser of the partition. Since $K$ has index $2$ in $T$, $K$ acts
as a transitive group $K_{i}$ on $B_{i}$ so using the inductive hypothesis,
we are done unless $K_{1}$ (or equivalently, $K_{2}$) is isomorphic to the
regular action of $C_{2}^{n-1}$. Then $T$ embeds into the wreath product $%
K_{1}\wr C_{2}$. Now $T$ has an element $g$ of order $4$, otherwise $T$ would
be regular elementary abelian. By our claim, the cycle decomposition of $g$
consists of $4$-cycles. Now using the preimage of $g$ in $G$ we see that $%
t_{G}(V)\leq \frac{1}{4}\dim V$.

If $T$ is isomorphic to $C_{2}^{n}$ then our claim follows from 3).
\end{proof}

Lemma \ref{module} is used in the following result. 

\begin{lemma}
\label{nyuszi}
Let $X_{1}, \ldots, X_{t}$ be finite groups, $X$ their direct product, and let $G$ be an automorphism group of $X$ which permutes the factors transitively. Let $K\leq X$ be a $G$-invariant subgroup, such that for each projection $\pi_{i} $ of $X$ onto $X_{i}$ we have $\pi _{i}(K)\vartriangleleft
\vartriangleleft X_{i}$. Set $J=[G,K]$ (and note that $J$ is normal in $K$
and $G$-invariant). Then $a(K/J)\leq (a(X_{1}))^{t/2}$. If $t\ne 2,4$ then
$a(K/J)\leq (a(X_1))^{3t/8}$ and if $t\geq 17$ then $a(K/J)\leq
(a(X_1))^{t/3}$.
\end{lemma}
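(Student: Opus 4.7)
The plan is to prove the statement by induction on $|X_1|$, after a preliminary reduction. I would begin by replacing each $X_i$ with $\pi_i(K)$: this is allowed because $\pi_i(K)\vartriangleleft\vartriangleleft X_i$ gives $a(\pi_i(K))\le a(X_i)=a(X_1)$, and since $K$ is $G$-invariant we have $g(\pi_i(K))=\pi_{g\cdot i}(K)$, so $G$ still permutes the new factors transitively. Henceforth I may assume $\pi_i(K)=X_i$ for every $i$, i.e.\ $K$ is a $G$-invariant subdirect product of $X_1\times\cdots\times X_t$. Let $H\le G$ be the stabilizer of $X_1$, so $[G:H]=t$.

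For the inductive step, pick a proper nontrivial $H$-invariant normal subgroup $N$ of $X_1$; set $\widetilde N=\prod_{gH\in G/H} g(N)\le X$ and $K'=K\cap\widetilde N$. Then $K'$ is $G$-invariant in $\widetilde N\cong N^t$; since $\pi_i(K')\vartriangleleft \pi_i(K)=X_1$ and $\pi_i(K')\le N\vartriangleleft X_1$, we get $\pi_i(K')\vartriangleleft N$, so the inductive hypothesis (with $N$ in place of $X_1$) gives $a(K'/[G,K'])\le a(N)^{t/2}$. Because $[G,K']\le K'\cap J$, the quotient $K'J/J\cong K'/(K'\cap J)$ inherits this bound, so $a(K'J/J)\le a(N)^{t/2}$. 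Similarly, $K/K'$ is a $G$-invariant subdirect product of $(X_1/N)^t$, and the identification $K/(K'J)\cong (K/K')/[G,K/K']$ combined with induction (applied with $X_1/N$ in place of $X_1$) yields $a(K/(K'J))\le a(X_1/N)^{t/2}$. Multiplying along the normal series $J\le K'J\le K$ then gives $a(K/J)\le a(N)^{t/2}\cdot a(X_1/N)^{t/2}=a(X_1)^{t/2}$.

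In the base case $N=X_1$, the group $X_1$ is itself a minimal $H$-invariant normal subgroup, hence a direct product of isomorphic simple groups with $H$ transitive on the factors. If those simple factors are $C_p$, then $X_1$ is an elementary abelian $\mathbb{F}_p[H]$-module and $X=X_1^t$ identifies with the induced module $\Ind_H^G X_1$ as an $\mathbb{F}_p[G]$-module; Lemma \ref{module}(1) then gives $t_G(\Ind_H^G X_1)\le \tfrac{t}{2}\dim_{\mathbb{F}_p}X_1$. Since $\dim K_G\le t_G(K)\le t_G(\Ind_H^G X_1)$ and $K/J=K_G$, we obtain $a(K/J)=|K_G|\le p^{(t/2)\dim X_1}=|X_1|^{t/2}=a(X_1)^{t/2}$. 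If instead $X_1=T^s$ with $T$ nonabelian simple, then the condition $\pi_i(K)=X_1$ plus simplicity of $T$ forces every projection of $K$ onto each of the $st$ simple factors of $X_1^t$ to be surjective, and Scott's lemma on subdirect products identifies $K$ with a direct product of diagonal copies of $T$; hence $K$ is perfect and $a(K/J)\le a(K)=1=a(X_1)^{t/2}$. The stronger bounds for $t\ne 2,4$ and $t\ge 17$ follow by substituting parts (2) or (4) of Lemma \ref{module} for part (1) at the elementary abelian base case, with no other change to the induction.

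The main obstacle is the bookkeeping in the inductive step: checking that the hypotheses of the lemma carry over to $(K',N)$ and to $(K/K',X_1/N)$---the subnormality of the projections and $G$-invariance in particular---and that the containment $[G,K']\le K'\cap J$ lets the coinvariant-type bound push through the short exact sequence $1\to K'J/J\to K/J\to K/K'J\to 1$ without any extra loss.
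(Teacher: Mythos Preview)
Your proof is correct and follows essentially the same route as the paper's. Both arguments reduce by a normal series in $X_1$ to the characteristically simple case, then in the elementary abelian case identify $X$ with the induced module $\Ind_H^G X_1$ and invoke Lemma~\ref{module}, while in the nonabelian case observe that $K$ (hence $K/J$) is a direct product of copies of a nonabelian simple group so $a(K/J)=1$. The paper peels off a minimal \emph{characteristic} subgroup $Y_1\le X_1$ at each step (so $G$-invariance of $Y=\prod Y_i$ is automatic and the subnormality hypothesis $\pi_i(K)\vartriangleleft\vartriangleleft X_i$ is carried through unchanged), whereas you first reduce to the subdirect case $\pi_i(K)=X_i$ and then use an $H$-invariant normal subgroup $N$; this is a cosmetic difference and your bookkeeping (in particular $[G,K']\le K'\cap J$ and $K/(K'J)\cong (K/K')/[G,K/K']$) is handled correctly.
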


\begin{proof}
Let $Y_{1}$ be a minimal characteristic subgroup of $X_{1}$ and let $%
Y=Y_{1}\times \ldots \times Y_{t}$ where $Y_{i}$ are the images of $Y_{1}$
under $G$. Note that
\begin{equation*}
a(K/J)=a(K/J(K\cap Y))a((K\cap Y)J/J)=a(KY/JY)a((K\cap Y)/(J\cap Y))\text{.}
\end{equation*}
So by induction on the length of a characteristic series in $X_1$, we might assume that $X_{i}$ is characteristically simple.

If $X_{i}$ is elementary abelian, then $X$ is an induced module and the
result follows by Lemma \ref{module}. Suppose that $X_{i}$ is a direct
product of isomorphic copies of a nonabelian simple group. Since $\pi
_{i}(K)\vartriangleleft \vartriangleleft X_{i}$, the same is true for each $%
\pi _{i}(K)$, whence $K$ is also a direct product of copies of a nonabelian
simple group. Since $J \vartriangleleft K$, $K/J$ also has the same form, whence
$a(K/J)=1$.

If $t\ne 2,4$ or $t\geq 17$, the same argument applies
(using the stronger conclusions in Lemma \ref{module}).
\end{proof}

We will use Lemma \ref{nyuszi} in the above form, however in one case we will need a refined version. 

\begin{lemma}
\label{imprimitiveexample}
Use the notations and assumptions of Lemma \ref{nyuszi}. Let $t=4$ and for each $i$ with $1 \leq i \leq t$ suppose that $X_{i} = \mathrm{GL}_{2}(3)$. Then $a(K/J) \leq 16^{2} \cdot 3$. 
\end{lemma}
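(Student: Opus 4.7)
The plan is to apply the inductive argument of Lemma \ref{nyuszi} along the characteristic series
$$
1 \vartriangleleft Z \vartriangleleft Q_{8} \vartriangleleft \mathrm{SL}_{2}(3) \vartriangleleft \mathrm{GL}_{2}(3),
$$
whose successive abelian quotients are $L_{0} = Z \cong C_{2}$, $L_{1} = Q_{8}/Z \cong V_{4}$, $L_{2} = \mathrm{SL}_{2}(3)/Q_{8} \cong C_{3}$ and $L_{3} = \mathrm{GL}_{2}(3)/\mathrm{SL}_{2}(3) \cong C_{2}$, and then to sharpen the standard estimate for the single $C_{3}$-layer. Telescoping the identity $a(K/J) = a(KY/JY) \cdot a((K \cap Y)/(J \cap Y))$ used inside the proof of Lemma \ref{nyuszi} yields
$$
a(K/J) \le \prod_{i=0}^{3} |M_{i}/[G, M_{i}]|,
$$
where each $M_{i}$ is the $G$-submodule of the elementary abelian layer $L_{i}^{4}$ that arises from $K$; writing $H \le G$ for the coordinate stabilizer (of index $t = 4$), we may view $L_{i}^{4} \cong \Ind_{H}^{G}(L_{i})$.

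For the three layers in characteristic $2$ I would apply parts (1) and (3) of Lemma \ref{module}: with $c_{2} = 1/2$ these give $t_{G}(L_{i}^{4}) \le \tfrac{1}{2}\dim L_{i}^{4}$, hence contributions at most $2^{2} = 4$, $2^{4} = 16$ and $2^{2} = 4$ for $L_{0}$, $L_{1}$ and $L_{3}$ respectively. Their product is $16^{2}$, so it remains only to bound the $C_{3}$-layer contribution by $3$ (rather than the naive $3^{2} = 9$ coming from part (1)).

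For the $C_{3}$-layer I would split into the two cases of Lemma \ref{module} part (4). If the exception there does not apply, part (4) gives $t_{G}(L_{2}^{4}) \le \tfrac{3}{8}\cdot 4 = \tfrac{3}{2}$, which rounds down to $t_{G}(L_{2}^{4}) \le 1$, so $|M_{2}/[G, M_{2}]| \le 3$. Otherwise $H$ is normal in $G$ with $G/H \cong C_{2}^{2}$ acting regularly on the four coordinates. The key observation is that $\Aut(\mathrm{GL}_{2}(3)) \cong \mathrm{PGL}_{2}(3) \cong S_{4}$ acts on $L_{2} \cong C_{3}$ only through its quotient $S_{4}/A_{4} \cong C_{2}$, because $A_{4} \cong \mathrm{PSL}_{2}(3)$ centralizes the abelian section $\mathrm{SL}_{2}(3)/Q_{8}$. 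Consequently the image of $G$ in $\Aut(L_{2}^{4})$ lies inside $C_{2} \wr V_{4}$, a $2$-group; so $L_{2}^{4}$ is a semisimple $\mathbb{F}_{3}[G]$-module and $M_{2}/[G, M_{2}] \cong M_{2}^{G}$. Finally, Frobenius reciprocity applied to $L_{2}^{4} = \Ind_{H}^{G} W$ with $W = L_{2}$ one-dimensional gives $\dim (L_{2}^{4})^{G} = \dim W^{H} \le 1$, whence $|M_{2}/[G, M_{2}]| \le 3$.

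Multiplying the four layer contributions yields $a(K/J) \le 4 \cdot 16 \cdot 3 \cdot 4 = 16^{2} \cdot 3$, as required. The main obstacle will be the exceptional case above: there the bound $t_{G} \le \tfrac{1}{2}\dim V = 2$ from part (3) of Lemma \ref{module} is too weak by a factor of $3$, and it is the $2$-group semisimplicity argument that eliminates this extra factor and sharpens the general Lemma \ref{nyuszi} estimate $(a(X_{1}))^{t/2} = 48^{2}$ down to $16^{2} \cdot 3$.
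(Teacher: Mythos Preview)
Your proposal is correct and follows the same overall strategy as the paper: telescope along a characteristic series of $\mathrm{GL}_2(3)$, use $t_G(V)\le \tfrac12\dim V$ on the characteristic-$2$ layers (total contribution $16^2$), and sharpen the characteristic-$3$ layer to a single factor of $3$.

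The only difference is in how the $C_3$-layer is handled. You case-split via part (4) of Lemma \ref{module} and, in the exceptional case $G/H\cong C_2^2$, argue directly that $G$ acts on $L_2^4$ through a $2$-group so the $\mathbb F_3[G]$-module is semisimple and Frobenius reciprocity gives the bound. The paper avoids this split: it simply cites the line from the \emph{proof} of Lemma \ref{module} (the paragraph beginning ``If $\mathrm{char}(k)\neq 2$\ldots'') which already shows that whenever $t=2^n$ and $\mathrm{char}(k)\neq 2$, restriction to a Sylow $2$-subgroup makes $V$ semisimple and yields $t_G(V)\le\dim W=\tfrac1t\dim V$. For $t=4$ and $\mathrm{char}(k)=3$ this gives $t_G(V)\le 1$ uniformly, so $a(K/J)\le 16^{t/2}\cdot 3^{t/4}=16^2\cdot 3$ in one stroke. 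Your exceptional-case argument is really a special instance of this same semisimplicity observation; you could shorten your proof by invoking it directly.
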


\begin{proof}
In the notation of Lemma \ref{module} we have $t_{G}(V) \leq \frac{1}{2} \dim V$ in general, and $t_{G}(V) \leq \frac{1}{4} \dim V$ for $t=4$ and $\mathrm{char}(k) = 3$. Since $|\mathrm{GL}_{2}(3)| = 16 \cdot 3$, the proof of Lemma \ref{nyuszi} gives $a(K/J) \leq 16^{t/2} \cdot 3^{t/4} = 16^{2} \cdot 3$.
\end{proof}

We will need the following explicit exponential estimate. 

\begin{theorem}
\label{abelian-transitive} Let $G \vartriangleleft A\leq \mathrm{S}_{n}$ with $G$
transitive. Then $a(A/G)\leq 6^{n/4}$.
\end{theorem}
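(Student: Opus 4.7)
I would prove the theorem by induction on $n$, with the case $n=1$ being trivial. For the inductive step, the argument splits depending on whether $A$ is primitive on $\Omega$ or not.

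If $A$ is primitive, then Corollary \ref{c1} yields $a(A/G) \leq a_{p}(A/G) \leq 24^{-1/3} n^{c_{1}}$ for any prime $p$ dividing $n$. A direct comparison of logarithms (using $\log_{6} 24 \approx 1.774$ against $c_{1} \approx 2.244$) shows that $24^{-1/3} n^{c_{1}} \leq 6^{n/4}$ for all $n$ beyond a small explicit threshold, and the remaining finitely many small primitive configurations can be verified by inspection. The extremal case is $n=4$, $A=\mathrm{S}_{4}$, $G=V_{4}$, where $a(A/G) = 6 = 6^{4/4}$ is attained; this is the source of the constant $6^{1/4}$ in the theorem.

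Suppose instead that $A$ is imprimitive, and fix a nontrivial $A$-block system $\mathcal{B} = \{B_{1},\ldots,B_{t}\}$ with blocks of common size $s$, so $n=st$ with $t>1$. Let $K = \ker(A \to \mathrm{Sym}(\mathcal{B}))$ and $K_{G} = K \cap G$. Since both $K$ and $G$ are normal in $A$, we have $[G,K] \leq K_{G}$, and therefore
\begin{equation*}
a(A/G) \,=\, a(A/KG) \cdot a(K/K_{G}) \,\leq\, a(A/KG) \cdot a(K/[G,K]).
\end{equation*}
For the first factor, the pair $(\bar A, \bar G) = (A/K, KG/K)$ is a pair of transitive permutation groups on $\mathcal{B}$ satisfying the hypothesis of the theorem at degree $t<n$, so $a(A/KG) \leq 6^{t/4}$ by induction. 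For the second factor, I would apply Lemma \ref{nyuszi} with $X_{i} = \pi_{i}(K) \leq \mathrm{Sym}(B_{i})$ and the natural conjugation action of $G$, which permutes the $t$ factors transitively since $G$ is transitive on $\mathcal{B}$: this yields $a(K/[G,K]) \leq (a(X_{1}))^{t/2}$, improved to $(a(X_{1}))^{3t/8}$ if $t \notin \{2,4\}$ and to $(a(X_{1}))^{t/3}$ if $t \geq 17$. Using the Dixon-type estimate $a(X_{1}) \leq 24^{(s-1)/3}$ from Theorem \ref{subgroup}, a short logarithmic check confirms the required inequality $a(A/G) \leq 6^{n/4}$ in every case with $t \notin \{2,4\}$; the computation for $t \neq 2,4$ reduces to $0.028 \,t \leq 0.028\,st$, which is immediate.

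The main obstacle is the two exceptional cases $t=2$ and $t=4$, where Lemma \ref{nyuszi} only provides the weaker bound $(a(X_{1}))^{t/2}$ and the resulting estimate $\sqrt{6}\cdot 24^{(s-1)/3}$ or $6 \cdot 24^{2(s-1)/3}$ exceeds $6^{n/4}$ once $s \geq 4$. To resolve these I would choose $\mathcal{B}$ minimal, so that $\bar A_{1} = \rho_{1}(N_{A}(B_{1}))$ acts primitively on $B_{1}$: the normal subgroup $X_{1} = \pi_{1}(K) \vartriangleleft \bar A_{1}$ is then either trivial or transitive on $B_{1}$, which opens the way to invoke the inductive hypothesis on the pair $(\bar A_{1}, \bar G_{1})$ at degree $s$, and in the $\mathrm{GL}_{2}(3)$-type extremal configurations Lemma \ref{imprimitiveexample} supplies the refined bound needed. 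A finite case analysis on the small values of $s$ where the estimate is tight completes the argument, and the sharpness of the constant $6^{1/4}$ propagates through wreath towers from the base example $\mathrm{S}_{4}/V_{4}$.
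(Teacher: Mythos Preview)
Your overall architecture matches the paper's: induction on $n$, Corollary~\ref{c1} plus a finite check in the primitive case, and in the imprimitive case the factorization $a(A/G)\le a(A/KG)\cdot a(K/[G,K])$ together with Lemma~\ref{nyuszi}. For $t\notin\{2,4\}$ your computation with the $3t/8$ exponent and the bound $a(X_1)\le 24^{(s-1)/3}$ (this is Proposition~\ref{di}, not Theorem~\ref{subgroup}, though the latter underlies it) is essentially the paper's argument.

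The gap is in your treatment of $t\in\{2,4\}$. You propose to invoke the inductive hypothesis on the pair $(\bar A_1,\bar G_1)$ at degree $s$, but that only bounds $a(\bar A_1/\bar G_1)$, which is not the quantity you need: Lemma~\ref{nyuszi} bounds $a(K/[G,K])$ in terms of $a(X_1)$ itself, not in terms of any quotient $a(X_1/\text{something})$, and there is no mechanism to pass from $a(\bar A_1/\bar G_1)\le 6^{s/4}$ to a bound on $a(K/[G,K])$. Your appeal to Lemma~\ref{imprimitiveexample} is also misplaced: that lemma concerns the specific linear configuration $X_i=\mathrm{GL}_2(3)$, $t=4$, not permutation groups on blocks. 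Finally, the shortfall for $t\in\{2,4\}$ is not confined to small $s$ (as you correctly observed, it occurs for all $s\ge 4$), so a finite case analysis cannot close it.

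What the paper actually does for $t\in\{2,4\}$ is different and simpler: having chosen $t$ maximal so that $A_1$ is primitive on $B_1$, it invokes \cite[Corollary~1.4]{maroti} to obtain the sharper estimate $a(A_1)\le 6^{(s-1)/2}$ (valid unless $s=4$). Feeding this into Lemma~\ref{nyuszi} with the exponent $t/2$ gives
\[
a(A/G)\le 6^{t/4}\cdot\bigl(6^{(s-1)/2}\bigr)^{t/2}=6^{t/4+(s-1)t/4}=6^{st/4}=6^{n/4},
\]
exactly on target. The residual case $s=4$ (so $n=8$ or $16$) is disposed of by a GAP check. The missing ingredient in your proposal is precisely this sharper bound on $a(A_1)$ for primitive $A_1$; once you have it, no induction on the block action and no Lemma~\ref{imprimitiveexample} are needed.
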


\begin{proof}
If $A$ is primitive, then our statement follows from Corollary \ref{c1} for $n \geq 12$ and from \cite{GAP} for $n \leq 11$.

If $A$ is not primitive, then choose a non-trivial partition $\{ B_{1},\ldots ,B_{t} \}$ that is $A$-invariant with $%
1 < t < n$ maximal. Denote by $A_{1}$ the action of the stabilizer
of $B_{1}$ in $A$ on $B_{1}$ and denote by $K$ the stabilizer of the
partition in $A$. Write $n=st$. Then $$a(A/G)\leq
a(A/KG)a(K/G\cap K)\leq a(A/KG)a(K/\left[ G,K\right] ).$$ 
First suppose that $t$ is different from $2$ and $4$. Then induction and Lemma \ref{nyuszi} yield
$a(A/G) \leq 6^{t/4} \cdot a(A_{1})^{3t/8}$. By Proposition \ref{di}, $a(A_{1}) \leq 24^{(s-1)/3}$ and
so
\begin{equation*}
a(A/G) \leq 6^{t/4} \cdot 24^{(s-1)t/8} < 6^{t/4} \cdot 6^{(s-1)t/4} = 6^{st/4} = 6^{n/4}\text{.}
\end{equation*}
Now let $t = 2$ or $t=4$. Then by \cite[Corollary 1.4]{maroti} we see that $a(A_{1}) \leq 6^{(s-1)/2}$, unless $s = 4$. This and the previous argument using Lemma \ref{nyuszi} give the desired conclusion unless the set of prime divisors of $|A|$ is $\{ 2,3 \}$ and $n = 8$ or $n=16$. But even in this case \cite{GAP} gives the result.  
\end{proof}

An asymptotically better version of Lemma \ref{module} has been obtained by
Lucchini, Menegazzo and Morigi \cite{lucmemo}. The constant in their result has been evaluated by Tracey \cite[Corollary 4.2]{Tracey}.

\begin{lemma}
\label{LMM}
Let $H<G$ with $|G:H|=t>1$. Let $W$ be an $H$-module and let $V=W_{H}^{G}$
be the induced module. Then $t_{G}(V) < 4 \frac{t}{\sqrt{\log t}}\dim W$.
\end{lemma}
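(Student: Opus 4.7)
The plan is to refine the reduction scheme used in the proof of Lemma \ref{module} while exploiting the fact that, already in the key elementary abelian case, the Kov\'acs-type bound in Lemma \ref{module}(3) yields exactly a $\sqrt{\log t}$ saving. First I would extend scalars to the algebraic closure of the ground field, choose the largest prime $p$ dividing $t = [G:H]$, and restrict $V$ to a Sylow $p$-subgroup $S$ of $G$. Since $t_{G}(V) \leq t_{S}(V)$ and $t_{S}$ is subadditive on direct sums, the Mackey decomposition $V_{S} \cong \bigoplus_{g} (W^{g})_{H^{g} \cap S}^{S}$ --- in which each $H^{g} \cap S$ is proper in $S$ because $p \mid t$ --- reduces the problem to the case in which $G$ itself is a nontrivial $p$-group, at the cost of at most a factor of $t$ in $\dim W$.

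Next I would induct along a chief series $G = G_{0} > G_{1} > \cdots > G_{r} = 1$ with elementary abelian quotients. The decisive base case is precisely that of Lemma \ref{module}(3): if $H \vartriangleleft G$ with $G/H \cong C_{p}^{n}$, then $t_{G}(V) \leq c_{n}\dim V$ where $c_{n} = 2^{-n}\binom{n}{\lfloor n/2 \rfloor}$, and Stirling's formula gives $c_{n} \sim \sqrt{2/(\pi n)}$. Since $t = p^{n}$ and $\dim V = t\dim W$, this translates directly into a bound of the desired form $(\mathrm{const}) \cdot (t/\sqrt{\log t})\dim W$ in the ``regular elementary abelian'' case, already with constant strictly smaller than $4$.

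The main obstacle will be propagating the $\sqrt{\log t}$ saving up the chief series without losing it, since a naive composition of bounds across successive layers yields $\prod_{i} 1/\sqrt{\log t_{i}}$, which can be much worse than $1/\sqrt{\log t}$ when $t = \prod_{i} t_{i}$. The Lucchini--Menegazzo--Morigi strategy is to treat the whole induction as a single analysis of the generator function on the lattice of submodules of $V$ rather than to chain elementary steps, keying on the observation that non-regular transitive $p$-actions already satisfy the stronger estimate $\tfrac{3}{8}$ (and $\tfrac{5}{16}$ for $t \geq 32$) from Lemma \ref{module}(4), so such layers contribute only a bounded constant to the final ratio. With this organisation, only the ``regular elementary abelian'' layer contributes the $1/\sqrt{n}$ factor, and Tracey's explicit bookkeeping in \cite{Tracey} then pins the universal constant down to $4$.
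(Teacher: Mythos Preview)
The paper does not prove this lemma; it simply quotes it from Lucchini--Menegazzo--Morigi \cite{lucmemo}, with the explicit constant $4$ supplied by Tracey \cite[Corollary~4.2]{Tracey}. So there is no argument in the paper to compare against, and your sketch already offers more than the paper does.

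Your outline is sound in spirit: the critical case is that of a $2$-group, the regular $C_2^n$ action is where the $1/\sqrt{n}$ saving originates via Stirling applied to $c_n = 2^{-n}\binom{n}{\lfloor n/2\rfloor}$, and you correctly flag that this saving is destroyed by naive composition along a chief series. Two points, however. First, the same subadditivity failure you identify for the chief-series step already bites at your opening Mackey reduction: after restricting to a Sylow $p$-subgroup the summands have indices $t_g$ with $\sum_g t_g = t$, and summing bounds of shape $t_g/\sqrt{\log t_g}$ over many small $t_g$ can substantially exceed $t/\sqrt{\log t}$, so that step is not as costless as your phrase ``at the cost of at most a factor of $t$ in $\dim W$'' suggests. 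Second, Lemma~\ref{module}(3) as stated in this paper is for $G/H \cong C_2^{\,n}$ only; the formula for $c_n$ is specific to $p=2$, so invoking it for general $C_p^{\,n}$ is a slip (harmless in the end, since the hard case really is $p=2$). Your closing paragraph correctly indicates that the actual Lucchini--Menegazzo--Morigi argument organises the estimate globally rather than layer by layer, but at that point the proposal becomes a pointer to where the proof lives rather than a proof --- which, to be fair, is exactly what the paper itself does.
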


Combining this lemma with other ideas above one can easily prove
the following.

\begin{theorem}
\label{t1}
Let $G$ and $A$ be transitive permutation groups of degree $n>1$ with $G \vartriangleleft A$. 
Then $a(A/G)\leq {4}^{n/\sqrt{\log n}}$.
\end{theorem}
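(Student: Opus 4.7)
The plan is to induct on $n$, paralleling the proof of Theorem~\ref{abelian-transitive} but substituting Lemma~\ref{LMM} for Lemma~\ref{module} inside the proof of Lemma~\ref{nyuszi}. A direct computation shows $6^{n/4} \leq 4^{n/\sqrt{\log n}}$ for $n$ below some threshold $n_{0}$ of order a few hundred, so Theorem~\ref{abelian-transitive} covers the base of the induction.

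For the inductive step with $n > n_{0}$, I would split on whether $A$ is primitive. In the primitive case, Corollary~\ref{cc22} yields the polynomial bound $a(A) \leq 24^{-1/3} n^{1+c_{1}}$, which is easily dominated by $4^{n/\sqrt{\log n}}$ once $n$ is large enough. Otherwise I would choose a maximal nontrivial $A$-invariant partition $\{B_{1}, \ldots, B_{t}\}$ so that $A/K$ is primitive on the $t$ blocks (where $K$ is the kernel of the block action), set $s = n/t$, and let $X_{1}$ be the action on $B_{1}$. As in Theorem~\ref{abelian-transitive}, write
\[
a(A/G) \leq a(A/KG) \cdot a(K/[G,K]).
\]
The inductive hypothesis applied to the transitive pair $GK/K \triangleleft A/K$ of degree $t < n$ gives $a(A/KG) \leq 4^{t/\sqrt{\log t}}$. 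For the second factor, rerunning the proof of Lemma~\ref{nyuszi} with Lemma~\ref{LMM} in place of Lemma~\ref{module} at the elementary abelian step yields $a(K/[G,K]) \leq a(X_{1})^{4t/\sqrt{\log t}}$. One then bounds $a(X_{1})$ either by Dixon's estimate $24^{(s-1)/3}$ or, when sharper control is needed, by $4^{s/\sqrt{\log s}} \cdot a(G_{1})$ using the inductive hypothesis on the transitive pair $G_{1} \triangleleft X_{1}$ (which exists because $G$ transitive on $\Omega$ forces $G_{1}$ transitive on $B_{1}$).

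The main obstacle will be the arithmetic bookkeeping: verifying
\[
4^{t/\sqrt{\log t}} \cdot a(X_{1})^{4t/\sqrt{\log t}} \leq 4^{n/\sqrt{\log n}}
\]
across the relevant regimes of $s$ and $t$. The asymptotic improvement $4/\sqrt{\log t}$ from Lemma~\ref{LMM} (versus the constant in Lemma~\ref{module}) is precisely what drives the sub-exponential bound, but some care is needed at the boundary cases (for instance $s$ or $t$ small compared to $n$), which can be folded into the base case by enlarging $n_{0}$ or handled by iterating the recursion on $X_{1}$ so that the bulk of the abelian factors in $X_{1}$ is absorbed into the inductive contribution $a(X_{1}/G_{1})$ rather than counted by Dixon's bound.
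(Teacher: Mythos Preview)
Your overall plan---induction on $n$, the $6^{n/4}$ base case, handling primitive $A$ via Corollary~\ref{c1}, and invoking Lemma~\ref{LMM} inside the proof of Lemma~\ref{nyuszi}---matches the paper. The gap is in your choice of partition. You pick the partition so that $A/K$ is \emph{primitive on the $t$ blocks}; the paper (following the proof of Theorem~\ref{abelian-transitive}) instead takes $t$ \emph{maximal}, which makes the block group $A_{1}$ primitive on $B_{1}$ of degree $s$. This distinction is exactly what drives the proof.

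The reason is that Lemma~\ref{nyuszi} (with the Lucchini--Menegazzo--Morigi exponent) gives $a(K/[G,K]) \leq a(X_{1})^{c\,t/\sqrt{\log t}}$, so any bound on $a(X_{1})$ is \emph{raised to a power of order} $t/\sqrt{\log t}$. If $X_{1}$ is merely transitive, the best general bound is Dixon's $a(X_{1}) \leq 24^{(s-1)/3}$, which is genuinely exponential in $s$; plugging this in yields roughly $2^{c'\,st/\sqrt{\log t}}$ with $c' > 2$, strictly larger than the target $4^{st/\sqrt{\log (st)}} = 2^{2st/\sqrt{\log(st)}}$. Your proposed alternative of writing $a(X_{1}) \leq 4^{s/\sqrt{\log s}}\,a(G_{1})$ by induction does not help, since $a(G_{1})$ is again exponential in $s$ in general, and the Lemma~\ref{nyuszi} bound is in terms of $a(X_{1})$ itself, not any quotient of $X_{1}$; there is no mechanism by which ``iterating the recursion on $X_{1}$'' absorbs the bulk of $a(X_{1})$ into a single inductive factor.

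The paper's fix is to put the polynomial bound where it is exponentiated: with $t$ maximal, $A_{1}$ is primitive, and Corollary~\ref{cc22} gives $a(A_{1}) \leq 24^{-1/3}s^{1+c_{1}}$. Raising a polynomial in $s$ to the power $c\,t/\sqrt{\log t}$ and multiplying by the inductive bound $a(A/KG) \leq 4^{t/\sqrt{\log t}}$ is what makes the arithmetic close (for $s \geq 32$; the remaining ranges $6 \leq s < 32$ and $2 \leq s \leq 5$ are handled by exploiting that $t$ is then large). Your choice wastes primitivity on the factor $a(A/KG)$, which enters only once, where a polynomial bound buys almost nothing.
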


\begin{proof}
We use the bound, the notation and the argument of Theorem \ref{abelian-transitive}. By the $6^{n/4}$ bound we see that we may assume that $n > 512$. Also, by Corollary \ref{c1}, it is easy to see that we may assume that $A$ is an imprimitive transitive group. Let $t$ and $s$ be as in the proof of Theorem \ref{abelian-transitive}. By use of Lemma \ref{nyuszi} and Corollary \ref{cc22}, the result follows for $s \geq 32$ as in the proof of Theorem \ref{abelian-transitive}. If $6 \leq s < 32$, then we obtain the result using the fact that $t > 16$. Finally, if $2 \leq s \leq 5$, then $t > 100$ and the bound follows. 
\end{proof}

As pointed out in the Introduction, Theorems \ref{t1} and \ref{transitive} imply Theorem \ref{main:4}.  

We will also use various bounds for the orders of outer automorphism
groups of simple groups.

\begin{lemma} \label{l2}  Let $S$ be a nonabelian finite simple group and suppose that
$S$ has a nontrivial permutation representation of degree
$n$. Then $|\Out (S)| \le 2\log n$ or $S=\mathrm{L}_d(q)$ with 
$d>2$ or $S=\mathrm{P\Omega}_8^+(3^e)$ with $e$ an integer, and $|\Out (S)|\leq 3\log n$. In all cases we have $|\Out (S)| \le 2\sqrt{n}$.
Moreover, $|\Out (S)| \le \sqrt{n}$ unless $S=\mathrm{A}_6$, $\mathrm{L}_2(27)$, $\mathrm{L}_3(4)$
or $\mathrm{L}_3(16)$.
\end{lemma}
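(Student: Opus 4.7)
The plan is a case analysis via the classification of finite simple groups. Since $n$ is at least the minimal faithful permutation degree $n_{\min}(S)$, and both $\log$ and $\sqrt{\,\cdot\,}$ are monotone, I only need to verify each claimed bound with $n$ replaced by $n_{\min}(S)$, and then proceed through the four classes: alternating, sporadic, exceptional Lie type, and classical Lie type.

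First I would dispose of the alternating and sporadic cases. For $S = \mathrm{A}_m$ with $m \neq 6$ we have $|\Out(S)| = 2$ and $n_{\min}(S) = m \geq 5$, so all three bounds hold comfortably; for $\mathrm{A}_6$ we have $|\Out(S)| = 4$ and $n_{\min}(S) = 6$, which satisfies $2\log n$ and $2\sqrt{n}$ but not $\sqrt{n}$, yielding the first listed exception. Each of the $26$ sporadic groups has $|\Out(S)| \leq 2$ and $n_{\min}(S) \geq 11$, so all three bounds hold automatically.

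For the Lie type case, write $q = p^f$. The outer automorphism group decomposes as diagonal-by-field-by-graph, with the field part of order $f$ and the other two parts bounded in terms of the Dynkin-diagram symmetries. I would argue that, for each family except two, the product of the diagonal and graph parts is bounded by an absolute constant $K$, while $n_{\min}(S) \geq q^{c}$ for an explicit $c \geq 1$ depending on the family (via tables of Cooperstein-Patton-Vasilyev for the untwisted classical and exceptional cases, or Kleidman-Liebeck for the remaining ones). Thus $|\Out(S)| \leq Kf$ and $\log n \geq cf\log p \geq cf$, from which $|\Out(S)| \leq 2\log n$ follows directly once $K \leq 2c \log p$. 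The two families where this estimate fails are precisely $\mathrm{L}_d(q)$ with $d \geq 3$ (because $\gcd(d,q-1)$ contributes a factor potentially as large as $d$, which then grows with the rank) and $\mathrm{P\Omega}_8^+(q)$ for $q = 3^e$ (where triality adds an extra factor of $6$). In both cases I would show by direct estimate that $|\Out(S)| \leq 3\log n$, as asserted. The universal bound $|\Out(S)| \leq 2\sqrt{n}$ then follows uniformly, since $n_{\min}(S)$ grows exponentially in the Lie rank while $|\Out(S)|$ is polynomial.

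Finally, to pin down the list of exceptions to the sharper bound $|\Out(S)| \leq \sqrt{n}$, I would check the small $q$ ranges in $\mathrm{L}_2(q)$ and $\mathrm{L}_3(q)$ by hand, verifying that the only failures are $\mathrm{L}_2(27)$ ($|\Out|=6$, $n=28$), $\mathrm{L}_3(4)$ ($|\Out|=12$, $n=21$) and $\mathrm{L}_3(16)$ ($|\Out|=24$, $n=273$), and then showing that for larger parameters the growth of $n_{\min}(S)$ dominates $|\Out(S)|^2$ in every family. The main obstacle here is the careful bookkeeping: one must systematically rule out further small exceptions in the low-rank linear, unitary, and orthogonal families by comparing $n_{\min}$ against $|\Out|^2$ in each case, which reduces to routine but nontrivial verification using known tables or the ATLAS.
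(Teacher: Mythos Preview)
Your proposal is correct and takes essentially the same approach as the paper: a case analysis via the classification of finite simple groups, comparing the known formulas for $|\Out(S)|$ against the minimal permutation degree in each family, with the alternating/sporadic cases handled first and the Lie type families treated via the diagonal-field-graph decomposition. The paper's proof is organised a little differently---it works through $\mathrm{L}_2(q)$, $\mathrm{L}_d(q)$, $U_d(q)$ (separately for $d=3,4,\ge 5$), and $\mathrm{P\Omega}_8^+(q)$ explicitly before bundling all remaining families under the observation $|\Out(S)|\le 8\log q<\log n$---but the substance is the same, and your identification of the exceptional families and of the four exceptions to the $\sqrt{n}$ bound matches the paper's.
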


\begin{proof} We refer the reader to
Chapter 5 of \cite{KL} for detailed information about
these degrees of permutation representations and
outer automorphism groups.

In most cases we will see in fact that $|\Out (S)|\leq\log n\leq \sqrt{n}$
holds.

If $S$ is sporadic, the Tits group, or an alternating group of degree
other than $6$, then $|\Out(S)| \le 2$ and $n > 4$.
Also, $|\Out(\mathrm{A}_6)|=4\leq 2\log 6$.

So assume that $S$ is a simple group of Lie type in characteristic
$p$ and is not an alternating group. 

Suppose that $S=\mathrm{L}_2(q)$ for some power $q$ of $p$. So $q > 5$ and $q \ne 9$.
The smallest permutation representation is at least $q$
and is generically $q + 1$.  Also, the outer automorphism
group has size $(2,q-1)\log_p q$ which is less than $\log n$
if $p=2$ or $p\geq 5$ and is at most $2\log n$  if $p=3$. If $p=3$
and $q>9$ then $|\Out (S)|=2\log_3q\leq \sqrt{q+1}$  unless
$q=27$.

Next consider the case that $S=\mathrm{L}_d(q), d > 2$.
  We may exclude $\mathrm{L}_3(2) = \mathrm{L}_2(7)$ and  $\mathrm{L}_4(2) = \mathrm{A}_8$.
The order of the outer automorphism group
is $2(d,q-1)\log_p q$ and the smallest permutation
representation is of degree $(q^d-1)/(q - 1)$.

So $\log ((q^d-1)/(q - 1)) > (d-1)\log q \ge
2(d,q-1)\log_p q$ unless one of the following holds.

1. $d|(q-1)$ and $(d-1)/2d \le \log_p q/\log q  = \log_p 2$. In this case
$p \le 7$. If $p=7$, then $d=3$, and if $p=5$, then $d \le 7$.
We have $$|\Out(S)| = \log_p(q^{2d}) < \log (n^{2d/(d-1)}) = 2d \log n/(d-1) \le 3 \log n .$$
Also, note that if $d|(q-1)$, then $\log_p(q^{2d}) < ((q^d-1)/(q-1))^{1/2}$ unless
$d=3$ and $q=4$ or $q=16$.

2. $(d,q-1)=d/2$ and $(d-1)/d \le  \log_p 2$. In this case $p=2$. Also $d/2$ must be odd, whence $d \ge 6$. Thus $|\Out(S)| = \log (q^d) \le (1 + 2/d)\log n
\le (4/3)\log n$. Note also that $(4/3)\log n < \sqrt{n}$ since $d \ge 6$ and $q \ge 4$.

Next consider $U_d(q)$ with $d \ge 3$. We exclude $U_3(2)$ (which is solvable).

Then the outer automorphism group has order $2(d,q+1)\log_p q$.

If $d=3$, then the smallest permutation degree is $q^3+1$
(aside from $q=5$, where it is $50$). If $p > 3$, we see
that $\log n$ dominates $|\Out(S)|$ unless $q=5$.  If $p=3$, then
$|\Out(S)|=2 \log_3 q$ and the result holds.

Finally, if $p=2$ and $q > 2$ is an odd power of $2$, then
$|\Out(S)|=6 \log q$ and $n > q^3$, whence
$|\Out(S)| < 2\log n$.

If $d=4$, then the smallest permutation degree is
$(q+1)(q^3+1)$. We see that $|\Out (S)|=2(4,q+1)\log_p q$
which is less than $\log n$ unless $p=3$ when it is
at most $2\log n$.

If $d\geq 5$, then the smallest permutation representation has
degree roughly $q^{2d-3}$ and we see that $|\Out (S)|\leq\log n$
if $p\geq 5$ and at most $2\log n$ in any case.
In all cases we have $|\Out (S)|\leq \sqrt{n}$.

Suppose that $S=\mathrm{P\Omega}_8^+(q)$, $q$ odd. In this case
$|\Out (S)|=24\log_p q$ and the smallest permutation degree is
$(q^4+1)(q^2+q+1)$. If $p\geq 5$, then we see that $|\Out (S)|\leq 2\log n$.
If $p=3$, then $|\Out (S)|\leq 3\log n$ and in any case
$|\Out (S)|$ is at most $\sqrt{n}$.

In all other cases, $|\Out(S)| \le 8 \log q$
and we see that $|\Out(S)| < \log n$. This completes the proof.
\end{proof}

We remark that the above lemma may be considered as a sharper version of
the observation \cite{AG2} that if $S\ne \mathrm{A}_6$, then $2|\Out (S)|<n$.

A handy consequence of the lemma is that for all nonabelian
finite simple groups $S$ we have $|\Out(S)| \leq  \sqrt[4]{|S|}$ unless
$S=\mathrm{L}_3(4)$. This follows from the known fact that the minimal degree
of a permutation representation of $S$ is less than $\sqrt{|S|}$,
when $|\Out(S)| \leq \sqrt{n}$, and directly in the remaining cases.

We end this section with a result about dimensions versus outer
automorphism groups for simple groups.

\begin{lemma}
\label{l3}
Let $S$ be a nonabelian simple section of $\mathrm{SL}_{n}(p)$ where $p$ is a prime. Then
$|\Out (S)|\leq 4n$.
\end{lemma}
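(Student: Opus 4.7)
The plan is to verify the bound case by case using the classification of finite simple groups. First I would establish a representation-theoretic reduction: if $S$ is a nonabelian simple section of $\mathrm{SL}_{n}(p)$, say $S = H/K$ with $K \vartriangleleft H \leq \mathrm{SL}_{n}(p)$, then applying Clifford theory to the action of $H$ on the natural module $V = \mathbb{F}_p^n$ -- decomposing $V_H$ into irreducibles and passing to a suitable central quotient -- yields a faithful projective representation of $S$ of dimension at most $n$ over $\overline{\mathbb{F}}_p$. Moreover, since this representation is realised over $\mathbb{F}_p$, Galois descent forces $n \geq d(S) \cdot e$ whenever $S$ is a Lie type group defined over $\mathbb{F}_{p^e}$ and $d(S)$ denotes the minimum faithful projective dimension of $S$ over $\overline{\mathbb{F}}_p$.

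With this reduction in hand, the alternating and sporadic cases are immediate: $|\Out(\mathrm{A}_m)| \leq 4$ for $m \geq 5$, $|\Out(S)| \leq 2$ for sporadic $S$, and any nonabelian simple section of $\mathrm{SL}_n(p)$ satisfies $n \geq 2$. For groups of Lie type I would split according to whether the defining characteristic coincides with $p$. In the defining characteristic case $S = G(q)$ with $q = p^e$, the standard factorization gives $|\Out(S)| = d_{\Delta} \cdot e \cdot d_{\Gamma}$ where the diagonal factor $d_{\Delta}$ is controlled by the rank and the graph factor $d_{\Gamma} \leq 6$, the worst case being triality for $\mathrm{P\Omega}_8^+(q)$. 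Combined with the lower bound $n \geq d(S) \cdot e$ coming from Steinberg's classification of irreducible modular representations, a short case analysis across the families (linear, unitary, symplectic, orthogonal, exceptional) yields $|\Out(S)| \leq 4n$. In the cross-characteristic case, the Landazuri--Seitz lower bounds force $n$ to grow polynomially in $q$ while $|\Out(S)|$ grows only logarithmically in $q$, so the bound follows with room to spare.

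The main obstacle will be carrying out the Clifford-theoretic reduction cleanly when $S$ appears only deep inside a subnormal chain of $\mathrm{SL}_n(p)$ rather than as a composition factor of the obvious natural module, and verifying the tightest instances: $\mathrm{P\Omega}_8^+(q)$, where triality pushes the ratio $|\Out(S)|/n$ up to roughly $3$, and small examples such as $\mathrm{L}_3(4)$ with $|\Out(S)| = 12$ and $n = 6$. In each case the factor $4$ in $4n$ leaves enough slack, but pinning down these worst-case ratios is where the computational effort lies.
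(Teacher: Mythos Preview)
Your outline is correct and would succeed, but it takes a harder road than the paper, particularly in the defining-characteristic case.

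For $S$ of Lie type in characteristic different from $p$, the paper simply quotes \cite[Lemma~3.1]{lipy}: for $S$ defined over $\mathbb{F}_r$ of untwisted rank $\ell$ one has $n \geq \min\{R_p(S), r^{\ell}\}$. This packages exactly the Clifford-theoretic reduction you sketch; the $r^{\ell}$ term handles the possibility that $S$ surfaces through the permutation action on Clifford blocks rather than projectively on an irreducible constituent, a subtlety your phrase ``passing to a suitable central quotient'' glosses over (the normal subgroup $K$ need not become central on any constituent). Combined with the Landazuri--Seitz bounds, the paper's cross-characteristic argument is then essentially the same as yours.

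The genuine divergence is in defining characteristic. You propose Steinberg's classification plus a Galois-descent claim $n \geq d(S)\cdot e$; this can be made to work but requires verifying, family by family, that the minimal-dimension projective module for $G(p^e)$ over $\overline{\mathbb{F}_p}$ is not realised over a proper subfield of $\mathbb{F}_{p^e}$. The paper sidesteps representation theory here entirely with a Zsigmondy-prime argument: if a primitive prime divisor of $p^m-1$ divides $|S|$, then it divides $|\mathrm{SL}_n(p)|$, which forces $n \geq m$ by elementary arithmetic. One then reads off the largest such $m$ from \cite[Table~5.2C]{KL} and checks $|\Out(S)| \leq 4m$ against \cite[Table~5.1A]{KL}. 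This is shorter, avoids the Clifford reduction for this case altogether, and makes the worst cases (such as $\mathrm{P\Omega}_8^+$ with triality) transparent.
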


\begin{proof}
For sporadic and alternating groups the result is obvious.

Suppose that $S$ is a group in $\mathrm{Lie}(p')$ over $\mathbb{F}_r$ of
(untwisted) rank $\ell$. By \cite[Lemma 3.1]{lipy} in this case
we have $n\geq\min \{R_p(S),r^{\ell}\}$ where $R_p(S)$ is the
minimal degree of a projective representation of $S$ in characteristic
$p$. Using the lower bounds of Landazuri and Seitz for $R_p(S)$
(slightly corrected in  \cite[Table 5.3A]{KL}) and
\cite[Table 5.1A]{KL}, where values of $|\Out(S)|$ are given, the result
follows by easy inspection.

Suppose now that $S$ is a group in $\mathrm{Lie}(p)$. If the order of $S$ is
divisible by a primitive prime divisor of $p^m-1$ then clearly $n\geq m$
holds. A list of the largest such numbers $m$ is given in
\cite[Table 5.2C]{KL}. Using this we see that in all cases $4m\geq
|\Out(S)|$ holds. This completes the proof.
\end{proof}

\section{Transitive subnormal subgroups}
\label{Section 8}




In this section we prove an amusing variant of Theorem \ref{main:4} for
subnormal transitive subgroups by an elementary argument. 

\begin{theorem1}
\label{subnormal}
Let $G\vartriangleleft \vartriangleleft A \leq \mathrm{S}_n$. If $G$ is
transitive, then $\left| A:G\right| \leq 5^{n-1}$.
\end{theorem1}

\begin{proof}
Consider a counterexample with $n$ minimal.

Let $\{ B_{1},\ldots ,B_{t} \}$ be an $A$-invariant partition, which consists of
blocks of minimal size $k>1$ (if $A$ is primitive, then we take $k=n$).
Denote by $H_{i}$ the action of the stabilizer of $B_{i}$ in $A$ on $B_{i}$.
Denote by $K$ the kernel of the action of $A$ on the set of blocks and by $%
K_{i}$ the action of $K$ on $B_{i}$.

We claim that $\left| K:G\cap K\right| \leq 5^{n-t}$.

By the minimality of $k$ the group $H_{1}$ is primitive.

If $k \leq 7$, then $|H_{1}| \leq \left|\mathrm{S}_k\right| \leq 5^{k-1}$. Furthermore if $k \geq 8$ and $H_{1}$ does not contain
the alternating group $\mathrm{A}_k$, then, by a theorem of Praeger and Saxl \cite{prasax}, we have $\left| H_{1}\right| \leq 4^{k} \leq 5^{k-1}$. This implies that if $k \leq 7$, or if $k \geq 8$ and $H_{1}$ does not contain
the alternating group $\mathrm{A}_k$, then $\left| K:G\cap K\right| \leq \left| K\right| \leq \left| H_{1}\right|
^{t}\leq 5^{n-t}$. Thus we may assume that $k \geq 8$ and that $\mathrm{A}_k \leq H_{1}$.

Since $K_{1} \vartriangleleft H_{1}$, we must have $K_{1}=1,\mathrm{A}_k$ or $\mathrm{S}_k$.

If $K_{1}=1$ then $K=1$.

We may assume that $K$ is a subdirect product of symmetric or alternating groups of
degree $k$. Hence the derived subgroup $K^{\prime }$ is a subdirect product
of alternating groups of degree $k$ and we have $\left| K:K^{\prime }\right|
\leq 2^{t}$. Now $\mathrm{A}_k$ is nonabelian and simple, so the set $\left\{
B_{1},\ldots ,B_{t}\right\} $ has a partition such that $K^{\prime }$ acts
diagonally on the set of blocks which belong to one part of the partition
and $K^{\prime }$ is the direct product of these diagonal subgroups. This 
partition of the set of blocks is clearly $A$-invariant and it follows
that $A$ acts transitively on the set of factors of the direct product, hence $%
K^{\prime }$ is a minimal normal subgroup of $A$.

Let $G=N_{l}\vartriangleleft N_{l-1}\vartriangleleft \cdots \vartriangleleft
N_{1}=A$ be a normal series of length $l$ between $G$ and $A$. Let $j$ be
the largest index with $K^{\prime }\subseteq N_{j}$.

If $j=l$, i.e., $G$ contains $K^{\prime }$, then $\left| K:G\cap K\right|
\leq \left| K:K^{\prime }\right| \leq 2^{t}<5^{n-t}$.

Otherwise $N_{j+1}\cap K^{\prime }$ is normal in $N_{j}$ and it is 
properly contained in $K^{\prime }$. Since $N_{j}$ is transitive, we see
that in fact $K^{\prime }$ is a minimal normal subgroup in $N_{j}$ (and not
just in $A$). Hence $N_{j+1}\cap K^{\prime }=1$, thus $K^{\prime }$
centralizes $N_{j+1}$. Now $N_{j+1}$ is transitive, hence its centralizer is
semiregular. However $K^{\prime }$ acts on $B_{1}$ as $\mathrm{A}_k$ which is
impossible.

This completes the proof of the claim.

Now the permutation group $GK/K$ is transitive on $t$ points and it is
subnormal in $A/K$, so we have $\left| A:GK\right| =\left| A/K:GK/K\right|
\leq 5^{t-1}$. Using the claim we see that
\begin{equation*}
\left| A:G\right| =\left| A:GK\right| \left| GK:G\right| =\left| A:GK\right|
\left| K:G\cap K\right| \leq 5^{n-1}\text{.}
\end{equation*}

Thus the result holds.
\end{proof} 

Using the classification theorem via improved versions \cite{klewha}, \cite{maroti} of the Praeger-Saxl
result one can improve the above bound.
For example the second author \cite{maroti} shows that a primitive group $H$ of
degree $k$ not containing $\mathrm{A}_k$ has order $|H|<c^k$ where $c=2.6$.
This easily implies the bound $|H|\leq 3^{k-1}$. Substituting this bound
in the above proof we obtain that $|A:G|\leq 3^{n-1}$.
It
would be interesting to see whether in fact $\left| A:G\right| \leq 2^{n}$
holds. This would be essentially best possible, since if $n$ is a power of $%
2 $, then the Sylow $2$-subgroup $P$ of $\mathrm{S}_n$ contains a regular
elementary abelian $2$-subgroup of index $2^{n}/2n$ (which is necessarily
subnormal in $P$). Note also that by use of \cite[Corollary 1.4]{maroti} the bound 
$|A:G| \leq 2^{n-1}$ holds in case $A$ is primitive, unless $n=8$, $A = \mathrm{AGL}_{3}(2)$, and $G$ is regular.  

In later sections we will need an improvement (relying on the classification theorem) of the previous bound in a special case.

\begin{theorem}
\label{168}
Let $G\vartriangleleft A \leq \mathrm{S}_n$. If $G$ is
transitive, then $\left| A:G\right| \leq 168^{(n-1)/7}$.
\end{theorem}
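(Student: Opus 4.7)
The plan is to refine the proof of Theorem 1.8 (i.e.\ Theorem \ref{subnormal}) by replacing its Praeger--Saxl input with classification-based bounds on primitive groups, and by exploiting the stronger hypothesis $G\vartriangleleft A$ (rather than merely $G\vartriangleleft\vartriangleleft A$). The constant $168^{1/7}$ is pinned by the extremal example $n=8$, $A=\mathrm{AGL}_{3}(2)$, $G$ its translation subgroup of order $8$, for which $|A/G|=|\mathrm{GL}_{3}(2)|=168=168^{(8-1)/7}$. I would take a minimal counterexample $(A,G,n)$; the small cases $n\leq 8$ can be verified directly (e.g.\ via \cite{GAP}), so one may assume $n\geq 9$ and proceed by induction on $n$.

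Assume first that $A$ is primitive. Applying the Aschbacher--O'Nan--Scott theorem: if $A\geq A_{n}$ then $G\in\{A_{n},S_{n}\}$ and $|A/G|\leq 2$; if $A$ is of affine type then $G$ contains the unique minimal normal subgroup $V$, so $|A/G|\leq|\mathrm{GL}_{d}(p)|$, and a direct arithmetic check shows $|\mathrm{GL}_{d}(p)|\leq 168^{(p^{d}-1)/7}$ for all $p,d$ (with equality only at $(p,d)=(2,3)$); for all other primitive types $G$ contains the socle, and the outer group $|A/\mathrm{Soc}(A)|$ is polylogarithmic in $n$ (via the classification and Schreier), hence easily dominated by $168^{(n-1)/7}$ for $n\geq 9$.

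For the imprimitive case, choose a minimal $A$-invariant partition into $t$ blocks of size $k$, so that $H_{1}$ is primitive on $B_{1}$. Let $K\vartriangleleft A$ denote the kernel on blocks; then
\begin{equation*}
|A/G| = |A/GK|\cdot|K/(K\cap G)|.
\end{equation*}
Since $GK/K\vartriangleleft A/K$ is transitive on $t$ blocks, induction gives $|A/GK|\leq 168^{(t-1)/7}$, so it remains to show $|K/(K\cap G)| \leq 168^{t(k-1)/7}$. In the generic case where $H_{1}\not\geq A_{k}$ and $H_{1}$ is not on a short exceptional list, Maroti's classification-based bound (\cite{maroti}) gives $|H_{1}|\leq 168^{(k-1)/7}$, and the crude estimate $|K/(K\cap G)|\leq|K|\leq|H_{1}|^{t}$ suffices. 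If $A_{k}\leq H_{1}$ with $k\geq 5$, I would repeat the minimal-normal-subgroup argument from the proof of Theorem 1.8: $K^{\prime}$ is a subdirect product of copies of $A_{k}$ and is minimal normal in $A$, and either $K^{\prime}\leq G$ (yielding $|K/(K\cap G)|\leq|K/K^{\prime}|\leq 2^{t}$, absorbed by $168^{t(k-1)/7}$ since $k\geq 5$) or $K^{\prime}$ centralizes a transitive normal subgroup of $A$, contradicting semiregularity. The upgrade from ``subnormal'' to ``normal'' here removes the intermediate normal-series bookkeeping of Theorem \ref{subnormal}.

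The main obstacle is the finite list of exceptional primitive groups $H_{1}$ for which the naive inequality $|H_{1}|\leq 168^{(k-1)/7}$ fails -- for instance $F_{20}$ at $k=5$, $\mathrm{L}_{2}(5)$ and $S_{5}$ at $k=6$, $\mathrm{L}_{3}(2)$ at $k=7$, and $\mathrm{AGL}_{3}(2)$, $\mathrm{PGL}_{2}(7)$ at $k=8$. For these one cannot simply bound $|K/(K\cap G)|$ by $|K|$ and must exploit the normality $G\vartriangleleft A$: the image of $\mathrm{Stab}_{G}(B_{1})$ in $H_{1}$ is a \emph{transitive normal} subgroup of $H_{1}$, so the primitive case of the theorem applied inside $H_{1}$ forces $[K_{1}:\pi_{1}(K\cap G)]\leq 168^{(k-1)/7}$; a subdirect-product argument in the spirit of Lemma \ref{abelian-diagonal} then controls $|K/(K\cap G)|$ by $\prod_{i}[K_{i}:\pi_{i}(K\cap G)]$ up to a solvable error (absorbed using that $k\geq 5$ in the relevant ranges). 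The tightest case is $k=8$, $H_{1}=\mathrm{AGL}_{3}(2)$, where the constant $168^{1/7}$ is essentially forced by the extremal example itself.
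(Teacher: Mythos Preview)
Your overall induction scheme (minimal counterexample, minimal $A$-invariant partition, the factorisation $|A/G|=|A/GK|\cdot|K/(K\cap G)|$, and the $A_k\le H_1$ subcase via the derived-subgroup argument of Theorem~1.8) matches the paper exactly, and your treatment of the primitive base case is fine.

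The gap is in your handling of the exceptional $H_1$. You correctly observe that $G_1$, the image of $N_G(B_1)$ in $H_1$, is transitive and normal in $H_1$, so the primitive case gives $|H_1:G_1|\le 168^{(k-1)/7}$. But the quantity you need to control is $|K/(K\cap G)|$, and for this you pass to $[K_1:\pi_1(K\cap G)]$ and then invoke a ``subdirect-product argument in the spirit of Lemma~\ref{abelian-diagonal}''. Neither step is justified: $\pi_1(K\cap G)$ can be strictly smaller than $G_1$ (indeed $K\cap G$ need not be transitive on $B_1$, so the primitive case does not apply to $\pi_1(K\cap G)\vartriangleleft K_1$), and Lemma~\ref{abelian-diagonal} requires the subgroup in question to surject onto each factor, which $K\cap G$ need not do inside $\prod K_i$. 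So the bound $|K/(K\cap G)|\le\prod_i[K_i:\pi_i(K\cap G)]$ ``up to solvable error'' is not available in general, and your argument for the critical $k=8$, $H_1=\mathrm{AGL}_3(2)$ case does not close.

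The paper bypasses this entirely: it applies Theorem~\ref{diag1} and Lemma~\ref{nyuszi} directly to obtain
\[
|K/(K\cap G)| \;=\; a\!\left(K/(K\cap G)\right)\,b\!\left(K/(K\cap G)\right)\;\le\; b(H_1)\,\bigl(a(H_1)\bigr)^{t/2}\;\le\; |H_1|^{t/2},
\]
using $t\ge 2$ for the last step. One then only needs $|H_1|\le 168^{2(k-1)/7}$, which holds for each of the genuinely exceptional $H_1$ (namely $\mathrm{AGL}_1(5)$, $\mathrm{AGL}_3(2)$, $\mathrm{AGL}_2(3)$, $\mathrm{AGL}_4(2)$). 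Note also that your exceptional list is longer than necessary: the same $K'$-is-minimal-normal argument you use for $A_k\le H_1$ disposes of the almost-simple cases $\mathrm{L}_2(5)$, $\mathrm{S}_5$, $\mathrm{L}_3(2)$, $\mathrm{PGL}_2(7)$ as well (this is what the paper means by excluding the groups in (ii)--(iv) of \cite[Corollary~1.4]{maroti}), leaving only the four affine cases above.
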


\begin{proof}
We may use the notations and the argument of Theorem \ref{main:5}. For this purpose consider a counterexample with $n$ minimal. For convenience set $c = 168^{1/7}$.  

By the remark after the previous theorem, $A$ cannot be a primitive permutation group. Thus $t \geq 2$ and $1 < k < n$.

We may also assume that $|H_{1}| > c^{k-1}$. Furthermore, $H_{1}$ not only does not contain $\mathrm{A}_{k}$ for $k \geq 8$, by the proof of Theorem \ref{main:5}, but it cannot contain $\mathrm{A}_{k}$ even for $k = 5$, $6$, and $7$. (For $k = 6$ we have $|K:K'| \leq 4^{t}$, but this is also sufficient for our purposes.) Taking this a step forward we also see that $H_{1}$ cannot be any of the groups appearing in (ii), (iii) and (iv) of \cite[Corollary 1.4]{maroti}. We conclude that case (i) of \cite[Corollary 1.4]{maroti} holds. Using the bound $|H_{1}| > c^{k-1}$, we may exclude two more groups from the list. Thus $H_{1}$ must be $\mathrm{AGL}_{1}(5)$ for $k = 5$, $\mathrm{AGL}_{3}(2)$ for $k = 8$, $\mathrm{AGL}_{2}(3)$ for $k = 9$, or $\mathrm{AGL}_{4}(2)$ for $k = 16$. 

As in the proof of Theorem \ref{main:5}, we get 
\begin{equation*}
\left| A:G\right| =\left| A:GK\right| \left| GK:G\right| =\left| A:GK\right|
\left| K:G\cap K\right| \leq c^{t-1} \left| K:G\cap K\right| \text{.}
\end{equation*}
But $\left| K:G\cap K\right| \leq b(H_{1}) {(a(H_{1}))}^{t/2} \leq {|H_{1}|}^{t/2} \leq c^{n-t}$, where the first inequality follows from Theorems \ref{diag1} and \ref{nyuszi} and the second inequality from the fact that $t \geq 2$.
\end{proof}

\section{Normalizers of primitive groups -- Abelian composition factors}
\label{Section 9}

We consider the situation $G \vartriangleleft A\leq \mathrm{S}_{n}$, $G$ primitive and
want to bound $a(A/G)$. We first consider the case when the socle of $G$ is
abelian. To deal with this case, we need the following result on primitive
linear groups.

\begin{theorem}
\label{primitive-linear2} 
Let $V$ be a finite vector space of order $n=p^{b}$ defined over a field of prime order $p$. Let $B$ be a subgroup of $\mathrm{GL}(V) = \mathrm{GL}_{b}(p)$ which acts primitively (and irreducibly) on $V$. Let $F$ be a maximal field such that $B$ embeds in $\Gamma L_{F}(V)$%
. Let $|F|=p^{f}$ and let $d=\dim _{F}V$ (so $d=b/f$). Then one of the
following holds.

\begin{enumerate}
\item  $d=1$ and $a(B)\leq (n-1)f\leq (n-1)\log n$; or

\item  $d>1$ and $a(B)< n$ for $n > 3^{16}$.
\end{enumerate}
Furthermore $a(B) < n^{2}/6^{1/2}$ unless $n=9$ and $B = \mathrm{GL}_{2}(3)$. 
\end{theorem}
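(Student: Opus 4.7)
The plan is to dispose of the $d=1$ case by a direct computation, and then treat the $d>1$ case by invoking the structure theorem for primitive linear groups (proved earlier in Section~\ref{Section 9}) and carefully bounding each contribution to $a(B)$.

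In case (1) the group $B$ embeds in $\mathrm{\Gamma L}_{F}(V) = \mathrm{\Gamma L}_{1}(F) \cong F^{\times} \rtimes \mathrm{Gal}(F/\mathbb{F}_{p})$, which is solvable (in fact metacyclic) of order $(|F|-1)f = (n-1)f$. Therefore $a(B) = |B| \leq (n-1)f \leq (n-1)\log n$, using $f \log p = \log n \geq f$. A trivial estimate gives $(n-1)\log n < n^{2}/\sqrt{6}$ for every $n \geq 2$, so case (1) contributes nothing to the exception list in the ``furthermore'' statement.

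In case (2) I would apply the structure theorem for primitive linear groups of $F$-dimension at least two (with $F$ the maximal subfield). It gives $F^{*}(B) = Z(B) \cdot R \cdot L_{1} \cdots L_{k}$ as a central product, in which $R$ is an $r$-group of symplectic type with $|R/Z(R)| = r^{2a}$ and each $L_{i}$ is quasisimple; moreover $V$ decomposes compatibly as a tensor product $V_{R} \otimes V_{1} \otimes \cdots \otimes V_{k}$ over $F$, with $\dim_{F} V_{R} = r^{a}$ and $\dim_{F} V_{i} = d_{i} \geq 2$. The hypothesis $d > 1$ forces $r^{a}\prod d_{i} > 1$. I would now estimate $a(B)$ along the normal chain $1 \vartriangleleft Z(B)R \vartriangleleft F^{*}(B) \vartriangleleft B$: the bottom factor contributes at most $|Z(B)|\cdot|R|$, the middle factor $F^{*}(B)/Z(B)R$ is a direct product of nonabelian simple groups and contributes $1$, and the top factor $B/F^{*}(B)$ embeds into $\Out(R) \times \prod_{i} \Out(L_{i}/Z(L_{i}))$ extended by a permutation of the equivalent tensor factors. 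Lemma~\ref{l3} bounds each $|\Out(L_{i}/Z(L_{i}))| \leq 4 d_{i}$; the group $\Out(R)$ is $\spl{2a}{r}$ or an orthogonal group over $\mathbb{F}_{2}$, whose abelian composition factors contribute only a bounded constant; and Proposition~\ref{di} bounds the abelian part coming from the permutation of components.

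Since $n = |V| = |F|^{r^{a}\prod d_{i}}$ is exponential in the parameters $(r, a, d_{i}, k)$ while the combined estimate for $a(B)$ is polynomial in them (using $|F| \geq r$ from $r \mid |F|-1$ together with the minimum field size needed for $L_{i}$ to act on $V_{i}$), the bound $a(B) < n$ for $n > 3^{16}$ follows by a direct numerical estimate. For the ``furthermore'' bound $a(B) < n^{2}/\sqrt{6}$, a finite case analysis over the small configurations $(r, a, k, d_{i}, |F|)$ isolates exactly one failing case: $k = 0$, $r = 2$, $a = 1$ (so $R \cong Q_{8}$), and $|F| = 3$, forcing $B \leq \mathrm{GL}_{2}(3)$ and giving $a(B) = |B| = 48$ on $n = 9$, the stated exception. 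The principal technical difficulty is making this small-dimension case analysis fully rigorous, relying on the explicit structure of $\spl{2a}{r}$ for small $(a, r)$ and on the lower bounds for the minimal projective degrees of quasisimple groups that enter Lemma~\ref{l3}.
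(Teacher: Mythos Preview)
Your handling of case (1) is correct and matches the paper, and the overall shape of case (2) --- pass to $F^*(B)$, tensor-decompose $V$, bound $a(B)$ along a normal chain --- is also the paper's strategy. But there is a genuine gap in how you treat the symplectic-type factor $R$. You write that $\Out(R)\cong\spl{2a}{r}$ (or an orthogonal group over $\FF_2$) has abelian composition factors contributing ``only a bounded constant'', and use this to control the top quotient $B/F^*(B)$. The problem is that $a(\cdot)$ is \emph{not monotone under subgroups}: for instance $a(C_5)=5>1=a(A_5)$. What actually enters $a(B)$ is $a$ of the \emph{image} of $B_0$ acting on $R/Z(R)$, not $a(\Out(R))$ itself, and that image may be a large solvable subgroup of $\spl{2a}{r}$ even when $a(\spl{2a}{r})\le 2$. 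Your estimate would then undercount $a(B)$ by a factor as large as the order of a Borel subgroup of $\spl{2a}{r}$.

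The paper closes this hole by a different device: it cites \cite[Lemma 1.7]{lucmemo2} to get that $R/Z(R)$ is a \emph{completely reducible} $\FF_r[B_0]$-module under conjugation, and then applies Theorem~\ref{szar} (the P\'alfy--Wolf bound for completely reducible linear groups) to the image $B_i\le\mathrm{GL}_{2a}(r)$, obtaining $a(B_i)\le (r^{2a})^{c_1}<r^{4.5a}$. Combined with $|R/Z(R)|=r^{2a}$ this yields the contribution $e_i=r^{6.5a}$ and the working bound $a(B)\le p^{f}\cdot f^{1+\lfloor\log d\rfloor}\cdot d^{6.5}$; even with this correct exponent a case analysis for small $p^f$ and $d$ is still needed to reach the thresholds $n>3^{16}$ and $n^2/\sqrt6$. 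Two smaller remarks: your reference to a structure theorem ``proved earlier in Section~\ref{Section 9}'' is circular, since that is precisely the present theorem --- the structure is developed inside this proof (paralleling Theorem~\ref{primitive-linear}); and in general there can be several symplectic-type factors $J_i$, one for each prime $r_i\mid p^f-1$, not a single $R$.
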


\begin{proof}
Every normal subgroup of a primitive linear subgroup of $\mathrm{GL}(V)$ acts homogeneously on $V$. In particular, any
abelian subgroup normalized by $B$ acts homogeneously and so is cyclic by
Schur's Lemma. Let $C$ be the subgroup of nonzero elements in $F$ (viewing $%
F $ as a subring of $\mathrm{End}(V)$). Note that $C$ is normalized by $B$
and, for $d=1$, contains the centralizer of $B$. We may replace $B$ by $BC$ and so
assume that $C \le B$.

Let $E=\mathrm{End}_B(V)$ with $q=|E|$. The algebra generated by $C$ is $F$ and $|F|= q^{e}$ for some integer $e$.

Let $B_{0}$ be the centralizer of $C$ in $B$. Note that $C$ is the center of
$B_{0}$. We claim that $B_{0}$ acts irreducibly on $V$ considered as a
vector space over $F$. For let $U$ denote $V$ as a $B_{0}$-module over $F$%
. Then $V^{\prime }:=V\otimes _{E}F\cong \oplus U^{\sigma }$, where the sum
is over the elements of $\mathrm{Gal}(F \mid E)$. Since $B$ acts absolutely irreducibly on $V$
(over $E$), $B$ acts irreducibly on $V^{\prime }$. Note that $B/B_{0}$ acts
regularly on the set $\{ U^{\sigma } \}$ and so $B_{0}$ must act irreducibly on
each $U^{\sigma }$ and so, in particular, on $U$ as claimed.

Note that $a(B)=a(B_0)e$ for $B/B_0$ is cyclic of order $e$.

Let $R$ be a normal subgroup of $B$ contained in $B_{0}$ minimal with
respect to not being contained in $C$. If none exists, then $B_{0}=C$, $d=1$
and $B^{\prime }$ is cyclic and the first conclusion allowed holds. So
assume that this is not the case. Let $W$ be an irreducible $F[R]$-submodule of $V$, which, as an $F[R]$-module, is a direct sum of copies of $W$. Let $F^{\prime }=\mathrm{End}_{R}(W)$.

We claim that $F^{\prime }=F$. The center of the centralizer of $R$ in $%
\mathrm{GL}(V) $ is the group of units of $F^{\prime }$. This is normalized by $B$
and so by the choice of $C$ must just be $C$, whence $F=F^{\prime }$. Let $%
d_{R}$ denote $\dim _{F}W$.

Notice that $R$ cannot be abelian. For if $R$ is abelian, then so is $RC$. But then $RC$ is cyclic by Schur's Lemma and so $RC = C$ by our choice of $C$. This is a contradiction since we chose $R$ not to be contained in $C$. (By this same argument we also see that every characteristic abelian subgroup of $B_{0}$ is central and
contained in $C$.)


So there are two possibilities for $R$.

1. $R$ is of symplectic type with $R/Z(R)$ of order $r^{2a}$ for some prime $%
r$ and integer $a$. Since $Z(R)\leq C$, it follows that $r|q^{e}-1$ and $d_{R}=r^{a}$.
By \cite[Lemma 1.7]{lucmemo2}  in this case $R/Z(R)$ is a completely
reducible ${\F}_r B_{0}$-module under conjugation.

2. $R$ is the central product of $t$ isomorphic quasisimple groups $%
Q_{i},1\leq i\leq t$. Since $R$ acts homogeneously on $V$ and since $%
F^{\prime }=F$, it follows that $W$ is of the form $W_{1}\otimes \cdots \otimes W_{t}$ where $%
W_{i}$ is absolutely irreducible over $F$ (and the tensor product is taken
over $F$). Thus $d_{R}=(\dim _{F}W_{1})^{t}$.

Choose a maximal collection of non-cyclic subgroups described above which pairwise
commute. Denote these by $J_1, \ldots, J_m$. Let $J=J_1 \cdots J_m$ be the
central product of these subgroups.

We next claim that $C_{B_{0}}(J)=C$. Suppose not. By the maximality
condition, any $B$-normal subgroup of $C_{B_{0}}(J)$ minimal with respect to
not being contained in $C$ is one of the $J_{i}$. However, $J_{i}$ is
nonabelian and so is not contained in $C_{B_{0}}(J)$.

In particular $B_{0}/C$ embeds in the direct product of the automorphism
groups of the $J_{i}/Z(J_{i})$. Since $J$ is the central product of the $J_{i}$, $J$
acts homogeneously and $F$ is a splitting field for the irreducible
constituents for each $J_{i}$, it follows that $d=\dim _{F}V\geq \prod d_{i}$
where $d_{i}=d_{J_{i}}$.

Thus, $a(B) \le f(p^f-1) \prod e_i$, where the $e_i$ are defined as follows.

If $J_i$ is of symplectic type with $J_i/Z(J_i)$ of order $r_i^{2\ a_i}$, then
if $B_i$ denotes the (completely reducible) action of $B_{0}$ on
$J_i/Z(J_i)$, we have  $a(B_i)\leq(r_i^{2 a_i})^{2.25}$ by Theorem \ref{szar}.
In this case we set $e_i=r_i^{6.5 a_i}$.

If $J_{i}/Z(J_{i})=L_{1}\times \cdots \times L_{t}\neq 1$ for non-abelian simple groups $L_{i}$, then if
$S_i$ denotes the action of $B_{0}$ permuting the $L_j$, we have
$a(S_i)\leq 24^{(t-1)/3}$ by Proposition \ref{di}. In this case we set
$e_i=|\Out(L_1)|^t 24^{(t-1)/3}$.
Using Lemma \ref{l3} we see that
$$
e_i\leq 4^t\ d_{J_i}\cdot f^t\cdot 24^{(t-1)/3}\leq (d_{J_i})^{4.53}
   f^{[\log d_{J_i}]}.
$$
Altogether we see that $a(B)\leq p^f\cdot f^{1+[\log d]}\cdot d^{6.5}$.
On the other hand $n=p^{fd}$. 

From this, by a tedious calculation, it follows that $a(B) < n$ whenever $n \geq 2^{40}$ (for $d > 1$). With more calculations it is possible to show that $a(B) < n$ whenever $n > 3^{16}$ and $d>1$. 








Finally, consider the last statement of the theorem. By similar calculations as before, it follows that $a(B) < n^{2}/6^{1/2}$ whenever $n \geq 2^{16}$ (even if $d=1$). So assume that $n < 2^{16}$ and also that $d>1$.

If $p^{f} = 2$ then no $J_{i}$ is a group of symplectic type and so a closer look at our previous estimates yields $a(B) < n^{2}/6^{1/2}$ and $a(B) < n$ (if $d > 1$). 

Let $p^{f} = 3$. Then $d \leq 10$, a $J_{i}$ can be a group of symplectic type, but, in this case, we must have $r_{i} = 2$. Using this observation, a simple calculation gives $a(B) < n^{2}/6^{1/2}$ whenever $n > 81$.  

Let $p^{f} = 4$. Then $d \leq 7$, a $J_{i}$ can be a group of symplectic type, but, in this case, we must have $r_{i} = 3$ and $a_{i} = 1$. Using the fact that $|\spl{2}{3}| = 24$, the exponent $6.5$ in the above estimate can be improved in this special case and we get $a(B) < n^{2}/6^{1/2}$ whenever $n > 64$. The same bound holds even in case $d=1$ and $n > 64$.

Let $p^{f} \geq 5$. Here $d \leq 6$ and a very similar argument yields the desired bound.

Thus we only need to check the last statement of the theorem for $n \leq 81$. This was done by GAP \cite{GAP}. 
\end{proof}



\begin{theorem}
\label{linear2} Let $G \vartriangleleft A\leq \mathrm{GL}(V)$ with $|V|=p^{f}=n$. Assume
that $G$ acts irreducibly on $V$. Then either $A$ is metacyclic and $|A/G|<n$
or $a(A/G)<n$ for $n > 3^{16}$.
\end{theorem}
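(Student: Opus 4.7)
The plan is to split according to whether $A$ acts primitively on $V$. In the primitive case, I would apply Theorem~\ref{primitive-linear2} directly to $A$. If conclusion~(1) of that theorem holds, so that $d=1$ and $A \leq \Gamma L_{F}(V) = \Gamma L_{1}(q)$ with $q=n$, then $A$ is metacyclic, since $\Gamma L_{1}(q) = F^{\ast} \rtimes \mathrm{Gal}(F \mid \mathbb{F}_{p})$ is itself metacyclic. To verify $|A/G|<n$, pick any $v \in V\setminus\{0\}$; the $\mathbb{F}_{p}$-span of the $G$-orbit $Gv$ is a nonzero $G$-invariant subspace of $V$, hence equals $V$ by irreducibility of $G$. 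Thus $|Gv|\geq \dim_{\mathbb{F}_{p}} V = f$, so $|G|\geq f$, and since $|A|\leq (q-1)f$ we obtain $|A/G|\leq (q-1)f/f = q-1 < q = n$. If instead conclusion~(2) of Theorem~\ref{primitive-linear2} holds, then $a(A/G) \leq a(A) < n$ immediately.

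Now assume $A$ is imprimitive on $V$, and choose a maximal system of imprimitivity $V = V_{1}\oplus \cdots \oplus V_{t}$ with $t \geq 2$ and $|V_{1}|$ minimal. A standard refinement argument shows this maximality forces the action $A_{1}$ of $N_{A}(V_{1})$ on $V_{1}$ to be primitive on $V_{1}$, and applying Clifford's theorem to the analogous normal subgroup $G_{1}\vartriangleleft A_{1}$ also ensures that $A_{1}$ acts irreducibly on $V_{1}$. Let $K$ denote the kernel of the induced permutation action of $A$ on $\{V_{1},\ldots,V_{t}\}$, so that $K \leq \prod_{i}\mathrm{GL}(V_{i})$ with each $\pi_{i}(K)\vartriangleleft A_{i}$. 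Since irreducibility of $G$ forces $G$ to permute the $V_{i}$ transitively, Theorem~\ref{abelian-transitive} applied to $GK/K \vartriangleleft A/K \leq \mathrm{S}_{t}$ yields $a(A/GK) \leq 6^{t/4}$. Applying Lemma~\ref{nyuszi} with $X_{i}=A_{i}$ (valid since $G$ permutes the $A_{i}$ transitively and $\pi_{i}(K)\vartriangleleft A_{i}$), and using $[G,K]\leq G\cap K$, I obtain $a(K/(K \cap G)) \leq (a(A_{1}))^{t/2}$, so
\[
a(A/G) \leq 6^{t/4}\cdot (a(A_{1}))^{t/2}.
\]

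To finish, I would apply Theorem~\ref{primitive-linear2} to the primitive, irreducible group $A_{1} \leq \mathrm{GL}(V_{1})$ with $m = |V_{1}|$. For $(m, A_{1}) \neq (9, \mathrm{GL}_{2}(3))$, the ``furthermore'' clause gives $a(A_{1}) < m^{2}/\sqrt{6}$, whence
\[
a(A/G) < 6^{t/4}\cdot (m^{2}/\sqrt{6})^{t/2} = m^{t} = n.
\]
In the single exceptional case $(m, A_{1}) = (9, \mathrm{GL}_{2}(3))$, the hypothesis $n = 9^{t} > 3^{16}$ forces $t \geq 9$ and in particular $t \notin \{2, 4\}$, so Lemma~\ref{nyuszi} supplies the sharper bound $a(K/[G,K]) \leq 48^{3t/8}$. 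A direct numerical verification of $6^{1/4}\cdot 48^{3/8} < 9$ then yields $a(A/G) < 9^{t} = n$. The main point requiring care is the verification that a maximal system of imprimitivity forces $A_{1}$ to be primitive (and hence irreducible) on $V_{1}$, together with checking the subnormality condition $\pi_{i}(K) \vartriangleleft \vartriangleleft A_{i}$ needed to apply Lemma~\ref{nyuszi}; both steps are standard but should be spelled out carefully.
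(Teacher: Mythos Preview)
Your proposal is correct and follows essentially the same approach as the paper's proof: split into the primitive and imprimitive cases, invoke Theorem~\ref{primitive-linear2} in the primitive case (together with the bound $|G|\geq f$ when $d=1$), and in the imprimitive case combine Theorem~\ref{abelian-transitive} with Lemma~\ref{nyuszi} and the ``furthermore'' clause of Theorem~\ref{primitive-linear2}, handling the exception $(m,A_{1})=(9,\mathrm{GL}_{2}(3))$ via the sharper exponent $3t/8$ in Lemma~\ref{nyuszi}. The only minor redundancy is your appeal to Clifford's theorem for the irreducibility of $A_{1}$ on $V_{1}$; this already follows from the fact that $V\cong \Ind_{N_{A}(V_{1})}^{A}V_{1}$ is $A$-irreducible, while primitivity of $A_{1}$ comes from the minimality of $|V_{1}|$ exactly as you indicate.
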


\begin{proof}
Consider a counterexample with $n$ minimal.

If $A$ acts primitively on $V$, then, by Theorem \ref{primitive-linear2}, either $a(A) < n$, or $A^{\prime }$
is cyclic, $A$ embeds in $\mathrm{\Gamma L}_{1}(p^{f})$ and $|A|=a(A)<nf$.

Consider the latter case. Since $G$ acts irreducibly on $V$ over the prime
field, it follows that $|G|\geq f$ (a group of order less than $f$ will not
have an irreducible module of dimension $f$). Thus $|A/G|<n$.

So we may assume that $A$ acts imprimitively.

So $V=V_{1}\oplus \cdots \oplus V_{t}$ with $t>1$ and $A$ permutes the $%
V_{i} $. Note that since $G$ is irreducible, $G$ must permute the $V_{i}$
transitively as well. We may assume that this is done in such a way that $%
V_{1}$ has minimal dimension over $\mathbb{F}_p$. Set $m=|V_{1}|$. Since $%
t>1$ and since $A$ is irreducible on $V$, we have $m>2$.

Let $K$ be the subgroup of $A$ fixing each $V_{i}$. Let $A_{i}$ be the image
of $N_{A}(V_{i})$ acting on $V_{i}$ and define $G_{i}$ similarly. Since $%
V_{1}$ is minimal, it follows that $A_{1}$ acts primitively on $V_{1}$.

Now $a(A/G)\leq a(A/GK)a(K/(G\cap K)$. By Theorem \ref{abelian-transitive}, $%
a(A/GK)\leq 6^{t/4}$. By Lemma \ref{nyuszi} and Theorem \ref{primitive-linear2}%
, $a(K/(G\cap K)\leq a(A_{1})^{t/2} <  (m^{2}/6^{1/2})^{t/2}$ unless $m$ is
$9$ and $A_{1} = \mathrm{GL}_{2}(3)$. Thus, if $m \not= 9$, we have $%
a(A/G) < 6^{t/4}(m^{2}/6^{1/2})^{t/2} = n$.

Assume now that $m = 9$ and $A_{1} = \mathrm{GL}_{2}(3)$. 

By the restriction $n > 3^{16}$, we have $t \not= 2$, $4$. Then Lemma \ref{nyuszi} implies that $a(K/(G\cap K))\leq a(A_1)^{3t/8} = 48^{3t/8}$. Hence $a(A/G)\leq
{\left(6^{1/4} 48^{3/8} \right)}^t < m^t = n$.
\end{proof}

\begin{theorem}
\label{t2}
Let $G$ and $A$ be primitive permutation groups of degree $n$ with $G \vartriangleleft A$. Then
$a(A/G)<n$ for $n > 3^{16}$.
\end{theorem}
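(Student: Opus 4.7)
The plan is to apply the Aschbacher--O'Nan--Scott theorem to the primitive group $A$ and handle each case separately.

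First I would dispose of the affine case. If the socle of $A$ is abelian, then it is an elementary abelian group $V$ of order $n$ contained in $G$, and the quotients $G/V \vartriangleleft A/V$ sit inside $\mathrm{GL}(V)$ with $G/V$ acting irreducibly (as $G$ is primitive). Applying Theorem \ref{linear2} to the pair $(G/V, A/V)$ gives either that $A/V$ is metacyclic with $|(A/V)/(G/V)|<n$, whence $a(A/G)\leq |A/G|<n$, or directly that $a(A/G)<n$ for $n>3^{16}$.

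For the remaining cases the socle $E=F^*(A)$ is a direct product $L_1\times\cdots\times L_t$ of $t$ copies of a nonabelian simple group $L$. A minimal normal subgroup of $A$ not contained in $G$ would have trivial intersection with $G$ and hence centralize the transitive group $G$, making it semiregular of order at most $n$; using this, one sees that $E\leq G$ outside of the special two-minimal-normal-subgroup case handled below. Let $K$ be the stabilizer in $A$ of each component $L_i$. Then $A/K$ is a transitive permutation group on $t$ letters and $K/E$ embeds into $\mathrm{Out}(L)^t$, which is solvable by Schreier's conjecture. Consequently
\[
a(A/G)\;\leq\;a(A/GK)\cdot a(K/(K\cap G))\;\leq\;a(A/K)\cdot a(K/E)\;\leq\;24^{(t-1)/3}\cdot |\mathrm{Out}(L)|^{t},
\]
where Proposition \ref{di} bounds the $a$-invariant of the transitive group $A/K\leq \mathrm{S}_t$.

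It then remains to check in each Aschbacher--O'Nan--Scott type that this bound is smaller than $n$. Using the general estimate $|\mathrm{Out}(L)|\leq |L|^{1/4}$ (from the remark following Lemma \ref{l2}; the one exception $\mathrm{L}_3(4)$ is ruled out by $n>3^{16}$) together with Lemma \ref{l2} itself, one treats: (i) the almost simple case $t=1$, where $|\mathrm{Out}(L)|<n$ directly; (ii) the diagonal type with $n\geq |L|^{t-1}$; (iii) the product-action type with $n=m^t$ and $m$ at least the minimal degree of $L$ (so $|\mathrm{Out}(L)|\leq 2\sqrt{m}$); and (iv) the twisted wreath case with $n=|L|^t$. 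In each of these the inequality $24^{(t-1)/3}|\mathrm{Out}(L)|^t<n$ is very comfortable for $n>3^{16}$.

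The main obstacle I anticipate is the case where $A$ has two minimal normal subgroups $E_1\cong E_2\cong L^s$ (so $t=2s$ and $n=|L|^s$) and only one, say $E_1$, lies in $G$. Here $E_2$ centralizes $G$ and acts semiregularly, and one must show that $G$ is forced to be the standard diagonal-type primitive group on $|L|^s$ points while $A/G$ is a subquotient of an extension of $\mathrm{Out}(L)^s$ by something like $C_2$ (arising from the swap of the two minimal normal subgroups). Controlling this factor and combining it with the bound on $a(K/E)$ as above should again yield $a(A/G)<n$ for $n>3^{16}$; the bookkeeping between the two minimal normal subgroups is the most delicate point of the argument.
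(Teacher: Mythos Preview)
Your affine case is fine and matches the paper. The real gap is in the nonabelian case: your displayed estimate
\[
a(A/G)\;\le\;a(A/K)\cdot a(K/E)\;\le\;24^{(t-1)/3}\,|\mathrm{Out}(L)|^{t}
\]
is too crude to yield $a(A/G)<n$ in product action. Take $L=\mathrm{A}_5$, so $m=5$, $|\mathrm{Out}(L)|=2$ and $n=5^{t}$. Then $24^{(t-1)/3}\cdot 2^{t}$ exceeds $5^{t}$ for every $t\ge 8$ (e.g.\ for $t=11$, the first value with $5^{t}>3^{16}$, one gets roughly $8\times 10^{7}$ versus $4.9\times 10^{7}$), and the ratio only gets worse as $t$ grows. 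So the inequality ``$24^{(t-1)/3}|\mathrm{Out}(L)|^{t}<n$'' simply fails here, and your case (iii) does not close.

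What you are missing are the two sharper tools the paper uses. First, Lemma~\ref{nyuszi} (built on the induced-module estimate Lemma~\ref{module}) exploits the transitivity of $G$ on the components to give $a(K/(K\cap G))\le |\mathrm{Out}(L)|^{t/2}$ rather than $|\mathrm{Out}(L)|^{t}$; this halving of the exponent is decisive. Second, Theorem~\ref{abelian-transitive} gives $a(A/GK)\le 6^{t/4}$, which is genuinely smaller than the $24^{(t-1)/3}$ you get from Proposition~\ref{di}. With these, the paper obtains $a(A/G)\le 6^{t/4}(2\sqrt{m})^{t/2}=(24m)^{t/4}<m^{t}$ for $m\ge 5$, which is exactly what is needed.

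Finally, your anticipated ``main obstacle'' is not one. When $G$ is primitive and $A$ has two minimal normal subgroups $E_1,E_2$, the group $G$ must contain both: if $G\cap E_2=1$ then $G\le C(E_2)=E_1$, forcing $G=E_1$ regular, which is never primitive for nonabelian $L$. So $E\le G$ in all cases, and this case is handled by the same machinery (with $A/K$ viewed in $\mathrm{S}_s$, $t=2s$).
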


\begin{proof}
We consider the various cases in the Aschbacher-O'Nan-Scott Theorem.

In all cases, $G$ contains $E:=F^{\ast }(A)$. The result follows by Theorem
\ref{linear2} if $E$ is abelian. So we may assume that $E$ is a direct
product of $t$ copies of a nonabelian simple group $L$ of order $l$. Let $K$
denote the subgroup of $A$ stabilizing all the components.

Suppose first that $E=E_{1}\times E_{2}$ with $E_{1}\cong E_{2}$ the two
minimal normal subgroups of $A$. In this case $t=2s$ for some integer $s$ and $n=|E_{1}|=l^{s}$.
Then the groups $GK/K \vartriangleleft A/K$ can be considered as transitive subgroups of $\mathrm{S}_{s}$ and so by
Theorem \ref{abelian-transitive}, $a(A/GK)\leq 6^{s/4}$.
By Lemma \ref{nyuszi}, $a(GK/G)=a(K/G\cap K)\leq |\Out(L)|^s$.
Hence $a(A/G)\le n^{1/4}\cdot 6^{s/4}$ unless $L=\mathrm{L}_3(4)$ by a remark after
Lemma \ref{l2}. This is certainly less than $n$ since $l\geq 60$.
The same follows for $L=\mathrm{L}_3(4)$ by direct computation.

In the remaining cases, $E$ is the unique minimal normal subgroup of $A$, the groups $
GK/K \vartriangleleft A/K$ are transitive subgroups of $\mathrm{S}_{t}$ and so as in the
previous case, we see that $a(A/GK)\leq 6^{t/4}$.
Here $n \geq m^t$ where $m$ is at least the minimal degree of a nontrivial permutation
representation of $L$. By Lemmas \ref{nyuszi} and \ref{l2} it follows that $a(GK/G)\leq|\Out (L)|^{t/2}\leq
(2\sqrt{m})^{t/2}$. Hence $a(A/G)\le n^{1/4}\cdot 2^{t/2} \cdot 6^{t/4}$
which is less than $n$ if $m\geq 5$.
\end{proof}

\section{Normalizers of primitive groups -- Sizes}
\label{Section 10}

We continue to consider the situation $G \vartriangleleft A\leq \mathrm{S}_{n}$, $G$ primitive and
want to bound $|A/G|$. We first consider the case when the socle of $G$ is
abelian. To deal with this case, we need the following result on primitive
linear groups.

\begin{theorem}
\label{primitive-linear3} 
Let $V$ be a finite vector space of order $n=p^{b}$ defined over a field of prime order $p$. Let $A$ be a subgroup of $\mathrm{GL}(V) = \mathrm{GL}_{b}(p)$ which acts primitively (and irreducibly) on $V$. Let $F$ be a maximal field such that $A$ embeds in $\mathrm{\Gamma L}_{F}(V)$%
. Let $G$ be a normal subgroup of $A$ which acts irreducibly on $V$. Let $|F|=p^{f}$ and let $d=\dim _{F}V$ (so $d=b/f$). Then $a(A) b(A/G) < f \cdot p^{f} \cdot  d^{2 \log d + 3}$.
\end{theorem}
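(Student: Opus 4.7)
The plan is to combine the normal-subgroup analysis from the proof of Theorem~\ref{primitive-linear2} (which controls $a(A)$) with the observation that the relevant normal subgroups must lie in $G$ (which lets us control $b(A/G)$). After replacing $A$ by $AC$, where $C$ is the group of nonzero elements of $F$ viewed as $F$-scalars, we may assume $C \leq A$. Set $B_0 = C_A(C)$, so $A/B_0$ is cyclic of order at most $f$. Let $J_1,\ldots,J_m \leq B_0$ be the $A$-normal subgroups minimal with respect to not lying in $C$, and put $J = J_1\cdots J_m$. As in the proof of Theorem~\ref{primitive-linear2}, each $J_i$ is either (a) of symplectic type with $|J_i/Z(J_i)| = r_i^{2a_i}$, giving $d_i := \dim_F W_i = r_i^{a_i}$, or (b) a central product of $t_i$ quasisimple groups with common nonabelian simple central quotient $L_i$, in which case $W_i = W_{i,1}\otimes_F\cdots\otimes_F W_{i,t_i}$ and $d_i = (\dim_F W_{i,1})^{t_i} \geq 2^{t_i}$. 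One has $d \geq \prod_i d_i$, $Z(J) \leq C$, and $C_{B_0}(J) = C$, so $B_0/CJ$ embeds into $\Out(J) \leq \prod_i \Out(J_i) \cdot \mathrm{S}_m$.

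To pass to $b(A/G)$, I first note that we may enlarge $G$ to contain every solvable normal subgroup of $A$: this does not affect $a(A)$ and can only decrease $b(A/G)$. In particular every symplectic-type $J_i$ (an $r$-group) then lies in $G$. For the perfect $J_i$ in case (b), the Three Subgroup Lemma argument used in the proof of Theorem~\ref{primitive-linear} applies verbatim: since $G$ is irreducible, $C_A(G) = Z(A)$, and if $J_i \not\leq G$ then $J_i = [J_i,J_i]$ would centralize $G$, a contradiction. Thus $J \leq G$, and we obtain
\begin{equation*}
a(A)\cdot b(A/G) \;\leq\; \frac{a(A)\,b(A)}{b(J)} \;=\; \frac{|A|}{b(J)} \;\leq\; f\cdot(p^f-1)\cdot\frac{|J/Z(J)|}{b(J)}\cdot\left|\frac{B_0}{CJ}\right|.
\end{equation*}

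It remains to estimate, for each $i$, the factor $|J_i/Z(J_i)|/b(J_i)\cdot|\Out(J_i)|$ in terms of $d_i$. In case (a), $|J_i/Z(J_i)|/b(J_i) = d_i^2$, and the image of $B_0$ in $\Out(J_i)$ is controlled by $|\spl{2a_i}{r_i}|$ or $|\ort{2a_i}{\epsilon}{2}|$, each at most $r_i^{2a_i^2 + a_i} \leq d_i^{2\log d_i + 1}$; the combined contribution is $\leq d_i^{2\log d_i + 3}$. In case (b), $|J_i/Z(J_i)|/b(J_i) = 1$, and since $B_0$ centralizes $C$ and therefore acts $F$-linearly, its image in $\Out(L_i)$ misses the field automorphisms (already accounted for in the $A/B_0$ factor of order at most $f$); the remaining $F$-linear outer automorphisms of $L_i$ have order at most a constant times $\dim_F W_{i,1} = d_i^{1/t_i}$, giving a contribution $\leq (c\,d_i^{1/t_i})^{t_i}\cdot t_i! \leq d_i^{3 + \log\log d_i}$ using $t_i \leq \log d_i$, which is well below $d_i^{2\log d_i + 3}$. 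Multiplying across all $i$ with $\prod_i d_i \leq d$ yields the bound $f\cdot p^f\cdot d^{2\log d + 3}$. The main obstacle is precisely this separation of field automorphisms in case (b): a naive application of Lemma~\ref{l3} would give $|\Out(L_i)| \leq 4 f d_i^{1/t_i}$, introducing a spurious $f^{\log d}$ factor; one must verify that $B_0$, centralizing all of $F$, induces only diagonal-graph automorphisms on $L_i$, whose orders are bounded in terms of $\dim_F W_{i,1}$ alone.
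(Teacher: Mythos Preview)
Your overall strategy matches the paper's brief proof: invoke the primitive-linear structure of Theorem~\ref{primitive-linear2}, use the Three Subgroup argument from Theorem~\ref{primitive-linear} to place every minimal non-central $J_i$ inside $G$, and bound $a(A)\,b(A/G) \le |A|/b(J)$ via the product decomposition. Your case~(a) estimate $d_i^{2\log d_i + 3}$ is correct, and you rightly identify case~(b) as the crux and the $f^{\log d}$ factor as the obstacle.

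There is, however, a genuine gap in your case~(b) reasoning. You assert that $B_0$'s image in $\Out(L_i)$ ``misses the field automorphisms (already accounted for in the $A/B_0$ factor of order at most $f$)''. This conflates two unrelated things: $A/B_0 \hookrightarrow \mathrm{Gal}(F/\FF_p)$ records automorphisms \emph{of the field $F$}, whereas the field-automorphism part of $\Out(L_i)$ is intrinsic to the abstract simple group $L_i$ and need not correspond to $\mathrm{Gal}(F/\FF_p)$ at all. An $F$-linear transformation can perfectly well induce a field automorphism of $L_i$ --- for instance when $L_i = \mathrm{L}_2(r^e)$ with $r \ne p$ acts on an $F$-module whose isomorphism class is fixed by all of $\Out(L_i)$. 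What \emph{is} true is that any $\alpha$ induced by $N_{B_0}(L_{i,1})$ must satisfy $W_{i,1}^\alpha \cong_F W_{i,1}$ (apply Krull--Schmidt to $V|_{L_{i,1}}$, which is $W_{i,1}$-isotypic). So the correct object to bound by $c\cdot\dim_F W_{i,1}$ is the stabiliser of the isomorphism class $[W_{i,1}]$ in $\Out(L_i)$, not the ``diagonal-graph subgroup''. Establishing that stabiliser bound is a case analysis through the classification (Steinberg's tensor product theorem in defining characteristic; Landazuri--Seitz type bounds in cross characteristic), not the formal consequence of $F$-linearity you suggest. Without it, Lemma~\ref{l3} alone yields the spurious $f^{\log d}$ factor, which for large $f$ and small $d$ (say $d=4$, $J_1 = \mathrm{SL}_2(p^f)\circ\mathrm{SL}_2(p^f)$) exceeds the stated bound.
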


\begin{proof}
We use the description of the structure of $A$ found in the proof of Theorem \ref{primitive-linear2} (where this group was denoted by $B$). By the fourth paragraph of the proof of Theorem \ref{primitive-linear} we see that $G$ contains every non-solvable normal subgroup of $A$ which is minimal with respect to being non-central. From this the result easily follows.
\end{proof}

We note here that, with little modification and in case $J \not= 1$, the proof of Theorem \ref{primitive-linear} essentially bounds $|A|/(b(G)|J|) = (a(A) b(A/G)) / |J|$, where $J$ is the product of all {\it solvable} normal subgroups of $A$ (satisfying the conditions of Theorem \ref{primitive-linear}) which are minimal with respect to being non-central. We also note that ${(\log n)}^{2 \log \log n}$ is close to $d^{2 \log d}$. However the argument in Theorem \ref{primitive-linear2} is to be used together with Lemma \ref{l3} but excluding Theorem \ref{szar}. 

We continue with a simple lemma.

\begin{lemma}
\label{seged}
Let us use the notations and assumptions of the statement of Theorem \ref{primitive-linear3}. Put $A_{0} = A \cap \mathrm{GL}_{F}(V)$. Suppose that $A$ has a unique normal subgroup $J$ contained in $A_0$ which is minimal subject to being not contained in the multiplicative group $C$ of $F$ viewed as a subset of $\mathrm{End}(V)$. If $|A/G| \geq n$, then $J \leq G$.  
\end{lemma}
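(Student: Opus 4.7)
The plan is to argue by contradiction. Suppose $J \not\leq G$; I will derive $|A/G| < n$, contradicting the hypothesis.

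The first step exploits the uniqueness of $J$. Since $G\cap A_{0}$ is a normal subgroup of $A$ contained in $A_{0}$, the uniqueness hypothesis forces the dichotomy: either $G\cap A_{0}\supseteq J$ (which would give $J\leq G$, contrary to assumption) or $G\cap A_{0}\leq C$. Hence $G\cap A_{0}\leq C$. Consequently, $G$ sits in an extension of the cyclic group $G\cap A_{0}\leq F^{\ast}$ by the cyclic quotient $G/(G\cap A_{0})\hookrightarrow A/A_{0}\leq \mathrm{Gal}(F/\mathbb{F}_{p})$, so $G$ is metacyclic and solvable, with $b(G)=1$ and $|G|\leq f(p^{f}-1)$.

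Since $b(G)=1$, one has $b(A)=b(A/G)$, so Theorem \ref{primitive-linear3} yields $|A| = a(A)b(A) = a(A)b(A/G) < f\,p^{f}\,d^{2\log d+3}$. To compare this with $n\,|G|$, I need a lower bound on $|G|$, which comes from the irreducibility of $G$ on $V$: a classical theorem (e.g., Suprunenko's description of supersolvable irreducible linear groups) states that the metacyclic irreducible subgroup $G$ of $\mathrm{GL}_{fd}(\mathbb{F}_{p})$ is conjugate to a subgroup of $\Gamma L_{1}(\mathbb{F}_{p^{fd}})$. The cyclic part $N=G\cap\mathbb{F}_{p^{fd}}^{\ast}$ must then satisfy $\mathrm{ord}_{|N|}(p)=fd$ (otherwise $G$ would preserve a proper subfield), which forces $|N|>fd$, and so $|G|>fd$. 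Combining this with $|G|\leq f(p^{f}-1)$ already forces $p^{f}>d+1$.

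Putting these estimates together gives $|A/G| \leq |A|/|G| < p^{f}\,d^{2\log d+2}$. It remains to verify that this is strictly less than $n=p^{fd}$, equivalently that $d^{2\log d+2} < p^{f(d-1)}$. A short case analysis on $d$ does the job: for $d\geq 9$, the inequality $p^{f}\geq d+2$ already gives $(d+2)^{d-1} > d^{2\log d+2}$ since the left side grows exponentially in $d$ while the right side grows only as $d^{O(\log d)}$; for the finitely many small values of $d\in\{2,3,\ldots,8\}$ that are prime powers (recall $d=r^{a}$ must hold for $J$ to be of symplectic type), Section \ref{Section 10}'s standing hypothesis $n>3^{16}$ forces $p^{f}>3^{16/d}$, and one checks directly that this suffices in each case. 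This contradicts $|A/G|\geq n$, completing the argument. The main obstacle is the careful execution of this final numerical step, especially ensuring that the constraints ``$J$ exists'' (forcing $d$ to be a prime power and $Z(J)\hookrightarrow F^{\ast}$) and ``$G$ irreducible'' (forcing $p^{f}>d+1$) together with $n>3^{16}$ leave no uncontrolled cases.
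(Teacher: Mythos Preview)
Your reduction to the metacyclic case is correct and matches the paper: from the uniqueness of $J$ and $G\cap A_0\vartriangleleft A$, one gets either $J\leq G\cap A_0$ or $G\cap A_0\leq C$, so under the contrapositive assumption $G_0:=G\cap A_0\leq C$ and $G$ is metacyclic. From this point on, however, your argument diverges from the paper's and runs into two problems.

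The decisive gap is your appeal to a ``standing hypothesis $n>3^{16}$''. No such hypothesis is present in the statement of Lemma~\ref{seged} (nor in Theorem~\ref{primitive-linear3}, whose notation it borrows), and the lemma is later invoked in Section~\ref{Section 11} precisely for small $n$, e.g.\ for $p^{f}=3$, $d=4$ (so $n=81$) and for $p^{f}=5$, $d=4$ (so $n=625$). Your final numerical step requires $d^{\,2\log d+2}<p^{\,f(d-1)}$; for $d=4$, $p^{f}=3$ this reads $4^{6}=4096<27$, which is false. So the route via Theorem~\ref{primitive-linear3} and a crude lower bound on $|G|$ cannot close the argument for the small values of $n$ where the lemma is actually needed.

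A second, smaller issue: the assertion that a metacyclic irreducible subgroup of $\mathrm{GL}_{fd}(p)$ is conjugate into $\Gamma\mathrm{L}_{1}(p^{fd})$ is not correct in general (for instance $D_{8}$ acts faithfully and irreducibly on $\mathbb{F}_{5}^{2}$ but does not embed into $\Gamma\mathrm{L}_{1}(25)$, whose Sylow $2$-subgroup is the modular group $M_{16}$). The weaker inequality $|G|\geq fd$ you actually need does hold, by the trivial bound that a faithful irreducible $\mathbb{F}_{p}[G]$-module has dimension at most $|G|$; but this does not rescue the numerics above.

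The paper's proof avoids both issues by a sharper, $n$-uniform estimate: with $K=\mathrm{End}_{G}(V)$ and $L=K^{\ast}$, it first disposes of the case $G$ cyclic (then $|A|<|L|\cdot|G|<n\cdot|G|$), and otherwise uses that $|K|\leq p^{\,df/r}$ with $r$ the least prime divisor of $df$. Since $G_{0}\leq Z(A_{0})$ and $G/G_{0}$ is cyclic, the conjugation action of $A_{0}$ on $G$ has kernel $L\cap A_{0}$, giving $|A_{0}|\leq |L|\cdot|G|$ and hence $|A/G|\leq f\cdot|L|<f\cdot p^{\,df/r}\leq p^{\,df}=n$. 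This works for every $n$, which is exactly what Section~\ref{Section 11} requires.
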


\begin{proof}
Let the multiplicative group of the field $K = \mathrm{End}_{G}(V)$ be $L$. 

We may assume that $G$ is not cyclic. Indeed, otherwise $|A| < |L| \cdot |G| < n \cdot |G|$ since $A$ acts on $G$ by conjugation with kernel contained in $L$. 

By the facts that $G$ is not cyclic and a Singer cycle is self centralizing, we must have $df \geq 2$ and $|K| \leq p^{df/r}$ where $r$ is the smallest prime factor of $df$. 

We may also assume that $G$ is metacyclic. Indeed, $G_{0} = G \cap A_0$ is normal in $A$, is contained in $A_0$, thus it may be assumed that $G_{0} \leq C$ is cyclic and thus $G$ is metacyclic. 

By considering the action of $A_0$ on $G$, we see that $|A_{0}| \leq |L| \cdot |G|$ since the kernel of the action is $L \cap A_{0}$, $G_{0} \leq Z(A_{0})$, and $G/G_{0}$ is cyclic. From this we have $|A/G| \leq f \cdot |L| < f \cdot p^{df/r} \leq p^{df} = n$.   
\end{proof}

We next present two useful bounds for $|A/G|$ in terms of $n$. 

\begin{lemma}
\label{segedlemma}
Let $n$, $A$ and $G$ be as in Theorem \ref{primitive-linear3}. Then we have the following. 
\begin{enumerate}
\item $|A/G| < n$ for $n > 3^{16}$; 

\item $a(A) b(A/G) < n^{2}/6^{1/2}$ unless $n=9$ and $A = \mathrm{GL}_{2}(3)$.
\end{enumerate}
\end{lemma}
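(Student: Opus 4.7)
The plan is to split into cases based on the $F$-dimension $d=\dim_{F}V$ and combine Theorem \ref{primitive-linear3} with the structure of $\mathrm{\Gamma L}_{1}$-subgroups, much as in the proof of Theorem \ref{linear2}.

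First I would treat the case $d=1$. Here $A\leq \mathrm{\Gamma L}_{1}(p^{f})$ is metacyclic, so $b(A/G)=1$ and by Theorem \ref{primitive-linear2}(1) we have $a(A)=|A|\leq (n-1)f$. Because $G$ acts irreducibly on $V$ as an $\mathbb{F}_{p}$-space of dimension $f$, the same argument used in the proof of Theorem \ref{linear2} forces $|G|\geq f$. Therefore $|A/G|\leq (n-1)f/f=n-1<n$, which proves (1), and $a(A)b(A/G)\leq (n-1)f\leq (n-1)\log n$, which is easily less than $n^{2}/\sqrt{6}$ for every $n\geq 3$; the only primitive possibilities with $n\leq 9$ are checked by hand.

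Next I would treat $d\geq 2$ by invoking Theorem \ref{primitive-linear3} to obtain
\[
a(A)\,b(A/G) < f\cdot p^{f}\cdot d^{2\log d+3}.
\]
Since $|A/G|\leq a(A/G)b(A/G)\leq a(A)b(A/G)$, part (1) reduces to the inequality
\[
f\,d^{2\log d+3} < p^{f(d-1)},
\]
while part (2) reduces to
\[
\sqrt{6}\,f\,d^{2\log d+3} < p^{f(2d-1)}.
\]
Both are exponential in $f(d-1)$, respectively $f(2d-1)$, against a polynomial in $f,d$ on the left, so they hold outside a finite list of small triples $(p,f,d)$ with $d\geq 2$.

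The main obstacle is the finite case analysis: for part (1) one must check that every triple with $d\geq 2$ and $p^{fd}>3^{16}$ actually satisfies $f\,d^{2\log d+3}<p^{f(d-1)}$, and one verifies this by hand going through $d=2,3,4,\dots$ with the allowed range of $f$ (the tightest cases being $p=2$, $d=2$, where $n>3^{16}$ forces $f\geq 13$, and $p=2$, $d=3$, where $f\geq 9$). For part (2) the inequality fails precisely at $(p,f,d)=(3,1,2)$, giving $n=9$ and matching the exception $A=\mathrm{GL}_{2}(3)$ (where $|A|=48>81/\sqrt{6}$); all other small residual triples are handled either directly or by a short GAP computation, exactly in the spirit of the last paragraph of the proof of Theorem \ref{primitive-linear2}.
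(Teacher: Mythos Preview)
There is a genuine gap in your argument for $d\geq 2$. You claim that the left-hand side $f\cdot d^{2\log d+3}$ is ``polynomial in $f,d$'', but it is not: the exponent $2\log d+3$ grows with $d$, so $d^{2\log d+3}=2^{2(\log d)^{2}+3\log d}$ is only quasi-polynomial in $d$. Consequently, for small $p^{f}$ the inequality $f\,d^{2\log d+3}<p^{f(d-1)}$ fails over a wide range of $d$ even with $n>3^{16}$. For instance, with $p^{f}=2$ and $d=26$ one has $n=2^{26}>3^{16}$, yet $d^{2\log d+3}=26^{12.4}\approx 3.5\times 10^{17}$ while $p^{f(d-1)}=2^{25}\approx 3.4\times 10^{7}$; indeed the inequality fails for every $26\le d\lesssim 130$ when $p^{f}=2$, and similarly for $p^{f}=3$. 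So Theorem \ref{primitive-linear3} by itself gives (1) only for $n$ roughly at least $2^{136}$ (as the paper notes), and gives (2) only for $n$ roughly at least $2^{34}$; the residual ranges are not a handful of triples one checks by hand.

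The paper closes this gap with two additional ingredients you are missing. First, it disposes of the case where $A/G$ is solvable directly via Theorems \ref{linear2} and \ref{primitive-linear2}, which already contain the sharper estimate $a(A)<n$ for $d>1$ and $n>3^{16}$ together with the $n^{2}/\sqrt{6}$ bound. Second, once $A/G$ is assumed non-solvable, structural information severely constrains the parameters: for $p^{f}=2$ there are no symplectic-type $J_{i}$, so a non-solvable quotient forces $n\geq 2^{243}$, and for $p^{f}\geq 3$ one has $d\geq 4$. The remaining finite range is then handled $p^{f}$ by $p^{f}$ (from $3$ up to about $19$), and in several borderline cases (e.g.\ $n=3^{32}$, $5^{16}$, $13^{8}$) one must invoke Lemma \ref{seged} to conclude $J\leq G$ and thereby shave off an extra factor. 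Your proposal needs this split into solvable versus non-solvable $A/G$, the structural lower bounds on $d$, and Lemma \ref{seged}; the purely numeric route through Theorem \ref{primitive-linear3} cannot carry the argument for $3^{16}<n<2^{136}$.
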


\begin{proof}
In case $A/G$ is solvable, this follows from Theorems \ref{linear2} and \ref{primitive-linear2}. Thus we may assume that $A/G$ is not solvable. An easy computation using Theorem \ref{primitive-linear3} shows that $|A/G| < n$ for $n \geq 2^{136}$ and $a(A) b(A/G) < n^{2}/6^{1/2}$ for $n \geq 2^{34}$. It is easy to see by the structure of a primitive linear group (see Theorem \ref{primitive-linear2}), that if $p^f = 2$ (where $p$ and $f$ are as in Theorem \ref{primitive-linear3}) and $A/G$ is not solvable, then $n \geq 2^{243}$. Thus we may also assume that $p^{f} \geq 3$ (and $d \geq 4$, where $d$ is as in Theorem \ref{primitive-linear3}). 

Now straightforward calculations using Theorem \ref{primitive-linear3} give $|A/G| < n$ for $n \geq 3^{54}$ and $a(A) b(A/G) < n^{2}/6^{1/2}$ for $n \geq 3^{14}$.

Let us adopt the notations and assumptions of the proof of Theorem \ref{primitive-linear2} (with $B$ replaced by $A$ and $B_0$ replaced by $A_0$). 

Assume that $p^{f} = 3$. Then we may assume that $16 < d \leq 53$ and $d \leq 13$ in the respective cases. A $J_{i}$ can be a group of symplectic type, but, in this case, we must have $r_{i} = 2$. As in Example \ref{example} the normalizer $N_{i}$ in $\mathrm{GL}_{2^{a_{i}}}(3)$ of such a $J_{i}$ satisfies $N_{i}/(J_{i} Z) \cong
\ort{2a_{i}}{\epsilon}{2}$ where $Z$ is the group of scalars. (This is because $|Z(J_{i})|$ must be $2$ since it divides $p^{f}-1$.) A straightforward computation using the structure of $A$ (and $G$) gives the result. 

We only comment on the bound (1) in case $n = 3^{32}$ and when the product $J$ of all normal subgroups of $A$ contained in $A_0$ which are not contained in the multiplicative group, $C$ of $F$ viewed as a subset of $\mathrm{End}(V)$, is solvable. When $J$ is itself a normal subgroup of $A$ contained in $A_0$ which is minimal subject to being not contained in $C$ (a unique such), then $J \leq G$, by Lemma \ref{seged}, and so we find that $|A/G| < n$. Otherwise, if $J$ is a product of more than one $J_{i}$, then $|A| < n$ by the fact that the index of a proper subgroup in $\ort{10}{\epsilon}{2}$, apart from the simple subgroup $\sort{10}{\epsilon}{2}$ whose index is $2$, is at least $495$. 

Assume that $p^{f} = 4$. Then $13 \leq d \leq 42$ and $d \leq 11$ in the respective cases. A $J_{i}$ can be a group of symplectic type, but, in this case, we must have $r_{i} = 3$. We are assuming that $A/G$ is not solvable. As a result, for (2), only the case $d=9$ has to be checked. The bound in (1) is slightly more complicated to establish (but true).

To finish the proof of (2) we may assume that $p^{f} \geq 5$. Then $4 \leq d \leq 9$. Using this information and Theorem \ref{primitive-linear3} we see that $a(A) b(A/G) < f \cdot p^{f} \cdot  d^{2 \log d + 3} < n^{2}/6^{1/2}$. Thus from now on we only consider (1).

Let $p^{f} = 5$. Then we may assume that $11 \leq d \leq 36$. In fact, by use of Theorem \ref{primitive-linear3} we may assume that $d \leq 29$. With a computation similar to the ones above it is possible to deduce (1) in this special case.

We only comment on the case $n = 5^{16}$ and when the product $J$ of all normal subgroups of $A$ contained in $A_0$ which are minimal subject to being not contained in $C$ is solvable. When $J$ is itself a normal subgroup of $A$ contained in $A_0$ which is minimal subject to being not contained in $C$ (a unique such), then $J \leq G$, by Lemma \ref{seged}, and so we find that $|A/G| < n$. Otherwise, if $J$ is a product of more than one $J_{i}$, then $|A| < n$ by the fact that the subgroups $\spl{6}{2} \times \spl{2}{2}$ and $\spl{4}{2} \times \spl{4}{2}$ of $\spl{8}{2}$ are relatively small. 

Let $p^{f} = 7$. We may assume that $10 \leq d \leq 17$ (by use of Theorem \ref{primitive-linear3}). A straightforward computation gives the result. 

Similarly, if $p^{f} = 8$, $9$ or $11$, then we may assume that $d$ satisfies $9 \leq d \leq 16$, $9 \leq d \leq 15$ or $8 \leq d \leq 11$ in the respective cases. Straightforward computations give the result. 

Let $p^{f} = 13$. We may assume that $d = 7$, $8$ or $9$. A straightforward computation gives the result except when $d=8$ and $A$ does not contain a non-solvable normal subgroup which is minimal subject to being not contained in $C$. In this latter case we may proceed as in the case $n = 5^{16}$ described above. 

Let $p^{f} = 16$. We may assume that $d = 7$ or $d=8$. In this case there is nothing to do since we are assuming that $A/G$ is non-solvable.

Let $p^{f} = 17$. We may assume that $d = 7$ and so there is nothing to do. 

Let $p^{f} = 19$. We may assume that $d = 6$. However there is nothing to do since $A/G$ is non-solvable.

By $p^{f} \geq 23$ and Theorem \ref{primitive-linear3}, we have $d = 4$ or $d = 5$. Both these cases can easily be handled using the assumption that $n > 3^{16}$.

This finishes the proof of the lemma.     
\end{proof}

\begin{theorem}
\label{166}
Let $G \vartriangleleft A\leq \mathrm{GL}(V)$ with $|V|=p^{d}=n$. Assume
that $G$ acts irreducibly on $V$. Then $|A/G|<n$ for $n > 3^{16}$.
\end{theorem}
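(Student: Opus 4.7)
The plan is to argue by induction on $n$, in close analogy with the proof of Theorem \ref{linear2}, but tracking the full size $|A/G| = a(A/G) b(A/G)$ rather than just $a(A/G)$. Consider a counterexample with $n > 3^{16}$ minimal. If $A$ acts primitively on $V$, then Lemma \ref{segedlemma}(1) gives $|A/G| < n$ immediately, so we may assume $A$ is imprimitive and write $V = V_1 \oplus \cdots \oplus V_t$ with $t > 1$, the $V_i$ permuted by $A$. Because $G$ is irreducible it permutes the $V_i$ transitively, and choosing $V_1$ of minimal dimension ensures that $A_1$ acts primitively on $V_1$, with $G_1 \vartriangleleft A_1$ acting irreducibly (by the Clifford-type argument in the proof of Theorem \ref{linear2}); moreover $m := |V_1| \geq 3$ as noted there.

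Let $K$ be the kernel of the $A$-action on $\{V_1,\ldots,V_t\}$, and factor
\[
|A/G| = |A/GK| \cdot |K/(K \cap G)|.
\]
For the right-hand factor, Lemma \ref{nyuszi} applied to the embedding $K \leq A_1 \times \cdots \times A_t$ gives $a(K/(K\cap G)) \leq a(A_1)^{t/2}$, while Theorem \ref{diag1} yields $b(K/(K\cap G)) \leq b(A_1/G_1)$. Multiplying and regrouping,
\[
|K/(K\cap G)| \leq a(A_1)^{t/2-1}\bigl(a(A_1)\,b(A_1/G_1)\bigr) < a(A_1)^{t/2-1}\cdot m^2/6^{1/2},
\]
using Lemma \ref{segedlemma}(2); and since $a(A_1) < m^2/6^{1/2}$ by Theorem \ref{primitive-linear2}, this collapses to $m^t/6^{t/4}$, exactly as in Theorem \ref{linear2}. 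The left-hand factor $|A/GK|$ is the index of a transitive normal subgroup in a transitive subgroup of $\mathrm{S}_t$, and one has bounds from Theorems \ref{abelian-transitive}, \ref{transitive}, and \ref{168}.

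The main obstacle is that the naive combination of these ingredients only yields $|A/G| < t^{\log t}\cdot n$ (from the Theorem \ref{transitive} factor) or $|A/G| < 168^{(t-1)/7}\cdot n/6^{t/4}$ (from Theorem \ref{168}), either of which can exceed $n$ for $t \geq 3$. To beat this, one must use additional slack in two regimes. When $m > 3^{16}$, the minimal counterexample assumption allows one to apply Theorem \ref{166} inductively to the pair $(A_1,G_1)$ acting on $V_1$, yielding $|A_1/G_1| < m$, which is stronger than Lemma \ref{segedlemma}(2) and can be fed back into the bound on $|K/(K\cap G)|$ to absorb the $|A/GK|$ factor. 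When $m \leq 3^{16}$, the hypothesis $n > 3^{16}$ forces $t$ to be large (e.g.\ $t \geq 17$ for $m = 3$), which triggers the sharper conclusion $a(K/(K\cap G)) \leq a(A_1)^{t/3}$ of Lemma \ref{nyuszi}; moreover for the smallest values of $m$, the group $A_1 \leq \mathrm{\Gamma L}_1(m)$ is of order bounded well below $m^2/6^{1/2}$, providing the needed additional slack.

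Finally, the exceptional case $m = 9$, $A_1 = \mathrm{GL}_2(3)$ (where Lemma \ref{segedlemma}(2) and Theorem \ref{primitive-linear2} fail) is handled via the refined wreath product estimate of Lemma \ref{imprimitiveexample} for $t = 4$, and by the improved bound $a(K/(K\cap G)) \leq 48^{3t/8}$ of Lemma \ref{nyuszi} for $t \neq 2, 4$ (which is always available in this case because $n > 3^{16}$ forces $t \geq 9$). The hardest part will be verifying the numerical inequality $|A/G| < n$ uniformly across these regimes, particularly the boundary cases where $m$ is just below $3^{16}$ and $t$ is small, and where one must rely on induction rather than the crude Lemma \ref{segedlemma}(2) bound.
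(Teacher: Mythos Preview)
Your overall framework matches the paper's: reduce to the imprimitive case via Lemma \ref{segedlemma}(1), decompose $|A/G| = |A/GK|\cdot|K/(K\cap G)|$, and control the second factor by combining Lemma \ref{nyuszi} for $a(K/(K\cap G))$ with Theorem \ref{diag1} for $b(K/(K\cap G))$. But the mechanism you propose for closing the gap has a real flaw.

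Your inductive step for $m > 3^{16}$ does not feed back. The induction (or Lemma \ref{segedlemma}(1) directly, since $A_1$ is primitive) gives $|A_1/G_1| = a(A_1/G_1)\,b(A_1/G_1) < m$. But the quantity that appears in your bound for $|K/(K\cap G)|$ is $a(A_1)^{t/2}\,b(A_1/G_1) = a(A_1)^{t/2-1}\cdot\bigl(a(A_1)\,b(A_1/G_1)\bigr)$, involving the full $a(A_1)$, not $a(A_1/G_1)$. Since $a(A_1) \geq a(A_1/G_1)$ with no control on the ratio, knowing $|A_1/G_1|<m$ gives you nothing beyond what Lemma \ref{segedlemma}(2) already provided; it is not ``stronger''. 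Likewise, your dichotomy breaks: $m \leq 3^{16}$ does not force $t$ to be large when $m$ is itself close to $3^{16}$ (e.g.\ $m = 3^{8}$, $t=3$), so the $t/3$ exponent of Lemma \ref{nyuszi} is unavailable there.

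What the paper actually does is quite different. It first reduces to the case where $A/G$ is nonsolvable by invoking the already-proved Theorem \ref{linear2}; this means one only needs to treat the situation $b(A_1/G_1) \neq 1$, which by the structure of primitive linear groups forces $m=81$ or $m \geq 625$ with $d_1 \geq 4$. Then, rather than induction, it bounds $a(A_1)\,b(A_1/G_1)$ directly via Theorem \ref{primitive-linear3} (and GAP for $m=81$). The key algebraic trick is to choose $f(m) = m^{8/3}/168^{8/21}$ (for $3\leq t\leq 16$, $t\neq 4$, exponent $3t/8$) or $f(m)=m^3/168^{3/7}$ (for $t\geq 17$, exponent $t/3$), so that $168^{(t-1)/7}\cdot f(m)^{et}$ telescopes exactly to $m^t\cdot 168^{-1/7}$; one then checks $a(A_1)\,b(A_1/G_1) < f(m)$ using Theorem \ref{primitive-linear3}. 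For $t=2,4$ the paper uses $b(A/GK)=1$ together with $a(A/GK)\leq 6^{t/4}$ from Theorem \ref{abelian-transitive}, which already gives $|A/G| < 6^{t/4}\cdot m^t/6^{t/4} = n$. None of this requires induction on $n$.
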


\begin{proof}
By Lemma \ref{segedlemma} we may assume that $A$ acts imprimitively on $V$. By Theorem \ref{linear2} we may also assume that $A/G$ is not solvable.

We may proceed almost as in the relevant paragraph of Theorem \ref{linear}. We may decompose $V$ in the form $V=V_1 \oplus \cdots \oplus
V_t$ with $t > 1$ maximal such that $A$ permutes the $V_i$. Note that $G$ must permute the
$V_i$ transitively as well since $G$ is irreducible. Let $K$ be the subgroup
of $A$ fixing each $V_i$. Let $A_i$ be the action of $N_A(V_i)$ on $V_i$ and
define $G_i$ similarly. 

Now $G_1$ must act irreducibly on $V_1$ and so by Theorem \ref{primitive-linear3} we have the inequality
$a(A_{1}) b(A_1/G_1) < f_{1} \cdot p^{f_{1}} \cdot  d_{1}^{2 \log d_{1} + 3}$, where $m=|V_1| = p^{f_{1}d_{1}}$ for certain integers $f_{1}$ and $d_{1}$. Note that $n=m^t$.

By Theorem \ref{diag1}, we have 
$$|A/G| = a(A/G) b(A/G) \leq a(A/GK)b(A/GK) \cdot a(K/(G \cap K)) b(A_1/G_1).$$

We have $b(A/GK) < t^{\log t}$ by Theorem \ref{transitive}. We also have $a(A/GK) \leq 6^{t/4}$
by Theorem \ref{abelian-transitive}. Thus Lemma \ref{nyuszi}, Theorem \ref{primitive-linear2}, and Theorem \ref{linear} give $$|A/G| \leq 6^{t/4} \cdot t^{\log t} \cdot {a(A_{1})}^{t/2} \cdot b(A_{1}/G_{1}) < 6^{t/4} \cdot t^{\log t} \cdot {(m^{2}/6^{1/2})}^{t/2} \cdot {(\log m)}^{2 \log \log m},$$ provided that $m \not= 9$. However we can improve this bound by use of the inequality 
$${a(A_{1})}^{t/2} b(A_{1}/G_{1}) \leq {f(m)}^{t/2} (a(A_{1})b(A_{1}/G_{1}))/f(m),$$ where $f(m)$ is any upper bound for $a(A_{1})$.
For example if $m \not= 9$ then we get 
$$|A/G| < 6^{t/4} \cdot t^{\log t} \cdot {(m^{2}/6^{1/2})}^{t/2} \cdot ( a(A_{1}) b(A_1/G_1) )/(m^{2}/6^{1/2}).$$ 

First it will be convenient to deal with the case when $t = 2$ or $t = 4$. Then $m \not= 9$. Since $b(A/GK) = 1$, the previous inequality shows that we are done unless $b(A_{1}/G_{1}) \not= 1$. On the other hand, if $b(A_{1}/G_{1}) \not= 1$, then Lemma \ref{segedlemma} gives $$a(A_{1}) b(A_1/G_1) < m^{2}/6^{1/2},$$ provided that $m \not= 9$.

Now let $t$ be different from $2$ and $4$ but at most $16$. 

Assume first that $m \not= 4$. By Lemma \ref{nyuszi} and Theorem \ref{168}, 
$$|A/G| < 168^{(t-1)/7} \cdot {(m^{8/3}/168^{8/21})}^{3t/8} \cdot (a(A_{1}) b(A_1/G_1))/(m^{8/3}/168^{8/21}) <$$ $$< m^{t} \cdot (a(A_{1}) b(A_1/G_1))/(m^{8/3}/168^{8/21}) \cdot 168^{-1/7}$$ (since $m^{8/3}/168^{8/21} > m^{2}/6^{1/2} > a(A_{1})$ for $m \geq 5$ (and $m$ different from $9$), and $m^{8/3}/168^{8/21} >a(A_{1})$ for $m=3$ and $m=9$). Again, we are finished if $b(A_{1}/G_{1}) = 1$. Assume that $b(A_{1}/G_{1}) \not= 1$. If $m = 81$ then we can use GAP \cite{GAP} to arrive to a conclusion. Otherwise it is easy to see that $m \geq 625$. Since $d_{1} \geq 4$, we certainly have
$$a(A_{1}) b(A_1/G_1) < f_{1} \cdot p^{f_{1}} \cdot  d_{1}^{2 \log d_{1} + 3} < m^{8/3}/168^{8/21}$$ for $m \geq 625$, unless possibly if $p^{f} = 2$ or $p^{f} = 3$. If $p^{f} = 2$, then $d_{1} \geq 3^{5}$, by the structure of $A_{1}$, and so the previous inequality holds. We also have the previous inequality in case $p^{f} = 3$ since we may assume by the structure of $A_{1}$ that $d_{1} \geq 8$.

If $m = 4$ then Lemma \ref{nyuszi} and Theorem \ref{168} give us $|A/G| \leq 168^{(t-1)/7} \cdot 6^{3t/8}$. This is not necessarily less than $4^t$, however it is for $t \leq 16$.

Now let $t \geq 17$. 

By Lemma \ref{nyuszi} and Theorem \ref{168}, 
$$|A/G| < 168^{(t-1)/7} \cdot {(m^{3}/168^{3/7})}^{t/3} \cdot (a(A_{1}) b(A_1/G_1))/(m^{3}/168^{3/7}) <$$ $$< m^{t} \cdot (a(A_{1}) b(A_1/G_1))/(m^{3}/168^{3/7}) \cdot 168^{-1/7}$$ (since $m^{3}/168^{3/7} > m^{2}/6^{1/2}$ when $m \geq 4$, and $m^{3}/168^{3/7} > a(A_{1})$ for $m = 3$ and $m=9$). We may assume that $b(A_1/G_1) \not= 1$. Then $m = 81$ or $m \geq 625$ by the structure of $A_{1}$. If $m = 81$ then we have $a(A_{1}) b(A_1/G_1) < m^{3}/168^{3/7}$ by use of GAP \cite{GAP}. If $m \geq 625$ then we arrive to a conclusion by use of three paragraphs up, noting that $m^{8/3}/168^{8/21} < m^{3}/168^{3/7}$ for $m \geq 3$.      
\end{proof}

\begin{theorem}
\label{primitive-size}
Let $G$ and $A$ be permutation groups with $G \vartriangleleft A \leq \mathrm{S}_{n}$. Suppose that $G$ is primitive and 
$|A/G| \geq n$. Then $A$ and $G$ are affine primitive permutation groups and $n \leq 3^{16}$.
\end{theorem}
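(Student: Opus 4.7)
The plan is to apply the Aschbacher--O'Nan--Scott theorem to $G$ and argue that the only way to have $|A/G| \geq n$ is for $G$ (and hence $A$) to be affine with $n \leq 3^{16}$. Note first that $A$ is itself primitive, as any overgroup of a primitive permutation group is primitive; thus the classification applies to both $G$ and $A$. The affine case will be handled by Theorem \ref{166}, which is precisely the source of the threshold $3^{16}$, while the non-affine cases will be shown to always satisfy $|A/G| < n$, contradicting the hypothesis.

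Suppose first that $G$ is affine, with socle $V \cong \mathbb{F}_{p}^{d}$ and $n = p^{d}$. As the socle of $G$, the subgroup $V$ is characteristic in $G$ and hence normal in $A$; since $A$ is primitive and contains the transitive abelian normal subgroup $V$, we may write $A = V \rtimes K$ and $G = V \rtimes H$ with $H \vartriangleleft K \leq \mathrm{GL}(V)$ and $H$ irreducible on $V$. Then $|A/G| = |K/H|$, and Theorem \ref{166} yields $|K/H| < n$ whenever $n > 3^{16}$. Hence if $|A/G| \geq n$ and $G$ is affine, then $n \leq 3^{16}$.

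Now suppose for contradiction that $G$ is primitive but not affine. Write $E = F^{*}(A)$, a direct product of $t \geq 1$ copies of a nonabelian simple group $L$ of order $l$; the proofs of Theorems \ref{t2} and \ref{primitive} show that $G \supseteq E$ in all these cases. Decompose $|A/G| = a(A/G) \cdot b(A/G)$ and estimate each factor using those proofs (whose non-affine parts do not require $n > 3^{16}$). In the holomorph-type subcase where $A$ has two minimal normal subgroups $E_{1} \cong E_{2}$, with $t = 2s$ and $n = l^{s}$, $l \geq 60$, one obtains $a(A/G) \leq 6^{s/4} \cdot n^{1/4}$ and $b(A/G) \leq s^{\log s}$; together with $s \leq \log n / \log 60$ this gives $|A/G| \leq n^{1/4 + o(1)}$, well below $n$. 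When $E$ is the unique minimal normal subgroup of $A$ and $t \geq 2$ (simple/compound diagonal, product action, or twisted wreath), one has $n \geq m^{t}$ with $m \geq 5$ at least the smallest nontrivial permutation degree of $L$, and $a(A/G) \leq n^{1/4} \cdot 2^{t/2} \cdot 6^{t/4}$ combined with $b(A/G) \leq t^{\log t}$ and $t \leq \log n / \log 5$ yields $|A/G| < n$. Finally, in the almost simple case $t = 1$, $A/G$ embeds in $\mathrm{Out}(L)$, so Lemma \ref{l2} gives $|A/G| \leq 2\sqrt{n} < n$ for $n \geq 5$. Thus in every non-affine subcase we reach $|A/G| < n$, contradicting the hypothesis; hence $G$ is affine, and the preceding paragraph concludes $n \leq 3^{16}$.

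The main obstacle is establishing $|A/G| < n$ uniformly across all non-affine types, since for very small values of $s$ or $t$ (for instance $s = 1$ in the holomorph case, or $t \in \{2,3,4\}$ in the product-action setup) the asymptotic estimates above are tight. These residual situations should be handled by direct numerical verification using the relevant lower bounds ($l \geq 60$ in holomorph type, $n \geq 5^{t}$ in product action), or by invoking Schreier's conjecture to observe that $b(A/G) = 1$ whenever $t \leq 4$, reducing the problem to bounding $a(A/G)$ alone, which is comfortably controlled by the bounds quoted above.
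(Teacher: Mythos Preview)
Your overall architecture matches the paper's: reduce the affine case to Theorem~\ref{166}, and in the non-affine cases run the computations from the proofs of Theorems~\ref{t2} and~\ref{primitive}. There is, however, a genuine numerical gap in the non-affine argument.

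You bound $a(A/G)$ and $b(A/G)$ separately and then multiply, obtaining in the product-action case
\[
|A/G| \;\le\; n^{1/4}\cdot 2^{t/2}\cdot 6^{t/4}\cdot t^{\log t}.
\]
For this to be below $n=m^t$ one needs $(\log t)^2 < t\cdot\log(m^{3/4}/24^{1/4})$; when $m=5$ (e.g.\ $L=\mathrm{A}_5$) this reads $(\log t)^2 < 0.595\,t$, which \emph{fails} for every $t$ with $5\le t\lesssim 55$. Your proposed remedies do not cover this range: Schreier's conjecture gives $b(A/G)=1$ only for $t\le 4$, and ``direct numerical verification using $n\ge 5^t$'' is exactly what fails, since the bound above genuinely exceeds $5^t$ in that range. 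The holomorph case has the analogous problem for intermediate~$s$.

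The paper closes this gap not by separating $a$ and $b$, but by bounding $|A/GK|$ in one stroke via Theorem~\ref{168}, which gives $|A/GK|\le 168^{(t-1)/7}$. Combined with $a(GK/G)\le (2\sqrt{m})^{t/2}$ and $b(GK/G)=1$ (Schreier), one gets
\[
|A/G|\;\le\;168^{(t-1)/7}\cdot 2^{t/2}\cdot n^{1/4},
\]
and $168^{1/7}\cdot 2^{1/2}\approx 2.94 < 5^{3/4}\le m^{3/4}$ makes this less than $n$ for all $t\ge 1$ and $m\ge 5$. The same substitution ($6^{s/4}\to 168^{s/7}$) handles the holomorph case. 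So the fix is precisely to replace your product $6^{t/4}\cdot t^{\log t}$ by the single estimate $168^{(t-1)/7}$ from Theorem~\ref{168}; this is the one extra ingredient the paper invokes beyond Theorems~\ref{t2} and~\ref{primitive}.
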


\begin{proof}
If $A$ is an affine primitive permutation group then the result follows from Theorem \ref{166}. Otherwise we may mimic the proof of Theorem \ref{t2} by noting that we must replace $6^{s/4}$ by $168^{s/7}$ and $6^{t/4}$ by $168^{t/7}$ in the respective cases (due to Schreier's conjecture and Theorem \ref{168}). 
\end{proof}

\section{Small linear groups}
\label{Section 11}

In this section we will finish the proof of the first half of Theorem \ref{main:0}.

Let $G$ and $A$ be permutation groups with $G \vartriangleleft A \leq \mathrm{S}_{n}$. Suppose that $G$ is primitive and 
$|A/G| \geq n$. We must show that the pair $(n,A/G)$ is one of the eleven exceptions in Theorem \ref{main:0}.

By Theorem \ref{primitive-size} it is sufficient to consider affine primitive permutation groups of degrees at most $3^{16}$. 

Let $V$ be a finite vector space of size $n$ with $n \leq 3^{16}$. Opposed to the notation of the statement of the theorem, let $G$ and $A$ be groups such that $G \vartriangleleft A \leq \mathrm{GL}(V)$. Assume that $G$ (and thus $A$) acts irreducibly on $V$. We must classify all possibilities for which $|A/G| \geq n$.   

Let us first assume that $A$ acts primitively on $V$. We use the notations and assumptions of Theorem \ref{primitive-linear2} and its proof (with $B$ replaced by $A$ and $B_0$ replaced by $A_0$). We put $n=p^{b}$ for a prime $p$ and integer $b$ with the property that $A$ is a subgroup of $\mathrm{GL}(V) = \mathrm{GL}_{b}(p)$ acting primitively (and irreducibly) on $V$. Let $F$ be a maximal field such that $A$ embeds in $\Gamma L_{F}(V)$. Let $|F|=p^{f}$ and let $d=\dim _{F}V$ (so $d=b/f$). Let the multiplicative group of $F$, viewed as a subset of $\mathrm{End}(V)$, be denoted by $C$. 

If $d=1$ then Theorem \ref{linear2} gives $|A/G| < n$. Thus assume that $d > 1$.

As in the proof of Theorem \ref{primitive-linear2}, let $J$ be the product of all normal subgroups of $A$ contained in $A_0$ which are minimal subject to not being contained in $C$. 

Assume that $d$ is a prime. Then, by the proof of Theorem \ref{primitive-linear2}, $J$ itself is a normal subgroup of $A$ contained in $A_0$ which is minimal subject to not being contained in $C$. Moreover $J$ is either a quasisimple group or is a group of symplectic type with $|J/Z(J)| = d^2$. In both of these cases we must have $J \leq G$, by Lemma \ref{seged}. 

Assume that $J$ is a quasisimple group. By Lemma \ref{l3} (and the proof of Theorem \ref{primitive-linear2}), we have $|A/G| \leq 4 (p^{f}-1) d f^{2}$. This is less than $p^{df}$ for $d \geq 5$. Assume that $d=2$. If $\mathrm{A}_5$ is a factor group of $J$, then $|A/G| \leq 2 f (p^{f} - 1) < p^{2f}$. Otherwise, by Dickson's theorem on subgroups of $\mathrm{GL}_{2}(p^{f})$, we have $|A/G| \leq 2 f^{2} (p^{f}-1) < p^{2f}$ if $p$ is odd, and $|A/G| \leq f^{2} (p^{f}-1) < p^{2f}$ if $p = 2$. Now assume that $d = 3$. Then, by information from \cite{KL}, we find that $|A/G| \leq 6 f^{2} (p^{f}-1)$. This is smaller than $p^{3f}$ unless $p = f = 2$. If $d = 3$ and $p = f = 2$, then, by \cite{GAP}, we get the desired estimate $|A/G| \leq 4 f (p^{f}-1) = 24 < 64 = n$. 

Let $J$ be a group of symplectic type with $|J/Z(J)| = d^2$ where $d$ is a prime. Then $|A/G| \leq |\spl{2}{d}| f (p^{f}-1) < d^{3} f (p^{f}-1) < n$ for $d \geq 5$, and also for $d = 3$ and $p^{f} > 4$. If $d = 3$ and $p^{f} = 4$, then $|A/G| \leq d^{3} f = 54 < 64$. Let $d = 2$. It is then easy to see that $|A/G| \leq 6 f ((p^{f}-1)/2) < p^{2f}$ since $p > 2$.          

From now on we assume that $d$ is not a prime and larger than $1$. 

In this paragraph let $p^{f} = 2$. By the structure of $A$ described in the proof of Theorem \ref{primitive-linear2} we know that all normal subgroups of $A$ contained in $A_0$ and minimal with respect to being not contained in $C$ are non-solvable. Moreover $J$ has at most two non-abelian simple composition factors, since $d \leq 25$. By this, we immediately see, as in the proof of Theorem \ref{primitive-linear2}, that $|A/G| \leq 32 d$. This is less than $2^d$ unless $d \leq 8$. If $d = 6$ or $8$, then $|A/G| \leq 4d < 2^d$. For $d = 4$ the result follows by \cite{GAP}.

From now on we assume that $p^{f} > 2$. 

In this paragraph we deal with the cases when $d = 6$, $10$, $14$, or $15$. In these cases $d$ is a product of two primes $r_{1}$ and $r_{2}$. First suppose that $J$ is not solvable. If $A$ has no solvable normal subgroup contained in $A_{0}$ which is minimal with respect to being noncentral, then it is easy to see that $|A/G| \leq f (p^{f}-1) \cdot 4^{2} f^{2} d < p^{fd}$ since $p^{f} > 2$. Otherwise we get $|A/G| \leq f^{2} (p^{f}-1) \cdot 4 r_{1} \cdot {r_{2}}^{5}$ (for a certain choice of $r_1$ and $r_2$). This is always less than $p^{df}$ unless $d=6$ and $p^{f} = 3$ or $4$. If $d = 6$ and $p^{f} = 3$, then in the previous bound we must have $r_{2} = 2$ and thus $|A/G| < n$. If $d = 6$ and $p^{f} = 4$, then we must have $r_{1} = 2$ and $r_{2} = 3$. In this special case we can modify our bound to $|A/G| \leq f (p^{f}-1) \cdot 2 \cdot 3^{5} = 12 \cdot 3^{5} < 4^{6} = n$. Thus we may assume that $J$ is solvable. In this case $d$ divides $p^{f}-1$, and since $n \leq 3^{16}$, we are left to consider only the case $d = 6$ and $p^{f} = 7$ or $13$ when $|A| < n$.      

We are left to consider the cases when $d = 4$, $8$, $9$, $12$, or $16$. 

Let $d=4$. 

First assume that $J$ is solvable and it is the unique normal subgroup of $A$ contained in $A_0$ which is minimal subject to being not contained in $C$. By Lemma \ref{seged} we may assume that $J \leq G$. For $p^{f} \geq 7$ we can bound $|A/G|$ by $f ((q^{f}-1)/2) |\spl{4}{2}| = 360 f (q^{f}-1) < p^{4f}$. We are left to consider the cases when $p^{f} = 3$ and $p^{f} = 5$. If $p^{f} = 3$, $d = 4$ and $|A/G| \geq 81$, then $(n,A/G) = (3^{4},\ort{4}{-}{2})$, while if $p^{f} = 5$, $d = 4$ and $|A/G| \geq 625$, then $(n,A/G) = (5^{4}, \spl{4}{2})$. Now assume that $J$ is solvable and it is the product of two normal subgroups, say $J_{1}$ and $J_{2}$ of $A$ contained in $A_{0}$ which are minimal subject to being not contained in $C$. If $f = 1$ then $G$ contains one (if not both) of these normal subgroups, say $J_{1}$. Furthermore, since $J_{1}$ is not irreducible on $V$, the irreducible group $G$ properly contains $J_{1}$. Thus $|A/G| \leq 4 \cdot 36 \cdot ((p^{f}-1)/2) \cdot (1/2) = 36 (p-1) < p^4$. We may now assume that $f \geq 2$ (and also that $p$ is odd). In this case we only use the fact that $|G| \geq 4$ to conclude that $|A/G| \leq f (p^{f}-1) 16 \cdot 36 \cdot (1/4) = 144 f (p^{f}-1)$. We already know from the same paragraph that this is less than $p^{4f}$ for $p^{f} \geq 9$.    

Secondly assume that $A$ has no solvable normal subgroup contained in $A_0$ which is minimal subject to not being contained in $C$. In this case $J$ has at most two non-abelian composition factors and so $|A/G| \leq f^{3} (p^{f}-1) \cdot 4^{3} \cdot 2 = 128 f^{3} (p^{f}-1)$, by the second half of the proof of Theorem \ref{primitive-linear2}. From this we get $|A/G| < p^{4f}$ unless possibly if $p^{f} = 3$, $4$, $8$, $9$ or $16$. When $J$ has a unique non-abelian composition factor, then we may sharpen our bound to $|A/G| \leq 16 f^{2} (p^{f}-1)$, and this is smaller than $p^{4f}$ for the remaining five values of $p^f$. Thus $J$ has exactly two non-abelian composition factors. In this case we can apply Dickson's theorem on subgroups of $\mathrm{GL}_{2}(p^{f})$ to refine our bound on $|A/G|$ even further. This is $8 f^{3} (p^{f}-1)$ which is smaller than $p^{4f}$ for the remaining five values of $p^f$.     

Thirdly there are two normal subgroups of $A$ contained in $A_0$ which are minimal subject to not being contained in $C$. One is $J_{1}$, a symplectic $2$-group, and one is $J_{2}$, a quasisimple group. In this case we have $|A/G| \leq f (p^{f}-1) \cdot 2f \cdot 24 = 48 f^{2} (p^{f}-1)$. This is less than $p^{4f}$ where $p > 2$, unless $p^{f} = 3$. But $p^{f} = 3$ cannot occur in this case since $J_{2} \leq \mathrm{GL}_{2}(3)$ is solvable.

From now on let $d$ be $8$, $9$, $12$ or $16$. 

In this paragraph suppose that $A$ has no solvable normal subgroup contained in $A_0$ which is minimal subject to not being contained in $C$. In this case the number, say $r$ of non-abelian composition factors of $J$ is at most $4$. If $r = 4$ then $d=16$ and so $p^{f} = 3$. In this case it is easy to see that $|A/G| \leq 98304 f^{5} (p^{f}-1) < 3^{16}$. Let $r = 3$. Then $d = 8$ or $d \geq 12$. In the first case we can use Dickson's theorem to conclude that a quasisimple subgroup $Q$ of $\mathrm{GL}_{2}(p^{f})$ satisfies $|\Out(S/Z(S))| \leq 2f$. This implies that $|A/G| \leq 48 f^{4} (p^{f}-1) < p^{8f}$. In case $d \geq 12$ we can use our usual bound $|A/G| \leq f^{4} (p^{f}-1) \cdot 4^{3} \cdot 16 \cdot 6 = 6144 f^{4} (p^{f}-1) < p^{12f}$. Finally let $r \leq 2$. Then $|A/G| \leq 512 f^{3} (p^{f}-1)$. This is less than $p^{fd}$ for $d \geq 8$ (and $p^{f} > 2$). 

In the remaining cases $A$ has a solvable normal subgroup contained in $A_0$ which is minimal subject to not being contained in $C$. This implies that the greatest common divisor of $d$ and $p^{f} - 1$ is larger than $1$. This, the above, and the fact that $n \leq 3^{16}$ imply that the only cases to deal with are the following: $d = 8$ and $p^{f} = 3$, $5$, $7$, $9$; $d = 9$ and $p^{f} = 4$, $7$; $d = 12$ and $p^{f} = 3$, $4$; and $d = 16$ and $p^{f} = 3$.      

Let $d=8$. We may assume that $A$ has a solvable normal subgroup contained in $A_0$ which is minimal subject to not being contained in $C$. First suppose that $J$ is not solvable. Then $J$ has one or two non-abelian composition factors. Such a composition factor can be considered as a subgroup of $\mathrm{L}_{2}(p^{f})$ or of $\mathrm{L}_{4}(p^{f})$. In the first case we must have $p^{f} \geq 5$. Suppose $J$ has exactly one non-abelian composition factor. If this is a subgroup of $\mathrm{L}_{2}(p^{f})$, then, by Dickson's theorem, we have the estimate $|A/G| \leq f(p^{f}-1) \cdot 2f \cdot |\spl{4}{2}| \cdot 2^{4} < p^{8f}$ for $p^{f} \geq 5$. If this is considered as a subgroups of $\mathrm{L}_{4}(p^{f})$, then $|A/G| \leq f(p^{f}-1) \cdot 16f \cdot |\spl{2}{2}| \cdot 4 < p^{8f}$. Finally, if $J$ has exactly two non-abelian composition factors, then these must be subgroups of $\mathrm{L}_{2}(p^{f})$, and we have $|A/G| \leq f (p^{f}-1) \cdot {(2f)}^{2} \cdot 2 \cdot |\spl{2}{2}| \cdot 4 < p^{8f}$ for $p^{f} \geq 5$. Thus we may assume that $J$ is solvable. First assume that $p^{f}=9$. If $A$ has more than one normal subgroup contained in $A_0$ which is minimal subject to not being contained in $C$, then $|A| \leq 2 \cdot 8 \cdot |\spl{4}{2}| |\spl{2}{2}| \cdot 2^6 < 9^8$. Otherwise we may assume that $J \leq G$, by Lemma \ref{seged}, and so $|A/G| \leq 2 \cdot 8 \cdot |\spl{6}{2}| < 9^8$. We may now assume that $p^{f} = 3$, $5$, or $7$. In all of these cases $A_{0} = A$. First suppose that $A$ has more than one normal subgroup which is minimal subject to not being contained in $C$. If $p^{f} \not= 3$, then $|A/G| \leq 16 \cdot 3 \cdot |\spl{4}{2}| |\spl{2}{2}| < p^{8f}$. Let $p^{f} = 3$. If $|G| \geq 16$, then $|A/G| \leq 8 \cdot |\ort{4}{-}{2}| |\ort{2}{-}{2}| < 3^{8}$. Otherwise $|G \cap J| = 8$ and in fact $|G| = 8$. But such a group $G$ cannot act irreducibly on $V$. We conclude that $J$ is the unique normal subgroup of $A$ which is minimal subject to not being contained in $C$. Thus $J \leq G$. If $p^{f} = 7$, then $|A/G| \leq |\ort{6}{-}{2}| < 7^8$. Let $p^{f} = 5$. Assume that $A$ is the full normalizer of $J$ in $\mathrm{GL}(V)$. If $G = J$, then $(n,A/G) = (5^{8},\spl{6}{2})$. Otherwise, since $A/J \cong \spl{6}{2}$ is simple, $G = A$. Thus we may assume that $A/J$ is a proper subgroup of $\spl{6}{2}$. By \cite[pp. 319]{dixmort}, we have $|A/G| \leq |A/J| \leq |\spl{6}{2}|/28 < 5^8$. We remain with the case $p^{f} = 3$. If $G = J$ and $J \leq A$ has index at most $2$ in the full normalizer of $J$ in $\mathrm{GL}(V)$, then $|A/G| > n$ and $(n,A/G) = (3^{8},\ort{6}{-}{2})$, $(3^{8},\sort{6}{-}{2})$, $(3^{8},\ort{6}{+}{2})$ or $(3^{8},\sort{6}{+}{2})$. Suppose now that $J \leq A$ has index larger than $2$ in the full normalizer of $J$ in $\mathrm{GL}(V)$. Since $\sort{6}{+}{2} \cong \mathrm{A}_{8}$ and $\sort{6}{-}{2} \cong \mathrm{U}_{4}(2)$ are simple groups with minimal index of a proper subgroup $8$ and $27$ respectively (for the latter see \cite[pp. 317]{dixmort}), we immediately get $|A/G| \leq |A/J| < 3^8$ in the remaining cases. 
     
Let $d=9$. We may assume that $A$ has a solvable normal subgroup contained in $A_0$ which is minimal subject to not being contained in $C$. If $J$ is non-solvable, then, by the structure of $A$ (and $G$), $|A/G| \leq f (p^{f}-1) \cdot 4 \cdot 3f \cdot 3^{2} \cdot |\spl{2}{3}| < p^{9f}$. Thus $J$ is solvable. If $p^{f} = 7$ then an easy computation yields $|A| \leq 6 \cdot 81 \cdot |\spl{4}{3}| < 7^9$. We assume that $p^{f} = 4$. Now
$J$ is the product of one or two normal subgroups of $A$ not contained in $C$. If one, then we may assume by Lemma \ref{seged} that $J \leq G$. In this case we get $|A/G| \leq 2 \cdot |\spl{4}{3}| < 4^9$. In the other case we get $|A| \leq 6 \cdot 81 \cdot {|\spl{2}{3}|}^{2}$. Since $|G| > 2$, we see that $|A/G| < 4^9$.       

Let $d=12$. We may again assume that $A$ has a solvable normal subgroup contained in $A_0$ which is minimal subject to not being contained in $C$. We may also assume that $J$ is not solvable since $p^f$ is $3$ or $4$. If $J$ has one non-abelian composition factor, then $|A/G| \leq f (p^{f}-1) \cdot 4 \cdot 6 f \cdot 16 \cdot |\ort{4}{-}{2}| = f^{2} (p^{f}-1) \cdot 46080 < p^{12f}$ for both $p^{f} = 3$ and $p^{f} = 4$. Finally, if $J$ has two non-abelian composition factors, then $|A/G| \leq f (p^{f}-1) \cdot 4^{2} \cdot 4 \cdot f^{2} \cdot 2 \cdot 9 \cdot |\spl{2}{3}| = f^{3} (p^{f}-1) \cdot 27648 < p^{12f}$ for both $p^{f} = 3$ and $p^{f} = 4$. 

Let $d=16$. Then $p^{f} = 3$ and $A_{0} = A$. From the above we may assume that $A$ has a solvable normal subgroup which is minimal subject to not being contained in $C$. Assume first that $J$ is not solvable. Since $\mathrm{GL}_{2}(3)$ is solvable and $3$ does not divide $16$, we know that $J$ has a unique non-abelian composition factor and this can be considered as a subgroup of $\mathrm{L}_{4}(3)$. From this we arrive to a conclusion by $|A/G| \leq f (p^{f}-1) \cdot 4 \cdot (4f) \cdot |\ort{4}{-}{2}| \cdot 2^{4} < 3^{16}$. Thus we may assume that $J$ is solvable. First assume that $A$ contains more than one normal subgroup which is minimal subject to not being contained in $C$. In this case $|A/G|$ is at most $2^{6} \cdot |\ort{6}{-}{2}||\ort{2}{-}{2}| < 3^{16}$. Thus we may assume that $J$ is the unique normal subgroup of $A$ which is minimal subject to not being contained in $C$. This implies that $J \leq G$. If $G = J$ and $J \leq A$ has index at most $2$ in the full normalizer of $J$ in $\mathrm{GL}(V)$, then $|A/G| > n$ and $(n,A/G)$ is $(3^{16},\ort{8}{-}{2})$, $(3^{16},\sort{8}{-}{2})$, $(3^{16},\ort{8}{+}{2})$ or $(3^{16},\sort{8}{+}{2})$. Now $\sort{8}{\epsilon}{2}$ are simple groups with the property that every proper subgroup has index at least $119$ (see \cite[pp. 319--320]{dixmort}). This implies that if $J \leq A$ has index larger than $2$ in the full normalizer of $J$ in $\mathrm{GL}(V)$, then $|A/G| \leq |A/J| < 3^{16}$.

We may now assume that $A$ acts imprimitively on $V$. By the proofs of Theorems \ref{linear2} and \ref{166} we see that we may assume, in the notations of these proofs, that $m = 9$ and $t = 2$ or $t=4$. In these cases $n = 3^4$ or $n = 3^8$. If $n = 3^4$, then \cite{GAP} gives $|A/G|< n$. So assume that the second case holds. The group $A$ is clearly solvable and so by Lemmas \ref{imprimitiveexample} and \ref{abelian-transitive} we have $|A/G| \leq 6 \cdot 16^{2} \cdot 3$. This is less than $3^{8}$. 

This completes the proof of the first half of Theorem \ref{main:0}.   

\section{Normalizers and outer automorphism groups of primitive groups}
\label{Section 12}

In this section we prove Theorem \ref{main:1}.

Let $G$ be a primitive permutation group of degree $n$. Assume first that the generalized Fitting subgroup $E=F^*(G)$ of $G$ is
nonabelian. By \cite[Lemma 1.1]{rose} $\Aut(G)$ has a natural embedding into
$\Aut(E)$ and it acts transitively on the components of $E$
(this is also true if $G$ has two minimal normal subgroups). The bound follows as in the proof of the bound for $|A/G|$.

So suppose that $F^{\ast }(G)=V$ is abelian. If $V$ is central, then $n$ is a prime, $%
G$ is cyclic and ${\mathrm{Out}}(G)$ is cyclic of order $n-1$. So assume
that $Z(G)=1$. In this case the centralizer of $G$ in $\mathrm{S}_{n}$ is $1$.

\begin{lemma} 
\label{cohomology}
Let $G$ be a primitive affine permutation group
of degree $n$ and $H$ a point stabilizer. Let $V=F(G)=F^*(G)$.
Then $|\mathrm{Aut}(G):N_{\mathrm{S}_n}(G)|=|H^1(H,V)|$.
\end{lemma}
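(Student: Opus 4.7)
The plan is to realise $N_{\mathrm{S}_n}(G)$ as the point-stabiliser in $\Aut(G)$ of the class $[H]$ inside $H^{1}(H,V)$, where the latter is interpreted as the set $\mathcal{C}$ of complements to $V$ in $G$ modulo $V$-conjugacy, and then to show that $\Aut(G)$ acts transitively on this set.

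First I would check that $C_{\mathrm{S}_n}(G)=1$, so that conjugation gives an injection $N_{\mathrm{S}_n}(G)\hookrightarrow \Aut(G)$. Any $\sigma\in C_{\mathrm{S}_n}(G)$ centralises $H$, so fixes the unique $H$-fixed point, and then transitivity of $G$ forces $\sigma=1$. Thus the desired index equals the index of the image of $N_{\mathrm{S}_n}(G)$ in $\Aut(G)$. Next I would identify this image explicitly as $\{\alpha\in\Aut(G):\alpha(H)\text{ is }G\text{-conjugate to }H\}$. If $\sigma\in N_{\mathrm{S}_n}(G)$, the subgroup $\sigma H\sigma^{-1}$ is the stabiliser of the point $\sigma(1)$, hence $G$-conjugate to $H$. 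Conversely, if $\alpha(H)=gHg^{-1}$, then $c_{g^{-1}}\alpha$ fixes $H$ setwise, so it induces a permutation of the cosets of $H$ that lies in $N_{\mathrm{S}_n}(G)$ and realises $c_{g^{-1}}\alpha$ by conjugation; composing with $c_g$ (which is conjugation by $g\in G\leq \mathrm{S}_n$) recovers $\alpha$.

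Now, since $V=F^{*}(G)$ is characteristic in $G$, $\Aut(G)$ acts on $\mathcal{C}$, and a standard computation identifies $\mathcal{C}/V$ (as a pointed set with basepoint $[H]$) with $H^{1}(H,V)$. I would then observe that $V$-conjugacy and $G$-conjugacy coincide on $\mathcal{C}$: writing $g=vh$ with $v\in V$, $h\in H$, one computes
\[ H^{vh}=(H^{v})^{h}=H^{h^{-1}vh}, \]
since $h\in H$ normalises $H$ and $h^{-1}vh\in V$ because $V\lhd G$. Combined with the previous paragraph, the stabiliser in $\Aut(G)$ of the class $[H]\in H^{1}(H,V)$ is exactly the image of $N_{\mathrm{S}_n}(G)$.

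The remaining ingredient, and the main obstacle, is transitivity of $\Aut(G)$ on $H^{1}(H,V)$, which I would obtain by an explicit cocycle construction. Given a complement $K=\{c(h)h:h\in H\}\in\mathcal{C}$ with associated cocycle $c\colon H\to V$, define $\alpha\colon G\to G$ by
\[ \alpha(vh)=v\,c(h)\,h. \]
A direct check using the cocycle identity $c(hh')=c(h)\cdot h c(h')h^{-1}$ shows that $\alpha$ is multiplicative; it is the identity on $V$ and induces an isomorphism on $G/V$, so it is an automorphism, and by construction $\alpha(H)=K$. Hence $\Aut(G)$ acts transitively on $\mathcal{C}$, and a fortiori on $H^{1}(H,V)$. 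Applying the orbit--stabiliser theorem to this action yields the asserted equality $|\Aut(G):N_{\mathrm{S}_n}(G)|=|H^{1}(H,V)|$.
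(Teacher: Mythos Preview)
Your proof is correct and follows essentially the same approach as the paper's: identify the image of $N_{\mathrm{S}_n}(G)$ in $\Aut(G)$ as the stabiliser of the $G$-conjugacy class of $H$, and show $\Aut(G)$ is transitive on complements of $V$. Where the paper cites \cite[4.2B]{dixmort} and writes ``it is easy to see'', you have supplied the explicit arguments (the $C_{\mathrm{S}_n}(G)=1$ computation, the coincidence of $V$- and $G$-conjugacy, and the cocycle construction of $\alpha$); the only implicit assumption in your argument is $H\neq 1$, which the paper also makes just before stating the lemma.
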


\begin{proof}
Let $N$
be the normalizer of $G$ in $\mathrm{S}_{n}$, then $N$ embeds into $A=\Aut(G)$.
Moreover an element $\varphi \in A\;$is in $N$ exactly when the image of a
point stabilizer $H$ of $G$ under $\varphi $ is also a point stabilizer
\cite[4.2B]{dixmort}.

It is easy to see that $A$ acts transitively on the complements of $V$ hence
it acts transitively on the $G$-conjugacy classes of such complements. It
follows that $\left| A:N\right| $ equals the number of $G$-conjugacy classes
of complements, that is, $\left| H^{1}(H,V)\right| $.
\end{proof}

Assume first that $H^1(H,V)= \{ 0 \}$. Then $\Out (G) \cong N_{\mathrm{S}_n}(G)/G$. In this case we may apply Theorem \ref{main:0}. We see that if $A$ is a permutation group of degree $n$ and $G$ is a primitive normal subgroup in $A$, then $|A/G| < n$ unless one of eleven cases holds. From these eleven exceptional pairs, $(A,G)$, the group $A$ is the full normalizer of $G$ in $\mathrm{S}_n$ in exactly seven cases. These pairs give rise to the seven exceptions in the statement of Theorem \ref{main:1}. In order to characterize the seven groups $G$, the reader is referred to Example \ref{example} and the paragraph that follows it.  

From now on we assume that $H^1(H,V) \ne \{ 0 \}$.
So $G=VH$ with $H$ acting faithfully and irreducibly on
$V$. We must show that if $|\Out (G)| \geq n$, then (8) holds (in the statement of Theorem \ref{main:1}). 

We first point out the following result in \cite[2.7 (c)]{AG1}.
See also \cite{Gu}.

\begin{lemma} 
\label{AGlemma}
Suppose that $H$ is a finite group acting
irreducibly and faithfully on $V$ with $H^1(H,V)\ne 0$.
Then $H$ has a unique minimal normal subgroup $N$ of the form $L_1 \times \cdots
\times L_t$ with $L_i \cong L$ a nonabelian simple group. Set
$L_{i'} = C_N(L_i)$ (the product of all the other $L_j$). Moreover,
$V = \oplus_i V_i$ where $V_i=[L_i,V] = C_V(L_{i'})$
and $|H^1(H,V)| \le |H^1(L_1,W)|$, where $W$ is any nontrivial
irreducible $L_i$-submodule of $V_1$.
\end{lemma}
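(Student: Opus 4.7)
The plan is to extract the structure of $H$ from the hypothesis $H^1(H,V) \ne 0$ by applying the inflation-restriction sequence to successive normal subgroups, then read off the $V$-decomposition via Clifford theory and K\"unneth, and finally bound $|H^1|$ using Shapiro's lemma.

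First I would show that $F^{*}(H)$ has the claimed shape. If $O_p(H) \ne 1$, where $p = \Char(V)$, then $V^{O_p(H)}$ is a proper nonzero submodule, contradicting irreducibility together with faithfulness; so $O_p(H)=1$. If $O_{p'}(H) \ne 1$, then $H^1(O_{p'}(H), V) = 0$ for order reasons and $V^{O_{p'}(H)} = 0$ by irreducibility plus faithfulness, so the five-term exact sequence attached to $1 \to O_{p'}(H) \to H \to H/O_{p'}(H) \to 1$ forces $H^1(H,V) = 0$, contrary to hypothesis. Hence $F(H)=1$, and since $C_H(F^{*}(H)) \le F^{*}(H)$ we get $Z(F^{*}(H))=1$, so $F^{*}(H) = L_1 \times \cdots \times L_t$ is a direct product of nonabelian simple groups.

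Second, I would use Clifford theory combined with the K\"unneth formula to pin down the decomposition of $V$ and to force transitivity of $H$ on $\{L_1, \ldots, L_t\}$. Over a splitting field extension, each $N$-irreducible constituent of $V$ has the form $M_1 \otimes \cdots \otimes M_t$ with $M_i$ absolutely irreducible for $L_i$, and K\"unneth gives
\[
H^1(N, M_1 \otimes \cdots \otimes M_t) \;=\; \bigoplus_{i=1}^{t} H^1(L_i, M_i) \otimes \bigotimes_{j \ne i} M_j^{L_j}.
\]
If every $M_j$ were nontrivial, then each $M_j^{L_j} = 0$ by irreducibility, forcing the whole sum to vanish; combined with $V^N = 0$ and inflation-restriction, this would give $H^1(H,V) = 0$. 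So in each Clifford constituent exactly one tensor factor is nontrivial. The span of constituents whose nontrivial factor sits in the $L_i$-slot is precisely $V_i = [L_i,V] = C_V(L_{i'})$, giving $V = \bigoplus_i V_i$. Running the same K\"unneth argument with the $L_i$ split into two $H$-orbits leads to the same vanishing, so $H$ acts transitively on the $L_i$; any second minimal normal subgroup would then lie in $C_H(N) \le Z(F^{*}(H)) = 1$, giving uniqueness of $N$.

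Finally, since $H$ permutes $V_1, \ldots, V_t$ transitively with stabilizer $N_H(L_1)$, we have $V \cong \Ind_{N_H(L_1)}^{H} V_1$, so Shapiro's lemma gives $H^1(H,V) \cong H^1(N_H(L_1), V_1)$. Because $L_{1'}$ acts trivially on $V_1$, this equals $H^1(N_H(L_1)/L_{1'}, V_1)$, and inflation-restriction relative to the normal subgroup $L_1$ of $N_H(L_1)/L_{1'}$ bounds it in terms of $H^1(L_1, V_1)^{N_H(L_1)/(L_1 L_{1'})}$. The last reduction from $V_1$ to a single irreducible constituent $W$ proceeds by a filtration argument along an $L_1$-composition series of $V_1$, using that $N_H(L_1)/(L_1 L_{1'})$ transitively permutes the nontrivial irreducible $L_1$-constituents of $V_1$. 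The hard part will be exactly this last filtration step: in characteristic dividing $|L_1|$ the module $V_1$ need not be semisimple as an $L_1$-module, so one must control the connecting maps in the long exact sequences of cohomology along the composition series, check that trivial composition factors do not inflate the bound (using $V^N = 0$ and perfectness of $L_1$), and justify that $W$ may be chosen as an arbitrary nontrivial irreducible $L_1$-submodule via the $N_H(L_1)$-action permuting them.
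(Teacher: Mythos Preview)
The paper does not give a proof of this lemma at all; it simply cites \cite[2.7(c)]{AG1} and \cite{Gu}. Your sketch is essentially the standard route to that result and is correct in outline, so in that sense you are supplying what the paper omits rather than paralleling an argument in the paper.

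Two comments on the details. First, your worry that $V_1$ may fail to be semisimple as an $L_1$-module is misplaced: by Clifford's theorem $V|_N$ is semisimple, hence so is its $N$-submodule $V_1=C_V(L_{1'})$, and since $L_{1'}$ acts trivially on $V_1$ the $N$-action on $V_1$ factors through $N/L_{1'}\cong L_1$, giving semisimplicity over $L_1$. So the ``filtration along a composition series'' collapses to a direct sum decomposition. Second, your argument for transitivity of $H$ on the $L_i$ is a bit elliptical. The clean way is to observe that once each $N$-constituent of $V$ has exactly one nontrivial tensor slot, the subspaces $\bigoplus_{i\in\Omega}V_i$, as $\Omega$ ranges over $H$-orbits on $\{1,\dots,t\}$, are $H$-submodules summing to $V$; irreducibility then forces a single orbit, and faithfulness rules out $V_i=0$.

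The genuinely delicate point, which you correctly flag, is the final inequality: after Shapiro and inflation--restriction you have
\[
|H^1(H,V)|\;\le\;|H^1(L_1,V_1)^{\,N_H(L_1)/(L_1L_{1'})}|,
\]
and with $V_1|_{L_1}\cong W^{\oplus e}$ (using the semisimplicity above) one must show that the invariant subspace of $H^1(L_1,W)^{\oplus e}$ has order at most $|H^1(L_1,W)|$. This does not follow from a naive filtration or averaging argument; it uses that $V_1$ is irreducible for $N_H(L_1)$ and an analysis of how the quotient acts jointly on $L_1$ and on the multiplicity space. That analysis is precisely the content of \cite[2.7]{AG1} (see also \cite{Gu}), and you should either reproduce it or, as the paper does, cite it.
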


Now we obtain bounds on the size of outer automorphism groups
for such groups.

First an example. Let $q=2^e, e > 1$ and  $H= \mathrm{L}_{2}(q)$.  Let $V$ be the natural
module for $H$ (i.e. $2$-dimensional over $F_q$).
Consider $G=V.\mathrm{L}_{2}(q)$ acting on $V$.  Then $G$ is primitive
and $|\Out(G)| = |H^1(H,V)||N_{\mathrm{S}_n}(G):G|= q(q-1)e < (n \log n)/2$
but $ |\Out(G)| > n$ and indeed has order roughly $(n \log n)/2$.
We now show that this is the only example with $|\Out(G)| \geq n$. That will complete the proof of Theorem \ref{main:1}.

\begin{theorem}
\label{out} 
Let $G$ be a primitive affine permutation group
of degree $n$.  Let $V=F(G)=F^*(G)$ and $H \not= 1$ a point stabilizer.
Assume that $H^1(H,V) \ne \{ 0 \}$.  Then either
$|\Out(G)| < n$ or $n=q^2$ with $q=2^e, e > 1$ and
$H=\mathrm{L}_{2}(q)$.
\end{theorem}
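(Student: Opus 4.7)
The plan is to study $|\mathrm{Out}(G)|$ via the identity
$$|\mathrm{Out}(G)| = |H^1(H,V)| \cdot |N_{\mathrm{S}_n}(G)/G|,$$
which follows from Lemma \ref{cohomology} together with the fact that $Z(G) = 1$ (so $\mathrm{Inn}(G) = G$). Hence it suffices to prove $|H^1(H,V)| \cdot |N_{\mathrm{S}_n}(G)/G| < n$ except in the stated case. I would first observe that the eleven exceptional pairs in Theorem \ref{main:0} all arise from the extraspecial-type construction of Example \ref{example}, where $H$ is a $2$-group acting on $V$ in odd characteristic; by coprimality $H^1(H,V) = 0$ there, so under the hypothesis $H^1(H,V) \neq 0$ we automatically have $|N_{\mathrm{S}_n}(G)/G| < n$.

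Next I would apply Lemma \ref{AGlemma} to obtain the socle decomposition $N = L_1 \times \cdots \times L_t$ of $H$ with $L_i \cong L$ simple nonabelian, the orthogonal decomposition $V = \bigoplus V_i$, and the crucial reduction $|H^1(H,V)| \leq |H^1(L_1, W)|$ for some nontrivial irreducible $L_1$-submodule $W$ of $V_1$. The argument then splits on $t$.

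For $t \geq 2$ one has $|V| \geq |W|^t$, while $N_{\mathrm{S}_n}(G)/G$ embeds in $N_{\mathrm{GL}(V)}(H)/H$, whose order is controlled by $|\mathrm{Out}(L)|^t$, a transitive permutation group on $t$ letters, and a small field-extension contribution. Using Lemma \ref{l3} to bound $|\mathrm{Out}(L)|$ linearly in $\dim W$ and Theorem \ref{168} for the permutation factor, one obtains a bound for $|H^1(H,V)| \cdot |N_{\mathrm{S}_n}(G)/G|$ that is polynomial in $|W|$ and $t$, and this is comfortably dominated by $|W|^t$ for every $t \geq 2$.

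The substantive case is $t = 1$, where $H$ is almost simple with socle $L$ and $V$ is a sum of Galois conjugates of $W$. I would invoke the classification of pairs $(L, W)$ with $H^1(L, W) \neq 0$ together with the standard upper bound $|H^1(L,W)| \leq |W|^{1/2}$ (and sharper estimates in most cases), combined with the factorisation $|N_{\mathrm{GL}(V)}(H)/H| \leq |\mathrm{Out}(L)| \cdot (q_F - 1) \cdot e$, where $\mathbb{F}_{q_F}$ is the maximal field fixed by $H$ and $e$ is its degree over $\mathbb{F}_p$. A case-by-case check rules out every pair except $L = L_2(q)$, $q = 2^e$, with $W$ the natural module, where $|H^1(L,W)| = q$ and $|N_{\mathrm{GL}(V)}(H)/H| = (q-1)e$, giving $|\mathrm{Out}(G)| = q(q-1)e \geq q^2 = n$ precisely for $e \geq 2$. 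The main obstacle is the detailed analysis for simple groups of Lie type in defining characteristic, where the singular behaviour of the natural $L_2(2^e)$-module must be isolated to produce the exception appearing in the theorem.
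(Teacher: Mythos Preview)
Your overall plan matches the paper's: you use Lemma~\ref{cohomology} to get $|\Out(G)| = |H^1(H,V)|\cdot |N_{\mathrm{S}_n}(G)/G|$, invoke Lemma~\ref{AGlemma} for the component decomposition and the reduction $|H^1(H,V)| \le |H^1(L_1,W)|$, and then split on $t$. For $t=1$ your sketch is essentially what the paper does: it bounds $|\Out(G)| \le (q-1)\,h_1\,|\Out(L)|$ (with $q=|\mathrm{End}_H(V)|$), splits according to $d=\dim_E V_1$, and uses the results of \cite{GH} to force the exception $L=\mathrm{L}_2(q)$, $q=2^e$, when $d=2$, handling $d=3$ and $d\ge 4$ by the cohomology bound $h_1 \le |V|^{1/2}$ together with Lemma~\ref{l2}, checking the four small simple groups there by hand.

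The one place your sketch diverges from the paper, and where it is not quite right as written, is the $t\ge 2$ case. You bound $|N_{\mathrm{GL}(V)}(H)/H|$ by something involving $|\Out(L)|^{t}$; the paper instead obtains $|N_J(H)/H| \le |\Out(L)|^{t/2}\,b_t$ (with $J=\Aut(L)\wr \mathrm{S}_t$), where the exponent $t/2$ comes from Lemma~\ref{nyuszi} and crucially uses that $H$ permutes the components transitively. With the naive exponent $t$ your assertion that the bound is ``polynomial in $|W|$ and $t$'' is false: using Lemma~\ref{l3} you get a factor $(4\dim W)^t$, which is exponential in $t$, and for small $\dim W$ (e.g.\ $p=2$, $d=3$, $L=\mathrm{L}_3(2)$) this is not ``comfortably dominated by $|W|^t$'' without further work. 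The paper, even with the sharper exponent $t/2$ and the \cite{AG2} bound $|\Out(L)| < q^d/(2(q-1))$, still has to treat $t\le 7$ and the cases $q=2$ and $L=\mathrm{A}_6$ separately. So your $t\ge 2$ argument needs either the $t/2$ refinement via Lemma~\ref{nyuszi} or a genuine case analysis for small $t$ and small modules; as stated it has a gap.
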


\begin{proof}  We use the lemma above and write
$V = \oplus V_i$ where $V_i=[L_i,V]$.  Let  $N$ be the normalizer
of $L_1$.  Then $N$ preserves $V_1$ and indeed $N$ is precisely
the stabilizer of $V_1$.  Since $H$ is irreducible on $V$, $N$
is irreducible on $V_1$.  Let $E={\rm End}_H(V)$.  So $E$ is a field
of size $q$.  By Frobenius reciprocity,
$E \cong {\rm End}_N(V_1)$.
Let $d=\dim_{E} V_1$ and $h_1=|H^1(H,V)|$.

So $H^1(H,V)$ embeds in $H^1(L_1,U_1)$ where $U_1$
is an $L_1$-submodule of $V_1$.  Now we know that
$|\Out(G)| \le (q-1)h_1|N_J(H)/H|$ where $J = \Aut(L) \wr \mathrm{S}_t$.

Suppose that $t=1$.  Then $F^*(H)=L$ and
$|\Out(G)| \le (q-1)h_1|\Out(L)|$.  We first consider
some special cases.

If $d=2$, then (since we are assuming
that $h_1 > 1$), $h_1=q$.  This implies (see \cite{GH})
that $L = \mathrm{L}_{2}(q)$ with $q=2^e > 2$.  Moreover, the equality on
$h_1$ implies that $H=L$ and we are as in the example
above.

Suppose that $d=3$.  Then also by the main result in \cite{GH},
it follows that $h_1=q$.
So $|\Out (G)|<q(q-1)\sqrt{(q^3 -1)/(q-1)}<q^3$ unless possibly $L= \mathrm{A}_6$,
$\mathrm{L}_2(27)$, $\mathrm{L}_3(4)$ or $\mathrm{L}_3(16)$ (see Lemma \ref{l2}). 
In these exceptional cases $|\Out(L)|$ has order $4,6,12$ or $24$ respectively
and the result holds unless possibly $q<|\Out(L)|$. This
easily rules out the first two cases (there are no $3$-dimensional
representations over a field of size $q<|\Out(L)|$ cf.\
\cite{JLPW}).
For the latter two groups, there are no $3$-dimensional
representations (only projective representations).

So assume that $d \ge 4$.

Suppose that $|H^1(H,V)|<|V|^{1/2}$. Since (by construction),
$V$ is an irreducible $\F_q[H]$-module,
$|H^1(H,V)|\le q^{(d/2)-1}$ if $d$ is even and
$|H^1(H,V)|\le q^{(d-1)/2}$ if $d$ is odd.

In this case, the same argument as above applies. We only need to deal with the four groups as above and only for
representations where $|\Out (L)|$ is larger than $\sqrt{(q^d-1)/(q-1)}$. From
\cite{JLPW} we see that the only possibility is that $L= \mathrm{A}_6$
and $q=2$, $d=4$. In that case, $|H^1(H,V)|\leq 2$ and we still
have the result.

The remaining case is when $|H^1(H,V)|=|V|^{1/2}$. This can only
occur for $d=2$ except if $H= \mathrm{A}_6,\ q=3$ and $d=4$ \cite{GH}.
So $n=81$. In that case, we see that $|\Out (G)|\leq 9\cdot 2
\cdot 4=72$  and the result still holds.

Now suppose that $t > 1$ and $n=q^{dt}$ where $d = \dim_E V_1$.
Then we have $|N_J(H)/H| \le |\Out(L)|^{t/2}b_t$ where $b_t$ is the
analogous bound for transitive groups of degree $t$.
Thus by \cite{GH}, $|\Out(G)|\leq (q-1)q^{d/2}|\Out(L)|^{t/2}b_t$.
By Theorem \ref{transitive} and
Theorem \ref{abelian-transitive}, we have $b_t\leq 2^t\cdot t^{\log t}$ and it is easy
to see that if $t\leq 7$ then in fact $b_t\leq 2^t$ holds. Unless
$L= \mathrm{A}_6$ (which we assume) we have $|\Out (L)|< q^{d}/(2(q-1))$ by
\cite[Lemma 2.7]{AG2}.

Assuming first that $q\ne 2$ we see that
$|\Out(G)|<q^{d\left((t/2)+1\right)}t^{\log t}$. It is also
clear that (since $\mathrm{L}_2(3)$ is solvable)
in this case we have $q^d\geq 16$ and $|\Out (G)|<q^{dt}$
follows for $t\geq 8$ (and even for $t \leq 7$ by the observation a few lines above).

Finally let $q=2$. Then we see that $|\Out(G)| \leq 2^{(d+t+dt)/2} \cdot t^{\log t}$. If $d \geq 4$ and $t \geq 8$, then this is less than $2^{dt}$. Otherwise $L = \mathrm{L}_{3}(2)$ and in this case for $t \geq 8$ we have $|\Out(G)| \leq 2 \cdot 2^{t/2} \cdot 2^{t} \cdot t^{\log t} < 2^{3t}$. If $t \leq 7$, then $|\Out(G)| < 2^{dt}$ follows, using $b_{t} \leq 2^{t}$ and Lemma \ref{l2}, in all cases, except when $L = \mathrm{A}_{6} = \mathrm{L}_{2}(9)$. Finally, if $L = \mathrm{A}_{6} = \mathrm{L}_{2}(9)$, then $d \geq 6$ and $|\Out(G)| < 2^{dt}$ follows easily.
\end{proof}

This completes the proof of Theorem \ref{main:1}.

\section{$p$-solvable composition factors and outer automorphism groups}
\label{Section 13}

The purpose of this section is to complete the proof of Theorem \ref{main:3}.

Fix a prime $p$. Throughout this section we put $c$ to be $24^{1/3}$ if $p \leq 3$ and ${(p-1)!}^{1/(p-2)}$ if $p \geq 5$. 

The first result concerns transitive permutation groups.

\begin{theorem}
\label{mixedtransitive}
Let $G \vartriangleleft A \leq \mathrm{S}_{n}$ be transitive permutation groups of degree $n$. Let $p$ be a prime. Then $a_{p}(G) |A/G| \leq c^{n-1}$. 
\end{theorem}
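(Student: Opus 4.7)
The plan is to proceed by induction on $n$, in close analogy with the proof of Theorem \ref{abelian-transitive}. At the heart of the argument lies the algebraic identity
\begin{equation*}
a_p(G)\,|A/G| \;=\; a_p(A)\cdot b^{\ast}(A/G),
\end{equation*}
where $b^{\ast}(X)$ denotes the product of orders of the non-$p$-solvable composition factors of a finite group $X$. Proposition \ref{di} applied to $A \le \mathrm{S}_n$ already bounds $a_p(A)$ (indeed, the slightly larger product of composition factors of $A$ that are not $\mathrm{A}_k$ for $k$ above the relevant threshold) by $c^{n-1}$; the work is to show that the normality and transitivity of $G$ are strong enough to absorb the extra factor $b^{\ast}(A/G)$.

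If $A$ is primitive, I would invoke the Aschbacher--O'Nan--Scott classification. In the affine case $F^{\ast}(A) = V$ is elementary abelian of order $n$ and contained in $G$, so $a_p(G) \ge n$, and one can bound $a_p(A/V)\,b^{\ast}(A/G)$ via Theorem \ref{primitive-linear3} in the spirit of the proof of Corollary \ref{c1}. In the nonabelian socle cases $G$ contains $F^{\ast}(A)$, whose nonabelian simple composition factors have $p$-divisible orders and so do not contribute to $a_p(G)$; the quotient $A/G$ is a subquotient of $\Out(L)\wr\mathrm{S}_t$, and one uses Lemma \ref{l2} for the outer automorphism contribution and Proposition \ref{di} for the induced transitive action on the $t$ components, with a handful of small cases verified directly.

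For the imprimitive case I would take a minimal $A$-invariant block system $\{B_1,\dots,B_t\}$ with $1 < t < n$ blocks of size $s = n/t$, let $K$ denote the block kernel, $\bar A = A/K$, $\bar G = GK/K$, and $A_1$ the primitive action of $N_A(B_1)$ on $B_1$. The multiplicative decomposition
\begin{equation*}
a_p(G)\,|A/G| \;=\; a_p(K \cap G)\,a_p(\bar G)\,|\bar A/\bar G|\,|K/(K \cap G)|,
\end{equation*}
combined with the inductive hypothesis $a_p(\bar G)\,|\bar A/\bar G| \le c^{t-1}$ applied to $\bar G \vartriangleleft \bar A \le \mathrm{S}_t$, reduces matters to proving $a_p(K \cap G)\,|K/(K \cap G)| \le c^{t(s-1)}$. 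Here I would split the second factor into abelian and nonabelian composition factor contributions: Lemma \ref{nyuszi} applied to $X_i = A_i$ gives $a(K/(K \cap G)) \le a(K/[G,K]) \le a(A_1)^{3t/8}$ for $t \neq 2,4$, and Theorem \ref{diag1} gives $b(K/(K \cap G)) = b(GK/G) \le b(A_1/L_1)$ where $L_1$ is the projection on $B_1$ of $N_G(B_1) \cap K$.

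The hard part will be the final joint bound on $K$. Applying Theorem \ref{subgroup} to $K \leq (\mathrm{S}_s)^t$ gives the crude estimate $a_p(K \cap G) \le a_p(K) \le c^{t(s-1)}$, since any $p$-solvable subgroup of $\mathrm{S}_s$ has order at most $c^{s-1}$ by Proposition \ref{di} and its projections bound any $p$-solvable subgroup of $(\mathrm{S}_s)^t$; but this already exhausts the target, leaving no room for the surplus factor $a(A_1)^{3t/8}b(A_1/L_1)$. The remedy is to work with the identity $a_p(K \cap G)\,|K/(K \cap G)| = |K|/b^{\ast}(K \cap G)$ and to show that every non-$p$-solvable composition factor appearing in a composition series of $K$ is already accounted for inside $K \cap G$ with enough multiplicity; the transitivity of $G$ on the $t$ blocks forces any such factor arising from $A_1$ to appear in $K \cap G$ with the multiplicity needed to cancel the wreath-product overcount. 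This is the delicate step of the proof, and it is pinpointed by the tightness of the constant $c$ in small examples such as $V_4 \vartriangleleft S_4$ at $p=2$, where $a_2(V_4)\,|S_4/V_4| = 24 = c^3$.
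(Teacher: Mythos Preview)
Your identity $a_p(G)\,|A/G| = a_p(A)\,b^{\ast}(A/G)$ is correct and a nice starting point, and your primitive case can be made to work, though it is far more elaborate than needed: the paper simply observes that $a_p(G)\,|A/G| \le |A|$, and either $A \supseteq \mathrm{A}_n$ (whence $G \supseteq \mathrm{A}_n$ and the bound is easy) or $|A| \le 24^{(n-1)/3} \le c^{n-1}$ by \cite{maroti}. No Aschbacher--O'Nan--Scott analysis is required.

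The imprimitive case, however, has a genuine gap. You correctly reduce to showing $a_p(K\cap G)\,|K/(K\cap G)| \le c^{t(s-1)}$, and you correctly observe that the crude bound $a_p(K\cap G) \le c^{t(s-1)}$ leaves no room for the remaining factor. But your proposed remedy --- rewriting the product as $|K|/b^{\ast}(K\cap G)$ and asserting that transitivity of $G$ forces each non-$p$-solvable factor of $A_1$ to appear in $K\cap G$ ``with the multiplicity needed'' --- is not a proof. The projections of $K\cap G$ onto the $B_i$ need not be the whole of $A_1$ (only the projections of $N_G(B_i)$ are), and even when they are, controlling the multiplicities of composition factors across a subdirect product is exactly the subtle point you have not addressed.

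The paper avoids this difficulty entirely by introducing the normal subgroup $N = \ker(K \to \mathrm{Sym}(B_1))$. One then passes through $(K\cap G)N$ to obtain
\[
a_p(K\cap G)\cdot a(K/(K\cap G)) \;\le\; a_p(G_1)\cdot a(A_1/G_1)\cdot a_p(N),
\]
using that $(K\cap G)N/N \le G_1$ and $K/(K\cap G)N$ is a quotient of $A_1/G_1$. Combined with $b(A/G) \le b(A/GK)\,b(A_1/G_1)$ from Theorem \ref{diag1}, this yields
\[
a_p(G)\,|A/G| \;\le\; \bigl(a_p(GK/K)\,|A/GK|\bigr)\cdot\bigl(a_p(G_1)\,|A_1/G_1|\bigr)\cdot a_p(N).
\]
The first two factors are handled by induction (on $t$ and $s$ respectively), and since $N$ embeds in $\prod_{i\ge 2}\mathrm{Sym}(B_i)$, repeated use of Proposition \ref{di} gives $a_p(N) \le c^{(s-1)(t-1)}$. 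The three exponents sum to $n-1$. The single missing idea in your proposal is this insertion of $N$, which converts the ``all blocks at once'' problem you were struggling with into a single-block problem (handled recursively) plus a kernel on the remaining $t-1$ blocks (handled by Proposition \ref{di}).
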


\begin{proof}
We prove the result by induction on $n$. 

First let $A$ be a primitive permutation group. If $A$ contains $\mathrm{A}_n$, then the bound is clear. Otherwise we have $|A| \leq 24^{(n-1)/3}$ by \cite{maroti}. 

Now let $A$ be an imprimitive permutation group. Let $\{ B_{1}, \ldots , B_{t} \}$ be an $A$-invariant partition of the underlying set on which $A$ acts, with $1 < t < n$. Let $A_{1}$ denote the action of the stabilizer of $B_{i}$ in $A$ on $B_{i}$. Then $A$ embeds in $A_{i} \wr \mathrm{S}_{t}$ and $G$ permutes transitively the subgroups $A_{i}$. Let $G_{i}$ denote the action of the stabilizer of $B_{i}$ in $G$ on $B_{i}$. By Theorem \ref{diag1} we have $b(A/G) \leq b(A/GK) b(A_{1}/G_{1})$ where $K$ denotes the kernel of the action of $A$ on $\{ B_{1}, \ldots , B_{t} \}$. 

In order to bound $a_{p}(G)$ we first set $s = |B_{1}|$, that is $n = st$. We have $$a_{p}(G) = a_{p}(G/K \cap G) \cdot a_{p}(K \cap G) \leq a_{p}(GK/K) \cdot a_{p}(K \cap G).$$ Let $N$ be the kernel of the action of $K$ on $B_{1}$. We have $$a_{p}(K \cap G) \cdot a(K/(K \cap G)) \leq a_{p}((K \cap G) N) \cdot a(K / (K \cap G)N) =$$ $$= a_{p}((K \cap G)N/N) \cdot a_{p}(N) \cdot a(K / (K \cap G)N) \leq a_{p}(G_{1}) \cdot a(A_{1}/G_{1}) \cdot a_{p}(N).$$  

We are now in position to bound $a_{p}(G) \cdot |A/G| = a_{p}(G) \cdot a(A/G) \cdot b(A/G)$. We have $$a_{p}(G) \cdot |A/G| \leq a_{p}(GK/K) \cdot |A/GK| \cdot a_{p}(K \cap G) \cdot a(K / (K \cap G)) \cdot b(A_{1}/G_{1}) \leq $$  
$$\leq (a_{p}(GK/K) \cdot |A/GK|) \cdot  (a_{p}(G_{1}) \cdot |A_{1}/G_{1}|) \cdot a_{p}(N).$$
By induction (noting that $GK/K$ and $G_{1}$ are transitive on $\{ B_{1}, \ldots , B_{t} \}$ and $B_{1}$ respectively), we have $a_{p}(GK/K) \cdot |A/GK| \leq c^{t-1}$ and $a_{p}(G_{1}) \cdot |A_{1}/G_{1}| \leq c^{s-1}$. Moreover by repeated use of Proposition \ref{di}, we see that $a_{p}(N) \leq c^{(s-1)(t-1)}$. These give $a_{p}(G) |A/G| \leq c^{t-1} c^{s-1} c^{(s-1)(t-1)} = c^{n-1}$.
\end{proof}

We next need a lemma which depends on the existence of regular orbits under certain coprime actions.

\begin{lemma}
\label{kgv}
Let $A$ be a primitive linear subgroup of $\mathrm{\Gamma L}_{d}(p^{f})$ for a prime $p$ and integers $f$ and $d$ with $d$ as small as possible. Put $A_{0} = \mathrm{GL}_{d}(p^{f}) \cap A$. Let $J_{0}$ be the product of all normal subgroups of $A$ contained in $A_0$ which are nonsolvable, have orders coprime to $p$, and are minimal with respect to being noncentral in $A_{0}$. Then either $p^{f} = 7$, $d = 4$ or $5$, and $b(J_{0}) = |\psp{4}{3}|$, or $b(J_{0}) < p^{fd}$. 
\end{lemma}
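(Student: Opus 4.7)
The plan is to build on the structural description of primitive linear groups from the proof of Theorem \ref{primitive-linear2}. First I would write $J_{0}=J_{1}\cdots J_{m}$ as a central product, where each $J_{i}$ is a central product of $t_{i}$ quasisimple groups with simple quotient $L_{i}$ of order coprime to $p$, so that $b(J_{0})=\prod_{i}|L_{i}|^{t_{i}}$. Because $F$ is a maximal field of scalars and $A$ is primitive, every $J_{i}$-constituent of $V$ is absolutely irreducible over $F$, and a common irreducible $F[J_{0}]$-constituent $W$ decomposes as $W\cong W_{1}\otimes_{F}\cdots\otimes_{F}W_{m}$ with $W_{i}$ an absolutely irreducible $F[J_{i}]$-module. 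Writing $R_{i}$ for the minimal dimension of a faithful absolutely irreducible $F$-representation of a quasisimple cover of $L_{i}$ in characteristic $p$, this forces $\dim_{F}W_{i}\ge R_{i}^{t_{i}}$ and hence $d\ge\prod_{i}R_{i}^{t_{i}}$.

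The desired inequality $b(J_{0})<p^{fd}$ therefore follows from
\[
\prod_{i}|L_{i}|^{t_{i}}\;<\;(p^{f})^{\prod_{i}R_{i}^{t_{i}}}.
\]
An elementary calculation based on $R_{i}^{t_{i}-1}\ge t_{i}$ for $R_{i}\ge 2$, together with $\prod_{i}R_{i}^{t_{i}}\ge\sum_{i}R_{i}^{t_{i}}$ once two factors $\ge 2$ are present, shows that this reduces to the single-factor bound $|L|<q^{R_{L}}$, where $L$ is nonabelian simple of order coprime to $p$, $q=p^{f}$, and $R_{L}$ is the minimum dimension over $\mathbb{F}_{q}$ of a faithful absolutely irreducible projective cross-characteristic representation of $L$.

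The last step is to invoke the Landazuri--Seitz lower bounds for $R_{L}$ (as tabulated in \cite[Tables 5.3A and 5.3B]{KL}), which yield $|L|<q^{R_{L}}$ outside a short explicit list of small Lie-type and alternating/sporadic groups. I would sieve this list using the extra constraints that $|L|$ is coprime to $p$, that the representation is absolutely irreducible over the prescribed $\mathbb{F}_{q}$, and that $d$ is minimal; this should eliminate all candidates except $L\cong\psp{4}{3}$ acting in its $4$- and $5$-dimensional absolutely irreducible representations over $\mathbb{F}_{7}$, where $|L|=25920>7^{5}>7^{4}$ and the bound genuinely fails. The principal obstacle will be executing this case-by-case elimination carefully: several near-exceptions such as $\mathrm{L}_{2}(7)$, $\mathrm{L}_{2}(11)$ and small alternating groups survive the crude order/dimension inequality and must be excluded by finer arguments about the field of definition of the relevant representations and about the minimality of $d$ over $\mathbb{F}_{q}$.
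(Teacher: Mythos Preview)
Your tensor-decomposition setup matches the paper's, but the key single-component inequality is obtained by a different route. The paper does not use Landazuri--Seitz; instead it invokes the regular-orbit results from the solution of the $k(GV)$ problem (this is why the lemma carries the label \texttt{kgv}). For $r=1$, either the quasisimple $p'$-group $J_{0}$ has a regular orbit on the irreducible constituent $W$, giving $b(J_{0})\le |J_{0}|<|W|\le p^{fd}$ at once, or the pair $(J_{0},W)$ appears on the short explicit list in \cite[Theorem~7.2.a]{Sc}, and one reads off from that table that $|J_{0}/Z(J_{0})|<|W|$ in every case except $\psp{4}{3}$ in dimension $4$ or $5$ over $\F_{7}$. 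For $r>1$ the paper simply multiplies the single-factor bounds over the tensor factors (using the crude exponent $1.31$ for any $\psp{4}{3}$ factor) to get $b(J_{0})<|W|$.

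Your route via $|L|<q^{R_{L}}$ is plausible but less direct. The Landazuri--Seitz bound over the algebraic closure leaves a longer list of near-exceptions than the regular-orbit table does---as you note, groups such as $\mathrm{L}_{2}(7)$ survive the crude comparison and have to be eliminated by field-of-definition arguments tied to the minimality of $d$ over $\F_{p^{f}}$. That sieve can probably be completed, but it amounts to redoing by hand a fragment of the regular-orbit classification; you would also need to patch your multi-factor reduction when one tensor factor is the $\psp{4}{3}$ exception, since your reduction presupposes $|L_{i}|<q^{R_{i}}$ for every $i$. The paper's regular-orbit argument sidesteps both issues by appealing to a result already tailored to the question of whether $|J_{0}|$ exceeds $|W|$.
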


\begin{proof}
The group $J_{0}$ is a central product of quasisimple groups $J_{1}, \ldots , J_{r}$ for some $r$. Since $J_{0} \vartriangleleft A$ and $A$ is primitive, $J_{0}$ acts homogeneously on the underlying vector space. Let $W$ be an irreducible $J_{0}$-submodule. As in the proof of Theorem \ref{primitive-linear}, we may use \cite[Lemma 5.5.5, page 205 and Lemma 2.10.1, pages 47-48]{KL} to write $W$ in the form $W_{1} \otimes \cdots \otimes W_{r}$ where for each $i$ the $J_{i}$-module $W_{i}$ (defined over a possibly larger field) is irreducible.  

Assume first that $r = 1$. If $J_{0}$ has a regular orbit on $W$, then the result is clear. If $J_{0}$ does not have a regular orbit on $W$, then \cite[Theorem 7.2.a]{Sc} says that $(J_{0},W)$ is a permutation pair in the sense of \cite[Example 5.1.a]{Sc} or $(J_{0},W)$ is listed in the table on \cite[Page 112]{Sc}. In all these exceptional cases we have $|J_{0}/Z(J_{0})| < |W|$ unless $W$ is a $4$ or $5$ dimensional module over the field of size $7$ and $J_{0}/Z(J_{0}) \cong \psp{4}{3}$. Moreover $|J_{0}/Z(J_{0})| < p^{fd}$ or $|W| = p^{fd}$ and one of the previous exceptional cases holds.

Assume that $r > 1$. For each $i$ we have $b(J_{i}) < |W_{i}|$ or $|W_{i}| = 7^{4}$ or $7^{5}$ and $b(J_{i}) < {|W_{i}|}^{1.31}$. From these it follows that $b(J_{0}) < |W| \leq p^{fd}$.  
\end{proof}

The next lemma may be viewed as a sharper version of Theorem \ref{szar} under the assumption that $p \geq 7$. 

\begin{lemma}
\label{lll}
Let $G$ be a finite group acting faithfully and completely reducibly on a finite vector space $V$ in characteristic $p$. Then 
$a_{p}(G) \leq {|V|}^{c_{1}}/c$.
\end{lemma}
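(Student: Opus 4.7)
The plan is to invoke Theorem \ref{subgroup} to replace $G$ by a $p$-solvable subgroup $S \leq G$ with $a_p(G) \leq |S|$, arranged so that $O_p(S) = 1$. This reduction is legitimate because $G$ acts faithfully and completely reducibly in characteristic $p$ forces $O_p(G) = 1$, and the construction implicit in the proof of Theorem \ref{subgroup} then allows us to pick $S$ with $O_p(S) = 1$. The subgroup $S$ inherits a faithful completely reducible action on $V$, so it is enough to prove $|S| \leq |V|^{c_1}/c$.

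For $p \in \{2,3,5\}$ the constant $c$ equals $24^{1/3}$ (for $p=5$ one has $(5-1)!^{1/(5-2)} = 24^{1/3}$), and the desired inequality is exactly \cite[Theorem 1.2]{HM}, i.e.\ Theorem \ref{szar} applied to $S$.

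For $p \geq 7$ the statement is strictly sharper than Theorem \ref{szar} and a refinement is needed. My plan would be to induct on $\dim V$, reducing first to the case that $S$ acts irreducibly and then primitively on $V$: the reducible case follows from splitting $V$ into $S$-summands and multiplying the inductive bounds (valid because $c > 1$), while the imprimitive case with $t$ blocks is handled by applying Theorem \ref{diag1} together with Lemma \ref{nyuszi} and Theorem \ref{abelian-transitive} to separate the action on blocks from the kernel. In the primitive irreducible case the structural description in the proof of Theorem \ref{primitive-linear2} applies: $S$ has a cyclic center $C$, an ``extraspecial'' layer which is a central product of $p'$-groups of symplectic type $J_i$ with $|J_i/Z(J_i)| = r_i^{2a_i}$ (all primes $r_i \neq p$, since $O_p(S) = 1$ and $S$ is $p$-solvable), and $S/CJ$ embeds in the direct product of $\mathrm{Sp}_{2a_i}(r_i)$; all nonabelian composition factors that arise are of order coprime to $p$, because $S$ is $p$-solvable. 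Tracking each factor explicitly via Lemma \ref{l3} and the classical bounds for solvable subgroups of $\mathrm{Sp}_{2a}(r)$ yields an estimate of the shape $|S| \leq c^{f} \cdot (p^f-1) \cdot f^{[\log d]} \cdot d^{O(1)}$ as in the proof of Theorem \ref{primitive-linear2}, but with the constant $c^f$ refined using $p$-solvability.

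The main obstacle is the arithmetic check in the primitive irreducible case: one must verify that for every prime $p \geq 7$ and every admissible choice of $(r_i, a_i)$ the refined bound $|V|^{c_1}/((p-1)!)^{1/(p-2)}$ is met. This is where the hypothesis $p \geq 7$ is essential, since the extremal Palfy--Wolf examples are built out of sections of $\mathrm{GL}_2(3)$ and $\mathrm{Sp}_4(3)$ (i.e.\ $\{2,3\}$-groups), which cannot realize the Palfy--Wolf bound in characteristic $p \geq 7$ once one insists on $p$-solvability with $O_p(S) = 1$; the gap between the Palfy--Wolf bound and the sharper bound is then absorbed by the factor $((p-1)!)^{1/(p-2)}/24^{1/3} \geq 1$.
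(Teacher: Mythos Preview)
For $p \leq 5$ (including your correct observation that $c = 24^{1/3}$ when $p=5$) you match the paper: Theorem \ref{szar} gives the bound directly.

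For $p \geq 5$ the paper's argument is entirely different and far shorter than yours. After reducing via Theorem \ref{subgroup} to a $p$-solvable $S$ with $O_p(S)=1$ (exactly as you do), the paper simply invokes \cite[Theorem 1.1]{HM}: for $p \geq 5$ such an $S$ acting faithfully and completely reducibly on $V$ has strong base size at most $2$, so $|S| < |V|^{2}/(p-1)$. Since $c_1 > 2$ and $|V| \geq p$, the elementary inequality $|V|^{2}/(p-1) \leq |V|^{c_1}/c$ finishes the proof in one line. No induction, no primitive reduction, no structural analysis.

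Your proposed alternative route --- induct on $\dim V$, reduce to the primitive irreducible case, then analyse the structure as in Theorem \ref{primitive-linear2} --- could in principle be carried through, but as written it has two defects. First, the tools you invoke for the imprimitive step (Theorem \ref{diag1}, Lemma \ref{nyuszi}, Theorem \ref{abelian-transitive}) are all designed to bound quantities like $b(A/G)$ or $a(K/[G,K])$ for a \emph{pair} $G \vartriangleleft A$; they are not the right machinery for bounding $|S|$ for a single $p$-solvable group. The correct imprimitive reduction here is the standard wreath-product inequality $|S| \leq |S_1|^{t}\,|T|$ with $T \leq \mathrm{S}_t$ the block action, together with a bound on $p$-solvable transitive groups of degree $t$. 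Second, you explicitly leave the ``arithmetic check'' in the primitive case undone; the heuristic that the extremal P\'alfy--Wolf configurations are built from $\{2,3\}$-sections is correct in spirit, but it is not a proof, and the constants you write down (``$c^f$'', ``$d^{O(1)}$'') are not tied to anything precise. The paper's base-size shortcut sidesteps all of this.
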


\begin{proof}
By Theorem \ref{szar}, we may assume that $p \geq 5$. By \cite[Theorem 1.1]{HM}, the strong base size of a $p$-solvable finite group $S$ acting completely reducibly and faithfully on a vector space of size $n$ over a field of characteristic $p \geq 5$ is at most $2$. Thus $|S| < n^{2}/(p-1)$. By the proof of Theorem \ref{szar} we then have $a_{p}(G) \leq {|V|}^{2}/(p-1) \leq {|V|}^{c_{1}}/c$.  
\end{proof}

Theorem \ref{mixedtransitive} is used in the proof of the following result which could be compared with Theorem \ref{szar}.
 
\begin{theorem}
\label{mixedirreducible}
Let $G \vartriangleleft A \leq \mathrm{GL}(V)$ be linear groups acting irreducibly on a finite vector space $V$ of size $n$ and characteristic $p$. Then $a_{p}(G) |A/G| \leq 24^{-1/3} n^{c_{1}}$.
\end{theorem}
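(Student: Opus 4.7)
The plan is to induct on $n=|V|$, adapting the strategy of Theorems~\ref{linear2} and~\ref{166} but tracking the invariant $a_p(G)$ in place of $a(G)$. The organizing identity, obtained by refining a common composition series for $G\vartriangleleft A$, is
\[
a_p(G)\,|A/G|\;=\;a_p(A)\cdot b^{\ast}(A/G),
\]
where $b^{\ast}(A/G)$ denotes the product of the orders of those nonabelian simple composition factors of $A/G$ whose orders are divisible by $p$. Since $A$ acts faithfully and completely reducibly on $V$, Theorem~\ref{szar} supplies $a_p(A)\le 24^{-1/3}n^{c_1}$, so the task reduces to controlling the factor $b^{\ast}(A/G)$ without losing the P\'alfy--Wolf bound.

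In the imprimitive case, one picks $t>1$ maximal with $V=V_1\oplus\cdots\oplus V_t$ an $A$-invariant decomposition; then $G$ also permutes the $V_i$ transitively. Let $K$ be the kernel of the permutation action, and let $A_1,G_1$ be the actions of $N_A(V_1), N_G(V_1)$ on $V_1$, so that $G_1$ is irreducible and $A_1$ is primitive on $V_1$. Set $m=|V_1|$, $n=m^t$, and factor
\[
a_p(G)\,|A/G|=\bigl(a_p(GK/K)\,|A:GK|\bigr)\cdot\bigl(a_p(K\cap G)\,|K:K\cap G|\bigr).
\]
Theorem~\ref{mixedtransitive} applied to the transitive permutation groups $GK/K\vartriangleleft A/K\le\mathrm{S}_t$ bounds the first factor by $c^{t-1}$. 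For the second, one would argue as in the proof of Theorem~\ref{mixedtransitive}: introduce the kernel $N$ of the action of $K$ on $V_1$, obtaining $a_p(K\cap G)\,a(K/(K\cap G))\le a_p(G_1)\,a(A_1/G_1)\,a_p(N)$; multiplying by the remaining $b(A_1/G_1)$ (controlled by Theorem~\ref{diag1}) recovers $a_p(G_1)\,|A_1/G_1|$, to which induction applies and yields $\le 24^{-1/3}m^{c_1}$, while $a_p(N)$ is handled inductively on the complementary subspace. Combining the pieces and using $c<p^{c_1}\le m^{c_1}$ (valid for every prime $p$) closes the induction.

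In the primitive case, the proof of Theorem~\ref{primitive-linear} shows that $G$ contains every normal subgroup of $A$ minimal with respect to being non-central; write $J=J_1\cdots J_k$ for the product of all such subgroups, so $J\le G$ and $A/(Z(A)J)$ embeds in $\prod_i\Out(J_i)$. Each $J_i$ is quasisimple (with $\Out(J_i)$ solvable by Schreier), a central product of $t_i$ isomorphic quasisimple groups (with $d\ge 2^{t_i}$ forcing $t_i\le\log d$), or of symplectic type with $|J_i/Z(J_i)|=r_i^{2a_i}$ (the non-solvable part of $\Out(J_i)$ then being $\psp{2a_i}{r_i}$ or $\sort{2a_i}{\epsilon}{2}$, as in Example~\ref{example}). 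Mimicking the accounting of Theorem~\ref{primitive-linear3}, but separating the $p$-solvable contributions absorbed into $a_p(A)$ from the $p$-divisible nonabelian ones surviving in $b^{\ast}(A/G)$, and using $n=p^{fd}$ with $d\ge\prod_i d_{J_i}$, yields $a_p(A)\cdot b^{\ast}(A/G)\le 24^{-1/3}n^{c_1}$.

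The hardest part will be the primitive case: one must verify, across the configurations in Theorem~\ref{primitive-linear2}, that nonabelian composition factors of $\Out(J_i)$ whose orders are divisible by $p$ --- coming either from symplectic-type $J_i$'s with $r_i\ne p$ but $p\mid|\psp{2a_i}{r_i}|$, or from alternating permutation factors $\mathrm{A}_{t_i}$ with $t_i\ge p$ --- are always absorbed by the slack that Theorem~\ref{szar} provides over the equality case of primitive $p$-solvable linear groups, so that their contribution to $b^{\ast}(A/G)$ never pushes $a_p(A)\cdot b^{\ast}(A/G)$ above $24^{-1/3}n^{c_1}$.
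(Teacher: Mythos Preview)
Your imprimitive reduction matches the paper's proof almost exactly: factor through $K$, apply Theorem~\ref{mixedtransitive} to the top, and control the bottom via the kernel $N$ of $K$ on $V_1$. One point you gloss over: you write that ``$a_p(N)$ is handled inductively on the complementary subspace'' and close the induction ``using $c<p^{c_1}$'', but the induction hypothesis concerns the pair $(G,A)$ with $G$ irreducible, not a bare $a_p$ bound for a completely reducible group. What you actually need is $a_p(N)\le m^{c_1(t-1)}/c^{t-1}$, so that the $c^{t-1}$ from Theorem~\ref{mixedtransitive} cancels. For $p\le 5$ one has $c=24^{1/3}$ and Theorem~\ref{szar} (applied to each $V_i$) already gives this; but for $p\ge 7$ one has $c=(p-1)!^{1/(p-2)}>24^{1/3}$, and you need the sharpening in Lemma~\ref{lll} (which uses the base-size-$2$ result of \cite{HM} for $p$-solvable groups in characteristic $p\ge 5$). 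This is easy to fix once noticed, and the paper does exactly this.

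The primitive case is where your proposal has a genuine gap. Your organizing identity $a_p(G)\,|A/G|=a_p(A)\,b^*(A/G)$ is correct and clean, and your claim that $J\le G$ (after harmlessly enlarging $G$ by the solvable $J_i$'s, which leaves $b^*(A/G)$ unchanged) is fine. But you then simply assert that the $p$-divisible nonabelian factors of $\prod_i\Out(J_i)$ are ``absorbed by the slack'' in Theorem~\ref{szar}, without giving any mechanism to verify this. Since Theorem~\ref{szar} is sharp for certain primitive solvable groups, this is not automatic; it requires showing that whenever $b^*(A/G)>1$ the structure forces $a_p(A)$ to drop enough. The paper takes a different and more concrete route: rewriting $a_p(G)\,|A/G|=a(A)\,b(A/G)\cdot(\text{$p'$ nonabelian composition factors of }G)$, it controls $a(A)\,b(A/G)$ via the Section~11 analysis (essentially $|A/(G\cap Z(A)J)|<n$), and bounds the remaining factor by $b(J_0)$, the product of the $p'$ quasisimple pieces of $J$. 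The key tool you are missing is Lemma~\ref{kgv}: using regular-orbit results for coprime quasisimple actions from \cite{Sc}, it gives $b(J_0)<p^{fd}$ apart from an explicit exception at $p^f=7$, $d\in\{4,5\}$, $J_0/Z(J_0)\cong\psp{4}{3}$ (handled separately), and small $n$ (handled by GAP). Without an analogue of this lemma, your ``slack'' argument remains a hope rather than a proof.
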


\begin{proof}
We prove the result by induction on $n$. 

Assume that $A$ acts primitively on $V$. If $a_{p}(G) = a(G)$, then the result follows from part (2) of Lemma \ref{segedlemma}. In fact, in our argument to show Theorem \ref{main:0} we naturally took $G$ to be as small as possible and our calculations actually gave $|A/(G \cap Z(A)J)| < n$ apart from the eleven exceptions listed in Theorem \ref{main:0} (when $a_{p}(G) = a(G)$). Here, as usual, $J$ denotes the central product of all normal subgroups of $A$ contained in $A_0$ (where $A_{0} = \mathrm{GL}_{d}(p^{f}) \cap A$ and $d$ is smallest with $A \leq \mathrm{\Gamma L}_{d}(p^{f})$ and $n = p^{f}$) subject to being noncentral. Thus we are finished if the product of the orders of the nonabelian composition factors of $J$ which are $p'$-groups is at most $24^{-1/3} n^{c_{1}-1}$. Let $J_{0}$ be as in Lemma \ref{kgv}. By Lemma \ref{kgv} we have $b(J_{0}) \leq 24^{-1/3} n^{c_{1}-1}$ unless $n \leq 81$ or $p^{f} = 7$ and $d = 4$ or $5$. It can be checked by GAP \cite{GAP} that the bound holds in case $n \leq 81$. Thus assume that $p^{f} = 7$ and $d = 4$ or $5$ with $|b(J_{0})| = |\psp{4}{3}|$. Then $A$ is a $7'$-group by \cite[Theorem 7.2.a]{Sc} and so $a_{7}(G) |A/G| = |A| \leq 24^{-1/3} n^{c_{1}}$ by \cite[Theorem 1.2]{HM}. 

Assume that $A$ acts (irreducibly and) imprimitively on $V$. Let $V = V_{1} \oplus \cdots \oplus V_{t}$ be a direct sum decomposition of the vector space $V$ such that $1 < t$ and $A$ (and so $G$) acts transitively on the set $\{ V_{1}, \ldots V_{t} \}$. Set $m = |V_{1}|$. Let $K$ be the kernel of the action of $A$ on $\{ V_{1}, \ldots V_{t} \}$ and let $A_1$ and $G_{1}$ be the action of $N_{A}(V_{1})$ and $N_{G}(V_{1})$ on $V_{1}$ respectively. As in the proof of Theorem \ref{mixedtransitive}, we have 
$$a_{p}(G) \cdot |A/G| \leq (a_{p}(GK/K) \cdot |A/GK|) \cdot  (a_{p}(G_{1}) \cdot |A_{1}/G_{1}|) \cdot a_{p}(N),$$
where, in this case, $N$ denotes the kernel of the action of $K$ on $V_{1}$. 
Since the groups $GK/K \vartriangleleft A/K$ can be viewed as transitive permutation groups acting on $t$ points, Theorem \ref{mixedtransitive} gives $a_{p}(GK/K) |A/GK| \leq c^{t-1}$. In the proof of Theorem \ref{linear} it was noted that $G_{1}$ must act irreducibly on $V_{1}$. Thus, by the induction hypothesis, $a_{p}(G_{1}) |A_{1}/G_{1}| \leq 24^{-1/3} m^{c_{1}}$. Since $N$ is subnormal in the irreducible group $A$, it must be completely reducible. By repeated use of Lemma \ref{lll} we have $a_{p}(N) \leq {m}^{c_{1}(t-1)}/c^{t-1}$. Applying these three estimates to the displayed inequality above, we get
$a_{p}(G) \cdot |A/G| \leq c^{t-1} \cdot (24^{-1/3} m^{c_{1}}) \cdot ({m}^{c_{1}(t-1)}/c^{t-1}) = 24^{-1/3} n^{c_{1}}$.
\end{proof}

We are now in the position to complete the proof of Theorem \ref{main:3} in the special case that $G$ is an affine primitive permutation group. 

\begin{theorem}
\label{mixedaffine}
Let $G$ be an affine primitive permutation group of degree $n$, a power of a prime $p$. Then $a_{p}(G) |\Out(G)| \leq 24^{-1/3} n^{1 + c_{1}}$.
\end{theorem}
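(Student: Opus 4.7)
The plan is to reduce the theorem to Theorem \ref{mixedirreducible} via the standard structure of affine primitive groups. Write $G = V \rtimes H$ where $V = F^{*}(G) \cong \F_{p}^{d}$ is the elementary abelian socle (of order $n = p^{d}$), and $H$ is a point-stabilizer acting faithfully and irreducibly on $V$. Since $V$ is the unique minimal normal subgroup and $Z(G) = 1$, we have $N_{\mathrm{S}_{n}}(G) = V \rtimes N_{\mathrm{GL}(V)}(H) \leq \mathrm{AGL}(V)$, so combining with Lemma \ref{cohomology} yields
\[
|\Out(G)| = |H^{1}(H,V)| \cdot |N_{\mathrm{GL}(V)}(H):H|.
\]
Moreover $a_{p}(G) = n \cdot a_{p}(H)$, because every composition factor of $V$ is cyclic of order $p$ and hence contributes to $a_p$.

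Applying Theorem \ref{mixedirreducible} to the pair $H \vartriangleleft N_{\mathrm{GL}(V)}(H) \leq \mathrm{GL}(V)$ (which is legitimate since $H$ acts irreducibly on $V$) gives $a_{p}(H) \cdot |N_{\mathrm{GL}(V)}(H):H| \leq 24^{-1/3}\, n^{c_{1}}$. Combined with the formulas above,
\[
a_{p}(G)\,|\Out(G)| \leq 24^{-1/3}\, n^{1+c_{1}} \cdot |H^{1}(H,V)|,
\]
so the theorem is immediate when $H^{1}(H,V) = 0$.

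The case $H^{1}(H,V) \neq 0$ is where the real work lies, and is the main obstacle. Lemma \ref{AGlemma} forces very restrictive structure on $H$: its socle $N = L_{1} \times \cdots \times L_{t}$ is a direct product of copies of a single nonabelian simple group $L$, so $a_{p}(N) = 1$ and $a_{p}(H) = a_{p}(H/N)$, with $H/N$ embedded in $\Out(L) \wr \mathrm{S}_{t}$. I would then split along Theorem \ref{out}: either we are in the exceptional configuration $n = q^{2}$, $q = 2^{e}$, $H = \mathrm{L}_{2}(q)$, which is handled by direct calculation since then $p = 2$, $a_{p}(H) = 1$, and $|\Out(G)| = q(q-1)e$, giving $a_{p}(G)|\Out(G)| = q^{3}(q-1)e$ which is comfortably below $24^{-1/3} q^{2(1+c_{1})}$; or else $|\Out(G)| < n$ by Theorem \ref{out}, in which case the required bound reduces to the inequality $a_{p}(H) \leq 24^{-1/3}\, n^{c_{1}-1}$.

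To verify this last inequality I would combine Schreier's conjecture (so $\Out(L)$ is solvable, contributing at most $|\Out(L)|^{t}$ to $a_{p}(H/N)$), Lemma \ref{l2} giving $|\Out(L)| \leq 2\sqrt{m_{L}}$ with $m_{L}$ the minimal permutation degree of $L$, and Proposition \ref{di} bounding the $p$-solvable part of the image in $\mathrm{S}_{t}$ by $c^{\,t-1}$. These are matched against the lower bound $\dim V \geq t \cdot d_{L}$ coming from the decomposition $V = \bigoplus V_{i}$ in Lemma \ref{AGlemma}, where $d_{L}$ is the minimal dimension of a nontrivial $L$-module in characteristic $p$. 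The resulting inequality is slightly delicate when $L$ is a small classical group whose outer automorphism group is disproportionately large compared to $d_{L}$, but the slack $c_{1} - 1 \approx 1.244$ in the target exponent is enough to absorb the logarithmic contribution from $|\Out(L)|$ and the wreathed permutation factor, so the bound holds with room to spare in all remaining cases.
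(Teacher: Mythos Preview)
Your overall strategy matches the paper's: reduce to Theorem \ref{mixedirreducible} when $H^1(H,V)=0$, handle the $\mathrm{L}_2(q)$ exception directly, and otherwise combine $|\Out(G)|<n$ from Theorem \ref{out} with a bound $a_p(H)\le 24^{-1/3}n^{c_1-1}$.

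However, your inference ``$N$ is a product of copies of a nonabelian simple group $L$, so $a_p(N)=1$'' is not valid as stated. The invariant $a_p$ counts all $p$-solvable composition factors, and a nonabelian simple group $L$ with $p\nmid|L|$ \emph{is} $p$-solvable (it is a $p'$-group) and contributes $|L|$ to $a_p$. The correct justification in this setting is that Lemma \ref{AGlemma} gives $0\ne H^1(H,V)\le H^1(L_1,W)$, and $H^1(L_1,W)\ne 0$ forces $p\mid|L|$, whence $L$ is not $p$-solvable. The paper in fact does not take this shortcut: it explicitly splits into the case $L$ not $p$-solvable (handled essentially as you sketch, via Lemma \ref{l3} and Proposition \ref{di}, with separate checks for small $d$ and $p$) and the case $L$ $p$-solvable, devoting two substantial paragraphs to the latter using regular-orbit results from \cite{Sc}, cohomology bounds from \cite{H}, and the estimate $a_p(A)\le n^3/4$ from Lemma \ref{lll}. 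If you want to bypass all of that you must argue explicitly that this second case is vacuous; otherwise you owe an argument for it, since there $a_p(N)=|L|^t$ and your reduction to $a_p(H/N)$ collapses.

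Your final paragraph is too impressionistic to count as a proof. The paper makes the inequality $(4d)^t c^{t-1}\le 24^{-1/3}p^{dt(c_1-1)}$ explicit (using Lemma \ref{l3} rather than Lemma \ref{l2}, since it is $\dim V_1$ that you know directly, not the minimal permutation degree of $L$) and then checks the boundary cases $(d,p)\in\{(3,2),(4,2)\}$ individually by sharpening the $4d$ factor. You should carry out the analogous verification rather than asserting that ``the slack is enough.''
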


\begin{proof}
Let $H$ be a point stabilizer in $G$. We may assume that $H \not= 1$. Clearly $H$ acts irreducibly and faithfully on a vector space $V$ of size $n$. If $H^1(H,V) = \{ 0 \}$, then Theorem \ref{mixedirreducible} gives the result, by Lemma \ref{cohomology}. So assume that $H^1(H,V) \not= \{ 0 \}$. By Lemma \ref{AGlemma}, $F^{*}(H) = L_{1} \times \cdots \times L_{t}$ where $L_{i} \cong L$ are nonabelian simple groups viewed as subgroups of $\mathrm{GL}(V_{i})$ where the $V_{i}$ are vector spaces with $V = \oplus_{i=1}^{t} V_{i}$. For each $i$ put $|V_{i}| = p^{d}$ for a prime $p$ and integer $d$. (See Lemma \ref{AGlemma} and the proof of Theorem \ref{out}.)    

If $n=q^2$ with $q=2^e$ for an integer $e > 1$ and $H=\mathrm{L}_{2}(q)$, then, by Section \ref{Section 12}, $a_{2}(G) |\Out(G)| < (n^{2} \log n)/2 < 24^{-1/3} n^{1 + c_{1}}$. Thus, by Theorem \ref{out}, we may assume that $|\Out(G)| < n$. 

Assume first that $L$ is not $p$-solvable. By Lemma \ref{l3} and Proposition \ref{di}, we have the estimate $a_{p}(H) \leq {(4d)}^{t} c^{t-1}$ where $c$ is as in the beginning of this section, depending on $p$. For $d=2$ and $p \geq 5$, $d = 3$ and $p \geq 3$, or $d = 4$ and $p \geq 3$, or $d \geq 5$ we have ${(4d)}^{t} c^{t-1} \leq 24^{-1/3} p^{d t (c_{1} - 1)}$ giving the desired bound for $a_{p}(H)$ and thus for $a_{p}(G) |\Out(G)|$ in these cases. The only exceptions are $d = 3$ and $p = 2$, and $d = 4$ and $p = 2$. However in these cases in the previous two estimates we may replace $4d$, we obtained from Lemma \ref{l3}, by $1$ and $4$ respectively. Thus we may assume that $L$ is $p$-solvable (and $p \geq 3$). 

Let $A$ be the full normalizer of $G$ in $\mathrm{S}_{n}$. Assume that $p \geq 5$ and that $A/G$ is not $p$-solvable. By Schreier's conjecture and the proof of Theorem \ref{transitive} we must then have $t \geq 8$. By the proof of Lemma \ref{lll}
we have $a_{p}(A) \leq n^{3}/4$. Thus, by the main result of \cite{GH}, Lemma \ref{AGlemma} and Theorem \ref{transitive}, we have the estimate $a_{p}(G) |\Out(G)| \leq (n^{3}/4) \cdot t^{\log t} \cdot p^{d/2}$. Furthermore, by the proof of Theorem \ref{out}, we may also assume that $d \geq 3$ as well as $t \geq 8$. Under these conditions it easily follows that $(n^{3}/4) \cdot t^{\log t} \cdot p^{d/2} < 24^{-1/3} n^{1 + c_{1}}$. Assume now that $p \geq 5$ and $A/G$ is $p$-solvable. Then we must bound $a_{p}(A) |H^{1}(H,V)| \leq (n^{3}/4) \cdot p^{d/2}$. Using Lemma \ref{l3} this is smaller than the desired estimate unless $t \leq 2$. Using Lemma \ref{kgv} we can deduce the result if $t \leq 2$ and $d \geq 4$. By the proof of Theorem \ref{out} we cannot have $d=2$ since $H^{1}(H,V) \not= \{ 0 \}$ and $p \not= 2$. Thus $t \leq 2$ and $d=3$. We then have by the proof of Lemma \ref{lll} that $a_{p}(A) |H^{1}(H,V)| \leq (n^{3}/(p-1)) \cdot p$ which is, for $n \geq 343$, less than $24^{-1/3} n^{1+c_{1}}$. This forces $p = 5$, $d=3$ and $t=1$ with $|L|$ not divisible by $5$. GAP \cite{GAP} shows that there is no such possibility.

We are left to consider the case when $p=3$ and $L$ is $3$-solvable, that is, a Suzuki group. In this case $F^{*}(H)$ has a regular orbit on $V$ by \cite[Theorem 7.2.a]{Sc}. We also have $1 < |H^{1}(H,V)| \leq n^{1/14}$ by \cite[Table 2]{H} and so we may assume by Lemma \ref{AGlemma} that $d \geq 14$. By these, Proposition \ref{di} and a remark after Lemma \ref{l2}, $a_{3}(A) |H^{1}(H,V)| < n^{3} < 24^{-1/3} n^{1+c_{1}}$ for $n \geq 3^{14}$. We may thus assume that $a_{3}(G) |\Out(G)| > a_{3}(A) |H^{1}(H,V)|$. Then $t \geq 8$ as in the previous paragraph and so $a_{3}(G) |\Out(G)| < n^{3} \cdot t^{\log t} \cdot p^{d/14} < 24^{-1/3} n^{1+c_{1}}$ whenever $n \geq 3^{8 \cdot 14}$.           
\end{proof}

Finally we finish the proof of Theorem \ref{main:3}. By Theorem \ref{mixedaffine} we may assume that $G$ is a primitive permutation group of degree $n$ with nonabelian socle $E$ which is isomorphic to a direct product of $t$ copies of a nonabelian simple group $L$. By Theorem \ref{main:1} we know that $|\Out(G)| < n$ in this case and so it is sufficient to show $a_{p}(G) \leq 24^{-1/3} n^{c_{1}}$ for every prime divisor $p$ of $n$. This follows from the proof of Corollary \ref{c1}.



\section{On the central chief factors of a primitive group}
\label{Section 13.5}

In this section Theorem \ref{main:3.5} is proved.

For a finite group $X$ let $c(X)$ denote the product of the orders of the central chief factors in a chief series for $X$. By a generalization of the Jordan-H\"older theorem, $c(X)$ is independent from the choice of the chief series for $X$. Clearly, $c(X) \leq c(X/N) c(N)$ for any normal subgroup $N$ of $X$. The following technical lemma is not necessary but it makes our argument easier to understand. 

\begin{lemma}
\label{folosleg}
Let $X = X_{1} \times \cdots \times X_{t}$ be the direct product of finite groups $X_{i}$. Let $Y$ be a subgroup of $X$ such that $Y$ projects onto every factor $X_i$. Then $c(Y)$ is at most $c(X) = \prod_{i=1}^{t} c(X_{i})$. 
\end{lemma}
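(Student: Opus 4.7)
The plan is to build an explicit chief series of $X$ by concatenating chief series of the $X_i$, intersect it with $Y$, and then compare contributions factor by factor.

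First I would verify that $c(X) = \prod_i c(X_i)$. Fixing chief series $1 = M_0^{(i)} \lhd \cdots \lhd M_{n_i}^{(i)} = X_i$ for each $i$, the subgroups
$$L_{n_1 + \cdots + n_{i-1} + j} := X_1 \times \cdots \times X_{i-1} \times M_j^{(i)} \times 1 \times \cdots \times 1$$
form a chief series of $X$: at the $k$-th step the quotient is isomorphic to $M_j^{(i)}/M_{j-1}^{(i)}$, and since normal subgroups of $X$ containing $L_{k-1}$ and lying inside $L_k$ correspond to normal subgroups of $X_i/M_{j-1}^{(i)}$ inside $M_j^{(i)}/M_{j-1}^{(i)}$, minimality is inherited. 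Because the factors $X_\ell$ with $\ell \neq i$ act trivially on $M_j^{(i)}/M_{j-1}^{(i)}$, the chief factor $L_k/L_{k-1}$ is central in $X$ if and only if $M_j^{(i)}/M_{j-1}^{(i)}$ is central in $X_i$, giving the product formula.

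Next I would set $Y_k = Y \cap L_k$ and refine the resulting normal series $1 = Y_0 \lhd Y_1 \lhd \cdots \lhd Y_m = Y$ to a chief series of $Y$. The key observation is that for each $k$ the $X$-action on $L_k/L_{k-1}$ factors through the projection $\pi_i\colon X \to X_i$ (the other factors commute), and since $\pi_i(Y) = X_i$ by hypothesis, the induced $Y$-action on $L_k/L_{k-1}$ has the same image in $\Aut(L_k/L_{k-1})$ as the $X$-action. The case analysis would then go as follows. If $L_k/L_{k-1}$ is central in $X$, then $[Y,Y_k] \leq L_{k-1} \cap Y = Y_{k-1}$, so every chief factor of $Y$ refining $Y_k/Y_{k-1}$ is abelian and centralized by $Y$, and the total contribution to $c(Y)$ is $|Y_k/Y_{k-1}| \leq |L_k/L_{k-1}|$. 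If $L_k/L_{k-1}$ is non-central, then $Y_k/Y_{k-1}$ is a $Y$-invariant normal subgroup of the chief factor $L_k/L_{k-1}$. By the previous observation, $Y$-invariant normal subgroups coincide with $X$-invariant ones; in the abelian case $L_k/L_{k-1}$ is an irreducible $X$-module, and in the non-abelian case $L_k/L_{k-1} \cong S^m$ with $X$ (hence $Y$) permuting the simple factors transitively so that only $1$ and the whole group are $X$-invariant. Thus $Y_k/Y_{k-1}$ is trivial or equals $L_k/L_{k-1}$, and in the latter case it is a single chief factor of $Y$ which is non-central (either non-abelian, or abelian with $Y$ acting non-trivially). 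Hence non-central steps contribute $1$ to $c(Y)$.

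Multiplying the contributions over all $k$ gives $c(Y) \leq \prod_{k : L_k/L_{k-1} \text{ central}} |L_k/L_{k-1}| = c(X)$. The main obstacle is the non-abelian subcase of the non-central step: verifying that the only $Y$-invariant normal subgroups of a direct product $S^m$ of non-abelian simple groups with a transitive permutation action are $1$ and $S^m$. This follows from the standard fact that every normal subgroup of $S^m$ (for $S$ non-abelian simple) is a sub-product $\prod_{j \in J} S_j$---obtained by showing that a normal subgroup meeting each $S_j$ trivially must centralize $S^m$, hence be trivial---combined with the transitivity of $Y$ on $\{1,\dots,m\}$, which forces $J$ to be empty or full.
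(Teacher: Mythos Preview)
Your argument is correct, but it takes a different route from the paper. The paper proves the lemma by induction on $t$: writing $X = X_1 \times X'$, it lets $N = \ker(Y \to X_1)$, observes that $c(Y) = c(X_1) \cdot c_Y(N)$ (since $Y/N \cong X_1$), embeds $N$ as a normal subgroup of the projection $Y'$ of $Y$ into $X'$, notes that a chief factor of $Y$ inside $N$ is central in $Y$ if and only if it is central in $Y'$ (the two groups act identically on $N$), and concludes $c_Y(N) \le c(Y') \le c(X')$ by the inductive hypothesis.

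Your approach instead fixes a global chief series of $X$ built from chief series of the $X_i$, intersects it with $Y$, and analyses each step using the key fact that $Y$ and $X$ have the same image in $\Aut(L_k/L_{k-1})$. This is more hands-on and arguably more informative, since it pins down exactly which chief factors of $Y$ can be central; the paper's induction is shorter and avoids the factor-by-factor bookkeeping. One simplification: your abelian/non-abelian case split in the non-central step is unnecessary. Once you know that $Y$-invariant subgroups of $L_k/L_{k-1}$ coincide with $X$-invariant ones, you are done immediately, because a chief factor of $X$ has no proper nontrivial $X$-invariant subgroup by definition (any such subgroup is in particular $L_k$-invariant, hence normal in $L_k/L_{k-1}$, and its preimage would violate minimality). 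The detour through the structure of normal subgroups of $S^m$ is correct but superfluous.
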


\begin{proof}
We prove the result by induction on $t$. In case $t = 1$ we have $Y = X$ and the result follows. Assume that $t > 1$. Write $X$ in the form $X_{1} \times X'$ where $X'$ is the direct product of all but the first factor of $X$. Let $N$ be the kernel of the projection of $Y$ onto $X_1$. Then $N \vartriangleleft Y$ and $c(Y)$ is equal to $c(Y/N) = c(X_{1})$ times the product, denoted by $c_{Y}(N)$, of the orders of the central chief factors of $Y$ which are contained in $N$. Let $Y'$ be the projection of $Y$ into $X'$. Now $N$ embeds naturally in $X'$ and $N \vartriangleleft Y'$. Observe that a chief factor of $Y$ contained in $N$ is central in $Y$ if and only if it is central in $Y'$. From this we get $c_{Y}(N) = c_{Y'}(N) \leq c(Y')$. By our induction hypothesis, we get $c(Y') \leq c(X') = \prod_{i=2}^{t} c(X_{i})$.    
\end{proof}

Our first result generalizes a theorem of Dixon \cite{dixon} stating that a nilpotent permutation group of degree $n$ has order at most $2^{n-1}$.

\begin{theorem}
\label{c.permutation}
For a permutation group $G$ of finite degree $n$ we have $c(G) \leq 2^{n-1}$.
\end{theorem}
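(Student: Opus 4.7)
The plan is to induct on $n$, with the base $n=1$ trivial. Throughout I use the inequality $c(X)\le c(X/M)c(M)$ for any normal subgroup $M\vartriangleleft X$ (noted just before the statement) and Lemma~\ref{folosleg}. If $G$ is intransitive with orbits $\Omega_1,\ldots,\Omega_k$ of sizes $n_i$, $k\ge 2$, write $G_i$ for the action of $G$ on $\Omega_i$; then $G$ embeds as a subdirect subgroup of $G_1\times\cdots\times G_k$, so Lemma~\ref{folosleg} and induction yield $c(G)\le\prod_i c(G_i)\le 2^{n-k}\le 2^{n-1}$.

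Next suppose $G$ is transitive but imprimitive. Fix a $G$-invariant partition into $t$ blocks of common size $s$ with $1<t<n$ and $st=n$, and let $N$ be the kernel of the action of $G$ on the block set. The restriction $H_i$ of $N$ to the $i$-th block is a permutation group of degree $s$, and $N$ is a subdirect subgroup of $H_1\times\cdots\times H_t$, so Lemma~\ref{folosleg} and the inductive bound $c(H_i)\le 2^{s-1}$ give $c(N)\le 2^{t(s-1)}=2^{n-t}$. Since $G/N\le\mathrm{S}_t$ is transitive of degree $t<n$, induction also gives $c(G/N)\le 2^{t-1}$, and therefore $c(G)\le c(G/N)c(N)\le 2^{n-1}$.

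Finally let $G$ be primitive of degree $n\ge 2$ with socle $N$. For any chief factor $\tilde A/\tilde B$ of $G$ with $N\le\tilde B$ one has $[N,\tilde A]\le N\le\tilde B$, so $N$ automatically centralizes $\tilde A/\tilde B$, and centrality in $G$ of such a factor is equivalent to centrality in $G/N$. If $N$ is nonabelian, say $N\cong L^t$ with $L$ simple, then every chief factor of $G$ contained in $N$ is a nonabelian characteristically simple group, hence noncentral; therefore $c(G)=c(G/N)$. By Schreier's conjecture $G/N$ is solvable, and by the Aschbacher--O'Nan--Scott theorem $|G/N|\le|\mathrm{Out}(L)|^t\cdot t!$, an estimate that falls well below $2^{n-1}$ in each primitive type thanks to the standard lower bounds for $n$ in terms of $|L|$ and $t$.

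In the remaining case $N$ is elementary abelian of order $n=p^d$; write $G=V\rtimes H$ with $V=N$ and $H\le\mathrm{GL}(V)$ irreducible. If $H=1$ then $c(G)=|V|=n\le 2^{n-1}$. Otherwise $V$ is a noncentral chief factor of $G$, so $c(G)=c(H)$, and since $H$ acts faithfully on $V\setminus\{0\}$ it is a permutation group of degree $n-1$; induction gives $c(H)\le 2^{n-2}$. The main obstacle I expect is the primitive nonabelian socle case, where one must combine the simple observation that $N$ centralizes every chief factor above it with uniform bounds on $|G/N|$ furnished by the Aschbacher--O'Nan--Scott theorem; the other cases reduce cleanly to strictly smaller instances of the theorem.
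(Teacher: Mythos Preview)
Your proposal is essentially correct, but one assertion is false (though harmlessly so). The claim ``By Schreier's conjecture $G/N$ is solvable'' is wrong: in product action type with $t\ge 5$ factors, $G/N$ can contain $\mathrm{A}_t$. Fortunately you do not use solvability; all you need is $c(G/N)\le|G/N|$, which holds for any finite group. The bound $|G/N|\le|\mathrm{Out}(L)|^t\,t!$ is correct (via $G/C_G(N)\hookrightarrow\mathrm{Aut}(L)\wr\mathrm{S}_t$, or the analogous embedding when there are two minimal normal subgroups), and checking $|\mathrm{Out}(L)|^t\,t!<2^{n-1}$ across the O'Nan--Scott types is routine given the standard lower bounds on $n$, though you only gesture at this.

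Your route differs pleasantly from the paper's. The paper treats all noncyclic primitive $G$ in one stroke, observing that $\mathrm{Soc}(G)$ contains no central chief factor and then invoking Corollary~\ref{c1} (which rests on the P\'alfy--Wolf/Halasi--Mar\'oti machinery and CFSG) to get $c(G)\le a(G/\mathrm{Soc}(G))\le 24^{-1/3}n^{c_1}<2^{n-1}$ for $n\ge 6$, with $n\le 5$ by hand. You instead split nonabelian and abelian socle. For the affine case your trick of regarding the point stabilizer $H\le\mathrm{GL}(V)$ as a faithful permutation group on $V\setminus\{0\}$ of degree $n-1$, and feeding this back into the induction to get $c(G)=c(H)\le 2^{n-2}$, is more self-contained than the paper's appeal to the linear-group bound. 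The paper's approach yields sharper intermediate estimates (polynomial in $n$), while yours avoids importing Corollary~\ref{c1}.
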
 

\begin{proof}
Let $G$ act on a set $\Omega$ of size $n$. We prove our bound using induction on $n$. 

If $G$ is a cyclic group generated by an $n$-cycle, then $c(G) = n \leq 2^{n-1}$. Assume that $G$ is a noncyclic primitive permutation group. Then the socle of $G$ does not contain any central chief factor of $G$. If $n = 3$, $4$, or $5$, then inspection shows that $c(G)$ is at most $2$, $3$ and $4$ in the respective cases. Otherwise if $n \geq 6$, then Corollary \ref{c1} gives $c(G) \leq 24^{-1/3}n^{c_{1}} < 2^{n-1}$. 

Let $G$ be an imprimitive transitive group acting on $\Omega$. Let $\Sigma$ be a system of nontrivial, proper blocks each of size $k$ and let $K$ be the kernel of the action of $G$ on $\Sigma$. By Lemma \ref{folosleg} and induction, we have $c(K) \leq {(2^{k-1})}^{n/k}$. Thus, again by induction, we get $c(G) \leq c(G/K) c(K) \leq 2^{(n/k) -1} \cdot 2^{n - (n/k)} = 2^{n-1}$.

Finally suppose that $G$ acts intransitively on $\Omega$. Let $\Delta$ be a $G$-orbit in $\Omega$ of size $k$. Let $K$ be the kernel of the action of $G$ on $\Delta$. By induction we have $c(G/K) \leq 2^{k-1}$ and $c(K) \leq 2^{n-k-1}$. Thus $c(G) \leq c(G/K) c(K) < 2^{n-1}$.
\end{proof}

In the next theorem $c_{2}$ denotes the constant $\log_{9}32$ which is close to $1.57732$.

\begin{theorem}
\label{ttt}
Let $G$ be a finite group acting faithfully and completely reducibly on a finite vector space $V$. Then $c(G) \leq |V|^{c_{2}}/2$. 
\end{theorem}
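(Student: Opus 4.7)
The plan is to reduce the theorem to Wolf's theorem \cite{wolf} for nilpotent groups by identifying $c(G)$ with the order of the hypercenter $Z_{\infty}(G)$ (the terminal member of the upper central series $1 \leq Z_{1}(G) \leq Z_{2}(G) \leq \cdots$ of $G$), and then observing that $Z_{\infty}(G)$ inherits the hypotheses of Wolf's theorem from $G$.

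First I would establish the identity $c(G) = |Z_{\infty}(G)|$. To do so, I refine the upper central series to a chief series of $G$ from $1$ up to $Z_{\infty}(G)$, and then continue with any chief series of $G$ between $Z_{\infty}(G)$ and $G$. Every chief factor lying inside $Z_{\infty}(G)$ sits inside some factor $Z_{i+1}(G)/Z_{i}(G)$, on which $G$ acts trivially by conjugation, hence it is central. Conversely, no chief factor $H_{j+1}/H_{j}$ with $Z_{\infty}(G) \leq H_{j} < H_{j+1}$ can be central, since otherwise $H_{j+1}/Z_{\infty}(G)$ would intersect $Z(G/Z_{\infty}(G))$ nontrivially, contradicting the fact that $Z(G/Z_{\infty}(G)) = 1$. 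Thus in this particular chief series the central chief factors are exactly those inside $Z_{\infty}(G)$, which gives $c(G) = |Z_{\infty}(G)|$; invariance of $c(G)$ under the choice of chief series (already noted in the paper) then yields the identity in general.

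Next I would verify the hypotheses of Wolf's theorem for $Z_{\infty}(G)$ acting on $V$. By construction $Z_{\infty}(G)$ is nilpotent (its own upper central series reaches it). Being a subgroup of $G$, it acts faithfully on $V$. To see that the action on $V$ is completely reducible, I use Clifford's theorem: since $Z_{\infty}(G)$ is characteristic, hence normal, in $G$, the restriction to $Z_{\infty}(G)$ of any irreducible $G$-submodule of $V$ is a direct sum of $G$-conjugate irreducible $Z_{\infty}(G)$-modules, so $V$ itself is semisimple as a $Z_{\infty}(G)$-module.

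Applying Wolf's theorem to $Z_{\infty}(G)$ then gives $|Z_{\infty}(G)| \leq |V|^{c_{2}}/2$, so that $c(G) = |Z_{\infty}(G)| \leq |V|^{c_{2}}/2$. The main conceptual step is the identification $c(G) = |Z_{\infty}(G)|$; once this is in hand, everything else is a direct invocation of Clifford's theorem and of Wolf's result, with no further calculation required.
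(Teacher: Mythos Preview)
Your key claim that $c(G) = |Z_\infty(G)|$ is false, and the error is in the second half of your argument. You assert that no chief factor $H_{j+1}/H_j$ with $Z_\infty(G) \leq H_j$ can be central because otherwise $H_{j+1}/Z_\infty(G)$ would meet $Z(G/Z_\infty(G))$. But centrality of $H_{j+1}/H_j$ only gives $[G,H_{j+1}] \leq H_j$, not $[G,H_{j+1}] \leq Z_\infty(G)$; your implication is valid only for the \emph{first} chief factor sitting immediately above $Z_\infty(G)$, not for all higher ones. In fact every chief factor between $G'$ and $G$ is central, so $c(G) \geq |G:G'|$ always, and this already shows $c(G)$ can be much larger than $|Z_\infty(G)|$.

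A concrete counterexample inside the hypotheses of the theorem: take $G = \mathrm{S}_3 \cong \mathrm{GL}_2(2)$ acting irreducibly and faithfully on $V = \F_2^2$. Then $Z(G)=1$, so $Z_\infty(G)=1$, yet in the chief series $1 < \mathrm{A}_3 < \mathrm{S}_3$ the top factor $\mathrm{S}_3/\mathrm{A}_3$ is central (since $[\mathrm{S}_3,\mathrm{S}_3]=\mathrm{A}_3$), giving $c(G)=2 \neq 1$. Your proposed proof would output the false value $c(G)=1$ here.

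Because $c(G)$ genuinely sees central factors occurring above the hypercenter, one cannot reduce directly to Wolf's nilpotent bound. The paper instead argues by induction on $|V|$: the reducible case is immediate, the imprimitive irreducible case is handled via Theorem~\ref{c.permutation} and Lemma~\ref{folosleg}, and the primitive irreducible case requires the structural analysis of primitive linear groups from Theorem~\ref{primitive-linear2}, treating separately the situations where the critical normal subgroup $J$ is a central product of quasisimple groups, is solvable, or is mixed.
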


\begin{proof}
Let us prove the result by induction on $n = |V|$. 

Suppose that $G$ acts irreducibly and primitively on $V$. First let $G$ be a subgroup of $\mathrm{\Gamma L}_{1}(p^{f})$ where $n = p^f$ for a prime $p$ and integer $f$. Since $p^{f}-1 < n^{c_{2}}/2$, we may assume that $|G| > p^{f}-1$. Set $G_{0} = G \cap \mathrm{GL}_{1}(p^{f}) < G$. If $G_{0}$ contains a cyclic (irreducible) subgroup of prime order $q$ where $q$ is a Zsigmondy prime, then $c(G) \leq f((p^{f}-1)/q) < p^{f}-1< n^{c_{2}}/2$ since such a cyclic subgroup cannot be central in $G > G_{0}$ and $q \geq f+1$. Otherwise Zsigmondy's theorem implies that $f=2$ or $p^f = 2^6$. In both of these cases we either have $c(G) \leq |G| < n^{c_{2}}/2$ or $G = \mathrm{\Gamma L}_{1}(p^{f})$ and $c(G) = |G|$. If $G = \mathrm{\Gamma L}_{1}(p^{2})$, then $|G| \leq n^{c_{2}}/2$ unless $p = 2$ or $3$ when $c(G) = 2$ and $16$ respectively, with our proposed bound being sharp in the latter case. Thus we may assume that $G = \mathrm{\Gamma L}_{1}(2^{6})$. But in this case $c(G) < |G|$.     

Let us continue to assume that $G$ acts irreducibly and primitively on $V$. Let us use the notation of Theorem \ref{primitive-linear2} with $B$ replaced with $G$. By the previous paragraph we may assume that $d > 1$. By Theorem \ref{primitive-linear2} we may also assume that $n \leq 3^{16}$. Let $G_{0}$ be $B_{0}$ in the notation of the proof of Theorem \ref{primitive-linear2}. Let $J$ be the product of all normal subgroups of $G$ contained in $G_{0}$ which are minimal with respect to not being contained in $C$. 

Assume that $J$ is a central product of quasisimple finite groups. Let $p^{f} = 2$. By the proof of Theorem \ref{primitive-linear2} we may assume in this case that $16 \leq d \leq 25$, and so that $J$ is the direct product of at most two quasisimple groups. Then $c(G)$ is at most $a(G) \leq 32 d < 2^{16} \leq n$. Let $p^{f} \geq 3$. Then $d \leq 16$ and $J$ is the central product of at most three quasisimple groups. In this case $c(G) \leq a(G) \leq 384 f^{4} d (p^{f}-1)$. This is less than $p^{c_{2}fd}/2$ unless $d \leq 7$. Assume that $d \leq 7$. If $J$ is a central product of two quasisimple groups, then, by Dickson's theorem, $a(G) \leq 32 f^{3} (p^{f}-1) < n^{c_{2}}/2$, if $J$ acts on a vector space of dimension $4$, and $a(G) \leq 96 f^{3} (p^{f}-1) < n^{c_{2}}/2$ if $J$ acts on a vector space of dimension $6$. Assume that $J$ is a single quasisimple group. If $d = 2$, then Dickson's theorem gives $a(G) \leq 2 f^{2} (p^{f}-1) < n^{c_{2}}/2$. If $3 \leq d \leq 7$, then $a(G) \leq 4f^{2}d (p^{f}-1)$ which is again smaller than $n^{c_{2}}/2$.         

Assume that $J$ is solvable. Then $c(G) \leq c(G/G_{0}) c(G_{0}/Z(G_{0})) c(Z(G_{0}))$ which is at most $f (p^{f}-1) c(G_{0}/Z(G_{0}))$. The group $G_{0}/Z(G_{0})$ is isomorphic to an affine linear group $HW$ acting on a vector space of size $|W| = d^{2}$ with $W$ a completely reducible $H$-module. By induction it is easy to see that $c(HW) \leq {|W|}^{c_2}/2$. Thus $c(G) \leq f (p^{f}-1) ({d}^{2c_2}/2)$ which is at most $n^{c_{2}}/2$.

Assume now that $J = J_{1} \circ J_{2}$ where $J_{1}$ is a normal subgroup of $G$ that is a central product of quasisimple groups and $J_{2}$ is a solvable noncentral normal subgroup of $G$. Let $d_{1} \geq 2$ and $d_{2} \geq 2$ be the dimensions of the vector spaces (over the field of size $p^f$) on which $J_{1}$ and $J_{2}$ act naturally. So $d_{1}d_{2} \mid d$. The group $G_{0}/Z(G_{0})$ may be viewed in a natural way as a subgroup $Y$ of a certain direct product $X_{1} \times X_{2}$ such that $Y$ projects onto both $X_1$ and $X_{2}$ where $X_{1}$ was treated two paragraphs up and $X_{2}$ one paragraph up as $G_{0}/Z(G_{0})$, where it was shown that $f (p^{f}-1) c(X_{i}) \leq p^{fd_{i}c_{2}}/2$ for $i = 1$ and $2$. By Lemma \ref{folosleg} we have $c(G) \leq f (p^{f}-1) c(X_{1}) c(X_{2}) < p^{fdc_{2}}/2$.    

Assume that $G$ acts irreducibly and imprimitively on $V$. Suppose $G$ preserves a direct sum decomposition $V_{1} \oplus \cdots \oplus V_t$ of the vector space $V$ with $t > 1$ maximal. Let $K$ be the kernel of the transitive permutation action of $G$ on $\{ V_{1}, \ldots , V_{t} \}$. Observe that the action of $K$ on every $V_{i}$ is completely reducible since such a projection of $K$ can be viewed as a normal subgroup in the primitive action of $N_{G}(V_{i})$. Thus by Theorem \ref{c.permutation}, Lemma \ref{folosleg} and induction, we have $$c(G) \leq c(G/K) c(K) \leq 2^{t-1} \cdot {|V_{1}|}^{c_{2}t}/2^{t} = {|V|}^{c_{2}}/2.$$   

Finally assume that $G$ acts reducibly on $V$ which may be considered as $U \oplus W$ where $U$ and $W$ are nontrivial $G$-submodules of $V$. If $K$ denotes the kernel of the action of $G$ on $U$, then $G/K$ and $K$ act faithfully and completely reducibly on $U$ and $W$ respectively. By induction $c(G) \leq c(G/K) c(K) \leq ({|U|}^{c_2}/2) ({|W|}^{c_2}/2) < n^{c_{2}}/2$. This finishes the proof of the theorem.   
\end{proof}

We are now in the position to prove Theorem \ref{main:3.5}. By Theorem \ref{ttt} we are finished in case $G$ is an affine primitive permutation group. If $G$ is not an affine primitive permutation group, then, as in the last two paragraphs of the previous section, $c(G)$ can be bounded using Theorem \ref{c.permutation} and the two remarks that follow Lemma \ref{l2}. In both of these cases we get $c(G) < n^{3/2}/2$. 

\section{Asymptotics}
\label{Section 14}

In this section the second half of Theorem \ref{main:0} and Theorem \ref{main:1.5} are proved. 

Let $G$ be a primitive permutation group of degree $n$ and let $A$ be the full normalizer of $G$ in $\mathrm{S}_n$. Assume that $A$ is not an (affine) subgroup of $\mathrm{A \Gamma L}_{1}(q)$ for a prime power $q$ equal to $n$. We will show that for $n \geq 2^{14000}$ we have $|A/G| < n^{1/2} \log n$.

Assume first that $A$ is an affine primitive permutation group. Then so is $G$. In fact we may change notation and assume that $A$ and $G$ are linear groups acting irreducibly on a finite vector space $V$ of size $n$ with $G \vartriangleleft A$. First assume that $A$ acts primitively on $V$. Let us use the notation of Theorem \ref{primitive-linear3}. By assumption, we have $d \geq 2$. If $d=2$ then, by the structure of $A$, we have that $|A/G|$ is at most $(3/2)(\sqrt{n}-1) \log_{3}n < {n}^{1/2} \log n$ if $A$ is solvable and $|A/G| < {n}^{1/2} \log n$ if $A$ is nonsolvable (using Dickson's theorem). If $d \geq 3$, then, by Theorem \ref{primitive-linear3}, $$|A/G| < n^{1/3} \cdot (\log n) \cdot d^{2 \log d + 3} \leq n^{1/3} {(\log n)}^{2 \log \log n + 4} < n^{1/2}$$ for $n \geq 2^{8192}$. 

Assume now that $A$ acts imprimitively on $V$ and that it preserves a direct sum decomposition $V_{1} \oplus \cdots \oplus V_t$ of $V$ where $t > 1$ is as large as possible. Let $K$ denote the kernel of the (transitive) action of $A$ on $\{ V_{1}, \ldots , V_{t} \}$. As in the proof of Theorem \ref{linear2}, let $A_1$ be the action of $N_{A}(V_{1})$ on $V_{1}$. The group $A_{1}$ acts primitively and irreducibly on $V_{1}$. By Theorem \ref{primitive-linear2}, we have $a(A_{1}) < m \log m$ for $m = |V_{1}| > 3^{16}$. 
In the notation of Lemma \ref{nyuszi} for $t \geq 2^{729}$ we have $a(K/J) \leq (a(X_1))^{t/3c_{1}}$, by use of Lemma \ref{LMM}, where $c_{1}$ is as in Theorem \ref{main:3}. From this and by the proof of Theorem \ref{linear2} together with part (2) of Lemma \ref{segedlemma} and Theorem \ref{main:4}, for $t \geq 2^{729}$ we get $$|A/G| \leq |A/GK| \cdot {(a(A_{1}) b(A_{1}/G_{1}))}^{t/3c_{1}} \leq {16}^{t/\sqrt{\log t}} \cdot n^{1/3} < n^{1/2}.$$ Otherwise, if $t$ is bounded (is at most $2^{729}$) but $t \not= 2$, $4$, then, again by the proof of Theorem \ref{linear2} and by Lemma \ref{nyuszi} and Theorems \ref{main:4} and \ref{linear}, we have, for $n \geq 2^{8192}$, $$|A/G| \leq |A/GK| \cdot b(A_{1}/G_{1}) \cdot {(a(A_{1}))}^{3t/8} \leq {16}^{t/\sqrt{\log t}} \cdot {(\log m)}^{2 \log \log m} \cdot n^{7/16} < n^{1/2}.$$ 

If $t = 2$ and $m \geq 2^{2048}$, then $|A/G| \leq b(A_{1}/G_{1}) a(A_{1}) < m \log m < n^{1/2} \log n$. 

Let $t = 4$. Then $|A/G| \leq 6 \cdot b(A_{1}/G_{1}) \cdot {(a(A_{1}))}^{2}$. Using the notation $d$ (for $A_1$) as in Theorem \ref{primitive-linear3}, by the argument above, we see that for $d \geq 2$ and $m \geq 2^{2048}$ we have $b(A_{1}/G_{1}) \cdot a(A_{1}) < 16 \cdot m^{1/2} \log m$. This gives $|A/G| < n^{1/2} \log n$ for $n \geq 2^{8192}$ under the assumption that $d \geq 2$. Thus assume that $d =1$. Here we use the observation made in Lemma \ref{imprimitiveexample}. Write $a(A_{1}) = |A_{1}|$ in the form $2^{\ell} r$ where $r$ is odd and $\ell$ is an integer. Then we have $|A/G| \leq 6 \cdot 2^{2 \ell} \cdot r$. From this the result follows if $|A_{1}| < 6 m$. Otherwise, by Zsigmondy's theorem, $2^{\ell} < m$. Now $|A_{1}| < m \log_{q} m$ where $q$ is the size of the field over which $A_1$ and $A$ are defined. From this 
$2^{2 \ell} \cdot r < m^{2} \log_{q} m$ giving $|A/G| < n^{1/2} \log n$ unless $q = 2$. If $q = 2$, then $2^{\ell} \leq \log m$, and so $|A/G| < 6 \cdot m {(\log m)}^{2} < n^{1/2} \log n$ for $m \geq 2^{2048}$. 

Assume that $A$ is a primitive permutation group which is not of affine type. In this case we use the notations, assumptions and the argument in the last two paragraphs of Section \ref{Section 9}. However, we use Theorem \ref{main:4}. If $A$ has two minimal normal subgroups, then we have $$|A/G| = a(A/G) b(A/G) < n^{1/3} \cdot {4}^{s/\sqrt{\log s}} \cdot {(\log n)}^{2 \log \log n} < n^{1/2}$$ for $n \geq 2^{8192}$, if $s \not= 1$, and also when $s = 1$. Finally assume that $A$ has a unique minimal normal subgroup. We first claim that we may assume that $t \geq 512$. If $|\mathrm{Out}(L)| \leq \sqrt{m}$ and $t \leq 512$, then $|A/G| \leq n^{1/4} \cdot 16^{t/\sqrt{\log t}} < n^{1/2}$. If $|\mathrm{Out}(L)|$ is larger than $\sqrt{m}$, then $L$ is one of the exceptions in Lemma \ref{l2} and so $t \geq 512$ by use of $n \geq 2^{14000}$. If $t \geq 512$, then, for $n \geq 2^{14000}$, we have $$|A/G| \leq {|\mathrm{Out}(L)|}^{1/3t} \cdot 4^{t/\sqrt{\log t}} \cdot t^{\log t} < n^{1/2} \log n.$$

This proves the second half of Theorem \ref{main:0}. 	
	
So far we showed Theorem \ref{main:1.5} in case $|\Out(G)| = |A/G|$. In fact by the same calculation as in the previous paragraph we established Theorem \ref{main:1.5} in case $G$ is not of affine type. Thus assume that $G$ is of affine type and $H^1(H,V) \ne 0$ where $V$ is the minimal normal subgroup of $G$ and $H \not= 1$ is a point stabilizer in $G$. Let us use the notation and assumptions of the proof of Theorem \ref{out}. 

Let us first assume that $t > 1$ and $|\Out(G)| \leq (q-1)q^{[d/2]}{|\Out(L)|}^{t/2}b_t$ with $d \geq 2$ where $b_{t} \leq {4}^{t/\sqrt{\log t}} \cdot t^{\log t}$, by Theorem \ref{main:4} and $|\Out(L)| \leq 4 (\log n)/t$, by Lemma \ref{l3}. Here $n = q^{dt}$. Thus $|\Out(G)| < q^{[d/2]+1} \cdot {(4(\log n)/t)}^{t/2} \cdot {4}^{t/\sqrt{\log t}} \cdot t^{\log t}$. If $d \geq 3$ or $t \geq 3$, then this is less than $n^{1/2} \log n$ for $n \geq 2^{14000}$. Finally, if $d = 2$ and $t = 2$, then $|\Out(G)| < n^{1/2} \log n$ since $|\Out(L)| \leq \log n$ by use of Dickson's theorem on subgroups $L$ of $\mathrm{GL}_{2}(q)$.    

Finally assume that in the proof of Theorem \ref{out} we have $t=1$, that is, $H$ is an almost simple group with socle $L$. Then $|\Out(G)| \leq (q-1) |H^{1}(L,W)| |\Out(L)|$ where $W$ is a nontrivial irreducible $L$-module of size dividing $n$ and defined over a field of size $q$. By Lemma \ref{l3}, $|\Out(L)| \leq 4 \log n$. 

By the main result of \cite{GH}, we have $|\Out(G)| \leq 4 (q-1) {|W|}^{1/2} \log n$. Assume first that $|W| < n$. If $\dim W \geq 3$, then $|\Out(G)| \leq 4 \cdot n^{5/12} \log n$, which is less than $n^{1/2} \log n$ for $n \geq 2^{14000}$. If $\dim W = 2$, then $|\Out(G)| < n^{1/2} |\Out(L)|$ which is less than $n^{1/2} \log n$ by using Dickson's theorem once again. Thus assume that $|W| = n = q^d$. Furthermore, as observed in the proof of Theorem \ref{out}, we may assume that $d \geq 3$ (otherwise $G$ is a member of the infinite sequence of examples in Theorem \ref{main:1}). Then, by \cite{GH}, it is easy to see that $|\Out(G)| < 2 \cdot n^{3/4}$, at least for $n \geq 2^{14000}$. (In this previous bound the factor $2$ comes from the fact that the full normalizer in $\mathrm{GL}_{4}(q)$ of $\mathrm{A}_6$ acting on the fully deleted permutation module of dimension $4$ in characteristic $3$ over the field of size $q$ has size $(q-1) |\mathrm{S}_6|$ since the dimension of the fixed point space of a $3$-cycle in $\mathrm{A}_6$ is different from the dimension of the fixed point space of an element in $\mathrm{A}_6$ which is the product of two $3$-cycles.) This proves the first statement of Theorem \ref{main:1.5}. 

If $L$ is an alternating group of degree at least $7$, a sporadic simple group or the Tits group, then $\dim(H^{1}(L,W)) \leq (1/4) \dim(W)$ by \cite[Corollary 3]{GK} and \cite{GH}. Thus in these cases we have $|\Out(G)| < 2 \cdot n^{1/2} \leq n^{1/2} \log n$. If $L$ is a simple group of exceptional type, then $\dim(H^{1}(L,W)) \leq (1/3) \dim(W)$ by \cite{H}, thus if $d \geq 7$ then $|\Out(G)| < 4 \cdot n^{3/7} \log n < n^{1/2} \log n$ for $n \geq 2^{14000}$. Otherwise $d = 6$ and $L = \mathrm{G}_{2}(r)$ with $r$ even or $4 \leq d \leq 6$ and $L$ is a Suzuki group (by \cite[Table 5.3.A]{KL} and \cite[Table 5.4.C]{KL}). However in these cases $\dim(H^{1}(L,W)) \leq 1$, by \cite{H}, and so $|\Out(G)| < n^{1/2} \log n$. We may now assume that $L$ is a classical simple group. 

Since we are assuming that the dimension of the natural module for $L$ is at least $7$, we see from \cite[Table 5.4.C]{KL} and \cite[Table 5.3.A]{KL} that $d \geq 7$. By \cite{H} we also have $\dim(H^{1}(L,W)) \leq (1/3) \dim(W)$. Thus $|\Out(G)| < 4 \cdot n^{3/7} \log n < n^{1/2} \log n$ for $n \geq 2^{14000}$.  

This completes the proof of Theorem \ref{main:1.5}.

\end{document}